\newcommand{\mylabel}[2]{#2\def\@currentlabel{#2}\label{#1}}
\newcommand{\ensemblenombre}[1]{\mathbb{#1}}
\newcommand{\N}{\ensemblenombre{N}}
\newcommand{\Z}{\ensemblenombre{Z}}
\newcommand{\R}{\ensemblenombre{R}}
\newcommand{\bU}{\mathbb{U}}
\newcommand{\Ec}[1]{\mathbb{E} \left[#1\right]}
\newcommand{\Pp}[1]{\mathbb{P} \left(#1\right)}
\newcommand{\Ecsq}[2]{\mathbb{E} \left[#1\mathrel{}\middle|\mathrel{}#2\right]}
\newcommand{\Ppsq}[2]{\mathbb{P} \left(#1\mathrel{}\middle|\mathrel{}#2\right)}
\newcommand{\Var}[1]{\mathrm{Var}\left(#1\right)}
\newcommand{\intervalle}[4]{\mathopen{#1}#2
\mathclose{}\mathpunct{},#3
\mathclose{#4}}
\newcommand{\intervalleff}[2]{\intervalle{[}{#1}{#2}{]}}
\newcommand{\intervalleof}[2]{\intervalle{(}{#1}{#2}{]}}
\newcommand{\intervallefo}[2]{\intervalle{[}{#1}{#2}{)}}
\newcommand{\intervalleoo}[2]{\intervalle{(}{#1}{#2}{)}}
\newcommand{\intervalleentier}[2]{\intervalle\llbracket{#1}{#2}
\rrbracket}
\newcommand{\petito}[1]{o\mathopen{}\left(#1\right)}
\newcommand{\grandO}[1]{O\mathopen{}\left(#1\right)}
\newcommand{\grandOdom}[2]{O_{#1}\mathopen{}\left(#2\right)}
\newcommand{\petitodom}[2]{o_{#1}\mathopen{}\left(#2\right)}
\newcommand{\enstq}[2]{\left\lbrace#1\mathrel{}\middle|\mathrel{}#2\right\rbrace}
\newcommand{\ind}[1]{\mathbf{1}_{\left\lbrace #1 \right\rbrace}}
\newcommand{\cG}{\mathcal{G}}
\newcommand{\bL}{\mathbb{L}}
\newcommand{\cF}{\mathcal{F}}
\newcommand{\ttT}{\mathtt{T}}
\newcommand{\ttP}{\mathtt{P}}
\newcommand{\ttG}{\mathtt{G}}
\newcommand{\Gam}[1]{\Gamma \left(#1\right)}
\DeclareMathOperator{\Log}{Log}
\DeclareMathOperator{\cst}{cst}
\DeclareMathOperator{\haut}{ht}
\DeclareMathOperator{\dist}{d}
\DeclareMathOperator{\wrt}{WRT}
\DeclareMathOperator{\pa}{PAT}
\DeclareMathOperator{\MLMC}{MLMC}
\newcommand{\bigabs}[1]{\biggr\lvert#1\biggr\rvert}
\newmdtheoremenv{theorem}{Theorem}
\newtheorem{proposition}[theorem]{Proposition}
\newtheorem{lemma}[theorem]{Lemma}
\newtheorem{corollary}[theorem]{Corollary}
\newtheorem{remark}[theorem]{Remark}
\begin{document}
\title{Geometry of weighted recursive and affine preferential attachment trees }
\author{Delphin Sénizergues}
\maketitle
\abstract{We study two models of growing recursive trees. For both models, initially the tree only contains one vertex $u_1$ and at each time $n\geq 2$ a new vertex $u_n$ is added to the tree and its parent is chosen randomly according to some rule.
In the \emph{weighted recursive tree}, we choose the parent $u_k$ of $u_n$ among $\{u_1,u_2,\dots, u_{n-1}\}$ with probability proportional to $w_k$, where $(w_n)_{n\geq1}$ is some deterministic sequence that we fix beforehand. 
In the \emph{affine preferential attachment tree with fitnesses}, the probability of choosing any $u_k$ is proportional to $a_k+\deg^{+}(u_k)$, where $\deg^{+}(u_k)$ denotes its current number of children, and the sequence of \emph{fitnesses} $(a_n)_{n\geq 1}$ is deterministic and chosen as a parameter of the model. 

We show that for any sequence $(a_n)_{n\geq 1}$, the corresponding preferential attachment tree has the same distribution as some weighted recursive tree with a \emph{random} sequence of weights (with some explicit distribution).
We then prove almost sure scaling limit convergences for some statistics associated with weighted recursive trees as time goes to infinity, such as degree sequence, height, profile and also the weak convergence of some measures carried on the tree.
Thanks to the connection between the two models, these results also apply to affine preferential attachment trees. 
}

\section{Introduction}

The uniform recursive tree has been introduced in the 70's as an example of random graphs constructed by addition of vertices: starting from a tree with a single vertex, the vertices arrive one by one and the $n$-th vertex picks its parent uniformly at random from the $n - 1$ already present vertices. Many properties of this tree were then investigated due to its particularly simple dynamics: number of leaves, profile, height, degrees, size of subtrees and others. 
We refer to the survey \cite{smythe_survey_1995} and the more recent book \cite[Section~6]{drmota_random_2009} for an overview of the results obtained for this model.

We consider a generalisation of the uniform recursive tree called the weighted recursive tree (WRT), which was introduced in \cite{borovkov_asymptotic_2006} in 2006. 
In this model, each vertex is assigned a non-negative weight, constant in time. 
When a newcomer randomly picks its parent, it does so with probability proportional to those weights. 
Although more general than the uniform recursive tree, WRT's have attracted far fewer contributions, see e.g.\ \cite{mailler_random_2018_online,hiesmayr_asymptotic_2017_online}. 
In \cite{mailler_random_2018_online} those trees are studied because of their connection to a model of random walk with preferential relocation (a.k.a. "monkey walk"). 
The authors prove some limiting results for the distribution of the weight of vertices at different heights in the tree, for different assumptions on the weight sequence which cover a wide range of behaviours.

In this paper, we prove asymptotic results for this model about the degree sequence, the height, the profile and the convergence of some probability measures carried on the tree, mainly under some assumptions that ensure that the sequence $(w_n)_{n\geq1}$ describing the weights of the vertices in order of creation behaves roughly as a power of $n$. 
Our deepest result is the one that concerns the asymptotic behaviour of the profile of the tree, which is the function that maps each integer $k$ to the number of vertices in the tree at height $k$. 
Both the statement and the proof of this result are inspired from the work carried out in the last 20 years for different models of logarithmic trees, see \cite{chauvin_profile_2001,chauvin_martingales_2005,sulzbach_functional_2008,schopp_functional_2010,kabluchko_general_2017}. 
They rely on the study of the Laplace transform of the profile using tools that ultimately date back to Biggins \cite{biggins_uniform_1992} in the context of the branching random walk, together with a Fourier inversion argument, which in our case is handled by a very precise theorem of \cite{kabluchko_general_2017}.
The rest of our results and proofs on WRT's are less involved and mostly rely on more elementary arguments, as well as a connection with Pemantle's time-dependent P\'olya urns, introduced in \cite{pemantle_time_1990}. 

We will also consider another model of trees which we call the affine preferential attachment tree (PAT) with fitnesses. 
In this one, every vertex has a fixed fitness, and the probability of picking any vertex to be the parent of a newcomer is proportional to its fitness plus its current number of children. 

The term "preferential attachment", coined by Barab\'asi and Albert in \cite{barabasi_emergence_1999}, refers to the property that a vertex in the graph that has a high degree tends to increase its degree even more over time, also referred to as a "rich-get-richer" effect.
Many different preferential attachment mechanisms have then been studied in the last two decades because the degree distribution that emerges from this type of construction shares some quantitative properties with real-world networks, see \cite{hofstad_random_2017, newman_structure_2003} for good overviews of the vast literature on this subject.

In our case, one of our motivations for studying those trees arises from the analysis of some growing random graphs, developed in the companion paper \cite{senizergues_growing_2020_online}. 
The class of models that we study there is designed to encompass Rémy's algorithm, described in \cite{remy_un_1985}, which creates a sequence of binary trees, and a lot of its natural generalizations, studied in \cite{ford_probabilities_2005_online,marchal_note_2008,chen_new_2009,haas_scaling_2015,haas_scaling_2019_online,ross_scaling_2018}.
In particular, we show that the sequences graphs obtained using these constructions, considered as metric spaces, almost surely converge in the so-called Gromov--Hausdorff--Prokhorov scaling limit towards a limiting random continuous metric space. 
This proof relies on a decomposition of our graphs along the structure of a tree, whose evolution is that of an affine preferential attachment tree with fitnesses. Notably, a crucial result that is needed in this argument is a uniform control over the degree of all the vertices the tree, which we prove in this paper.

Let us note that only a few contributions in the literature concern this particular model, where the fitness can depend on the vertex.
In the case where the fitnesses are i.i.d., the model is considered for the first time in \cite{erguen_growing_2002} and the first rigorous mathematical result can be found in \cite{bhamidi_universal_2007_online}. 
Very recently, still in the case of i.i.d. fitnesses, it has been studied in more detail in \cite{lodewijks_phase_2020_online} along with some other similar models. 
The authors study the asymptotic degree distribution and maximum degree in the tree and show that these can exhibit different behaviours according to the tail of the fitness distribution, which the authors classify as weak, strong and extreme disorder.
Let us also mention two models that do not fall in our setting but are somewhat related, studied in \cite{deijfen_preferential_2009} and \cite{bloem-reddy_preferential_2017_online}, in which the reinforcement is affine in the degree of the vertices but there is some inhomogeneity between vertices. 
Instead of coming from different fitnesses associated to vertices like in our model, this inhomogeneity is introduced using a random initial degree, or respectively a random time of creation.
 
Our approach for studying this model relies on the connection between the PAT and the WRT models (this was already known in the field in the case of constant fitnesses but stated in a slightly different form, see \cite{bloem-reddy_preferential_2017_online,berger_asymptotic_2014}).
Indeed we shall see that using a de Finetti-type argument, a PAT can be seen as a WRT with a random sequence of weights that almost surely decays like a power of $n$.   
This enables us to translate all of the results obtained for WRT's to corresponding results for PAT's, and hence prove asymptotics for degrees, height and profile of the tree. 
In particular, we prove the almost sure scaling limit convergence of the sequence of degrees of the vertices in the tree in an $\ell^p$ norm. 
For some regular sequences of fitnesses, we can explicitly describe the distribution of the limiting sequence using Beta, Gamma and Mittag-Leffler distributions. 
This relates in various ways to other results that can be found in the literature associated to preferential attachment trees or to urn models, contained in \cite{mori_maximum_2005,janson_limit_2006,pekoez_degree_2013,james_generalized_2015_online, pekoez_generalized_2016, pekoez_joint_2017, pekoez_polya_2019, banderier_periodic_2019_online}.

\subsection{Two related models of growing trees}\label{wrt:subsec:two models of growing trees}
\paragraph{Definitions.}For any sequence of non-negative real numbers $(w_n)_{n\geq 1}$ with $w_1>0$, we define the distribution $\wrt((w_n)_{n\geq 1})$ on sequences of growing rooted labelled trees\footnote{In fact, in the rest of the paper we will see them as plane trees, see Section~\ref{wrt:subsec:measures}.}, which is called the \emph{weighted recursive tree with weights $(w_n)_{n\geq 1}$}. 
We construct a sequence of rooted trees $(\ttT_n)_{n\geq 1}$ starting from $\ttT_1$ containing only one root-vertex $u_1$ with label $1$ and let it evolve in the following manner: the tree $\ttT_{n+1}$ is obtained from $\ttT_n$ by adding a vertex $u_{n+1}$ with label $n+1$. The parent of this new vertex is chosen to be the vertex with label $K_{n+1}$ with probability proportional to its weight, that is 
\begin{align}\label{wrt:eq:def Kn}
\forall k\in \{1,\dots,n\}, \qquad \Ppsq{K_{n+1}=k}{\ttT_1,\ttT_2,\dots \ttT_n}\propto w_k.
\end{align}
Remark that this conditional distribution does not depend on the evolution $\ttT_1,\ttT_2,\dots \ttT_n$ up to time $n$, which ensures in particular that the random variables $K_2,K_3,\dots$ are independent.  
In this definition, we also allow sequences of weights $(\mathsf{w}_n)_{n\geq 1}$ that are random and in this case the distribution $\wrt((\mathsf{w}_n)_{n\geq 1})$ denotes the law of the random tree obtained by the above process conditionally on $(\mathsf{w}_n)_{n\geq 1}$, so that the obtained distribution on growing trees is a mixture of WRT with deterministic sequences of weights.

Similarly, for any sequence $(a_n)_{n\geq 1}$ of real numbers, with $a_1>-1$ and $a_n\geq 0$ for $n\geq 2$, we define another model of growing tree.
The construction goes on as before: $\ttP_1$ contains only one root-vertex $u_1$ with label $1$ and $\ttP_{n+1}$ is obtained from $\ttP_n$ by adding a vertex $u_{n+1}$ with label $n+1$ and the parent of the newcomer is chosen to be the vertex with label $J_{n+1}$, where now
\begin{align}\label{wrt:eq:def Jn}
\forall k\in \{1,\dots,n\}, \qquad \Ppsq{J_{n+1}=k}{\ttP_1,\ttP_2,\dots,\ttP_n}\propto \deg^{+}_{\ttP_n}(u_k)+a_k,
\end{align}
where $\deg^{+}_{\ttP_n}(\cdot)$ denotes the number of children in the tree $\ttP_n$. In the particular case where $n=1$, the second vertex $u_2$ is always defined as a child of $u_1$, even in the case $-1<a_1\leq0$ for which the last display does not make sense. We call this sequence of tree an \emph{affine preferential attachment tree with fitnesses $(a_n)_{n\geq 1}$} and its law is denoted by $\pa((a_n)_{n\geq 1})$.

\paragraph{Notation.}Here and in the rest of the paper, whenever we have any sequence of real numbers $(x_n)_{n\geq 1}$, we write $\boldsymbol{x}=(x_n)_{n\geq 1}$ in a bold font as a shorthand for the sequence itself, and $(X_n)_{n\geq 1}$ with a capital letter to denote the sequence of partial sums defined for all $n\geq 1$ as $X_n:=\sum_{i=1}^nx_i$. In particular, we do so for sequences of fitnesses $(a_n)_{n\geq 1}$, for deterministic sequences of weights $(w_n)_{n\geq 1}$ and for random sequence of weights $(\mathsf w_n)_{n\geq 1}$.

\paragraph{Representation result.}The following result gives a connection between these two models of growing trees. It is an analogue of the so-called "P\'olya urn-representation" result described in \cite[Theorem~2.1]{berger_asymptotic_2014} or \cite[Section 1.2]{bloem-reddy_preferential_2017_online} for related models, which already cover the case of constant sequences $\mathbf{a}$. 

For $a,b>0$ the distribution $\mathrm{Beta}(a,b)$ has density $\frac{\Gamma(a+b)}{\Gamma(a)\Gamma(b)}\cdot x^{a-1}(1-x)^{b-1} \cdot \ind{0\leq x\leq 1}$ with respect to Lebesgue measure. 
If $b=0$ and $a>0$, we use the convention that the distribution $\mathrm{Beta}(a,b)$ is a Dirac mass at $1$.
\begin{theorem}[WRT-representation of PAT's]\label{wrt:thm:connection PA WRT}
For any sequence $\mathbf a$ of fitnesses, we define the associated random sequence $\boldsymbol{\mathsf w}^\mathbf a=(\mathsf{w}^\mathbf{a}_n)_{n\geq 1}$ as 
\begin{equation}
\mathsf{w}^\mathbf{a}_1=\mathsf{W}^\mathbf{a}_1=1 \qquad \text{and} \qquad \forall n\geq 2, \quad \mathsf{W}^\mathbf{a}_n=\prod_{k=1}^{n-1}\beta_k^{-1},
\end{equation}
where the $(\beta_k)_{k\geq 1}$ are independent with respective distribution $\mathrm{Beta}(A_k+k,a_{k+1})$. 
Then,
the distributions $\pa(\mathbf a)$ and $\wrt(\boldsymbol{\mathsf w}^\mathbf a)$ coincide.
\end{theorem}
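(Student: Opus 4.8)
The plan is to show that for every $n\geq 1$ and every recursively labelled rooted tree $t$ on the vertex set $\{u_1,\dots,u_n\}$ one has $\Pp{\ttP_n=t}=\Pp{\ttT_n=t}$, where $(\ttT_m)_{m\geq 1}$ is drawn from $\wrt(\boldsymbol{\mathsf w}^{\mathbf a})$. This will suffice: in both models the tree at time $m-1$ is recovered from the one at time $m$ by deleting the vertex labelled $m$ (always a leaf), so the law of the whole sequence is the pushforward of the law of its $n$-th term, and equality of all these one-dimensional marginals forces equality of the sequences in distribution.

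On the PAT side, I would fix $t$, write $m_k:=\deg^{+}_t(u_k)$ and $M_i:=\sum_{l=1}^i m_l$ (noting that $u_2,\dots,u_{i+1}$ each have a parent with label $\leq i$, whence $M_i\geq i$, and that $M_n=n-1$), and multiply the $n-1$ attachment probabilities from \eqref{wrt:eq:def Jn} along the construction of $t$. The denominator at the step creating $u_{j+1}$ equals $(j-1)+A_j$, and grouping the numerators by the vertex chosen as a parent, each $u_k$ contributes the rising factorial $a_k(a_k+1)\cdots(a_k+m_k-1)=\Gamma(a_k+m_k)/\Gamma(a_k)$, except that the deterministic first step (attaching $u_2$ to $u_1$) turns the leading factor $a_1$ into $1$, so $u_1$ contributes $\Gamma(a_1+m_1)/\Gamma(a_1+1)$ instead. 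This gives
\begin{equation}\label{wrt:eq:sketch-PAT}
\Pp{\ttP_n=t}=\frac{\Gamma(a_1+m_1)}{\Gamma(a_1+1)}\cdot\prod_{j=2}^{n-1}\frac{1}{A_j+j-1}\cdot\prod_{k=2}^{n}\frac{\Gamma(a_k+m_k)}{\Gamma(a_k)},
\end{equation}
a quantity depending on $t$ only through its out-degree sequence $(m_1,\dots,m_n)$.

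On the WRT side, conditioning on $\boldsymbol{\mathsf w}^{\mathbf a}$ and again grouping by parent, $\Ppsq{\ttT_n=t}{\boldsymbol{\mathsf w}^{\mathbf a}}=\prod_{k=1}^{n}(\mathsf w^{\mathbf a}_k)^{m_k}\prod_{j=1}^{n-1}(\mathsf W^{\mathbf a}_j)^{-1}$. I would then substitute $\mathsf W^{\mathbf a}_j=\prod_{i<j}\beta_i^{-1}$ and, for $k\geq2$, $\mathsf w^{\mathbf a}_k=\mathsf W^{\mathbf a}_k-\mathsf W^{\mathbf a}_{k-1}=(1-\beta_{k-1})\prod_{i<k}\beta_i^{-1}$ (with $\mathsf w^{\mathbf a}_1=1$), and collect the power of each $\beta_i$ to reach the identity
\begin{equation}\label{wrt:eq:sketch-integrand}
\prod_{k=1}^{n}(\mathsf w^{\mathbf a}_k)^{m_k}\prod_{j=1}^{n-1}(\mathsf W^{\mathbf a}_j)^{-1}=\prod_{i=1}^{n-1}\beta_i^{M_i-i}(1-\beta_i)^{m_{i+1}},
\end{equation}
whose right-hand side lies in $[0,1]$ since $M_i\geq i$, so there is no integrability issue. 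Taking expectations, using independence of the $\beta_i$ and the Beta moment $\E[\beta^r(1-\beta)^s]=\frac{\Gamma(\alpha+\gamma)\Gamma(\alpha+r)\Gamma(\gamma+s)}{\Gamma(\alpha)\Gamma(\gamma)\Gamma(\alpha+\gamma+r+s)}$ for $\beta\sim\mathrm{Beta}(\alpha,\gamma)$, applied with $(\alpha,\gamma,r,s)=(A_i+i,\,a_{i+1},\,M_i-i,\,m_{i+1})$ — so that $\alpha+\gamma=A_{i+1}+i$, $\alpha+r=A_i+M_i$, $\gamma+s=a_{i+1}+m_{i+1}$ and $\alpha+\gamma+r+s=A_{i+1}+M_{i+1}$ — the $i$-th factor equals $\frac{\Gamma(A_{i+1}+i)}{\Gamma(A_{i+1}+M_{i+1})}\cdot\frac{\Gamma(A_i+M_i)}{\Gamma(A_i+i)}\cdot\frac{\Gamma(a_{i+1}+m_{i+1})}{\Gamma(a_{i+1})}$, and a short telescoping over $i=1,\dots,n-1$ of the first two Gamma ratios (using $A_1=a_1$, $M_1=m_1$, $M_n=n-1$) collapses the product to exactly the right-hand side of \eqref{wrt:eq:sketch-PAT}.

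The argument is mostly bookkeeping, and the points I expect to require care are: the asymmetric role of the root (the deterministic first attachment produces $\Gamma(a_1+m_1)/\Gamma(a_1+1)$, not $\Gamma(a_1+m_1)/\Gamma(a_1)$, and this must survive the telescoping intact); keeping track of the index shifts in \eqref{wrt:eq:sketch-integrand} and in the telescoping; and the degenerate case $a_{k+1}=0$, where the convention $\mathrm{Beta}(\,\cdot\,,0)=\delta_1$ forces $\beta_k=1$ a.s.\ and $\mathsf w^{\mathbf a}_{k+1}=0$, so I would remark that the PAT then also has $m_{k+1}=0$ deterministically, making both formulas meaningful under the stated conventions. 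The same computation can alternatively be organised as an induction on $n$, showing that conditionally on $\ttT_n$ the $(\beta_i)$ remain independent with $\beta_i\sim\mathrm{Beta}(A_i+M_i,\,a_{i+1}+m_{i+1})$ for $i\leq n-1$ and checking that the resulting transition from $\ttT_n$ to $\ttT_{n+1}$ reproduces \eqref{wrt:eq:def Jn}; this is the de Finetti-type picture underlying the statement.
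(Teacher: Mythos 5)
Your proof is correct but proceeds differently from the paper. You compute the finite-dimensional law of a fixed labelled tree $t$ under both models directly: under $\pa(\mathbf a)$ by multiplying the attachment probabilities and grouping numerator factors by the chosen parent, and under $\wrt(\boldsymbol{\mathsf w}^\mathbf a)$ by conditioning on the weights, rewriting the conditional probability as $\prod_{i=1}^{n-1}\beta_i^{M_i-i}(1-\beta_i)^{m_{i+1}}$, integrating out the independent $\beta_i$ via the Beta moment identity, and collapsing the resulting product by telescoping to reproduce the PAT formula exactly. I checked the exponent bookkeeping, the Beta moment identification $(\alpha,\gamma,r,s)=(A_i+i,a_{i+1},M_i-i,m_{i+1})$, the telescoping (including the extra $\Gamma(x+1)=x\Gamma(x)$ step that produces the $\prod_{j=2}^{n-1}(A_j+j-1)^{-1}$), and the asymmetric treatment of the root — all correct. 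The paper's own proof (Section~\ref{wrt:subsec:coupling with a sequence of polya urns}) is structural rather than computational: it decomposes each attachment step into a downward sequential choice, shows that the suitably time-changed processes $(\mathsf{Urn}_k)_{k\geq 1}$ form a \emph{jointly independent} family of P\'olya urns from which the tree sequence can be reconstructed, and then applies de Finetti's theorem to each urn. That coupling buys more than the bare distributional identity: it is what makes Corollary~\ref{wrt:cor: pa conditional distribution limiting degrees} (the $\beta_k$, hence $\boldsymbol{\mathsf w}^\mathbf a$, as measurable functions of the PAT sequence, with the a.s.\ degree-ratio characterisation) immediate, whereas your computation would need an additional argument there. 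In exchange, yours is more elementary and self-contained, resting only on Beta integrals and telescoping. Your preliminary reduction to one-dimensional marginals — each $\ttT_{m-1}$ being a deterministic function of $\ttT_m$ by deleting the highest label — is sound, and your remark about the degenerate $a_{k+1}=0$ case (where $\mathrm{Beta}(\cdot,0)=\delta_1$ forces $m_{k+1}=0$ on the PAT side as well, so both formulas are read with the empty-product convention) correctly closes the one place where the Gamma-ratio expressions could look ill-defined.
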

The result of the theorem is obtained by studying the evolution of the degrees in the preferential attachment model $(\ttP_{n})_{n\geq 1}$. The key argument lies in the fact that we can describe the whole process $(\ttP_{n})_{n\geq 1}$ using a sequence of P\'olya urns, related to the degrees of the vertices. 
The connection of the evolution of the degrees to P\'olya urns in the context of preferential attachment models is well-know and was observed for the first time in \cite{mori_maximum_2005}. It  explains why Beta-distributed random variables appear in the limit. 
In our case, the theorem relies on applying the de Finetti theorem to this sequence of urns and on proving that those urns are jointly independent.

In fact, the result stated in the theorem can be made a bit more precise than an equality in distribution as soon as the sequence $\mathbf{a}$ is chosen in such way that almost surely the degree of the first vertex $\deg^{+}_{\ttP_n}(u_1)$ tends to infinity as $n\rightarrow\infty$. For example, it is easy to check that the condition $A_n=\grandO{n}$ is sufficient to ensure this behaviour, and in this case we can state the following corollary.
\begin{corollary}\label{wrt:cor: pa conditional distribution limiting degrees}
For a sequence $\mathbf{a}$ such that $A_n=\grandO{n}$, we can construct the sequence $\boldsymbol{\mathsf w}^\mathbf a$ from $(\ttP_n)_{n\geq 1}$ in such a way that for all $k\geq 1$:
\begin{align}\label{wrt:eq:wna as limit of ration of degrees}
	\mathsf w_k^\mathbf{a}=\lim_{n\rightarrow\infty} \frac{\deg^{+}_{\ttP_n}(u_k)}{\deg^{+}_{\ttP_n}(u_1)} \quad \text{almost surely.}
\end{align}
The obtained sequence has the distribution described in Theorem~\ref{wrt:thm:connection PA WRT} and conditionally on this sequence $(\ttP_n)_{n\geq 1}$ has distribution $\wrt(\boldsymbol{\mathsf w}^\mathbf a)$. 
\end{corollary}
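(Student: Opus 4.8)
The plan is to perform the construction from the proof of Theorem~\ref{wrt:thm:connection PA WRT} directly on the probability space carrying the preferential attachment sequence $(\ttP_n)_{n\geq 1}$, so that the assertions about the law of $\boldsymbol{\mathsf w}^{\mathbf a}$ and about the conditional law of $(\ttP_n)_{n\geq 1}$ given $\boldsymbol{\mathsf w}^{\mathbf a}$ are inherited from that proof, and then to identify the weights so obtained with the limits of degree ratios in~\eqref{wrt:eq:wna as limit of ration of degrees}. The only genuinely new ingredient is the almost sure divergence $\deg^{+}_{\ttP_n}(u_1)\to\infty$, which is precisely where $A_n=\grandO{n}$ enters: by~\eqref{wrt:eq:def Jn} and $\deg^{+}_{\ttP_n}(u_1)\geq 1$ for $n\geq 2$, the conditional probability that $u_{n+1}$ attaches to $u_1$ is at least $\tfrac{1+a_1}{(n-1)+A_n}$, whose sum over $n$ is (deterministically) infinite since $(n-1)+A_n=\grandO{n}$; the conditional Borel--Cantelli lemma then shows that $u_1$ receives infinitely many children almost surely.

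Next I would make the relevant urns explicit. For $k\geq 1$ and $n\geq k+1$ set $D^{(k)}_n=\sum_{i=1}^{k}\deg^{+}_{\ttP_n}(u_i)$, so that $D^{(1)}_n=\deg^{+}_{\ttP_n}(u_1)$; by~\eqref{wrt:eq:def Jn} the vertex $u_{n+1}$ attaches inside $\{u_1,\dots,u_k\}$ with probability proportional to $D^{(k)}_n+A_k$ and inside $\{u_1,\dots,u_{k+1}\}$ with probability proportional to $D^{(k+1)}_n+A_{k+1}$, while attaching to some $u_j$ with $j>k+1$ changes neither $D^{(k)}_n$ nor $D^{(k+1)}_n$. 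Hence, along the steps at which $u_{n+1}$ lands in $\{u_1,\dots,u_{k+1}\}$ — almost surely infinitely many, since $D^{(k+1)}_n\geq D^{(1)}_n\to\infty$ — the pair $\big(D^{(k)}_n+A_k,\ (D^{(k+1)}_n-D^{(k)}_n)+a_{k+1}\big)$ evolves as a two-colour Pólya urn $\mathcal{U}_k$ started from $(k+A_k,\,a_{k+1})$ at time $n=k+1$. This family $(\mathcal{U}_k)_{k\geq 1}$ is the one underlying the proof of Theorem~\ref{wrt:thm:connection PA WRT}, where it is shown that the urns are jointly independent and that the first-colour proportion of $\mathcal{U}_k$ converges almost surely to $\beta_k\sim\mathrm{Beta}(A_k+k,a_{k+1})$, the $\beta_k$ of the theorem.

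Then I would pass to degree ratios. Since every $D^{(k)}_n\geq D^{(1)}_n\to\infty$ the offsets are negligible in the limit, so $\beta_k=\lim_n D^{(k)}_n/D^{(k+1)}_n$ almost surely. With $\mathsf W^{\mathbf a}_n=\prod_{k=1}^{n-1}\beta_k^{-1}$ as in the theorem, interchanging the finite product with the limit gives $\mathsf W^{\mathbf a}_n=\lim_m\prod_{k=1}^{n-1}\big(D^{(k+1)}_m/D^{(k)}_m\big)=\lim_m D^{(n)}_m/D^{(1)}_m$ almost surely; subtracting consecutive terms, $\mathsf w^{\mathbf a}_n=\mathsf W^{\mathbf a}_n-\mathsf W^{\mathbf a}_{n-1}=\lim_m\big(D^{(n)}_m-D^{(n-1)}_m\big)/D^{(1)}_m=\lim_m \deg^{+}_{\ttP_m}(u_n)/\deg^{+}_{\ttP_m}(u_1)$ for $n\geq 2$, which is~\eqref{wrt:eq:wna as limit of ration of degrees}, the case $n=1$ being trivial. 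As $\boldsymbol{\mathsf w}^{\mathbf a}$ is the same deterministic function of $(\beta_k)_{k\geq 1}$ as the sequence built in the proof of Theorem~\ref{wrt:thm:connection PA WRT}, it has the law described there and, conditionally on it, $(\ttP_n)_{n\geq 1}$ has distribution $\wrt(\boldsymbol{\mathsf w}^{\mathbf a})$.

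The step I expect to require the most care is matching the urns and directing measures used in the proof of Theorem~\ref{wrt:thm:connection PA WRT} with the above degree statistics and checking that the offsets $A_k$ truly wash out — which is exactly what $\deg^{+}_{\ttP_n}(u_1)\to\infty$ provides. Were one to seek a proof not invoking the internals of that theorem, the technical heart would instead be establishing directly that the urns $(\mathcal{U}_k)_{k\geq 1}$ are jointly independent, a limiting avatar of the complete-neutrality property of the Dirichlet distribution.
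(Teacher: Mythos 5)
Your proof is correct and takes essentially the same route as the paper's own argument: when $A_n=\grandO{n}$, the nested urn processes — hence their de Finetti parameters $\beta_k$ — are measurable functions of $(\ttP_n)_{n\geq1}$, so the weight sequence and the conditional $\wrt$ property are inherited directly from the proof of Theorem~\ref{wrt:thm:connection PA WRT}. The paper compresses this to a short paragraph and leaves the identification $\mathsf w_k^{\mathbf a}=\lim_n\deg^{+}_{\ttP_n}(u_k)/\deg^{+}_{\ttP_n}(u_1)$ as ``easy to check''; you supply exactly the missing computations (the conditional Borel--Cantelli step giving $\deg^{+}_{\ttP_n}(u_1)\to\infty$, the wash-out of the affine offsets $A_k$, and the telescoping product).
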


In fact, and this is the content of Proposition~\ref{wrt:prop:assum pa implies assum wrt} below, if $A_n$ grows linearly as some $c\cdot n$ with some $c>0$ then the sequence $(\mathsf W^\mathbf{a}_n)_{n\geq 1}$ almost surely grows as some power of $n$ which depends on $c$.
This is done using moment computations under the explicit definition of $(\mathsf W^\mathbf{a}_n)_{n\geq 1}$ given by the theorem.  
In the rest of the paper, we investigate several properties of the WRT under this type of assumptions for the sequence of weights, such as convergence of height, profile and measures carried on the tree. Thanks to this connection, our results will then also hold for the PAT under the assumption that $A_n$ grows linearly.

\paragraph{Assumptions on the sequences.}
For two sequences $(x_n)$ and $(y_n)$ we say that 
\begin{align}
x_n\underset{n\rightarrow\infty}{\bowtie}y_n  \qquad \text{if and only if} \qquad \exists  \epsilon>0,\ x_n\underset{n\rightarrow\infty}=y_n\cdot (1+\grandO{n^{-\epsilon}}).
\end{align}
Our main assumption for sequences $\mathbf a=(a_n)_{n\geq 1}$ of fitnesses is the following \eqref{wrt:assum: pa An=cn+rn}, which is parametrised by some positive $c>0$ and ensures that the fitness of vertices is $c$ on average
\begin{align}\tag{$H_c$}
\label{wrt:assum: pa An=cn+rn}
A_n\underset{n\rightarrow\infty}{\bowtie} c\cdot n.
\end{align}
For sequences of weights $\boldsymbol{w}=(w_n)_{n\geq 1}$, we introduce the following hypothesis, which depends on a parameter $\gamma>0$
\begin{align}\tag{$\square_\gamma$}\label{wrt:assum: wrt alpha}
W_n\underset{n\rightarrow\infty}{\bowtie} \cst \cdot n^{\gamma},
\end{align}
where $\cst$ denotes a positive constant.
The following proposition ensures in particular that our assumption on sequences of fitnesses $\mathbf{a}$ translates to a power-law behaviour for the random sequence of cumulated weights $(\mathsf W^\mathbf{a}_n)_{n\geq 1}$ defined in Theorem~\ref{wrt:thm:connection PA WRT}.
\begin{proposition}\label{wrt:prop:assum pa implies assum wrt}
Suppose that there exists $c>0$ such that $\mathbf a$ satisfies \eqref{wrt:assum: pa An=cn+rn}, then the random sequence $(\mathsf{w}^\mathbf{a}_n)_{n\geq 1}$ defined in Theorem~\ref{wrt:thm:connection PA WRT} almost surely satisfies \eqref{wrt:assum: wrt alpha} with \[\gamma=\frac{c}{c+1}.\] 

If furthermore $\mathbf a$ is such that $a_n\leq (n+1)^{c'+\petito{1}}$ for some $c'\in\intervallefo{0}{1}$, then almost surely $\mathsf{w}^\mathbf{a}_n \leq (n+1)^{c'-\frac{1}{c+1}+o_\omega(1)}$, where $o_\omega(1)$ is a random function of $n$ which tends to $0$ when $n\rightarrow\infty$.
\end{proposition}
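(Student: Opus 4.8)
The plan is to study $\log\mathsf{W}^\mathbf{a}_n=\sum_{k=1}^{n-1}\left(-\log\beta_k\right)$, a sum of \emph{independent} random variables, through its expectation and variance. \textbf{Step 1 (the mean).} For $\beta\sim\mathrm{Beta}(a,b)$ one has $\E[\log\beta]=\psi(a)-\psi(a+b)$, where $\psi$ is the digamma function. Since $A_k+k+a_{k+1}=A_{k+1}+k$, this gives $\E[-\log\beta_k]=\psi(A_{k+1}+k)-\psi(A_k+k)$; writing $b_k:=A_k+k$ and using $\psi(x)=\psi(x+1)-1/x$ so that $\psi(A_{k+1}+k)=\psi(b_{k+1})-\tfrac1{b_{k+1}-1}$, the sum telescopes:
\[
	\E\left[\log\mathsf{W}^\mathbf{a}_n\right]=\psi(A_n+n)-\psi(A_1+1)-\sum_{k=2}^{n}\frac1{A_k+k-1}.
\]
I would then plug in $\psi(x)=\log x+\grandO{1/x}$ together with \eqref{wrt:assum: pa An=cn+rn}, which yields $A_n+n=(c+1)n\cdot(1+\grandO{n^{-\epsilon}})$ and $A_k+k-1=(c+1)k\cdot(1+\grandO{k^{-\min(\epsilon,1)}})$; expanding the logarithm and comparing $\sum_{k=2}^n\frac1{(c+1)k}$ with $\tfrac1{c+1}\log n$ gives
\[
	\E\left[\log\mathsf{W}^\mathbf{a}_n\right]=\left(1-\tfrac1{c+1}\right)\log n + C_0 + \grandO{n^{-\epsilon_0}}=\gamma\log n + C_0 + \grandO{n^{-\epsilon_0}}
\]
for a finite constant $C_0$ and some $\epsilon_0>0$; in particular this forces $\gamma=\tfrac c{c+1}$.

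\textbf{Step 2 (variance and almost sure control).} For $\beta\sim\mathrm{Beta}(a,b)$ one has $\Var{\log\beta}=\psi'(a)-\psi'(a+b)$ with $\psi'$ the trigamma function; using monotonicity of $\psi'$, the bound $\psi'(x)=\grandO{1/x}$, and the fact that \eqref{wrt:assum: pa An=cn+rn} forces $a_{k+1}=c+\grandO{k^{1-\epsilon}}$, one gets $\Var{\log\beta_k}=\grandO{a_{k+1}\,k^{-2}}=\grandO{k^{-1-\epsilon_1}}$ for some $\epsilon_1>0$, hence summable. By Kolmogorov's theorem $\sum_k\left(-\log\beta_k-\E[-\log\beta_k]\right)$ converges almost surely, so with Step~1 the quantity $\log\mathsf{W}^\mathbf{a}_n-\gamma\log n$ converges almost surely to a finite limit $\log\mathsf C$. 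To get the polynomial rate demanded by $\bowtie$, set $T_n:=\sum_{k\geq n}\left(-\log\beta_k-\E[-\log\beta_k]\right)$, so $\Var{T_n}=\grandO{n^{-\epsilon_1}}$; Kolmogorov's maximal inequality applied over dyadic blocks $\{2^j,\dots,2^{j+1}\}$ together with Borel--Cantelli gives $|T_n|=\grandO{n^{-\delta}}$ almost surely for every $\delta<\epsilon_1/2$. Combining with the deterministic $\grandO{n^{-\epsilon_0}}$ error from Step~1 yields $\log\left(\mathsf{W}^\mathbf{a}_n/n^\gamma\right)=\log\mathsf C+\grandO{n^{-\delta'}}$ almost surely, and exponentiating, $\mathsf{W}^\mathbf{a}_n=\mathsf C\cdot n^\gamma\left(1+\grandO{n^{-\delta'}}\right)$ with $\mathsf C\in(0,\infty)$ since $\log\mathsf C$ is finite. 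This is exactly \eqref{wrt:assum: wrt alpha}. (An equivalent route would be to compute $\E[(\mathsf{W}^\mathbf{a}_n)^s]$ directly for $s$ near $0$, which telescopes in the same way; the log-sum version is what makes the almost sure statement transparent.)

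\textbf{Step 3 (the bound on $\mathsf{w}^\mathbf{a}_n$).} For $n\geq2$ write $\mathsf{w}^\mathbf{a}_n=\mathsf{W}^\mathbf{a}_n-\mathsf{W}^\mathbf{a}_{n-1}=\mathsf{W}^\mathbf{a}_{n-1}\cdot\frac{1-\beta_{n-1}}{\beta_{n-1}}$. By Step~2, $\mathsf{W}^\mathbf{a}_{n-1}=(n+1)^{\gamma+o_\omega(1)}$ and $\beta_{n-1}\to1$ almost surely (e.g.\ because $\beta_{n-1}=\mathsf{W}^\mathbf{a}_{n-1}/\mathsf{W}^\mathbf{a}_{n}$), so it suffices to show $1-\beta_{n-1}\leq(n+1)^{c'-1+o_\omega(1)}$ almost surely. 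Now $1-\beta_{n-1}\sim\mathrm{Beta}(a_n,A_{n-1}+n-1)$, whose $m$-th moment equals $\prod_{j=0}^{m-1}\frac{a_n+j}{a_n+A_{n-1}+n-1+j}\leq\left(\frac{a_n+m}{A_{n-1}+n-1}\right)^m$. Since $a_n\leq(n+1)^{c'+\petito{1}}$ and $A_{n-1}+n-1\geq\cst\cdot n$ eventually, this is at most $\left(\cst\cdot n^{c'-1+\petito{1}}\right)^m$, so for a fixed $\eta>0$ Markov's inequality gives $\P\left(1-\beta_{n-1}>n^{c'-1+\eta}\right)\leq(\cst)^m\, n^{m(\petito{1}-\eta)}$, summable once $m>2/\eta$. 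Borel--Cantelli applied along a sequence $\eta_j\downarrow0$ then yields $1-\beta_{n-1}\leq(n+1)^{c'-1+o_\omega(1)}$ almost surely, and multiplying the three factors (noting $1/\beta_{n-1}=(n+1)^{o_\omega(1)}$) gives $\mathsf{w}^\mathbf{a}_n\leq(n+1)^{\gamma+c'-1+o_\omega(1)}=(n+1)^{c'-\frac1{c+1}+o_\omega(1)}$.

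The main obstacle is Step~2: turning the various approximations into genuine polynomial rates, in particular the variance estimate $\Var{\log\beta_k}=\grandO{k^{-1-\epsilon_1}}$, which depends on extracting a power-saving for $a_{k+1}$ from \eqref{wrt:assum: pa An=cn+rn}, and the dyadic maximal-inequality argument promoting almost sure convergence of the centered series to an almost sure polynomial rate. The digamma telescoping in Step~1 and the moment/Markov estimate in Step~3 are then routine.
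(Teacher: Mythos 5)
Your proposal is correct, and it reaches the same conclusion by a genuinely different route. The paper works multiplicatively: it introduces the positive martingale $X_n = \prod_{i<n}\beta_i/\E[\beta_i]$, computes $\E[X_n^p]$ explicitly from the Beta moments to conclude $L^p$-boundedness for every $p\geq1$, applies the general-purpose Lemma~\ref{wrt:lem:convergence analytic martingale} to get a polynomial rate $|X_n-X_\infty|=\grandO{n^{-\epsilon'}}$, and uses Kolmogorov's $0$-$1$ law to see $X_\infty>0$ a.s. Your proof works additively on $\log\mathsf W^\mathbf{a}_n=\sum_k(-\log\beta_k)$, a sum of independent variables: the digamma telescoping extracts the mean $\gamma\log n + C_0 + \grandO{n^{-\epsilon_0}}$, the trigamma/power-saving estimate $\Var{\log\beta_k}=\grandO{a_{k+1}/k^2}=\grandO{k^{-1-\epsilon_1}}$ (using $a_{k+1}=\grandO{k^{1-\epsilon}}$, which does follow from \eqref{wrt:assum: pa An=cn+rn}) gives summable variances, and the tail sums $T_n$ combined with Kolmogorov's maximal inequality over dyadic blocks and Borel--Cantelli yield the $\grandO{n^{-\delta}}$ fluctuation. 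This is more elementary — it avoids both the martingale-regularity Lemma~\ref{wrt:lem:convergence analytic martingale} and the $0$-$1$ law, since positivity of the constant $\mathsf C$ is automatic from the finiteness of the almost-sure limit $\log\mathsf C$. What the paper's route buys in exchange is the explicit formula $\E[X_\infty^p]=C_p/C_1^p$, which is re-used downstream (equations \eqref{wrt:eq:moments Xinfty}--\eqref{wrt:eq:moments Mk}) to identify the distribution of the limiting chain $(\mathsf M_k^\mathbf{a})$ in Proposition~\ref{wrt:prop:distribution limiting chains}; the log-approach does not produce those moments for free. For the second part, your Markov-on-integer-moments argument gives a polynomially small tail $\Pp{1-\beta_{n-1}>n^{c'-1+\eta}}\leq\cst\cdot n^{-m\eta/2}$, weaker than the paper's nearly exponential bound (Lemma~\ref{wrt:eq: proba beta exponentiellement petite} via the Beta factorisation), but it is still summable for $m$ large and Borel--Cantelli along $\eta_j\downarrow0$ closes the argument exactly as in the paper.
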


\paragraph{Convergence of degrees using the WRT representation.}
In the WRT with a deterministic sequence of weights $\boldsymbol{w}$ that satisfies
\begin{align}\label{wrt:eq:Wn asymptotique n puissance gamma}
W_n\underset{n\rightarrow\infty}{\sim} C\cdot n^{\gamma},
\end{align} 
for some $\gamma\in\intervalleoo{0}{1}$, the degree of a fixed vertex evolves as a sum of independent Bernoulli random variables and it is possible to handle it with elementary methods and obtain
\begin{equation}\label{wrt:eq:convergence degré fixé wrt}
\deg^{+}_{\ttT_n}(u_k)\underset{n\rightarrow\infty}{\sim}\frac{w_k}{C(1-\gamma)}\cdot n^{1-\gamma}.
\end{equation}
 Further calculations allow us to improve this statement to an almost sure convergence
\begin{equation}\label{wrt:eq:convergence degrees wrt}
n^{-(1-\gamma)}\cdot (\deg^+_{\ttT_n}(u_1),\deg^+_{\ttT_n}(u_2),\dots)\rightarrow \frac{1}{C(1-\gamma)}\cdot(w_1,w_2,\dots)
\end{equation} in the space $\ell^p$ of $p$-th power summable sequences, for weight sequences $\boldsymbol{w}$ that satisfy some additional control. A precise version of this statement is given in Proposition~\ref{wrt:prop:control wkn}.

Suppose that $\mathbf{a}$ satisfies \eqref{wrt:assum: pa An=cn+rn} and consider $(\ttP_{n})_{n\geq1}$ which has distribution $\pa(\mathbf{a})$.  
Then, according to Theorem~\ref{wrt:thm:connection PA WRT} and Corollary~\ref{wrt:cor: pa conditional distribution limiting degrees}, we know that conditionally on the sequence $(\mathsf w^\mathbf{a}_n)_{n\geq 1}$ obtained as in \eqref{wrt:eq:wna as limit of ration of degrees}, the sequence $(\ttP_{n})_{n\geq1}$ has distribution $\wrt((\mathsf{w}^\mathbf{a}_n)_{n\geq 1})$. 
Also, thanks to Proposition~\ref{wrt:prop:assum pa implies assum wrt}, we know that there exists some random variable $Z$ such that $\mathsf W^\mathbf{a}_n\underset{}{\sim} Z\cdot  n^{\gamma}$ almost surely as $n\rightarrow\infty$, with $\gamma=\frac{c}{c+1}$.
So let us introduce
 \begin{equation}\label{wrt:eq:def mna}
 (\mathsf m_n^\mathbf{a})_{n\geq 1}:= \frac{1}{Z(1-\gamma)}\cdot (\mathsf w^\mathbf{a}_n)_{n\geq 1}=\frac{c+1}{Z} \cdot (\mathsf w^\mathbf{a}_n)_{n\geq 1} \quad \text{a.s..}
 \end{equation} 

Applying the convergence \eqref{wrt:eq:convergence degrees wrt} conditionally on the sequence $(\mathsf w_n^\mathbf{a})_{n\geq 1}$ (or equivalently conditionally on $(\mathsf m_n^\mathbf{a})_{n\geq 1}$) yields an almost sure convergence in the product topology on sequences, which can be improved to an $\ell^p$ convergence if $\mathbf{a}$ satisfies some additional control, thanks Proposition~\ref{wrt:prop:control wkn}.
This is stated below as a theorem.
\begin{theorem}\label{wrt:thm:convergence degrees pa}
	Suppose that $\mathbf{a}$ satisfies \eqref{wrt:assum: pa An=cn+rn}. Then for a sequence $(\ttP_n)_{n\geq 1}\sim \pa(\mathbf{a})$ we obtain the following almost sure convergence in the product topology
	\begin{equation}\label{wrt:eq:convergence degrees pa}
	n^{-\frac{1}{c+1}}\cdot (\deg^{+}_{\ttP_n}(u_1),\deg^{+}_{\ttP_n}(u_2),\dots )\underset{n\rightarrow\infty}{\longrightarrow}(\mathsf m_1^\mathbf{a},\mathsf m_2^\mathbf{a},\dots ).
	\end{equation}
	Furthermore, if $a_n\leq (n+1)^{c'+\petito{1}},$ for some $0\leq c'<\frac{1}{c+1}$, the previous convergence also takes place in the space $\ell^{p}$ of $p$-th power summable sequences, for all $p>\frac{c+1}{1-(c+1)c'}$.
\end{theorem}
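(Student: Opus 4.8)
The plan is to reduce the statement entirely to the weighted-recursive-tree results of the paper via the representation in Theorem~\ref{wrt:thm:connection PA WRT}, by conditioning on the random weight sequence $\boldsymbol{\mathsf w}^{\mathbf a}$. Fix $(\ttP_n)_{n\geq1}\sim\pa(\mathbf a)$. Since \eqref{wrt:assum: pa An=cn+rn} in particular forces $A_n=\grandO n$, Corollary~\ref{wrt:cor: pa conditional distribution limiting degrees} lets us build $\boldsymbol{\mathsf w}^{\mathbf a}$ on the same probability space as $(\ttP_n)_{n\geq1}$, as the almost sure limits $\mathsf w^{\mathbf a}_k=\lim_n \deg^+_{\ttP_n}(u_k)/\deg^+_{\ttP_n}(u_1)$, and guarantees that conditionally on $\boldsymbol{\mathsf w}^{\mathbf a}$ the process $(\ttP_n)_{n\geq1}$ has law $\wrt(\boldsymbol{\mathsf w}^{\mathbf a})$. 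It therefore suffices to apply the WRT convergence results to (almost every realization of) the now-deterministic weight sequence and then to un-condition, using the elementary fact that $\P(A\mid\boldsymbol{\mathsf w}^{\mathbf a})=1$ almost surely implies $\P(A)=1$.

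\textbf{Product topology.} By Proposition~\ref{wrt:prop:assum pa implies assum wrt} the sequence $\boldsymbol{\mathsf w}^{\mathbf a}$ almost surely satisfies \eqref{wrt:assum: wrt alpha} with $\gamma=\frac c{c+1}$, and, as recalled in the paragraph preceding \eqref{wrt:eq:def mna}, there is an almost surely positive and finite random variable $Z$ with $\mathsf W^{\mathbf a}_n\sim Z\cdot n^{\gamma}$. Work on the full-probability event where all of this holds and condition on $\boldsymbol{\mathsf w}^{\mathbf a}$: the conditional law is that of a WRT with a deterministic weight sequence whose partial sums obey \eqref{wrt:eq:Wn asymptotique n puissance gamma} with $C=Z$, so the almost sure coordinatewise convergence underlying \eqref{wrt:eq:convergence degré fixé wrt}--\eqref{wrt:eq:convergence degrees wrt} gives, for every $k\geq1$,
\[
n^{-(1-\gamma)}\deg^+_{\ttP_n}(u_k)\ \underset{n\to\infty}{\longrightarrow}\ \frac{\mathsf w^{\mathbf a}_k}{Z(1-\gamma)}=\mathsf m^{\mathbf a}_k
\]
conditionally on $\boldsymbol{\mathsf w}^{\mathbf a}$, almost surely (note $Z$ and hence $\mathsf m^{\mathbf a}_k$ are measurable functions of $\boldsymbol{\mathsf w}^{\mathbf a}$, so this identification is legitimate under the conditioning). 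Since $1-\gamma=\frac1{c+1}$, un-conditioning yields \eqref{wrt:eq:convergence degrees pa}.

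\textbf{The $\ell^p$ upgrade.} Assume now $a_n\leq(n+1)^{c'+\petito{1}}$ with $0\leq c'<\frac1{c+1}$, and set $\delta:=\frac1{c+1}-c'>0$. The second half of Proposition~\ref{wrt:prop:assum pa implies assum wrt} gives, almost surely, $\mathsf w^{\mathbf a}_n\leq(n+1)^{-\delta+o_\omega(1)}$, so $(\mathsf m^{\mathbf a}_n)_{n\geq1}$ decays like $n^{-\delta+o_\omega(1)}$ and in particular lies in $\ell^p$ exactly when $p\delta>1$, i.e. $p>\frac1\delta=\frac{c+1}{1-(c+1)c'}$. Fix such a $p$ and choose $\varepsilon>0$ with $p>\frac1{\delta-\varepsilon}$; then almost surely $\mathsf w^{\mathbf a}_n\leq(n+1)^{-(\delta-\varepsilon)}$ for all large $n$. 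This genuine polynomial decay of the weight sequence is precisely the kind of additional control under which Proposition~\ref{wrt:prop:control wkn} turns the coordinatewise convergence into an $\ell^p$ convergence; applying that proposition conditionally on $\boldsymbol{\mathsf w}^{\mathbf a}$, on the full-probability event where these bounds hold, and un-conditioning as above, we obtain the claimed $\ell^p$ convergence for every $p>\frac{c+1}{1-(c+1)c'}$.

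\textbf{Main obstacle.} The substance of the argument is all on the WRT side and is already packaged in Propositions~\ref{wrt:prop:assum pa implies assum wrt} and~\ref{wrt:prop:control wkn}; what remains is bookkeeping, with two points deserving care. First, one must confirm that the event on which $\boldsymbol{\mathsf w}^{\mathbf a}$ simultaneously enjoys all the needed properties --- \eqref{wrt:assum: wrt alpha}, the refinement $\mathsf W^{\mathbf a}_n\sim Z n^{\gamma}$ with $Z>0$, and the decay bound $\mathsf w^{\mathbf a}_n\leq(n+1)^{-\delta+o_\omega(1)}$ --- has probability one, which is exactly the content of Proposition~\ref{wrt:prop:assum pa implies assum wrt}. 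Second, one must check that the sub-polynomial slack $o_\omega(1)$ in the decay exponent does not spoil the $\ell^p$ threshold; this is handled by keeping the strict inequality $p>\frac{c+1}{1-(c+1)c'}$ and absorbing the slack into the choice of $\varepsilon$ above, after which matching the hypotheses of Proposition~\ref{wrt:prop:control wkn} to $\mathsf w^{\mathbf a}_n=\grandO{n^{-(\delta-\varepsilon)}}$ is routine.
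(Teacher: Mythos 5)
Your proposal is correct and takes essentially the same route the paper uses: represent the PAT as a WRT with the random weight sequence via Theorem~\ref{wrt:thm:connection PA WRT} and Corollary~\ref{wrt:cor: pa conditional distribution limiting degrees}, condition on $\boldsymbol{\mathsf w}^{\mathbf a}$, invoke Proposition~\ref{wrt:prop:assum pa implies assum wrt} to see that the conditioned weight sequence satisfies the hypotheses of Proposition~\ref{wrt:prop:control wkn} with $\gamma=\frac{c}{c+1}$, and un-condition. One very minor note: since the $o(1)$ allowance already appears in the hypothesis of Proposition~\ref{wrt:prop:control wkn}\,(ii), the $\varepsilon$-bookkeeping you introduce to absorb $o_\omega(1)$ is not strictly necessary once you have conditioned (the random exponent becomes deterministic), though it does no harm.
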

Let us emphasize that the function $\mathrm{max}:\ell^p\rightarrow\R$ that outputs the maximum of a  sequence is a continuous function, so that the scaling limit of the maximal degree in the tree $\ttP_n$ is ensured by the theorem whenever the appropriate condition on the sequence $\mathbf{a}$ is satisfied.
Convergence of the rescaled degree of fixed vertices in preferential attachment trees is a well-know phenomenon in the case a preferential attachment trees with constant fitnesses, as is the convergence of the maximum of that sequence, see \cite{mori_maximum_2005}.
However, to the best of the author's knowledge, Theorem~\ref{wrt:thm:convergence degrees pa} is the first result that ensures an almost sure convergence of the rescaled degrees as a sequence in such a topology. 
This improves the $\ell^p$ convergence proved in distribution in \cite{pekoez_joint_2017} for a related model, which we treat in Proposition~\ref{wrt:cor:convergence degree sequence m,delta preferential attachment}.

The distribution of the limiting sequence $(\mathsf m_n^\mathbf{a})_{n\geq 1}$ can be characterized, and even has a reasonable description for certain regular sequences of fitnesses $\mathbf{a}$, as it is explained in the following paragraph. 
This result is actually related to the study of some urn models like the P\'olya urns with immigration of \cite{pekoez_generalized_2016} or the periodic P\'olya urns of \cite{banderier_periodic_2019_online} and allows us to provide some alternative proofs and complete some of the known results about those processes. 
This is developed in Section~\ref{wrt:subsec:application to polya urn with immigration}.

\paragraph{Distribution of the limiting chain.}
Let us say a word on the properties of the non-decreasing sequence $(\mathsf M^\mathbf{a}_n)_{n\geq 1}$ that corresponds using our notation to the sequence $(\mathsf m_n^\mathbf{a})_{n\geq 1}$ defined in \eqref{wrt:eq:def mna}.
Of course, using the random variables $(\beta_n)_{n\geq 1}$ defined in Theorem~\ref{wrt:thm:connection PA WRT}, we can write for any $n\geq1$,
\begin{align}\label{wrt:eq:Mna as iterated product}
\mathsf M^\mathbf{a}_n=\frac{c+1}{Z}\cdot \prod_{k=1}^{n-1}\beta_{k}^{-1},
\end{align}
but then because the random variable $Z$ depends on the whole sequence $(\beta_n)_{n\geq 1}$, the sequence $(\mathsf M^\mathbf{a}_n)_{n\geq 1}$ is not just an iterated product of independent random variables, as it was the case for $(\mathsf{W}_n^\mathbf{a})_{n\geq 1}$.
Nevertheless, the sequence still has the nice property of being a time-inhomogeneous Markov chain with a simple \emph{backward} transition, characterised by the equality
\begin{equation*}
	\mathsf M^\mathbf{a}_n=\beta_n\cdot \mathsf M^\mathbf{a}_{n+1},
\end{equation*}
where $\beta_n$ is independent of $\mathsf M^\mathbf{a}_{n+1}$ and has distribution $\mathrm{Beta}(A_n+n,a_{n+1})$. This is the content of Proposition~\ref{wrt:prop:Mk markov chain}.

For some specific choices of sequences $\mathbf{a}$, the distribution of the chain $(\mathsf M_n^\mathbf{a})_{n\geq1}$ is explicit. Whenever $\mathbf{a}$ is of the form \[\mathbf{a}=a,b,b,b,\dots \qquad \text{with $a>-1$ and $b>0$,}\] 
we retrieve Goldschmidt and Haas' Mittag-Leffler Markov chain family, introduced in \cite{goldschmidt_line_2015} and also studied by James \cite{james_generalized_2015_online}.

The other case where the chain is explicit is when $\mathbf{a}$ is periodic starting from the second term, of the form
\[\mathbf{a}=a,\underbrace{b_1,b_2,\dots,b_l},\underbrace{b_1,b_2,\dots,b_\ell},b_1,b_2\dots \qquad\text{with $a>-1$ and $b_1,b_2,\dots,b_\ell\in\N$.}\]
Then the sequence $(\mathsf M_n^\mathbf{a})_{n\geq1}$ has an explicit distribution defined using products of Gamma-distributed random variables. We define it in Section~\ref{wrt:subsec:product generalised gamma}.

\subsection{Other geometric properties of weighted random trees}
Let us now state the convergence for other statistics of weighted random trees, namely profile, height and probability measures. Here we let $(\ttT_n)_{n\geq 1}$ be a sequence of trees evolving according to the distribution $\wrt(\boldsymbol{w})$ for some deterministic sequence $\boldsymbol{w}$ and state our results in this setting. 
Our results will also apply to random sequences of weights $\boldsymbol{\mathsf w}$ that satisfy the assumptions of the theorems almost surely, they will hence apply to PAT with appropriate sequences of fitnesses, thanks to Theorem~\ref{wrt:thm:connection PA WRT} and Proposition~\ref{wrt:prop:assum pa implies assum wrt}. 
\subsubsection{Height and profile of WRT}\label{wrt:subsec:height and profile}
Let
\begin{align*}
\mathbb L_n(k):=\#\enstq{1\leq i\leq n}{\haut(u_i)=k}
\end{align*}
be the number of vertices of $\ttT_n$ at height $k$. The function $k\mapsto \bL_n(k)$ is called the \emph{profile} of the tree $\ttT_n$.
The height of the tree is the maximal distance of a vertex to the root, which we can also express as $\haut(\ttT_n):=\max\enstq{k\geq 0}{\bL_n(k)>0}$. We are interested in the asymptotic behaviour of $\bL_n$ and $\haut(\ttT_n)$ as $n\rightarrow\infty$.

In order to express our results, we need to introduce some quantities. For $\gamma>0$, we define the function $f_\gamma:\R\rightarrow\R$ as
\begin{align*}%
f_\gamma:z\mapsto f_\gamma(z):=1+\gamma\left(e^z-1-ze^z\right).
\end{align*}
 This function is increasing on $\intervalleof{-\infty}{0}$ and decreasing on $\intervallefo{0}{\infty}$  with $f_\gamma(-\infty)=1-\gamma $ and $f_\gamma(0)=1$ and $f_\gamma(\infty)=-\infty$.
We define $z_{+}$ and $z_-$ as
\begin{equation}\label{wrt:eq:definition z+ and z-}
	z_+:=\sup\enstq{z\in \R}{f_\gamma(z)>0} \quad \text{and} \quad
z_{-}:=\begin{cases}
-\infty &\text{if }\gamma\leq 1,\\
\log((\gamma-1)/\gamma)&\text{if }\gamma> 1.
\end{cases}
\end{equation}

We are going to assume that we work with a sequence $\boldsymbol{w}$ which satisfies the following assumption \eqref{wrt:assum: wrt alpha p} for some $\gamma>0$ and $p\in\intervalleof{1}{2}$, 
\begin{align}\tag{$\square_\gamma^p$}\label{wrt:assum: wrt alpha p}
W_n\underset{n\rightarrow\infty}{\bowtie}\cst\cdot n^\gamma \qquad \text{and} \qquad \sum_{i=n}^{2n}w_i^p\leq n^{1+(\gamma-1) p + \petito{1}}.
\end{align}
Thanks to Proposition~\ref{wrt:prop:assum pa implies assum wrt}, this property is almost surely satisfied for $\gamma=\frac{c}{c+1}$ by the random sequence $\boldsymbol{\mathsf w}^\mathbf{a}$ for any sequence $\mathbf{a}$ of fitnesses satisfying $A_n\underset{n\rightarrow\infty}{\bowtie} c\cdot n$ and $a_n\leq (n+1)^{\petito{1}}$. 
\begin{theorem}\label{wrt:thm:profile and height of wrt}	
	Suppose that there exists $\gamma>0$  and $p\in\intervalleof{1}{2}$ such that the sequence $\boldsymbol{w}$ satisfies \eqref{wrt:assum: wrt alpha p}. Then, for a sequence of random trees $(\ttT_n)_{n\geq 1}\sim\wrt(\boldsymbol{w})$, we have the almost sure asymptotics for the profile 
\begin{align}\label{wrt:eq:convergence uniforme profil}
\bL_n(k)\underset{n\rightarrow\infty}{=}\frac{n}{\sqrt{2\pi \log n}}\exp\left\lbrace-\frac{1}{2}\cdot \left(\frac{k-\gamma\log n}{\sqrt{\gamma\log n}}\right)^2\right\rbrace+\grandO{\frac{n}{\log n}},
\end{align}
where the error term is uniform in $k\geq 0$.
Also for any compact $K\subset \intervalleoo{z_-}{z_+}$ we have almost surely for all $z\in K$
\begin{align}\label{wrt:eq:radish asymptotics}
\bL_n\left(\lfloor \gamma e^z \log n\rfloor\right)=n^{f_\gamma(z)-\frac{1}{2}\frac{\log\log n}{\log n}+\grandO{\frac{1}{\log n}}},
\end{align}
where the error term is uniform in $z\in K$. 
Moreover, we have the almost sure convergence
\begin{align}\label{wrt:eq:convergence height gamma p}
\frac{\haut(\ttT_n)}{\log n}\underset{n\rightarrow\infty}{\longrightarrow}\gamma\cdot e^{z_{+}}.
\end{align}
\end{theorem}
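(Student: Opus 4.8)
The plan is to study the \emph{profile polynomial} $\mathbb{W}_n(z):=\sum_{i=1}^n e^{z\,\haut(u_i)}=\sum_{k\ge 0}\bL_n(k)e^{zk}$, extended to complex $z$, and to recover $\bL_n(k)$ from it by Fourier inversion. The first observation is that its \emph{weight-tilted} companion $\widetilde{\mathbb W}_n(z):=\sum_{i=1}^n w_i e^{z\haut(u_i)}$ has a tractable evolution: since $u_{n+1}$ attaches to $u_k$ with probability $w_k/W_n$ and then $\haut(u_{n+1})=\haut(u_k)+1$, we get $\E[\widetilde{\mathbb W}_{n+1}(z)\mid\ttT_n]=(1+w_{n+1}e^z/W_n)\,\widetilde{\mathbb W}_n(z)$, so that $M_n(z):=\widetilde{\mathbb W}_n(z)\big/\prod_{k=1}^{n-1}(1+w_{k+1}e^z/W_k)$ is a martingale for each fixed $z$. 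Using \eqref{wrt:assum: wrt alpha p}, which gives $w_{k+1}/W_k=\gamma/k+O(k^{-1-\epsilon})$, the normalizing product is $\asymp n^{\gamma e^z}$, and summing the one-step increments of $\E[\mathbb W_n(z)]$ (whose increment at step $n$ is $e^z\E[\widetilde{\mathbb W}_n(z)]/W_n$) yields $\E[\mathbb W_n(z)]=n^{\,1+\gamma(e^z-1)+o(1)}$. Evaluating a Markov bound at the saddle $e^{z}=k/(\gamma\log n)$ already makes the relevant rate function appear: $z\mapsto 1+\gamma(e^z-1)-z\gamma e^z=f_\gamma(z)$, together with the threshold $z_+$ where $f_\gamma(z_+)=0$.

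The core step is a Biggins-type statement: for $z$ in a complex neighbourhood of the real interval $\intervalleoo{z_-}{z_+}$, the martingale $M_n(z)$ converges almost surely and in $L^p$ — uniformly on compact subsets of that complex domain, so that the limit $M_\infty(z)$ is analytic — and $M_\infty$ is almost surely non-vanishing on $\intervalleoo{z_-}{z_+}$. The $L^p$-control ($p\in\intervalleof{1}{2}$) is exactly what the second half of \eqref{wrt:assum: wrt alpha p} provides: by a von~Bahr--Esseen / Topchii--Vatutin inequality, $\E|M_N(z)-M_1(z)|^p\lesssim\sum_k\E|M_{k+1}(z)-M_k(z)|^p$, and each increment contributes a factor $w_{k+1}^p$ against the normalization $\asymp k^{\gamma p\,\mathrm{Re}\,e^z}$; grouping dyadically, the dyadic-block bound $\sum_{i=n}^{2n}w_i^p\le n^{1+(\gamma-1)p+o(1)}$ is precisely what makes $\sum_i w_i^p/i^{\gamma p}$ (and the attendant sums) converge, using $p>1$. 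The non-degeneracy $M_\infty(z)>0$ a.s. on the real interval then follows from a standard $0/1$-type argument together with this $L^p$-boundedness, and the left endpoint $z_-$ (present only when $\gamma>1$) is sharp because below it the additive martingale freezes. I expect this uniform-in-$z$ convergence, and the verification of the hypotheses of the Fourier-inversion theorem of \cite{kabluchko_general_2017}, to be the technical heart of the argument.

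Granting this, \eqref{wrt:eq:radish asymptotics} is obtained by feeding into the theorem of \cite{kabluchko_general_2017} the almost sure uniform convergence $\mathbb W_n(z)/\E[\mathbb W_n(z)]\to M_\infty(z)$ on complex compacts together with $\E[\mathbb W_n(z)]=n^{1+\gamma(e^z-1)+o(1)}$: shifting the inversion contour $\bL_n(k)=\frac1{2\pi}\int\mathbb W_n(z^*+i\theta)e^{-k(z^*+i\theta)}\,d\theta$ to the saddle $e^{z^*}=k/(\gamma\log n)$, the integrand concentrates on $|\theta|\lesssim(\log n)^{-1/2}$ (a Gaussian window governed by $\partial_z^2$ of the rate function), and produces $n^{f_\gamma(z)-\frac12\frac{\log\log n}{\log n}+O(1/\log n)}$, the $-\frac12\frac{\log\log n}{\log n}$ encoding the prefactor $(\log n)^{-1/2}$ multiplicatively and $M_\infty(z)$ being absorbed in the $O(1/\log n)$. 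Specializing near $z=0$, where $f_\gamma(0)=1$ and the quadratic expansion of $f_\gamma$ dictates the shape, upgrades this to the uniform Gaussian asymptotics \eqref{wrt:eq:convergence uniforme profil}; the regime of $k$ far from $\gamma\log n$ is covered by the elementary bound $\P(\bL_n(k)>0)\le e^{-zk}\E[\mathbb W_n(z)]$, which shows $\bL_n(k)$ is negligible there, combined with the exact total mass $\sum_k\bL_n(k)=n$.

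Finally, for the height \eqref{wrt:eq:convergence height gamma p}: the upper bound is the first-moment estimate. For $\kappa>\gamma e^{z_+}$, optimizing $e^{-\theta\kappa\log n}\E[\mathbb W_n(\theta)]$ over $\theta>0$ (optimum at $e^\theta=\kappa/\gamma$) gives $\P(\haut(\ttT_n)\ge\kappa\log n)\le n^{f_\gamma(\log(\kappa/\gamma))+o(1)}\to 0$ since $f_\gamma$ is negative beyond $z_+$; a Borel--Cantelli argument along a subsequence, together with monotonicity of $n\mapsto\haut(\ttT_n)$, yields $\limsup_n\haut(\ttT_n)/\log n\le\gamma e^{z_+}$ almost surely. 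The lower bound is immediate from \eqref{wrt:eq:radish asymptotics}: for any $z<z_+$ one has $f_\gamma(z)>0$, hence $\bL_n(\lfloor\gamma e^z\log n\rfloor)=n^{f_\gamma(z)+o(1)}\to\infty$ almost surely, so $\haut(\ttT_n)\ge\lfloor\gamma e^z\log n\rfloor$ eventually and $\liminf_n\haut(\ttT_n)/\log n\ge\gamma e^z$; letting $z\uparrow z_+$ closes the gap and gives the claimed almost sure convergence.
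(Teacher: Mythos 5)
Your overall architecture matches the paper's: work with the Laplace transform of the profile, use the weight-tilted sum $\widetilde{\mathbb{W}}_n(z)$ to build an additive martingale $M_n(z)$, establish Biggins-style $L^p$ convergence uniformly on a complex domain, and invoke the Fourier-inversion theorem of \cite{kabluchko_general_2017}. The computation of the rate function $f_\gamma$, the dyadic use of the $\ell^p$-block hypothesis, and both halves of the height argument are correct and match the paper's in substance.

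There is, however, a genuine gap, and it sits exactly where you write ``$\mathbb{W}_n(z)/\E[\mathbb{W}_n(z)]\to M_\infty(z)$'' as though it followed by ``granting'' the convergence of $M_n(z)$. The martingale you constructed normalises the \emph{weighted} profile $\widetilde{\mathbb{W}}_n$, not $\mathbb{W}_n$; the only link is the conditional-mean relation $\Ecsq{\mathbb{W}_{n+1}(z)-\mathbb{W}_n(z)}{\cF_n}=e^z\widetilde{\mathbb{W}}_n(z)/W_n$, so one must show that the compensated sum $\mathbb{W}_n(z)-e^z\sum_{k<n}\widetilde{\mathbb{W}}_k(z)/W_k$ (itself a martingale in $n$) is negligible, and then resum $\sum_{k<n}C_k(z)$ against $M_\infty(z)$. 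This transfer is a separate estimate with its own $L^p$ moment bounds, and it is \emph{here}, not in the convergence of $M_n(z)$, that the constraint $z>z_-$ enters: the resummation of $\sum C_k(z)$ requires $1+\Re\phi(z)>0$. Your explanation that ``below $z_-$ the additive martingale freezes'' is therefore misdirected; $M_n(z)$ converges nondegenerately on the strictly larger interval $I_\gamma=\enstq{z}{f_\gamma(z)>0}$, which for $\gamma>1$ extends below $z_-$, as the paper's remark after the theorem points out. A second, smaller gap: the $0/1$-law plus $L^p$-boundedness gives $M_\infty(z)>0$ a.s.\ only for each \emph{fixed} real $z$. Upgrading to ``no zeros on the whole interval'' simultaneously (which the inversion theorem requires) needs an extra step, combining the fact that zeros of the analytic limit are at most countable with an independence/self-similarity decomposition of the tree into two subtrees that are themselves independent WRTs with sub-sampled weight sequences. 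Both steps are load-bearing and are not a mere verification of hypotheses.
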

The proof of this result follows the path used for many similar results for trees with logarithmic growth (see \cite{chauvin_profile_2001,chauvin_martingales_2005,katona_width_2005}): we study the Laplace transform of the profile $z\mapsto \sum_{k=0}^ne^{zk}\bL_n(k)$ on a domain of the complex plane and prove its convergence to some random analytic function when appropriately rescaled. Then, we apply \cite[Theorem 2.1]{kabluchko_general_2017}, which consists of a fine Fourier inversion argument and hence allows to obtain precise asymptotics for $\bL_n$. The application of the theorem in its full generality proves a so-called Edgeworth expansion for $\bL_n$, which we express here in a weaker form by equations \eqref{wrt:eq:convergence uniforme profil} and \eqref{wrt:eq:radish asymptotics}. The convergence \eqref{wrt:eq:convergence uniforme profil} expresses that the profile is asymptotically close to a Gaussian shape centred around $\gamma \log n$ and with variance $\gamma \log n$, so that a majority of vertices have a height of order $\gamma \log n$. The second equation \eqref{wrt:eq:radish asymptotics} provides the behaviour of the number of vertices at a given height, for heights that are not necessarily close to $\gamma \log n$ (for which the preceding result ensure that there are of order $\frac{n}{\sqrt{\log n}}$ vertices per level). According to this result, at height $\lfloor \gamma e^z \log n\rfloor$ for any $z\in\intervalleoo{z_-}{z_+}$ there are of order $\frac{n^{f_\gamma(z)}}{\sqrt{\log n}}$ vertices.

Remark that the exponent $f_\gamma(z)$ is continuous in $z$ and tends to $0$ when $z\rightarrow z_+$. Although this does not directly prove the convergence \eqref{wrt:eq:convergence height gamma p}, it already provides a lower-bound for $\haut(\ttT_n)$ since it ensures that asymptotically there always exist vertices at height $\lfloor\gamma e^{(z_+-\epsilon)} \log n\rfloor$, for any small $\epsilon>0$. 
The convergence of the height \eqref{wrt:eq:convergence height gamma p} can then be obtained by proving a corresponding upper-bound, which can be done using quite rough estimates.

This result includes the well-known asymptotics $\haut(\ttT_n)\sim e \log n$ as $n\rightarrow\infty$ for the uniform random tree, proved for example in \cite{devroye_branching_1987,pittel_note_1994}. 
Using the connection of preferential attachment trees to weighted recursive trees given by Theorem~\ref{wrt:thm:connection PA WRT}, it also includes the case of preferential attachment trees with constant fitnesses, for which similar results were proved, in \cite{pittel_note_1994} for the height and in \cite{katona_width_2005,kabluchko_general_2017} for the asymptotic behaviour of the profile \eqref{wrt:eq:convergence uniforme profil}. 

\begin{remark}
One can also notice that, in the case $\gamma>1$, the function $f_\gamma$ tends to a negative value $1-\gamma$ as $z\rightarrow-\infty$ so that it is understandable that the asymptotics \eqref{wrt:eq:radish asymptotics} couldn't be valid for values of $z$ below a certain threshold.
	Nevertheless, one can check that $f_\gamma(z_-)>0$ and wonder what happens for values of $z$ that are slightly below $z_-$.
	In fact, in the proof, we first study the \emph{weighted profile} of the tree, which corresponds to the total weight of vertices at every height instead their number. In this case, the study of the corresponding Laplace transform is easier and would lead to a statement similar to Theorem~\ref{wrt:thm:profile and height of wrt} for the asymptotics of the weighted profile that would hold for any $z$ such that $f_\gamma(z)>0$.
	A subsequent part of the proof then consists in transferring this result to the Laplace transform of the "true" profile of the tree, and this is the part that breaks down if $z$ is chosen smaller than $z_-$.
\end{remark}

As a complement to this result, let us mention that there is another case where we can compute the asymptotic height of the tree, which corresponds to sequences $\boldsymbol{w}$ that grow fast to infinity.
For any sequence of weights $\boldsymbol{w}$, a quantity of interest is $\sum_{i=2}^{n}\frac{w_i}{W_i}$, which is the expected height of a vertex taken with probability proportional to its weight in $\ttT_n$. When this quantity grows faster than logarithmically, we have the almost sure convergence (see Proposition~\ref{wrt:prop:upper-bound height} in Section~\ref{wrt:subsec:upper-bounds on height})
\begin{align*}
\lim_{n\rightarrow\infty} \frac{\haut(\ttT_n)}{\sum_{i=2}^{n}\frac{w_i}{W_i}}=\lim_{n\rightarrow\infty} \frac{\haut(u_n)}{\sum_{i=2}^{n}\frac{w_i}{W_i}}=1,
\end{align*}
which indicates that all the action takes place at the very tip of the tree.
\subsubsection{Convergence of the weight measure} \label{wrt:subsec:measures}\label{wrt:subsubsec:convergence of the weight measure intro}
We also study the convergence of some natural probability measures defined on the trees $(\ttT_n)_{n\geq 1}$. This will prove useful for the applications developed in the companion paper \cite{senizergues_growing_2020_online}.     
 
\paragraph{Plane-tree framework.}
For this result it will be easier to work with plane trees. 
We introduce the Ulam-Harris tree 
$\bU=\bigcup_{n=0}^\infty \N^n$, where $\N:=\{1,2,\dots\}$, with the convention that $\N^0=\{\emptyset\}$. 
Classically, a plane tree $\tau$ is defined as a non-empty subset of $\bU$ such that
\begin{enumerate}
	\item if $v\in\tau$ and $v=ui$ for some $i\in\N$, then $u\in \tau$,  
	\item for all $u\in\tau$, there exists $\deg^{+}_\tau(u)\in\N\cup \{0\}$ such that for all $i\in \N$,  $ui\in\tau$ iff $i\leq \deg^{+}_\tau(u)$.
\end{enumerate} 

We choose to construct our sequence $(\ttT_n)_{n\geq 1}$ of weighted recursive trees as plane trees by considering that each time a vertex is added, it becomes the right-most child of its parent. In this way the vertices $(u_1,u_2\dots)$ of the trees $(\ttT_n)_{n\geq 1}$, listed in order of arrival, form a sequence of elements of $\bU$. In fact, from now on, we will always assume that we use this particular embedded construction, both for the WRT and the PAT. 
Note that with this representation as unlabelled subsets of $\bU$, the tree $\ttT_n$ itself, for any $n\geq1$, \emph{does not} contain information relative to the labelling (hence the weight) of its vertices, but this piece of information can be read from the sequence $(\ttT_1,\ttT_2,\dots,\ttT_n)$. 

We also denote $\partial \bU=\N^\N$, which we can be interpreted as the set of infinite paths from the root to infinity, and write $\overline{\bU}=\bU\cup \partial\bU$. We classically endow this set with the distance 
\begin{align}\label{wrt:eq:def dexp}
\dist^{\mathrm{exp}}(u,v)=\begin{cases}
0, \qquad &\text{if } u=v,\\
\exp(-\haut(u\wedge v)), \qquad &\text{otherwise,}
\end{cases}
\end{align}
 where $u\wedge v$ denotes the most recent common ancestor of $u$ and $v$, and the height $\haut(u)$ of a vertex $u\in\bU$ is defined as the only $n$ such that $u\in \N^n$. Note that even when  $u,v\in\partial\bU$, their most recent common ancestor $u\wedge v$ belongs to $\bU$, as long as $u\neq v$.
 Endowed with this distance, $\overline{\bU}$ is then a complete separable metric space. 
 
In the paper, except when proving results related to the weak convergence of measures, for which we use the topology generated by $\dist^{\mathrm{exp}}$, we consider $\bU$ as a graph and that we compute distances between vertices using the corresponding graph distance, which we denote $\dist$. In particular, the height $\haut(u)$ of a vertex $u$ is always its graph distance $\dist(\emptyset,u)$ to the root $\emptyset$. 

\paragraph{Convergence of measures.}
For every $n\geq 1$, we define the measure $\mu_n$ on $\bU$, which only charges the set $\{u_1,\dots,u_n\}$ of vertices of $\ttT_n$, with for any $1\leq k \leq n$,
\begin{equation}
\mu_n(\{u_k\})=\frac{w_k}{W_n}.
\end{equation}
We refer to $\mu_n$ as the \emph{natural weight measure} on $\ttT_n$.  
The following theorem classifies the possible behaviours of $(\mu_n)$ for any weight sequence.

\begin{theorem}\label{wrt:thm:weak convergence of measures}
The sequence $(\mu_n)_{n\geq 1}$ converges almost surely weakly towards a limiting probability measure $\mu$ on $\overline{\bU}$.
There are three possible behaviours for $\mu$:
\begin{enumerate}
	\item If $\sum_{i=1}^{\infty}w_i<\infty$, then $\mu$ is carried on $\bU$.
	\item\label{wrt:it:condition mesure diffuse} If $\sum_{i=1}^{\infty}w_i=\infty$ and $\sum_{i=1}^{\infty}\left(\frac{w_i}{W_i}\right)^2<\infty$, then $\mu$ is diffuse and supported on $\partial \bU$.
	\item If $\sum_{i=1}^{\infty}\left(\frac{w_i}{W_i}\right)^2=\infty$ then $\mu$ is concentrated on one point of $\partial \bU$.
\end{enumerate} 
\end{theorem}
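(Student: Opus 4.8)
The plan is to realise $\mu$ as the quenched limit of a single ``tagged'' vertex, and to read off its support from a P\'olya-urn analysis of the masses of the subtrees. Throughout, for $v\in\bU$ write $v=u_{\tau_v}$ where $\tau_v$ is the birth time of $v$, write $w\succeq v$ if $v$ is an ancestor of $w$ or $w=v$, write $u_j\to u_k$ if $u_k$ is the parent of $u_j$, let $\llbracket v\rrbracket=\{w\in\overline\bU:w\succeq v\}$ be the (clopen) ``descendant'' cylinder of $v$, let $x|_m\in\bU$ be the length-$m$ prefix of $x\in\overline\bU$, and let $\cF_n=\sigma(\ttT_1,\dots,\ttT_n)$. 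For $n\ge\tau_v$ put $S_n(v)=\sum_{\tau_v\le j\le n,\ u_j\succeq v}w_j$ and $M_n(v)=\mu_n(\llbracket v\rrbracket)=S_n(v)/W_n$ (and $M_n(v)=0$ for $n<\tau_v$). By \eqref{wrt:eq:def Kn} the newcomer $u_{n+1}$ falls into the subtree of $v$ with conditional probability $S_n(v)/W_n$, so $\mathbb E[S_{n+1}(v)\mid\cF_n]=S_n(v)W_{n+1}/W_n$; thus $n\mapsto M_n(v)$ is a $[0,1]$-valued martingale on $\{n\ge\tau_v\}$ and $M_n(v)\to M_\infty(v)$ a.s.\ and in $L^2$, simultaneously over all $v\in\bU$. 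Also $\mu_n(\{v\})=w_{\tau_v}/W_n\to w_{\tau_v}/W_\infty=:a_v$, with $a_v=0$ iff $\sum_iw_i=\infty$.

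\emph{Conservation of mass and convergence.} For every $v$, $M_\infty(v)=a_v+\sum_{i\ge1}M_\infty(vi)$ a.s.: ``$\ge$'' is Fatou applied to the exact identity $M_n(v)=\mu_n(\{v\})+\sum_iM_n(vi)$, and equality holds because both sides have the same finite expectation — using the martingale property, $\mathbb E[\ind{u_j\to u_k}M_\infty(u_j)]=\tfrac{w_j}{W_j}\mathbb P(u_j\to u_k)=\tfrac{w_jw_k}{W_jW_{j-1}}$ for $j>k$, which telescopes to $\mathbb E\big[\sum_{u_j\to u_k}M_\infty(u_j)\big]=\tfrac{w_k}{W_k}-\tfrac{w_k}{W_\infty}$, while $\mathbb E[M_\infty(u_k)]=\tfrac{w_k}{W_k}$ and $a_{u_k}=\tfrac{w_k}{W_\infty}$, and a non-negative random variable of zero mean is zero. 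Iterating, $\sum_{\haut(v)=m}M_\infty(v)=1-\sum_{\haut(v)<m}a_v$ a.s.\ for every $m$ — no mass ``escapes within a level'' — which is exactly the statement that $(\mu_n)$ is a.s.\ tight. Since the cylinders $\llbracket v\rrbracket$ and the singletons $\{v\}$ ($v\in\bU$) are clopen and form a $\pi$-system generating the Borel $\sigma$-field of $\overline\bU$, the martingale convergence above together with tightness forces every subsequential weak limit of $(\mu_n)$ to equal one Borel probability measure $\mu$ characterised by $\mu(\llbracket v\rrbracket)=M_\infty(v)$ and $\mu(\{v\})=a_v$; hence $\mu_n\to\mu$ weakly a.s.

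\emph{The trichotomy.} If $\sum_iw_i<\infty$ then $\sum_{v\in\bU}a_v=W_\infty^{-1}\sum_kw_k=1$, so $\mu=\sum_k\tfrac{w_k}{W_\infty}\delta_{u_k}$ is carried by $\bU$. (Alternatively, couple a tagged vertex to the tree by $V_{n+1}=V_n$ with probability $W_n/W_{n+1}$ and $V_{n+1}=u_{n+1}$ otherwise; then $V_n$ has conditional law $\mu_n$ given $(\ttT_m)_m$, and $\sum_iw_i<\infty$ forces $\sum_nw_n/W_n<\infty$, so Borel--Cantelli gives $V_n\to u_{\rho_\infty}\in\bU$ a.s.) Otherwise $\sum_iw_i=\infty$, so $a_v\equiv0$ and $\mu$ has no atom in $\bU$; here the driver is the dichotomy for Pemantle's time-dependent P\'olya urn applied to ``subtree of $v$'' versus ``the rest'' — at step $n$ one of the two sides is increased by $w_{n+1}$, proportionally to the current sizes, the total being $W_n$ — namely $M_\infty(v)\in\{0,1\}$ a.s.\ if $\sum_n(w_n/W_n)^2=\infty$, and $M_\infty(v)\in(0,1)$ a.s.\ if $\sum_n(w_n/W_n)^2<\infty$. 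If $\sum_n(w_n/W_n)^2=\infty$: then $M_\infty(v)\in\{0,1\}$ for all $v$ a.s., and $\sum_iM_\infty(vi)=M_\infty(v)$ makes each positive-mass vertex have exactly one positive-mass child, so descending from the root ($M_\infty(\emptyset)=1$) produces a random ray $x_*\in\partial\bU$ with $M_\infty(x_*|_m)=1$ for all $m$, whence $\mu(\{x_*\})=\lim_m\mu(\llbracket x_*|_m\rrbracket)=1$ and $\mu=\delta_{x_*}$. If instead $\sum_n(w_n/W_n)^2<\infty$, it remains to prove that $\mu$ has no atom on $\partial\bU$; then $\mu$ is diffuse and, having no atom in $\bU$ either, is supported on $\partial\bU$.

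The ``$\Rightarrow\{0,1\}$'' half of the urn dichotomy used in the degenerate case is soft: inserting $\sum_n(w_n/W_n)^2=\infty$ into the quadratic-variation identity $\sum_{n\ge\tau_v}(w_{n+1}/W_{n+1})^2\,\mathbb E[M_n(v)(1-M_n(v))]=\mathbb E[M_\infty(v)^2]-\mathbb E[M_{\tau_v}(v)^2]\le1$, together with the $L^1$-convergence of $M_n(v)(1-M_n(v))$, forces $\mathbb E[M_\infty(v)(1-M_\infty(v))]=0$. The genuinely hard part is the diffuse case: ruling out that \emph{some} ray keeps positive mass. The line of attack is self-similarity — the subtree of a positive-mass vertex $v$ is, conditionally, a $\wrt$ with a random weight sequence $\mathbf w^{(v)}$ whose partial sums are asymptotic to $M_\infty(v)\cdot W_n$, hence $\sum_\ell (w^{(v)}_\ell/W^{(v)}_\ell)^2\lesssim M_\infty(v)^{-2}\sum_n(w_n/W_n)^2<\infty$, so the urn ``child $vi$ of $v$ versus the rest of $v$'' is again non-degenerate; along a hypothetical atomic ray $x$ the masses $M_\infty(x|_m)$ decrease to $\mu(\{x\})>0$, so the split ratios $M_\infty(x|_{m+1})/M_\infty(x|_m)$ tend to $1$, which should contradict non-degeneracy. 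Turning this into an actual contradiction — extracting a lower bound on the fragmentation $\mathbb E\big[\sum_{i\ne i'}M_\infty(vi)M_\infty(vi')\big]$ uniform enough to propagate down the ray, e.g.\ via a fixed-point argument showing $\mathbb E\big[\sum_{x\in\partial\bU}\mu(\{x\})^2\big]$ must vanish over the whole relevant class of weight sequences — is the step I expect to be the main obstacle.
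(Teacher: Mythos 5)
Your argument is correct and essentially parallel to the paper's through the first two cases and the degenerate case, with some pleasant local variations: the conservation-of-mass identity via the telescoping computation $\sum_{j>k}\frac{w_jw_k}{W_jW_{j-1}}=\frac{w_k}{W_k}-\frac{w_k}{W_\infty}$ is a clean alternative to the paper's bound $\Ec{M_\infty^{(k,i)}}\le w_k/W_i$ (which gets the same conclusion by observing that the time when $u_k$ receives its $i$-th child is at least $i$); and your direct quadratic-variation proof of the $\{0,1\}$-law in the case $\sum(w_n/W_n)^2=\infty$ is self-contained where the paper cites Pemantle. These parts are fine.

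The gap is exactly where you flag it: you do not prove diffuseness when $\sum(w_n/W_n)^2<\infty$. Your self-similarity sketch by itself does not close: the fact that each one-step split ``child of $v$ versus rest of $v$'' is non-degenerate does not rule out that along a random ray the split ratios $M_\infty(x|_{m+1})/M_\infty(x|_m)$ accumulate to $1$ fast enough to leave a positive product, and you have no uniform lower bound on $\Ec{M_\infty(vi)(1-M_\infty(vi))}$ that propagates down the ray. The paper closes this with a different mechanism that you might find satisfying precisely because it computes the quantity you identified, $\sum_{x\in\partial\bU}\mu(\{x\})^2=\Ppsq{D_\infty=D_\infty'}{(\ttT_n)}$: taking $D_n,D_n'$ i.i.d.\ under $\mu_n$ given the tree, a one-step conditioning argument (peel off $u_n$) yields the exact product formula
\begin{align*}
\Pp{D_n\wedge D_n'=u_k}=\left(\frac{w_k}{W_k}\right)^2\prod_{i=k+1}^{n}\left(1-\left(\frac{w_i}{W_i}\right)^2\right),
\end{align*}
whose $n\to\infty$ limits $(p_k)_{k\ge1}$ sum to $1$ precisely because $\sum(w_i/W_i)^2<\infty$ makes the infinite product positive; since $u_1,\dots,u_k$ sit at height $<k$, one then gets $\Pp{\haut(D_\infty\wedge D_\infty')\ge k}\le\sum_{i>k}p_i\to0$, hence $\Pp{D_\infty=D_\infty'}=0$ and $\mu$ is a.s.\ diffuse. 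Weak convergence of the pushforwards $(\cdot\wedge\cdot)_*(\mu_n\otimes\mu_n)$ (continuity of the mrca map for $\dist^{\mathrm{exp}}$) is what lets one pass from $p_k$, defined as a limit of finite-$n$ probabilities, to the law of $D_\infty\wedge D_\infty'$. Replacing your last paragraph with this argument completes the proof.
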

This convergence can be extended to other natural measures on the tree, such as the uniform measure on $\ttT_n$, or some "preferential attachment measure" which charges each vertex proportionally to some affine function of its degree. This is the content of Proposition~\ref{wrt:prop:convergence other measures}.
Note that in our case of interest, when the sequence $\boldsymbol{w}$ satisfies the assumption \eqref{wrt:assum: wrt alpha} for some $\gamma>0$, we are in case \ref{wrt:it:condition mesure diffuse} of the theorem. 

In the specific case of $\wrt$ (resp. $\pa$) with a sequence of weights (resp. of fitnesses) that is constant starting from the second term, the measure $\mu$ has an explicit description: if any $u\in \bU$, writing $T(u):=\enstq{uv}{v\in\overline\bU}$ for the sub-tree descending from $u$, the sequences
\begin{align*}
	\left(\frac{\mu(T(ui))}{\mu(T(u))}\right)_{i\geq 1}, \quad \text{for} \ u\in \bU,
\end{align*} 
are independent and have an explicit $\mathrm{GEM}$ distribution (see \cite{pitman_combinatorial_2006} for a definition).
Furthermore, the corresponding sequence of trees, conditionally on $\mu$, can be described as a \emph{split tree}. 
This result, along with other other properties of these families of growing trees can be found in \cite{janson_random_2019}.

\subsection{Organisation of the paper}
The paper is organised as follows.

We first investigate some properties of weighted random trees $(\ttT_n)_{n\geq 1}$ with deterministic weight sequence $\boldsymbol{w}$. 
In Section~\ref{wrt:subsec:convergence of the degree sequence} we first prove Proposition~\ref{wrt:prop:control wkn} which states the convergence of the degree sequence using elementary methods. Then in Section~\ref{wrt:subsec:convergence of measure}, we prove the weak convergence of the weight measure $\mu_n$ to some limit $\mu$ and describe three regimes for its behaviour. We also study other natural measures related to the sequence of trees $(\ttT_n)$ and prove that they also converge towards $\mu$. For all these measures, our main tool consists in introducing martingales related to the mass of a subtree descending from a fixed vertex. This is the content of Theorem~\ref{wrt:thm:weak convergence of measures} and Proposition~\ref{wrt:prop:convergence other measures}.  
In Section~\ref{wrt:sec:height and profile of WRT}, we prove Theorem~\ref{wrt:thm:profile and height of wrt} about the convergence of the height and the profile of WRT. 
This is achieved by  first proving the uniform convergence of a rescaled version of the Laplace transform of the profile on a complex domain, which is the content of Proposition~\ref{wrt:prop:assumptions kabluchko}. 
This ensures that we can use \cite[Theorem~2.1]{kabluchko_general_2017} for the convergence of the profile. This convergence provides a lower-bound for the height of the tree; we then prove a matching upper-bound to obtain asymptotics for the height. 
We also prove Proposition~\ref{wrt:prop:upper-bound height}, which identify the asymptotic behaviour of the height of the tree in the case where the weights increase very fast.

Then we switch to studying a sequence $(\ttP_n)_{n\geq 1}$ of preferential attachment trees with sequence of fitnesses $\mathbf{a}$. 
In Section~\ref{wrt:sec:exchangeability}, we present a proof of Theorem~\ref{wrt:thm:connection PA WRT} and Corollary~\ref{wrt:cor: pa conditional distribution limiting degrees} using a coupling between the preferential attachment process with a sequence of P\'olya urn processes and this establishes that $(\ttP_n)_{n\geq 1}$ can also be described as having distribution $\wrt(\boldsymbol{\mathsf w}^\mathbf{a})$ for a random sequence $\boldsymbol{\mathsf w}^\mathbf{a}$; we then prove Proposition~\ref{wrt:prop:assum pa implies assum wrt} which relates the properties of $\boldsymbol{\mathsf w}^\mathbf{a}$ to the ones of $\mathbf{a}$. We finish the section by stating and proving Proposition~\ref{wrt:prop:Mk markov chain} in which we prove that the sequence $(\mathsf M^\mathbf{a}_n)$ defined above as some random multiple of $(\mathsf W^\mathbf a_n)$ is a Markov chain.
In Section~\ref{wrt:sec:examples and applications}, we identify in Proposition~\ref{wrt:prop:distribution limiting chains} the distribution of the chain $(\mathsf{M}^\mathbf a_n)$ for particular sequences $\mathbf{a}$ using moment identifications. 
We then present two applications of this result, one concerning a model of P\'olya urn with immigration and the other concerning another model of preferential attachment graphs, in Proposition~\ref{wrt:cor:convergence degree sequence m,delta preferential attachment}.

Some technical results can be found in Appendix~\ref{wrt:app:computations}.

\subsection*{Acknowledgements}
The author would like to thank the anonymous referees for their numerous comments and suggestions that helped improve the presentation of this paper. 
He would also like to thank Philippe Marchal whose remarks led to an improvement in the generality of one of the results.
\section{Measures and degrees in weighted random trees}\label{wrt:sec:measures and degrees in the wrt}
In this section, we work with a sequence of trees $(\ttT_n)_{n\geq 1}$ that has distribution $\wrt\left(\boldsymbol{w}\right)$ for a deterministic sequence $\boldsymbol{w}$. We start with two statistics of the tree that are quite easy to analyse, namely the sequence of degrees of the vertices of the tree and also some natural measures defined on the tree.
\subsection{Convergence of the degree sequence}\label{wrt:subsec:convergence of the degree sequence}
We start the section by proving convergence for the sequence of degrees of the vertices in their order of creation under the $\wrt$ model.
We suppose here that the sequence of weights $\boldsymbol{w}$ is such that there exists constants $C>0$ and $0<\gamma<1$ for which
\begin{equation}\label{wrt:eq:Wk equivalent kgamma}
W_k \underset{k\rightarrow\infty }{\sim} C\cdot k^{\gamma}.
\end{equation}
We write $\deg^{+}_{\ttT_n}(u_k)$ for the out-degree of the vertex $u_k$ in $\ttT_n$. For a fixed $k\geq 1$ remark that, as a sequence of random variables indexed by $n\geq 1$, we have the equality in distribution
\begin{equation}\label{wrt:eq:equality distribution degree}
\left(\deg^{+}_{\ttT_n}(u_k)\right)_{n\geq 1}\overset{\mathrm{(d)}}{=}\left(\sum_{i=k}^{n-1}\ind{U_i\leq \frac{w_k}{W_i}}\right)_{n\geq 1},
\end{equation}
with $(U_i)_{i\geq 1}$ a sequence of independent uniform variables in $\intervalleoo{0}{1}$.
With this description of the distribution of the degrees of fixed vertices, only using some law of large numbers for the convergence and Chernoff bounds for the fluctuations we obtain the following result.
\begin{proposition}\label{wrt:prop:control wkn}For a sequence of weights $\boldsymbol{w}$ satisfying \eqref{wrt:eq:Wk equivalent kgamma}, the following holds.
	\begin{enumerate}
			\item\label{wrt:it:convergence pointwise} We have the almost sure pointwise convergence
		\begin{equation}\label{wrt:eq:convergence degres pointwise}
		n^{-(1-\gamma)}\cdot (\deg^{+}_{\ttT_n}(u_1),\deg^{+}_{\ttT_n}(u_2),\dots )\underset{n\rightarrow\infty}{\longrightarrow}\frac{1}{(1-\gamma)C} \cdot (w_1,w_2,\dots ).
		\end{equation}
		\item\label{wrt:it:convergence lp} If the sequence furthermore satisfies $w_k\leq (k+1)^{\gamma-1+c'+\petito{1}}$ for some constant $0\leq c'<1-\gamma$, then there exists a function of $k$ which goes to $0$ as $k\rightarrow\infty$, also denoted $o(1)$, such that all $n$ large enough, we have for all $k\geq1$
		\begin{align}\label{wrt:eq:control deg in lp}
		\deg^{+}_{\ttT_n}(u_k)\leq n^{1-\gamma}\cdot (k+1)^{\gamma-1+c'+o(1)},
		\end{align}
and the convergence \eqref{wrt:eq:convergence degres pointwise} holds almost surely in the space $\ell^{p}$ for all $p>\frac{1}{1-\gamma -c'}$. 
\end{enumerate}
\end{proposition}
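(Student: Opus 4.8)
The plan is to exploit the distributional identity \eqref{wrt:eq:equality distribution degree}, which presents $\deg^{+}_{\ttT_n}(u_k)$ as a sum of independent Bernoulli variables with success probabilities $w_k/W_i$ for $k\leq i\leq n-1$. For part \ref{wrt:it:convergence pointwise}, I would first compute the expectation: using \eqref{wrt:eq:Wk equivalent kgamma}, namely $W_i\sim C i^\gamma$, we get $\sum_{i=k}^{n-1} w_k/W_i \sim w_k C^{-1}\sum_{i=k}^{n-1} i^{-\gamma} \sim \frac{w_k}{C(1-\gamma)} n^{1-\gamma}$ as $n\to\infty$ (the sum of $i^{-\gamma}$ behaves like $n^{1-\gamma}/(1-\gamma)$ since $0<\gamma<1$, and the contribution of the first $k-1$ terms is negligible). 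For the almost sure convergence one fixes $k$ and applies a law of large numbers for sums of independent (non-identically distributed) bounded variables; since the variance is bounded by the mean and the mean grows polynomially, a Borel--Cantelli argument along a polynomially-spaced subsequence together with monotonicity of $n\mapsto\deg^+_{\ttT_n}(u_k)$ gives the full almost sure convergence. To upgrade pointwise convergence of each coordinate to convergence of the whole sequence in the product topology, note that the limit has finitely many coordinates per error window and the product topology only requires coordinatewise convergence, so \eqref{wrt:eq:convergence degres pointwise} follows immediately.

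For part \ref{wrt:it:convergence lp}, the strategy is to obtain a deterministic uniform-in-$k$ upper bound on $\deg^{+}_{\ttT_n}(u_k)$ of the form \eqref{wrt:eq:control deg in lp}, valid simultaneously for all $k\geq 1$ on an event of probability one for all large $n$. Here I would split into two ranges. For $k$ not too small (say $k\geq (\log n)^{2}$ or some slowly-growing threshold), the mean $\mu_{k,n}:=\sum_{i=k}^{n-1} w_k/W_i$ is of order $w_k\,n^{1-\gamma}$, and under the hypothesis $w_k\leq (k+1)^{\gamma-1+c'+o(1)}$ this is at most $n^{1-\gamma}(k+1)^{\gamma-1+c'+o(1)}$; a Chernoff bound $\Pp(\mathrm{Bin\text{-}type}\geq (1+\delta)\mu)\leq e^{-c\delta^2\mu}$ together with a union bound over $k$ and over a polynomial subsequence of $n$, followed by Borel--Cantelli, yields the claimed bound — one needs $\mu_{k,n}\to\infty$ fast enough that $\sum_k e^{-c\mu_{k,n}}$ is summable, which holds because $\mu_{k,n}\gtrsim n^{1-\gamma}(k+1)^{\gamma-1+c'}$ grows in $n$. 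For the finitely many small values of $k$ one simply uses part \ref{wrt:it:convergence pointwise} directly. Once \eqref{wrt:eq:control deg in lp} holds, the $\ell^p$ convergence follows from dominated convergence for series: the bound shows $n^{-(1-\gamma)}\deg^+_{\ttT_n}(u_k)\leq (k+1)^{\gamma-1+c'+o(1)}$, which is $p$-th power summable exactly when $p(\gamma-1+c')<-1$, i.e.\ $p>\frac{1}{1-\gamma-c'}$; combined with the pointwise convergence from part \ref{wrt:it:convergence pointwise} and a standard $\ell^p$ dominated-convergence argument (dominating sequence in $\ell^p$ plus coordinatewise convergence implies $\ell^p$ convergence), this gives the result.

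The main obstacle I anticipate is making the uniform-in-$k$ control \eqref{wrt:eq:control deg in lp} genuinely simultaneous: the naive union bound over all $k\geq 1$ requires the Chernoff tail $e^{-c\delta^2\mu_{k,n}}$ to be summable in $k$, but for $k$ comparable to $n$ the mean $\mu_{k,n}$ is small (only finitely many terms in the sum), so the Chernoff bound is useless there and one must instead absorb those $k$ into a cruder argument — e.g.\ observing that for $k$ close to $n$ the degree is trivially bounded by $n-k$, which is automatically compatible with the target bound $n^{1-\gamma}(k+1)^{\gamma-1+c'}$ when $k$ is of order $n$. Threading the threshold between these regimes, and controlling the $o(1)$ error terms uniformly, is the delicate bookkeeping; the probabilistic input (Chernoff plus Borel--Cantelli plus monotonicity in $n$) is otherwise routine.
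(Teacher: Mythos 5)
Your outline of part~\ref{wrt:it:convergence pointwise} is fine and matches the paper's idea: both reduce to the asymptotics of $\sum_{i=k}^{n-1}w_k/W_i$ and then pass from the mean to the sum of indicators via a Borel--Cantelli argument (the paper uses a conditional Borel--Cantelli lemma, Lemma~\ref{wrt:lem:sum bernoulli}; your polynomial-subsequence-plus-monotonicity version is a legitimate elementary substitute). The gap is in part~\ref{wrt:it:convergence lp}.

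The obstacle you flag for $k$ near $n$ is real, but your proposed fix does not close it. The trivial bound $\deg^{+}_{\ttT_n}(u_k)\leq n-k$ is not compatible with the target $n^{1-\gamma}(k+1)^{\gamma-1+c'}$ throughout the ``$k$ of order $n$'' range: at $k=n/2$ one has $n-k\asymp n$ whereas $n^{1-\gamma}(k+1)^{\gamma-1+c'}\asymp n^{c'}$, and since $c'<1$ this fails for all $k\in[n/2,\,n-n^{c'}]$. So those $k$ cannot be absorbed crudely; a probabilistic bound is still needed there. There is a second issue: the Chernoff inequality you cite, $\Pp(\cdot\geq(1+\delta)\mu)\leq e^{-c\delta^2\mu}$, holds only for bounded $\delta$, and the summability requirement ``$\sum_k e^{-c\mu_{k,n}}<\infty$'' implicitly assumes $\mu_{k,n}\asymp w_k n^{1-\gamma}$ is bounded \emph{below} by $n^{1-\gamma}(k+1)^{\gamma-1+c'}$, but the hypothesis on $\boldsymbol{w}$ gives only an upper bound on $w_k$ (and $w_k$ may even vanish), so $\mu_{k,n}$ can be arbitrarily small relative to the target.

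The paper sidesteps both problems at once by working with the plain one-sided exponential Markov bound $\Pp(\deg^{+}_{\ttT_n}(u_k)\geq t)\leq e^{-t}\,\Ec{e^{\deg^{+}_{\ttT_n}(u_k)}}\leq \exp(-t+(e-1)w_k\sum_{i=k}^{n-1}W_i^{-1})$ and choosing a threshold that does not attempt to track the mean: $t=\xi_k\,n^{1-\gamma}$ with $\xi_k=\max\bigl(2C'(e-1)w_k,\; k^{\gamma-1}\log^2(k+a)\bigr)$ where $x\mapsto x^{\gamma-1}\log^2(x+a)$ is decreasing. The first term in the maximum absorbs the moment-generating-function factor, giving $\Pp(\deg\geq t)\leq e^{-\xi_k n^{1-\gamma}/2}$; the second term, by monotonicity, gives $\xi_k n^{1-\gamma}\geq\log^2(n+a)$ uniformly over $1\leq k\leq n$, so the union bound yields $\Pp(\exists k: \deg\geq\xi_k n^{1-\gamma})\leq n\,e^{-\frac12\log^2(n+a)}$, summable over \emph{all} $n$. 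No splitting of $k$ into regimes, no lower bound on $w_k$, and no subsequence in $n$ are needed, and $\xi_k\leq(k+1)^{\gamma-1+c'+o(1)}$ under the hypothesis gives \eqref{wrt:eq:control deg in lp}. Your final step, dominated convergence in $\ell^p$ from \eqref{wrt:eq:control deg in lp} and coordinatewise convergence, is correct and is what the paper does.
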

\begin{proof}
		To prove \ref{wrt:it:convergence pointwise}, first remark that for any $k\geq 1$ such that $w_k\neq 0$, thanks to \eqref{wrt:eq:Wk equivalent kgamma}, we have \[\sum_{i=k}^{n-1}\frac{w_k}{W_i}\underset{n\rightarrow\infty}{\sim} w_k\cdot \frac{n^{1-\gamma}}{C(1-\gamma)}.\] 
	Using a version of the Borel-Cantelli lemma (see Lemma~\ref{wrt:lem:sum bernoulli} in the appendix), we get that almost surely
	\begin{equation*}
	\deg^{+}_{\ttT_n}(u_k)=\sum_{i=k}^{n-1}\ind{U_i\leq \frac{w_k}{W_i}}\underset{n\rightarrow\infty}{\sim}\sum_{i=k}^{n-1}\frac{w_k}{W_i}\underset{n\rightarrow\infty}{\sim}w_k\cdot \frac{n^{1-\gamma}}{(1-\gamma)C},
	\end{equation*}
	and hence $n^{-(1-\gamma)}\cdot \deg^{+}_{\ttT_n}(u_k)\rightarrow \frac{w_k}{(1-\gamma)C}$.
	For the indices $k$ for which $w_k=0$, we of course have $\deg^{+}_{\ttT_n}(u_k)=0$ almost surely for all $n\geq 1$, and so the convergence also holds. This finishes the proof of \ref{wrt:it:convergence pointwise}.

For the second part of the statement, let us first compute
	\begin{align*}
	\Ec{\exp\left( \deg^{+}_{\ttT_n}(u_k)\right)}=\Ec{\exp\left(\sum_{i=k}^{n-1}\ind{U_i\leq \frac{w_k}{W_i}}\right)}
	&=\prod_{i=k}^{n-1}\left(1+(e-1)\frac{w_k}{W_i}\right)\\
	&\leq \exp\left((e-1)w_k\sum_{i=k}^{n-1}\frac{1}{W_i}\right),
	\end{align*}
	where we have used the inequality $1+x\leq e^x$.
	Now let $C'$ be a constant such that for all $n\geq 1$, we have $\sum_{i=1}^{n-1}\frac{1}{W_i}\leq C'\cdot n^{1-\gamma}$ (such a constant exists because of the assumption \eqref{wrt:eq:Wk equivalent kgamma}). For all $k\geq 1$, we introduce the following 
	\begin{equation*}
\xi_k:= \max\left(2C'(e-1)w_k, k^{\gamma-1}\log^2(k+a)\right),
	\end{equation*}
	where the real number $a>0$ is chosen in such a way that the function $x\mapsto x^{\gamma-1}\log(x+a)$ is decreasing on $\intervalleoo{0}{\infty}$.
	Using Markov's inequality, we get for any integers $k$ and $n$ such that $n\geq k$
	\begin{align*}
	\Pp{\deg^{+}_{\ttT_n}(u_k)\geq \xi_k\cdot n^{1-\gamma}}
	&\leq \exp\left(-\xi_k\cdot n^{1-\gamma} +(e-1)w_k\sum_{i=k}^{n-1}\frac{1}{W_i}\right)\\
	&\leq \exp\left(-\frac12\cdot \xi_k\cdot n^{1-\gamma}\right).
	\end{align*}
	Using a union bound, the fact that $\deg^{+}_{\ttT_n}(u_k)=0$ for any $k>n$, and the definition of $\xi_k$, we get that for all $n\geq 1$
	\begin{align*}
	\Pp{\exists k\geq 1, \quad \deg^{+}_{\ttT_n}(u_k)\geq  \xi_k\cdot n^{1-\gamma}} &\leq \sum_{k=1}^n\exp\left(-\frac12\cdot \xi_k \cdot n^{1-\gamma}\right)\\
	&\leq n \cdot \exp\left(-\frac{1}{2}\cdot \log^2(n+a)\right).
	\end{align*}
	The last display is summable over all $n\geq 1$ and hence using the Borel-Cantelli lemma, we almost surely have for $n$ large enough 
	\[\forall k\geq 1, \quad \deg^{+}_{\ttT_n}(u_k)\leq n^{1-\gamma}\cdot \xi_k.\]
 We can conclude by noting that under our assumptions we have $\xi_k\leq (k+1)^{\gamma-1+c'+\petito{1}}$.
	The convergence in $\ell^p$ for $p>\frac{1}{1-\gamma-c'}$ is just obtained by dominated convergence using the componentwise convergence \eqref{wrt:eq:convergence degres pointwise} and the $\ell^p$ domination \eqref{wrt:eq:control deg in lp}.
\end{proof}
\subsection{Convergence of measures}\label{wrt:subsec:convergence of measure}
The goal of this section is to prove Theorem~\ref{wrt:thm:weak convergence of measures}, which concerns the convergence of the sequence of weight measures $(\mu_n)$ seen as measures on $\overline{\bU}$. One of the key arguments is the fact that the weight of the subtree descending from a fixed vertex can be described using a generalised P\'olya urn scheme, as studied by Pemantle \cite{pemantle_time_1990}. We also prove Proposition~\ref{wrt:prop:convergence other measures}, which states the weak convergence of other measures.

\paragraph{Time-dependent P\'olya urn scheme.} Let us start by describing an urn process, following Pemantle \cite{pemantle_time_1990}. 
Let $a,b$ be two non-negative real numbers, with $a+b>0$, and $k\geq1$ be an integer and $(s_n)_{n\geq k+1}$ be a sequence of non-negative real numbers. We refer to the following process as a time-dependent P\'olya urn starting at time $k$  with $a$ red balls and $b$ black balls and weight sequence $(s_n)_{n\geq k+1}$: 
\begin{itemize}
\item At time $k$, the urn contains $a$ red balls and $b$ black balls\footnote{Those numbers of balls are not required to be integers.}.
\item Then at every time $n\geq k+1$, a ball is drawn at random and replaced in the urn, along with $s_n$ additional balls of the same colour. 
\end{itemize}
For any $n\geq k$ we call $R_n$ the proportion of red balls in the urn at time $n$. We can easily check that $(R_n)_{n\geq k}$ is a martingale in its own filtration, with values in $\intervalleff{0}{1}$. As a result, it converges as $n\rightarrow\infty$ a.s. and in $L^1$ towards some random variable $R_\infty$.

\paragraph{Characterization of the convergence of probability measures over $\overline{\bU}$.}Recall from the introduction the definition of the Ulam-Harris tree $\bU=\bigcup_{n=0}^\infty \N^n$ and its completed version $\overline{\bU}=\bU\cup \partial \bU$, which is endowed with the distance $\dist^{\mathrm{exp}}$ defined in \eqref{wrt:eq:def dexp}. 
Recall that $(\overline{\bU},\dist^{\mathrm{exp}})$ is a separable and complete metric space.

For any $u\in \bU$, we write $T(u):=\enstq{uv}{v\in\overline\bU}$ the subtree descending from $u$. 
In $\overline\bU$ there is an easy characterisation of the weak convergence of sequences of probability measures defined on the Borel-$\sigma$-field associated to $\dist^{\mathrm{exp}}$, which a direct consequence of the Portmanteau theorem (see e.g.\ \cite[Theorem~2.1]{billingsley_convergence_1999}):
\begin{lemma}\label{wrt:lem:convergence measure}
Let $(\pi_n)_{n\geq 1}$ be a sequence of Borel probability measures on $\overline{\bU}$. Then $(\pi_n)_{n\geq 1}$ converges weakly to a probability measure $\pi$ if and only if for any $u\in\bU$, 
\[\pi_n(\{u\})\rightarrow \pi(\{u\}) \quad \text{and}\quad \pi_n(T(u))\rightarrow \pi(T(u)) \quad \text{as }  n\rightarrow\infty.\]
\end{lemma}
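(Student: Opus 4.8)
The plan is to deduce everything from the Portmanteau theorem by exhibiting a convenient convergence-determining class of sets. First I would recall the basic topological facts about $(\overline{\bU},\dist^{\mathrm{exp}})$: for any $u\in\bU$, the subtree $T(u)$ is both open and closed (it is the ball of radius $e^{-\haut(u)}$ around any of its points, and its complement is a finite union of such balls together with the finitely many strict ancestors of $u$ and their other subtrees), and each singleton $\{u\}$ with $u\in\bU$ is open (it equals $T(u)\setminus\bigcup_{i\geq1}T(ui)$, an open set). Moreover the collection $\mathcal{C}:=\{\{u\}: u\in\bU\}\cup\{T(u):u\in\bU\}$ is countable, stable under finite intersections (the intersection of two of these sets is again one of them or empty, since any two subtrees are either nested or disjoint, and a singleton meets a subtree in itself or $\emptyset$), and generates the Borel $\sigma$-field, because the balls $T(u)$ together with the singletons $\{u\}$ form a countable basis of the topology.

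Next I would argue the two directions. The ``only if'' direction is immediate: each set in $\mathcal{C}$ is simultaneously open and closed, hence a $\pi$-continuity set for every Borel probability measure $\pi$ on $\overline{\bU}$, so weak convergence $\pi_n\Rightarrow\pi$ forces $\pi_n(A)\to\pi(A)$ for all $A\in\mathcal{C}$, which is exactly the stated conditions. For the ``if'' direction, I would show that convergence on $\mathcal{C}$ forces convergence on all open sets from below, i.e.\ $\liminf_n \pi_n(G)\geq \pi(G)$ for every open $G$, which by Portmanteau yields $\pi_n\Rightarrow\pi$. Given an open $G$ and $\varepsilon>0$, write $G$ as a countable union of basic sets from $\mathcal{C}$; by inner regularity pick finitely many of them, say $A_1,\dots,A_m$, with $\pi(A_1\cup\dots\cup A_m)\geq \pi(G)-\varepsilon$. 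Since $\mathcal{C}$ is stable under finite intersections, inclusion–exclusion expresses $\pi_n(A_1\cup\dots\cup A_m)$ as a finite $\pm$ combination of $\pi_n$-values of sets in $\mathcal{C}$, each of which converges; hence $\pi_n(A_1\cup\dots\cup A_m)\to \pi(A_1\cup\dots\cup A_m)$, giving $\liminf_n\pi_n(G)\geq \liminf_n\pi_n(A_1\cup\dots\cup A_m)=\pi(A_1\cup\dots\cup A_m)\geq\pi(G)-\varepsilon$, and we let $\varepsilon\to0$.

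The only genuine subtlety — and the step I would be most careful about — is the claim that $\mathcal{C}$ is stable under finite intersections, so that inclusion–exclusion stays inside $\mathcal{C}$ and each term has a convergent $\pi_n$-mass; this rests on the tree structure of $\bU$ (nested-or-disjoint subtrees) and must be checked for the mixed case singleton-versus-subtree as well. A cleaner alternative, which I would present if the inclusion–exclusion bookkeeping feels heavy, is: given the open set $G$ and $\varepsilon$, choose a finite subfamily whose union $A$ satisfies $\pi(A)\geq\pi(G)-\varepsilon$, then note that since subtrees are nested-or-disjoint one may refine the chosen subtrees into a \emph{disjoint} finite subfamily with the same union $A$, so $\pi_n(A)=\sum \pi_n(A_j)\to\sum\pi(A_j)=\pi(A)$ with no signs at all. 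Either way the conclusion $\pi_n\Rightarrow\pi$ follows from the Portmanteau theorem, as cited from \cite[Theorem~2.1]{billingsley_convergence_1999}.
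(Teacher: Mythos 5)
Your proof is correct and follows essentially the same strategy as the paper: establish that the sets $\{u\}$ and $T(u)$ are clopen, deduce the ``only if'' direction from the Portmanteau theorem, and for the ``if'' direction show $\liminf_n \pi_n(G) \geq \pi(G)$ for open $G$ by decomposing $G$ into sets from $\mathcal{C}$. The one genuine difference is in how the decomposition is used: the paper writes each open set as a \emph{countable disjoint} union of sets from $\mathcal{C}$ and then applies Fatou's lemma termwise, whereas you approximate $\pi(G)$ from below by a \emph{finite} union (by continuity from below / inner regularity) and then pass to the limit either via inclusion--exclusion, using that $\mathcal{C}$ is a $\pi$-system, or via disjointification. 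Both routes rest on the same structural fact about the tree topology (nested-or-disjoint subtrees) and are comparable in length; your finite-approximation version avoids Fatou but introduces an extra $\varepsilon$-argument, and your observation that the finite subfamily can be made disjoint is exactly the finite analogue of the paper's countable disjoint decomposition.

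One small slip worth fixing: you justify that $\{u\}$ is open by writing $\{u\} = T(u)\setminus\bigcup_{i\geq1}T(ui)$ and calling this ``an open set.'' The set difference of two open sets is not generally open, so this is not a valid reason as stated. The identity is correct, but the right justification is metric: for any $v\neq u$ in $\overline{\bU}$ one has $u\wedge v$ a strict ancestor of $u$, hence $\dist^{\mathrm{exp}}(u,v)=e^{-\haut(u\wedge v)}\geq e^{-\haut(u)+1}>0$, so $\{u\}$ is itself the open ball $\Ball(u,e^{-\haut(u)})$. This is a local fix and does not affect the rest of the argument.
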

Let us provide a proof of this lemma for completeness.
\begin{proof}
We can check that the sets of the form $\{u\}$ for $u\in \bU$, or $T(u)$ for $u\in \bU$, are clopen for the topology generated by $\dist^{\mathrm{exp}}$, so by the Portmanteau theorem this already proves the "only if" part of the lemma.
Now reciprocally, suppose that the condition on $(\pi_n)_{n\geq 1}$ is satisfied. We can check that every open set $\mathcal{O}$ can be written as a countable disjoint union of these clopen sets (see for example \cite[Lemma~1.2]{rousselin_marches_2018} for a similar statement for the topology of $\partial \bU$), which we write $\mathcal{O}=\bigsqcup_{k\geq 1}\mathcal{O}_k$.
Then, using Fatou's lemma and the $\sigma$-additivity of measures we get
\begin{align*}\liminf_{n\rightarrow\infty}\pi_n\left(\mathcal{O}\right)=\liminf_{n\rightarrow\infty}\sum_{k=1}^{\infty} \pi_n(\mathcal{O}_k)\geq \sum_{k=1}^{\infty}\liminf_{n\rightarrow\infty} \pi_n(\mathcal{O}_k)
= \sum_{k=1}^{\infty}\pi(\mathcal{O}_k)=\pi(\mathcal{O})
\end{align*}
We conclude using the Portmanteau theorem again.
\end{proof}

\subsubsection{Proof of Theorem~\ref{wrt:thm:weak convergence of measures}.}
We are going to apply this criterion to our sequence $(\mu_n)_{n\geq 1}$, which, we recall, is defined in such a way that for all $n\geq 1$, the measure $\mu_n$ charges only the vertices $\{u_1,u_2,\dots,u_n\}$ of the tree $\ttT_n$, and such that for any $1\leq k \leq n$,
\begin{equation}
\mu_n(\{u_k\})=\frac{w_k}{W_n}.
\end{equation}
\begin{proof}[Proof of Theorem~\ref{wrt:thm:weak convergence of measures}]
We can already see that if $(W_n)_{n\geq 1}$ is bounded and hence converges to some $W_\infty$ we have $\mu_n(\{u_k\})\underset{}{\rightarrow}\frac{w_k}{W_\infty}$ as $n\rightarrow\infty$. 
In this case it is easy to verify that $(\mu_n)$ weakly converges to the measure $\mu$ which is such that $\mu(\{u_k\})=\frac{w_k}{W_\infty}$. 
In this case $\mu(\bU)=1$ and so $\mu$ is carried on $\bU$. 

Let us now assume that $W_n\underset{}{\rightarrow}\infty$ as $n\rightarrow\infty$ and show that in this case, $\mu_n$ converges weakly to some limit $\mu$ that is carried on $\partial \bU$.
In this case we have $\mu_n(\{u_k\})=\frac{w_k}{W_n}\underset{}{\rightarrow}0$ as $n\rightarrow\infty$. 
Let us denote for every integers $n, k\geq 1$,
\[M_n^{(k)}:=\mu_n(T(u_k)),\]
the proportion of the total mass above vertex $u_k$ at time $n$. Remark that this quantity evolves as the proportion of red balls in a time-dependent P\'olya urn scheme with weights $(w_i)_{i\geq k+1}$, starting at time $k$ with $W_{k-1}$ black balls and $w_k$ red balls.
Hence for all $k\geq 1$, the sequence $(M_{n}^{(k)})_{n\geq k}$ almost surely converges to a limit $M_{\infty}^{(k)}$.
Also, for any $u\in\bU$ that does not receive a label in the process, the sequence $(\mu_n(T(u)))_{n\geq 1}$ (and also $(\mu_n(\{u\}))_{n\geq 1}$) is identically equal to zero. 
Hence we have convergence of $(\mu_n(\{u\}))_{n\geq 1}$ and $(\mu_n(T(u)))_{n\geq 1}$ for all $u\in\bU$. 

The last step in order to prove the weak convergence of $(\mu_n)_{n\geq 1}$ is to prove that the quantities that we obtain in the limit indeed define a probability measure on $\overline{\bU}$. 
If for all $u\in\bU$ we have
\begin{equation}\label{wrt:eq:no loss of mass}
	\lim_{n\rightarrow\infty}\mu_n(T(u))= \sum_{i=1}^{\infty}\lim_{n\rightarrow\infty}\mu_n(T(ui)),
\end{equation}
then it entails that $\mu_n \underset{n\rightarrow\infty}{\rightarrow}\mu$, where $\mu$ is the unique probability measure on $\overline{\bU}$ such that for all $u\in\bU$, \begin{equation*}
\mu(\{u\})=0 \quad \text{and} \quad  \mu(T(u))=\lim_{n\rightarrow\infty}\mu_n(T(u)). 
\end{equation*}
The existence of such a measure $\mu$ is ensured by the Kolmogorov extension theorem on the product space $\partial \bU=\N^\N$. 

For any $u\notin\{u_1,u_2,\dots\}$, the equality \eqref{wrt:eq:no loss of mass} is immediate, so let us prove it for all $u_k$ for $k\geq 1$.
For any $n,k,i\geq 1$, let 
\[M_n^{(k,i)}:=\mu_n(\{u_k\})+\sum_{j=i+1}^{\infty}\mu_n(T(u_kj))=\mu_n\left(T(u_k)\right)-\sum_{j=1}^{i}\mu_n\left(T(u_kj)\right).\]
Using what we just proved, we know that for any $k,i$, the quantity $M_n^{(k,i)}$ almost surely converges as $n\rightarrow\infty$ to some limit $M_\infty^{(k,i)}$.
Proving \eqref{wrt:eq:no loss of mass} reduces to proving that for any $k\geq 1$, we almost surely have $M_\infty^{(k,i)}\underset{i\rightarrow\infty}{\rightarrow}0$. By construction, the sequence $(M_\infty^{(k,i)})_{i\geq 1}$ is non-negative and non-increasing, hence it converges almost surely, so it suffices to prove that its almost sure limit is $0$.

We define $\tau^{(k,i)}:=\inf\enstq{n\geq 1}{u_n=u_ki}$, the time when the vertex $u_k$ receives its $i$-th child in the growth procedure. 
Conditionally on the event $\{\tau^{(k,i)}=t\}$, the process $(M_n^{(k,i)})_{n\geq t}$ evolves as the proportion of red balls in a time-dependent P\'olya urn scheme, starting at time $t$ with $w_k$ red balls (that correspond to the weight of $u_k$) and $(W_{t}-w_k)$ blacks balls (that correspond to the total weight of other vertices in the tree), and weights $(w_m)_{m\geq t+1}$. Hence we have
\begin{align*}
	\Ecsq{M_\infty^{(k,i)}}{\tau^{(k,i)}=t}=\Ecsq{M_t^{(k,i)}}{\tau^{(k,i)}=t}=\frac{w_k}{W_t}.
\end{align*}

On the event $\{\tau^{(k,i)}=\infty\}$, we have $M_n^{(k,i)}=0$ for $n<k$ and $M_n^{(k,i)}=\mu_n(\{u_k\})$ for $n\geq k$, which decreases almost surely to $0$, so $M_\infty^{(k,i)}=0$ a.s. on that event.

Using the crude bound $\tau^{(k,i)}\geq i$, which entails that $W_{\tau^{(k,i)}}\geq W_{i}$ almost surely, we get 
\begin{align*}
\Ec{M_\infty^{(k,i)}}=\Ec{M_\infty^{(k,i)}\ind{\tau^{(k,i)}<\infty}}=\Ec{M_{\tau^{(k,i)}}^{(k,i)}\ind{\tau^{(k,i)}<\infty}}
&= \Ec{\frac{w_k}{W_{\tau^{(k,i)}}}\ind{\tau^{(k,i)}<\infty}}\\
&\leq \frac{w_k}{W_i}\underset{i\rightarrow\infty}{\rightarrow} 0,
\end{align*}
 hence $M_\infty^{(k,i)}\underset{i\rightarrow\infty}{\rightarrow} 0$ in $L^1$, so its almost sure limit is also $0$. 
In the end, by Lemma~\ref{wrt:lem:convergence measure}, the sequence of measures $(\mu_n)$ almost surely converges weakly to a limit $\mu$, and this measure only charges the set $\partial \bU$.

We just finished proving that, for any sequence of weight $\boldsymbol{w}$, the sequence $(\mu_n)_{n\geq 1}$ almost surely converges weakly to a probability measure $\mu$. 
Furthermore, we proved that $\mu$ is carried on $\bU$ when $(W_n)$ is bounded and carried on $\partial\bU$ when $W_n\rightarrow \infty$. 
The proof is then finished by applying the lemma stated below. 
\end{proof}
\begin{lemma}
Suppose that $\sum_{n=1}^{\infty}w_n=\infty$ so that $\mu$ is carried on $\partial\bU$. Then either $\sum_{n=1}^\infty \left(\frac{w_n}{W_n}\right)^2<\infty$ and then $\mu$ is almost surely diffuse, or $\sum_{n=1}^\infty \left(\frac{w_n}{W_n}\right)^2=\infty$ and then $\mu$ is carried on one point of $\partial\bU$.
\end{lemma}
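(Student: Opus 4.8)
The plan is to read off both branches of the dichotomy from the second-moment behaviour of the P\'olya-urn martingales $(M^{(k)}_n)_{n\ge k}$, $M^{(k)}_n=\mu_n(T(u_k))$, already introduced in the proof of Theorem~\ref{wrt:thm:weak convergence of measures}. Recall that, for $n\ge k$, $M^{(k)}_n$ is the proportion of red balls in a time-dependent P\'olya urn started at time $k$ with $w_k$ red and $W_{k-1}$ black balls and weight sequence $(w_j)_{j\ge k+1}$, so that the total number of balls at time $n$ equals $W_n$; in particular $M^{(k)}_k=w_k/W_k$, and $(M^{(k)}_n)_{n\ge k}$ is a bounded martingale converging a.s.\ and in $L^2$ to $M^{(k)}_\infty=\mu(T(u_k))$. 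Writing $\mathcal{F}_k=\sigma(\ttT_1,\dots,\ttT_k)$ and conditioning on the colour drawn at step $n+1$ gives the elementary identity, for $n\ge k$,
\[
\mathbb{E}\!\left[(M^{(k)}_{n+1})^2\mid\mathcal{F}_n\right]=(M^{(k)}_n)^2+M^{(k)}_n\bigl(1-M^{(k)}_n\bigr)\Bigl(\tfrac{w_{n+1}}{W_{n+1}}\Bigr)^{2}.
\]
Taking expectations and summing over $n\ge k$ yields $\sum_{j>k}\mathbb{E}\bigl[M^{(k)}_{j-1}(1-M^{(k)}_{j-1})\bigr](w_j/W_j)^2=\mathbb{E}[(M^{(k)}_\infty)^2]-(w_k/W_k)^2<\infty$; bounding $M(1-M)\le M$ and using the martingale property also gives $\mathbb{E}[(M^{(k)}_\infty)^2\mid\mathcal{F}_k]\le \tfrac{w_k}{W_k}\bigl(\tfrac{w_k}{W_k}+\sum_{j>k}(w_j/W_j)^2\bigr)$.

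For the case $\sum_n(w_n/W_n)^2=\infty$: if $w_k>0$ and $\mathbb{E}[M^{(k)}_\infty(1-M^{(k)}_\infty)]>0$, then by the $L^1$-convergence $M^{(k)}_n(1-M^{(k)}_n)\to M^{(k)}_\infty(1-M^{(k)}_\infty)$ the quantities $\mathbb{E}[M^{(k)}_n(1-M^{(k)}_n)]$ stay above a positive constant for large $n$, which forces the convergent sum $\sum_{j>k}\mathbb{E}[M^{(k)}_{j-1}(1-M^{(k)}_{j-1})](w_j/W_j)^2$ to diverge, a contradiction. Hence $M^{(k)}_\infty\in\{0,1\}$ a.s.\ (trivially so when $w_k=0$), and since there are countably many vertices, a.s.\ $\mu(T(u_k))\in\{0,1\}$ for all $k$ at once. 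Using $\mu(\{u_k\})=0$ and $\mu(T(u_k))=\sum_i\mu(T(u_ki))$ one builds a ray recursively: $\mu(T(u_1))=1$, and whenever $\mu(T(v))=1$ exactly one child $v'$ of $v$ satisfies $\mu(T(v'))=1$ (at least one child exists since $\mu$ lives on $\partial\bU$). This produces a random $\xi\in\partial\bU$ with $\mu(T(\xi|_m))=1$ for all $m$, whence $\mu(\{\xi\})=\lim_m\mu(T(\xi|_m))=1$ and $\mu=\delta_\xi$.

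For the case $\sum_n(w_n/W_n)^2<\infty$: since $\mu$ is carried by $\partial\bU$, $\mu$ is diffuse a.s.\ iff $\mathbb{E}\bigl[\sum_x\mu(\{x\})^2\bigr]=0$, and $\sum_x\mu(\{x\})^2=\lim_m Q_m$ with $Q_m:=\sum_{k:\haut(u_k)=m}(M^{(k)}_\infty)^2$ a non-increasing limit, so it suffices to prove $\mathbb{E}[Q_m]\to0$. Two ingredients combine: (i) the urn bound above, namely $\mathbb{E}[(M^{(k)}_\infty)^2\mid\mathcal{F}_k]\le \tfrac{w_k}{W_k}\varepsilon_k$ with $\varepsilon_k:=\tfrac{w_k}{W_k}+\sum_{j>k}(w_j/W_j)^2\to0$ (both terms vanish because the series converges); and (ii) the flow-conservation identity $\mathbb{E}\bigl[\sum_{k:\haut(u_k)=m}\tfrac{w_k}{W_k}\bigr]=\tfrac{w_1}{W_1}=1$ for every $m\ge0$, proved by induction on $m$ from $\mathbb{P}(\haut(u_k)=m)=\sum_{\ell<k}\tfrac{w_\ell}{W_{k-1}}\mathbb{P}(\haut(u_\ell)=m-1)$ and the telescoping sum $\sum_{k>\ell}\tfrac{w_k}{W_kW_{k-1}}=\tfrac1{W_\ell}$ (valid since $W_n\to\infty$, i.e.\ $\sum w_n=\infty$). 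Since $\haut(u_k)=m$ forces $k>m$, given $\eta>0$ and $N$ with $\varepsilon_k\le\eta$ for $k\ge N$, for all $m\ge N$ only indices $k\ge N$ contribute to $Q_m$, so $\mathbb{E}[Q_m]\le\eta\,\mathbb{E}\bigl[\sum_{k:\haut(u_k)=m}\tfrac{w_k}{W_k}\bigr]=\eta$; hence $\mathbb{E}[Q_m]\to0$ and $\mu$ is a.s.\ diffuse.

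The main obstacle is the diffuse case. The naive per-vertex estimate on $\mathbb{E}[(M^{(k)}_\infty)^2]$ is not, by itself, summable over the level-$m$ vertices — there are infinitely many of them and no uniform control — so the argument really hinges on the exact conservation-of-mass identity $\mathbb{E}[\sum_{k:\haut(u_k)=m}w_k/W_k]=1$, whose proof is the model-specific computation (the telescoping $w_k/(W_kW_{k-1})=1/W_{k-1}-1/W_k$). Establishing this identity and combining it cleanly with $\varepsilon_k\to0$ to get $\mathbb{E}[Q_m]\to0$ is the heart of the matter; the divergent case is, by comparison, a soft martingale argument.
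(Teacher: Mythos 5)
Your proof is correct and genuinely different from the paper's in both branches of the dichotomy, so it is worth spelling out what each route buys.

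For $\sum_n(w_n/W_n)^2=\infty$, the paper simply cites Pemantle's survey for the fact that a time-dependent P\'olya urn with this variance condition has a $\{0,1\}$-valued limit; you instead derive it directly from the exact conditional second-moment identity $\mathbb{E}[(M^{(k)}_{n+1})^2\mid\mathcal{F}_n]=(M^{(k)}_n)^2+M^{(k)}_n(1-M^{(k)}_n)(w_{n+1}/W_{n+1})^2$ and a bounded-convergence argument. This is a clean, self-contained replacement for the reference, and the recursive construction of the ray $\xi$ from the $\{0,1\}$-valued limits (using $\mu(\{u\})=0$ and $\sigma$-additivity) is exactly the right way to turn the per-vertex statement into $\mu=\delta_\xi$.

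For $\sum_n(w_n/W_n)^2<\infty$, both arguments ultimately control the same quantity, namely $\mathbb{E}[Q_m]=\mathbb{P}(\haut(D_\infty\wedge D_\infty')\ge m)$ where $D_\infty,D_\infty'$ are independent $\mu$-samples and $Q_m=\sum_{\haut(v)=m}\mu(T(v))^2$. The paper reaches this by pushing forward $\mu_n\otimes\mu_n$ under $(\cdot\wedge\cdot)$, using the Portmanteau theorem and computing the \emph{exact} annealed law $p_k=(w_k/W_k)^2\prod_{i>k}(1-(w_i/W_i)^2)$ of $D_\infty\wedge D_\infty'$, then noting the $p_k$ sum to $1$ since the infinite product is nonzero. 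You instead bound $\mathbb{E}[(M^{(k)}_\infty)^2\mid\mathcal{F}_k]\le\frac{w_k}{W_k}\varepsilon_k$ from the urn identity (where $\varepsilon_k\to0$ because the series converges), and pair it with the flow-conservation lemma $\mathbb{E}\bigl[\sum_{\haut(u_k)=m}w_k/W_k\bigr]=1$, which you prove by the one-step recursion $p_m(k)=\sum_{\ell<k}\frac{w_\ell}{W_{k-1}}p_{m-1}(\ell)$ and the telescoping $\sum_{k>\ell}\frac{w_k}{W_kW_{k-1}}=\frac1{W_\ell}$. Since $\haut(u_k)=m$ forces $k>m$, this gives $\mathbb{E}[Q_m]\le\sup_{k>m}\varepsilon_k\to0$, and monotone convergence of $Q_m$ down to $\sum_x\mu(\{x\})^2$ finishes. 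Your route avoids the weak-convergence machinery entirely and trades the exact computation of $p_k$ for a robust inequality plus the pretty conservation identity; the paper's route yields the explicit law of the coalescence point, which is a stronger by-product. Both are correct; yours is arguably more elementary and more clearly exposes why the variance condition is the right threshold.
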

\begin{proof}
For any $k\geq 1$ the process $(\mu_n(T(u_k))_{n\geq k}$ evolves as the proportion of red balls in a time-dependent P\'olya urn started at time $k$ with $w_k$ red balls, $W_{k-1}$ black balls and a weight sequence $(w_n)_{n\geq k+1}$. 
By the work of Pemantle in \cite{pemantle_survey_2007}, if we assume $\sum_{n=1}^\infty \left(\frac{w_n}{W_n}\right)^2=\infty$ then the limiting proportion $\mu(T(u_k))$ almost surely belongs to the set $\{0,1\}$. This translates into the fact that $\mu(T(u))\in\{0,1\}$ almost surely for any $u\in\bU$, which entails that $\mu$ is almost surely carried on one leaf of $\partial\bU$.

On the contrary, let us suppose that $\sum_{n=1}^\infty \left(\frac{w_n}{W_n}\right)^2<\infty$ and prove that this entails that the limiting measure $\mu$ is diffuse almost surely. Consider the function $(\cdot \wedge \cdot): \overline{\bU}\times \overline{\bU} \rightarrow\overline{\bU}$ which associates to each couple $(u,v)$ their most recent common ancestor $u\wedge v$ in the completed tree $\overline{\bU}$. This function is continuous with respect to the distance $\dist^{\mathrm{exp}}$. Then, since $\mu_n\rightarrow\mu$ almost surely weakly, we also get the following almost sure weak convergence of the push-forward of the product measure $\mu_n\otimes \mu_n$ on $\bU\times\bU$ by the function $(\cdot\wedge \cdot)$:
\begin{align}\label{wrt:eq:convergence pushforward measures}
(\cdot \wedge \cdot)_*(\mu_n\otimes\mu_n)\rightarrow (\cdot \wedge \cdot)_*(\mu\otimes\mu).
\end{align}
Let us fix $n\geq 1$ and conditionally on $(\ttT_1,\ttT_2,\dots, \ttT_n)$, let $D_n$ and $D_n'$ be two independent vertices taken under $\mu_n$.
Then, an argument taken from the proof of \cite[Lemma~3.8]{curien_random_2017} in a slightly different setting ensures that 
\begin{align*}\Pp{D_n\wedge D_n'=u_k}=\left(\frac{w_k}{W_k}\right)^2 \cdot \prod_{i=k+1}^{n}\left(1-\left(\frac{w_i}{W_i}\right)^2\right)\underset{k\rightarrow\infty}{\longrightarrow} p_k:=\left(\frac{w_k}{W_k}\right)^2 \cdot \prod_{i=k+1}^{\infty}\left(1-\left(\frac{w_i}{W_i}\right)^2\right).
\end{align*}
The argument goes as follows: 
\begin{itemize}
\item with probability $\left(\frac{w_n}{W_n}\right)^2$, we have $D_n=D_n'=D_n\wedge D_n'=u_n$,
\item with probability $1-\left(\frac{w_n}{W_n}\right)^2$, it is not the case, and we can check that conditionally on this event, the vertices $\left[D_n\right]_{n-1}$ and $\left[D_n'\right]_{n-1}$ defined as the most recent ancestor in $\ttT_{n-1}$ of respectively $D_n$ and $D_n'$, are independent and taken under the measure $\mu_{n-1}$, and that $D_n\wedge D_n'=\left[D_n\right]_{n-1}\wedge\left[D_n'\right]_{n-1}$.
\end{itemize}
It suffices to then apply this in cascade to get the last display.

Note that thanks to the summability condition, the infinite product $\prod_{i=2}^{\infty}\left(1-\left(\frac{w_i}{W_i}\right)^2\right)$ is non-zero, and this suffices to ensure that the obtained sequence $(p_k)_{k\geq 1}$ is a probability distribution.
Thanks to the weak convergence \eqref{wrt:eq:convergence pushforward measures}, it corresponds to the (annealed) distribution $p_k=\Pp{D_\infty\wedge D_\infty'=u_k}$, where $D_\infty$ and $D_\infty'$ are two independent points taken under the measure $\mu$, conditionally on $(\ttT_n)_{n\geq 1}$. Now we can write
\begin{align*}
\Pp{\dist^{\mathrm{exp}}(D_\infty,D_\infty')\leq e^{-k}} = \Pp{\haut(D_\infty \wedge D_\infty ')\geq k}\leq \sum_{i=k+1}^{\infty} p_i,
\end{align*}
where the inequality is due to the fact that the vertices $u_1, u_2, \dots,u_k$ have a height smaller than $k$. Hence $\Pp{\dist^{\mathrm{exp}}(D_\infty,D_\infty')=0}\leq \lim_{k\rightarrow \infty } \Pp{\dist^{\mathrm{exp}}(D_\infty,D_\infty')\leq e^{-k}} =0$. So, almost surely, two points taken independently under $\mu$ are different, and this ensures that $\mu$ is diffuse. 
\end{proof}

\subsubsection{Convergence of other sequences of measures.}
We also study two other sequences of measures $(\eta_n)$ and $(\nu_n)$ carried on the Ulam tree $\bU$. For every $n\geq 2$, these measures only charge the vertices $\{u_1,u_2,\dots,u_n\}$ in such a way that for any $1\leq k \leq n$,
\begin{equation*}%
\eta_n(\{u_k\})=\frac{b_k+\deg^{+}_{\ttT_n}(u_k)}{B_n+n-1} \qquad \text{and} \qquad \nu_n(\{u_k\})=\frac{1}{n},
\end{equation*}
where $(b_n)_{n\geq 1}$ is a sequence of real numbers such that $b_1>-1$ and $b_n\geq 0$ for all $n\geq 2$. We write $B_n:= \sum_{k=1}^{n}b_k$. We suppose that $B_n=\grandO{n}$ and that there exists $\epsilon>0$ such that $b_n=\grandO{n^{1-\epsilon}}$. The assumptions on the sequence $(b_n)_{n\geq 1}$ are chosen such that they are satisfied by a sequence $(a_n)_{n\geq 1}$ of fitnesses that satisfies \eqref{wrt:assum: pa An=cn+rn} for some $c>0$.
\begin{proposition}\label{wrt:prop:convergence other measures}
Under the assumptions $\sum_{n=1}^\infty w_n=\infty$ and $\sum_{n=1}^\infty \left(\frac{w_n}{W_n}\right)^2<\infty$, the sequences $(\eta_n)_{n\geq 1}$ and $(\nu_n)_{n\geq 1}$ converge almost surely weakly towards the limiting measure $\mu$ on $\partial \bU$ that is defined in Theorem~\ref{wrt:thm:weak convergence of measures}.
\end{proposition}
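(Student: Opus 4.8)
The plan is to verify, for each of the two sequences, the criterion of Lemma~\ref{wrt:lem:convergence measure}. Recall that under the present hypotheses Theorem~\ref{wrt:thm:weak convergence of measures} already tells us that $\mu$ is diffuse and carried on $\partial\bU$, so that $\mu(\{u\})=0$ for every $u\in\bU$, and that its proof identifies $\mu(T(u_k))$ with the almost sure limit of the P\'olya-urn proportion $M_n^{(k)}:=\mu_n(T(u_k))$. For $u\in\bU$ that never receives a label there is nothing to check, so it suffices to prove that for every $k\geq 1$ one has, almost surely, $\eta_n(\{u_k\})\to 0$, $\eta_n(T(u_k))\to\mu(T(u_k))$, and likewise for $\nu_n$. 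Throughout, set $\mathcal F_n:=\sigma(\ttT_1,\dots,\ttT_n)$ and $N_n^{(k)}:=\#\{k\le m\le n:u_m\in T(u_k)\}$, and use the basic fact that $\Pcsq{u_m\in T(u_k)}{\mathcal F_{m-1}}=M_{m-1}^{(k)}$ for all $m>k$.

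The sequence $(\nu_n)$ is the easiest: $\nu_n(\{u_k\})=1/n\to 0$ and $\nu_n(T(u_k))=N_n^{(k)}/n$, so I only need $N_n^{(k)}/n\to\mu(T(u_k))$ almost surely. Writing $N_n^{(k)}=1+\sum_{m=k+1}^{n}M_{m-1}^{(k)}+\sum_{m=k+1}^{n}(\ind{u_m\in T(u_k)}-M_{m-1}^{(k)})$, the first sum divided by $n$ converges to $\mu(T(u_k))$ by Ces\`aro (since $M_{m-1}^{(k)}\to\mu(T(u_k))$), while the second sum is a martingale with increments bounded by $1$, hence $\petito{n}$ almost surely by Kronecker's lemma applied to the almost surely convergent $L^2$-martingale $\sum_m m^{-1}(\ind{u_m\in T(u_k)}-M_{m-1}^{(k)})$. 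This already gives $\nu_n\to\mu$ weakly, almost surely.

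For $(\eta_n)$ I would start from the identity $\eta_n(T(u_k))=\frac{N_n^{(k)}-1+B_n^{(k)}}{n-1+B_n}$, where $B_n^{(k)}:=\sum_{m\le n:\,u_m\in T(u_k)}b_m$ and where I used that the out-degrees of the $N_n^{(k)}$ vertices of the subtree above $u_k$ sum to $N_n^{(k)}-1$. Since $b_1>-1$, $b_m\ge 0$ for $m\ge 2$ and $B_n=\grandO{n}$, the denominator is of order $n$. For the atoms, $\eta_n(\{u_k\})=\frac{b_k+\deg^{+}_{\ttT_n}(u_k)}{B_n+n-1}$, and by \eqref{wrt:eq:equality distribution degree} the degree $\deg^{+}_{\ttT_n}(u_k)$ has the law of a sum of independent Bernoulli variables of total mean $w_k\sum_{i=k}^{n-1}W_i^{-1}=\petito{n}$ --- the last estimate because $W_i\to\infty$ (as $\sum w_n=\infty$), so Ces\`aro applies. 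A Chernoff bound and the Borel--Cantelli Lemma~\ref{wrt:lem:sum bernoulli} then give $\deg^{+}_{\ttT_n}(u_k)=\petito{n}$ almost surely, whence $\eta_n(\{u_k\})\to 0$. It therefore remains only to establish the asymptotics $B_n^{(k)}=\mu(T(u_k))\cdot B_n+\petito{n}$ almost surely: together with $N_n^{(k)}=\mu(T(u_k))\cdot n+\petito{n}$ and $n-1+B_n=\Theta(n)$ this gives $\eta_n(T(u_k))\to\mu(T(u_k))$, and an appeal to Lemma~\ref{wrt:lem:convergence measure} finishes the proof.

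To obtain the asymptotics for $B_n^{(k)}$, decompose $B_n^{(k)}=b_k+\sum_{m=k+1}^{n}b_mM_{m-1}^{(k)}+D_n$ with $D_n:=\sum_{m=k+1}^{n}b_m(\ind{u_m\in T(u_k)}-M_{m-1}^{(k)})$. For the deterministic-weight part, if $B_n\to\infty$ the weighted Ces\`aro lemma (nonnegative weights $b_m$, $m\ge 2$, with divergent sum) gives $\sum_{m=k+1}^n b_mM_{m-1}^{(k)}=\mu(T(u_k))B_n+\petito{B_n}=\mu(T(u_k))B_n+\petito{n}$, while if $B_n=\grandO{1}$ both this sum and $B_n$ are bounded and the asymptotics is trivial. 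For $D_n$, a martingale with increments bounded by $b_m$ and conditional variances at most $b_m^2$, the crucial input is that $\sum_m b_m^2/m^2<\infty$; this is where the two hypotheses on $\mathbf b$ must be combined, via $b_m^2\le\cst\, m^{1-\epsilon}b_m$ (from $b_m=\grandO{m^{1-\epsilon}}$) and an Abel summation of $\sum_m b_m m^{-1-\epsilon}$ using $B_m=\grandO{m}$ and the monotonicity of $m\mapsto m^{-1-\epsilon}$. Consequently $\sum_m m^{-1}b_m(\ind{u_m\in T(u_k)}-M_{m-1}^{(k)})$ is an $L^2$-bounded martingale, hence converges almost surely, and Kronecker's lemma yields $D_n=\petito{n}$. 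I expect this last estimate, and in particular the extraction of $\sum_m b_m^2/m^2<\infty$ from the joint use of $B_n=\grandO{n}$ and $b_n=\grandO{n^{1-\epsilon}}$, to be the main obstacle; the remaining ingredients are routine conditional strong-law arguments, for which one should only take care to run them on a single almost sure event valid for every $k\geq 1$.
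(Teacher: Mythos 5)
Your proof is correct, and it takes a genuinely different route from the paper for the degree measure $(\eta_n)$. The paper proceeds by forming the single auxiliary martingale $X_n^{(k)}:=(B_n+n-1)\bigl(\eta_n(T(u_k))-\mu_n(T(u_k))\bigr)$, directly comparing $\eta_n(T(u_k))$ to $\mu_n(T(u_k))$, and then applying a Feller-type strong law after verifying $\sum_n n^{-2}\Var(\Delta X_n^{(k)})<\infty$ --- a computation that explicitly uses the hypothesis $\sum(w_n/W_n)^2<\infty$. You instead exploit the exact identity $\eta_n(T(u_k))=\bigl(N_n^{(k)}-1+B_n^{(k)}\bigr)/(n-1+B_n)$ (where your $N_n^{(k)}$ is the vertex count in $T(u_k)\cap\ttT_n$, not the paper's $N_n^{(k)}=\eta_n(T(u_k))$) and analyze the count part and the fitness part separately, each by a weighted Ces\`aro plus a Kronecker-lemma argument. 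The decomposition $B_n^{(k)}=b_k+\sum_{m>k}b_mM_{m-1}^{(k)}+D_n$ with the square-summability $\sum_m b_m^2/m^2<\infty$ extracted from $B_n=\grandO{n}$ and $b_n=\grandO{n^{1-\epsilon}}$ via Abel summation is sound, and your handling of the $B_n\to\infty$ vs.\ $B_n$ bounded dichotomy is complete. What your route buys is that the argument for $\eta_n$ never invokes $\sum(w_n/W_n)^2<\infty$ beyond what is already encoded in the convergence of the bounded martingale $M_n^{(k)}$, so it treats $\eta_n$ and $\nu_n$ by the same Ces\`aro/Kronecker technology; the paper's single-martingale construction is more compact and avoids the need for the structural identity $\sum_{u_m\in T(u_k)}\deg^+_{\ttT_n}(u_m)=N_n^{(k)}-1$. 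Both are elementary martingale strong-law arguments, and both correctly run all estimates simultaneously over the countably many $k$.

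Two small points of care: your claim that $\eta_n(\{u_k\})\to 0$ rests on $W_i\to\infty$, which indeed follows from $\sum w_n=\infty$, and your invocation of Lemma~\ref{wrt:lem:sum bernoulli} already covers both the convergent and divergent cases of $\sum_i W_i^{-1}$ --- the explicit Chernoff/Borel--Cantelli phrasing is unnecessary but harmless. Also, the weighted Ces\`aro step requires only that the weights $b_m$ for $m\ge 2$ are nonnegative with divergent sum; since the single term $b_k$ (possibly negative if $k=1$) is pulled out and bounded, this is fine.
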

The rest of this section is devoted to proving Proposition~\ref{wrt:prop:convergence other measures}. We treat the two sequences of measures separately. 

\paragraph{The degree measure.}
Consider the sequence $(\eta_n)_{n\geq 1}$ on $\overline{\bU}$. 
Since the sequence $(W_n)_{n\geq1}$ tends to infinity, we have $\eta_n(\{u\})\rightarrow 0$ for every $u\in\bU$. 
Indeed, using the equality in distribution \eqref{wrt:eq:equality distribution degree} and Lemma~\ref{wrt:lem:sum bernoulli} in the appendix, it is easy to see that either $\sum_{i=1}^\infty W_i^{-1}<\infty$ in which case the degrees $\deg^{+}_{\ttT_n}(u_k)$ are eventually constant as $n\rightarrow\infty$; 
or $\sum_{i=1}^\infty W_i^{-1}=\infty$, in which case we have the almost sure asymptotic behaviour $\deg^{+}_{\ttT_n}(u_k)\sim w_k\cdot \sum_{i=k}^n W_i^{-1}$. 
In both cases, for all $k\geq 1$, we have $n^{-1}\deg^{+}_{\ttT_n}(u_k)\rightarrow 0$ almost surely as $n\rightarrow\infty$. 

For all $k\geq n$, we keep the notation $M_n^{(k)}=\mu_n(T(u_k))$ introduced in the proof of Theorem~\ref{wrt:thm:weak convergence of measures} and let \[N_n^{(k)}:=\eta_n(T(u_k)).\]
We can check that
\begin{align*} 
N_{n+1}^{(k)}= \frac{1}{B_{n+1}+n}\cdot\left((B_n+n-1)\cdot N_n^{(k)}+(b_{n+1}+1)\cdot \ind{u_{n+1}\in T(u_k)}\right).
\end{align*}
Now, using that $\Ppsq{u_{n+1}\in T(u_k)}{\cF_n}=M_n^{(k)}$ and that $\Ecsq{M_{n+1}^{(k)}}{\cF_n}=M_n^{(k)}$, we get
\begin{align*}
\Ecsq{N_{n+1}^{(k)}-M_{n+1}^{(k)}}{\mathcal{F}_n}&=\frac{B_n+n-1}{B_{n+1}+n}\cdot N_{n}^{(k)} +\frac{b_{n+1}+1}{B_{n+1}+n}\cdot M_{n}^{(k)}-M_{n}^{(k)}\\
&=\frac{B_n+n-1}{B_{n+1}+n}\cdot\left(N_{n}^{(k)}-M_{n}^{(k)}\right).
\end{align*}
Hence, if we denote $X_n^{(k)}:= (B_{n}+n-1)\cdot \left(N_{n}^{(k)}-M_{n}^{(k)}\right)$, then the last computation shows that $\left(X_n^{(k)}\right)_{n\geq k}$ is a martingale for the filtration generated by $(\ttT_n)_{n\geq 1}$.
More precisely we can write
\begin{equation*}
X_{n+1}^{(k)}-X_n^{(k)}=\underset{c_n}{\underbrace{\left((1+b_{n+1})-\frac{w_{n+1}}{W_{n+1}}\left(B_{n+1}+n\right)\right)}}\cdot \left(\ind{u_{n+1}\in T(u_k)}-M_n^{(k)}\right),
\end{equation*}
hence we have
\begin{equation*}
\Ecsq{ X_{n+1}^{(k)}-X_n^{(k)}}{\cF_n}=0 \qquad \text{and} \qquad  \Ec{\left(X_{n+1}^{(k)}-X_n^{(k)}\right)^2}\leq c_n^2.
\end{equation*}
Then, using \cite[Chapter~VII.9, Theorem~3]{feller_introduction_1971}, we get that if
\begin{equation}\label{wrt:eq:somme second moment finie}
\sum_{n=k}^{\infty}n^{-2}c_n^2<\infty
\end{equation}
then $\frac{X_n^{(k)}}{n}\rightarrow 0$ a.s. as $n\rightarrow \infty$, which would prove that $	N_n^{(k)}\underset{}{\longrightarrow}M_{\infty}^{(k)}$ as $n\rightarrow\infty$. In our case, we can verify that \eqref{wrt:eq:somme second moment finie} holds. Indeed, using the fact that we assumed that $B_n=\grandO{n}$ and $b_{n+1}=\grandO{n^{1-\epsilon}}$, we have
\begin{align*}
n^{-2}c_n^2&=n^{-2}\left((1+b_{n+1})-\frac{w_{n+1}}{W_{n+1}}\left(B_{n+1}+n\right)\right)^2\\
&\leq n^{-2} \cdot 3\left(1+b_{n+1}^2+\left(\frac{w_{n+1}}{W_{n+1}}\left(B_{n+1}+n\right)\right)^2\right)\\
&\leq 3 n^{-2}+ 3 b_{n+1}^2 n^{-2} + \cst\cdot \left(\frac{w_{n+1}}{W_{n+1}}\right)^2,
\end{align*}
which is summable under our assumptions.
In the end, using Lemma~\ref{wrt:lem:convergence measure}, we have the almost sure convergence
\[\eta_n\overset{}{\rightarrow}\mu \qquad \text{weakly}.\]

\paragraph{The uniform measure on the vertices of $\ttT_n$.}Consider the sequence $(\nu_n)$ on $\overline{\bU}$.
Fix $k\geq 1$. For any $n\geq k$ we can write $\nu_n(T(u_k))=\frac{1}{n}\sum_{i=k}^{n}\ind{u_i\in T(u_k)}$. For any $i\geq k+1$, we have $\mathsf p_i:=\Ppsq{u_i\in T(u_k)}{\cF_{i-1}}=\mu_{i-1}(T(u_k))$, which tends a.s.\ to some limit $\mu(T(u_k))$ as $i\rightarrow\infty$. Using Lemma~\ref{wrt:lem:sum bernoulli} in the appendix, we have 
\begin{align*}
	\frac{\sum_{i=k+1}^{n}\ind{u_i\in T(u_k)}}{\sum_{i=k+1}^{n}\mathsf p_i}\overset{}{\underset{n\rightarrow\infty}{\longrightarrow}} 1 \quad \text{a.s.\ on the event} \quad \left\lbrace\sum_{i=k+1}^{\infty}\mathsf p_i=\infty\right\rbrace,
\end{align*}
and also
\begin{align*}
	\sum_{i=k+1}^{n}\ind{u_i\in T(u_k)} \quad \text{converges a.s.\ on the event} \quad   \left\lbrace\sum_{i=k+1}^{\infty}\mathsf p_i<\infty\right\rbrace.
\end{align*}
In both cases we get $\nu_n(T(u_k))\underset{n\rightarrow\infty}{\rightarrow} \lim_{i\rightarrow\infty}\mathsf{p}_i=\mu(T(u_k))$ almost surely. %
We also have for any $k\geq 1$, \[\nu_n(\{u_k\})=\frac{1}{n} \underset{n\rightarrow\infty}{\rightarrow}0 \qquad \text{and of course} \qquad \forall u \notin \{u_1,u_2,\dots\}, \forall n\geq 1,\ \nu_n(\{u\})=\nu_n(T(u))=0,\] 
so we can conclude using Lemma~\ref{wrt:lem:convergence measure} that almost surely $\nu_n \underset{n\rightarrow\infty}{\rightarrow} \mu$ weakly.

\section{Height and profile of WRT}\label{wrt:sec:height and profile of WRT}
The main goal of this section is to prove Theorem~\ref{wrt:thm:profile and height of wrt} which gives asymptotics for the profile and height of the tree. Recall that we denote
\begin{align*}
	\mathbb L_n(k):=\#\enstq{1\leq i\leq n}{\haut(u_i)=k},
\end{align*}
the number of vertices at height $k$ in the tree $\ttT_n$.
In order to get information on the sequence of functions $(k\mapsto \bL_n(k))_{n\geq 1}$ we study their Laplace transform
\begin{equation}\label{wrt:eq:def laplace tranform true profile}
z\mapsto  \sum_{k=0}^{\infty}\bL_n(k)e^{kz}=\sum_{i=1}^ne^{z \haut(u_i)}=n\cdot \int_{\bU}e^{z\haut(u)}\dd\nu_n(u),
\end{equation}
where the last expression is given using an integral against the probability measure $\nu_n$ defined in Section~\ref{wrt:subsec:convergence of measure} as the uniform measure on the vertices of $\ttT_n$. The key result in our approach is to prove the convergence of this sequence of analytic functions when appropriately rescaled, uniformly in $z$ on an open neighbourhood of $0$ in the complex plane. It then allows us to use \cite[Theorem~2.1]{kabluchko_general_2017} and hence derive a convergence result for the profile.
We actually start in Section~\ref{wrt:subsec:laplace transform weighted profile} by studying the convergence of the similarly defined sequence of functions
\begin{equation}\label{wrt:eq:def laplace transform weighted profile}
z\mapsto\int_{\bU}e^{z\haut(u)}\dd\mu_n(u)=\sum_{i=1}^{n}\frac{w_i}{W_n}e^{z\haut(u_i)},
\end{equation}
where we integrate with respect to the weight measure $\mu_n$ instead of the uniform measure $\nu_n$ as before. This one is easier to study because for every fixed $z\in\mathbb{C}$, it defines a martingale as $n$ grows, up to some deterministic scaling. Then in Section~\ref{wrt:subsec:from the weighted to unweighted sum}, we make use of this first convergence and show that up to some deterministic multiplicative constant, the two sequences of integrals appearing in \eqref{wrt:eq:def laplace tranform true profile} and \eqref{wrt:eq:def laplace transform weighted profile} are almost surely equivalent when $n$ tends to infinity.

\textbf{Let us fix some $\gamma>0$ for this whole section. 
Throughout this section, we always work under the assumption that \eqref{wrt:assum: wrt alpha} holds for the sequence $\boldsymbol{w}$.}
For some results, we will assume that their exists  $p\in\intervalleof{1}{2}$ such that the stronger condition \eqref{wrt:assum: wrt alpha p}  holds, i.e.
\begin{align*}
W_n\underset{n\rightarrow\infty}{\bowtie} \cst \cdot n^{\gamma} \qquad \text{and} \qquad \sum_{i=n}^{2n}w_n^p\leq n^{1+(\gamma-1) p + \petito{1}}.
\end{align*}

We let $\phi: z\mapsto \gamma(e^z-1)$ be a function of a complex parameter $z$ and let $z\mapsto N_n(z)$ be the following rescaled version of the Laplace transform of the profile 
\begin{align}\label{wrt:eq:def Nnz}
	N_n(z):=n^{-(1+\phi(z))}\sum_{k=0}^\infty\mathbb L_n(k)e^{z k}.
\end{align}
The proposition below ensures that the sequence $(z\mapsto N_n(z))_{n\geq 1}$ converges uniformly on all compact subsets of some domain $\mathscr D\subset \mathbb C$ to some limiting function $z\mapsto N_\infty(z)$ which does not vanish anywhere on the set $\mathscr D\cap \R$, along with some more technical statements.
\begin{proposition}\label{wrt:prop:assumptions kabluchko}
Suppose that the weight sequence $\boldsymbol{w}$ satisfies \eqref{wrt:assum: wrt alpha p} for some $\gamma>0$ and some $p\in \intervalleof{1}{2}$.
Then there exists a domain $\mathscr D\subset \mathbb C$ such that $\mathscr D\cap \R=\intervalleoo{z_{-}}{z_{+}}$ 
where $z_-<0$ and $z_+>0$ are defined as in \eqref{wrt:eq:definition z+ and z-},
 such that the following properties are satisfied.
\begin{enumerate}
	\item\label{wrt:it:convergence Nn sur tout compact} With probability $1$, the sequence of random analytic functions $(z\mapsto N_n(z))_{n\geq 1}$ converges uniformly on all compact subsets of $\mathscr{D}$, as $n\rightarrow\infty$, to some random
	analytic function $z\mapsto N_\infty(z)$ which satisfies $\Pp{N_\infty(z) \neq  0 \text{ for all } z \in (z_{-} ,z_{+})} = 1$.
	\item\label{wrt:it:convergence Nn polynomiale} For every compact subset $K\subset \mathscr{D}$ and $r\in\N$, we can find an
	a.s. finite random variable $C_{K,r}$ such that for all $n\in\N$,
\[\sup_{z\in K}\abs{N_n(z)-N_\infty(z)}< C_{K,r} (\log n)^{-r} .\]
\item\label{wrt:it:convergence Im>0} For every compact subset $K\subset \intervalleoo{z_{-}}{z_{+}}$, every $0<a<\pi$ and $r\in \N$,
\begin{align*}
	\sup_{z\in K }\left[e^{-(1+\phi(z))\log n}\int_{a}^{\pi}\abs{\sum_{k=0}^{\infty}\bL_n(k)e^{z+iu}} \dd u\right]=\petito{(\log n)^{-r}} \quad \text{a.s. as } n\rightarrow \infty.
\end{align*}
\end{enumerate}
\end{proposition}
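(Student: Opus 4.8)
The proof proceeds along the two-part route announced just before the statement: first one handles the \emph{weighted} Laplace transform, which carries an honest martingale structure (this is the content of Section~\ref{wrt:subsec:laplace transform weighted profile}), then one transfers the result to the true profile (Section~\ref{wrt:subsec:from the weighted to unweighted sum}). Throughout, $\cF_n$ denotes the $\sigma$-field generated by $(\ttT_1,\dots,\ttT_n)$. For $z\in\mathbb C$ set $S_n(z):=\sum_{i=1}^n w_ie^{z\haut(u_i)}$ and $D_n(z):=\prod_{k=1}^{n-1}(1+w_{k+1}e^z/W_k)$. Since $\Ppsq{K_{n+1}=k}{\cF_n}=w_k/W_n$ and $\haut(u_{n+1})=\haut(u_{K_{n+1}})+1$, one gets $\Ecsq{S_{n+1}(z)}{\cF_n}=S_n(z)(1+w_{n+1}e^z/W_n)$, so for each fixed $z$ the quotient $M_n(z):=S_n(z)/D_n(z)$ is an $\cF_n$-martingale, analytic in $z$, with $M_n(0)=w_1$. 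From \eqref{wrt:assum: wrt alpha p} one extracts $\sum_k(w_{k+1}/W_k)^2<\infty$ with a power-of-$n$ tail (split into dyadic blocks and use the $\ell^p$ bound with $p>1$), while $\prod_{k=1}^{n-1}(1+w_{k+1}/W_k)=W_n/w_1\bowtie\cst\cdot n^\gamma$; expanding $\log(1+w_{k+1}e^z/W_k)=e^z\log(1+w_{k+1}/W_k)+\grandO{(w_{k+1}/W_k)^2}$ then gives $D_n(z)\bowtie c(z)\,n^{\gamma e^z}$ for a non-vanishing analytic $c$, locally uniformly. Hence $\tilde N_n(z):=n^{-\phi(z)}\int_\bU e^{z\haut}\dd\mu_n=n^{-\phi(z)}S_n(z)/W_n$ equals $M_n(z)$ times a deterministic factor that is $\bowtie$ to a non-zero constant, so everything reduces to $M_n(z)$.

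\emph{$L^q$-bound, rate, non-vanishing.} Fix $q\in(1,2]$. Writing $M_{n+1}(z)-M_n(z)=\frac{w_{n+1}}{D_n(z)(1+w_{n+1}e^z/W_n)}(e^{z\haut(u_{n+1})}-e^zS_n(z)/W_n)$, using $\Ecsq{|M_{n+1}|^q}{\cF_n}\le|M_n|^q+C_q\Ecsq{|M_{n+1}-M_n|^q}{\cF_n}$ and the exact first-moment identity $\Ec{e^{\theta\haut(u_m)}}=w_1e^\theta D_{m-1}(\theta)/W_{m-1}\asymp m^{\phi(\theta)}$ (valid for \emph{every} real $\theta$, since $M_n$ is a genuine martingale), one is led to $\Ec{|M_{n+1}(z)|^q}\le\Ec{|M_n(z)|^q}(1+\grandO{n^{-q}})+\grandO{w_{n+1}^q\,n^{-q\gamma e^{\Re z}+\phi(q\Re z)}}$ on a thin complex strip. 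The forcing series, summed by dyadic blocks via the $\ell^p$ bound, converges precisely when $\gamma(e^{q\Re z}-qe^{\Re z}+q-1)<q-1$, a condition which degenerates, as $q\downarrow1$, to $f_\gamma(\Re z)>0$ — this is exactly where the function $f_\gamma$ and the endpoint $z_+$ come from. So, given any compact $K\subset\{z:f_\gamma(\Re z)>0\}$, choosing $q=q(K)>1$ close enough to $1$ makes $(M_n(z))_n$ $L^q$-bounded uniformly on a complex neighbourhood of $K$; then $M_n\to M_\infty$ a.s.\ and in $L^q$, the forcing estimate carries a power saving, so $\Ec{|M_n(z)-M_\infty(z)|^q}=\grandO{n^{-\delta}}$ uniformly on $K$, and Cauchy's integral formula (pass to a slightly larger contour) turns this into $\Ec{\sup_{z\in K}|M_n(z)-M_\infty(z)|^q}=\grandO{n^{-\delta}}$; Markov's inequality along a polynomially spaced subsequence, with the summable increments filling the gaps, then gives $\sup_{z\in K}|M_n(z)-M_\infty(z)|=\grandO{n^{-\delta'}}=\petito{(\log n)^{-r}}$ a.s.\ for every $r$. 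Finally $\tilde N_\infty$ does not vanish on the real part of this domain: for fixed real $\theta$ there, $M_\infty(\theta)$ is the a.s.\ non-degenerate limit of an $L^q$-bounded positive martingale, and the simultaneous statement is obtained from analyticity, $N_n(0)=1$, and a $0$--$1$ argument in the spirit of \cite{biggins_uniform_1992}.

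\emph{Transfer to the true profile.} With $T_n(z):=\sum_{k\ge0}\bL_n(k)e^{zk}=\sum_{i=1}^ne^{z\haut(u_i)}$, so that $N_n(z)=n^{-(1+\phi(z))}T_n(z)$, the identity $\Ecsq{e^{z\haut(u_m)}}{\cF_{m-1}}=e^zS_{m-1}(z)/W_{m-1}=e^z(m-1)^{\phi(z)}\tilde N_{m-1}(z)$ yields the decomposition $T_n(z)=e^{z\haut(u_1)}+\mathcal M_n(z)+\sum_{m=2}^ne^z(m-1)^{\phi(z)}\tilde N_{m-1}(z)$, with $\mathcal M_n(z)$ a martingale. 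On $\intervalleoo{z_-}{z_+}$ one has $1+\phi(z)>0$ — and this is precisely where $z_-$ enters, since $1+\phi(z_-)=0$ — so, $\tilde N_{m}(z)$ converging to $\tilde N_\infty(z)$ by the previous step, a Cesàro/Toeplitz argument gives $\sum_{m=2}^ne^z(m-1)^{\phi(z)}\tilde N_{m-1}(z)\sim\frac{e^z}{1+\phi(z)}\tilde N_\infty(z)\,n^{1+\phi(z)}$; and the von Bahr--Esseen inequality with the same $q$, applied to the increments $e^{z\haut(u_m)}-e^z(m-1)^{\phi(z)}\tilde N_{m-1}(z)$ and using $\Ec{|e^{z\haut(u_m)}|^q}\asymp m^{\phi(q\Re z)}$ together with $\sup_m\Ec{|\tilde N_m(z)|^q}<\infty$, gives $\|\mathcal M_n(z)\|_q=\grandO{n^{(1+\phi(q\Re z))/q}}$; since $f_\gamma(\Re z)>0$ one may pick $q>1$ so small that $(1+\phi(q\Re z))/q<1+\phi(\Re z)$, whence $\mathcal M_n(z)=\petito{n^{1+\phi(z)}}$ a.s. Therefore $N_n(z)\to N_\infty(z):=\frac{e^z}{1+\phi(z)}\tilde N_\infty(z)$, locally uniformly on a thin complex neighbourhood $\mathscr D$ of $\intervalleoo{z_-}{z_+}$, the $\petito{(\log n)^{-r}}$ rate being inherited through the Cauchy-formula device, and $N_\infty$ is non-vanishing on $\intervalleoo{z_-}{z_+}$ since $\tilde N_\infty$ is and $e^z,1+\phi(z)\ne0$ there. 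This proves \ref{wrt:it:convergence Nn sur tout compact} and \ref{wrt:it:convergence Nn polynomiale}.

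\emph{Imaginary frequencies away from $0$, and the main obstacle.} For \ref{wrt:it:convergence Im>0}, fix $a\in(0,\pi)$; for $u\in[a,\pi]$ the phases $e^{iu\haut(u_i)}$ in $T_n(z+iu)$ cannot align. Tracking the coupled second-moment recursions satisfied by $\Ec{|T_n(z+iu)|^2}$, $\Ec{|S_n(z+iu)|^2}$ and the corresponding cross-correlations — whose forcing terms involve the first moment $\Ec{S_n(2z)}\asymp n^{\gamma e^{2z}}$ and the dyadic-block sums $\sum_{m\le k\le 2m}w_k^2$ controlled by \eqref{wrt:assum: wrt alpha p} — one obtains $\Ec{|T_n(z+iu)|^2}\le n^{2(1+\phi(z))-2\delta}$ for some $\delta=\delta(z,a)>0$ that stays positive, uniformly for $z$ in a compact $K\subset\intervalleoo{z_-}{z_+}$, the saving coming ultimately from $\cos u\le\cos a<1$. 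By Cauchy--Schwarz, $\Ec{e^{-(1+\phi(z))\log n}\int_a^\pi|T_n(z+iu)|\dd u}\le\pi\,n^{-(1+\phi(z))}\sup_{u\in[a,\pi]}\Ec{|T_n(z+iu)|^2}^{1/2}=\grandO{n^{-\delta}}$, and Markov along a subsequence together with a covering of $K$ in the $z$ variable promotes this to the required $\petito{(\log n)^{-r}}$ bound a.s. I expect two points to require the most work: pinning down the domain (the function $f_\gamma$ and the value $z_+$ only materialise after the dyadic-block analysis of the $L^q$-increments in the limit $q\downarrow1$, and one must keep the imaginary width of $\mathscr D$ small enough that every one of these estimates survives); and establishing \ref{wrt:it:convergence Im>0} — as well as the weighted-profile statement — on \emph{all} of $\intervalleoo{z_-}{z_+}$, where the crude triangle-inequality bound $|S_n(z+iu)|\le S_n(\Re z)$ throws away all cancellation and a genuine second-moment computation exploiting the non-alignment of phases is needed, the a.s.\ simultaneous non-vanishing of $N_\infty$ on the real interval being a further soft-but-delicate point handled à la Biggins.
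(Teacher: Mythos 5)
Your route for items \ref{wrt:it:convergence Nn sur tout compact} and \ref{wrt:it:convergence Nn polynomiale} is, modulo a change of normalisation (your $D_n(z)=\prod_k(1+w_{k+1}e^z/W_k)$ equals $W_n$ times the paper's $C_n(z)$ up to a constant factor), the same two-stage argument as the paper: the weighted Laplace transform gives a martingale $M_n(z)=S_n(z)/D_n(z)$, the Topchii--Vatutin/von Bahr--Esseen inequality plus the $\ell^p$ dyadic-block bound yields $L^q$-boundedness on the domain $g(z,q)<0$, Cauchy's formula upgrades this to locally uniform a.s.\ convergence with a power saving, and the decomposition $T_n(z)=\mathcal M_n(z)+e^z\sum_m(m-1)^{\phi(z)}\tilde N_{m-1}(z)+\text{cst}$ transfers to the true profile exactly as the paper's $X_n(z)$. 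Your identification of the critical region $f_\gamma(\Re z)>0$ from $g(\cdot,q)\to0$ as $q\downarrow1$, and of $z_-$ from $1+\phi(\Re z)>0$, matches the paper.

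Two points, however, are genuine gaps.

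\emph{Simultaneous non-vanishing of $N_\infty$ on $(z_-,z_+)$.} You invoke a ``$0$--$1$ argument in the spirit of Biggins.'' The Kolmogorov $0$--$1$ law gives $\Pp{M_\infty(z)>0}=1$ for each \emph{fixed} real $z$, hence only that the zero set is a.s.\ countable (the paper's Lemma~\ref{wrt:lem:Minfty almost surely not zero}); upgrading this to ``no zeros at all'' is where the real work is. In the branching random walk this is done via the natural recursive self-similarity over the first generation, but a WRT has no such immediate branching structure: the subtrees above two fixed vertices are not independent copies of the process. The paper constructs two vertices $u_{I^{(1)}},u_{I^{(2)}}$ whose subtrees, \emph{conditionally on the attachment times}, are independent WRTs with inherited weight sequences that again satisfy $(\square_\gamma^p)$ — that is Lemma~\ref{wrt:lem:autosimilarity wrt}, which takes work to prove (in particular it needs the refined rate estimates $\mu_n(T(u_k))\bowtie\mu(T(u_k))$ and $\tau_n^{(j)}\bowtie\mu(T(u_{I^{(j)}}))^{-1}n$). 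Then one writes $M_\infty\geq A_1e^{z h_1}M_\infty^{(1)}+A_2e^{z h_2}M_\infty^{(2)}$ with $M_\infty^{(1)},M_\infty^{(2)}$ independent, each with countable zero set, so a.s.\ the zero sets are disjoint and the sum is positive on all of $I_\gamma$. Simply citing Biggins does not discharge this obligation; the self-similarity must be established, and it is the most technical part of the section.

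\emph{Item \ref{wrt:it:convergence Im>0} and second moments.} Your proposed proof tracks $\Ec{|T_n(z+iu)|^2}$, $\Ec{|S_n(z+iu)|^2}$ and cross-correlations, with forcing driven by $\sum_{m\leq k\leq 2m}w_k^2$. But the hypothesis $(\square_\gamma^p)$ only controls $\sum_{i=n}^{2n}w_i^p$ for the given $p\in(1,2]$. When $p<2$ there is no control on $\sum w_k^2$: the pointwise bound $w_k\leq k^{1/p+\gamma-1+o(1)}$ extracted from $(\square_\gamma^p)$ gives $\sum_{m\leq k\leq 2m}w_k^2\leq m^{2/p+2\gamma-1+o(1)}$, which for $p<2$ has a strictly larger exponent than the $m^{2\gamma-1+o(1)}$ that the second-moment recursion would need, and one can cook up admissible weight sequences where $\sum w_k^2$ genuinely blows up faster. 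The paper avoids this entirely: it runs the \emph{same} $q$-th moment machinery ($q\leq p$) on the auxiliary domain $\mathscr D'=(z_-,z_+)\times(0,2\pi)$, where the martingales $M_n(z)$ and $X_n(z)$ remain well-defined even though $M_n$ need not converge. The phase cancellation you want is captured analytically by $\phi(\Re z)-\Re\phi(z)=\gamma e^{\Re z}(1-\cos\Im z)>0$ for $\Im z\not\equiv0\ (\mathrm{mod}\ 2\pi)$, which makes $g(z,q)=q(\phi(\Re z)-\Re\phi(z))+g(\Re z,q)$ the right exponent in Lemma~\ref{wrt:lem:moments Mn}; items \ref{wrt:it:convergence Xn 2} and \ref{wrt:it:convergence Xn 3} of Lemma~\ref{wrt:lem:convergences linked to Xn} then give the power saving on $K\times[a,\pi]$. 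You should replace the second-moment sketch by this $q$-th moment argument, or else strengthen the hypothesis to $p=2$.

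A smaller imprecision: when you extend the $L^q$ estimate to a ``thin complex strip,'' the operative condition is $g(z,q)<0$ with $g$ depending on both $\Re z$ and $\Im z$ (through $\Re\phi(z)$); it is not just $f_\gamma(\Re z)>0$ plus ``thinness.'' The paper's $\mathscr V_q$, $\mathscr V$, $\mathscr D$ take care of this bookkeeping; your argument is fine in spirit but should be phrased on the complex sets directly.
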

Under the results of Proposition~\ref{wrt:prop:assumptions kabluchko} we can apply  \cite[Theorem~2.1]{kabluchko_general_2017} whose conclusions for the sequence $(k\mapsto \bL(k))_{n\geq 1}$ are the following.
For any $k\geq 0,\ n\geq 1$ and $z\in\intervalleoo{z_-}{z_+}$, we denote \[x_n(k;z)=\frac{k-\gamma e^z \log n}{\sqrt{\gamma e^z \log n}}.\] Then, for every integer $r\geq 0$ and every compact subset $K\subset \intervalleoo{z_-}{z_+}$, we have the convergence
\begin{align}\label{wrt:eq:general Edgeworth expansion}
(\log n)^{\frac{r+1}{2}}\cdot \sup_{k\in \N}\sup_{z\in K}\abs{e^{z k-(1+\phi(z))\log n}\bL_n(k) - \frac{N_\infty(z)e^{-\frac{1}{2}x_n(k;z)^2}}{\sqrt{2\pi \log n}} \sum_{j=0}^{r}\frac{G_j(x_n(k);z)}{(\log n)^{j/2}}}\overset{\text{a.s.}}{\underset{n\rightarrow\infty}{\longrightarrow}} 0,
\end{align}
where for all $j\geq 0$, the (random) functions $G_j(x,z)$ are polynomials of degree at most $3$ in $x$ and are entirely determined from $\phi$ and $N_\infty$, with $G_0=1$, see \cite[Equation (16)]{kabluchko_general_2017} for their complete definition. 
The asymptotics \eqref{wrt:eq:convergence uniforme profil} and \eqref{wrt:eq:radish asymptotics} stated in Theorem~\ref{wrt:thm:profile and height of wrt} follow from the last display. Indeed, \eqref{wrt:eq:convergence uniforme profil} is obtained by letting $r=0$ and $z=0$ and using the fact that $N_\infty(0)=1$ almost surely. For \eqref{wrt:eq:radish asymptotics}, we let $r=0$, and use $k=\left\lfloor \gamma e^z \log n\right\rfloor$.

In Section~\ref{wrt:subsec:upper-bounds on height}, we complete the proof of Theorem~\ref{wrt:thm:profile and height of wrt} by computing the asymptotic behaviour of the height of the tree. Since the convergence of the profile already ensures that there almost surely are vertices at height $\gamma e^{(z_+-\epsilon)}\log n$ for $\epsilon>0$ small enough and all $n$ large enough, it suffices to prove a corresponding upper-bound in order to finish proving the convergence \eqref{wrt:eq:convergence height gamma p} in Theorem~\ref{wrt:thm:profile and height of wrt}.
\subsection{Study of the Laplace transform of the weighted profile}\label{wrt:subsec:laplace transform weighted profile}
We study the sequence $\left(z\mapsto \sum_{i=1}^{n}\frac{w_i}{W_{n}}e^{z\haut(u_i)}\right)_{n\geq 1}$. The following lemma is the starting point of our analysis. 
In this section, we will use the notation $\cF_n=\sigma(\ttT_1,\ttT_2,\dots,\ttT_n)$.
\begin{lemma}\label{wrt:lem:Mn martingale}
For all $z\in\mathbb C$ and all $n\geq1$, we have 
\begin{align*}
\Ecsq{\sum_{i=1}^{n+1}\frac{w_i}{W_{n+1}}e^{z\haut(u_i)}}{\cF_n}
&=\left(1+(e^z-1)\frac{w_{n+1}}{W_{n+1}}\right) \cdot \sum_{i=1}^{n}\frac{w_i}{W_{n}}e^{z\haut(u_i)}.
\end{align*}
\end{lemma}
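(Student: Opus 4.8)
The plan is to condition on $\cF_n=\sigma(\ttT_1,\dots,\ttT_n)$ and isolate the single new term created at step $n+1$. Write $S_n:=\sum_{i=1}^{n}w_ie^{z\haut(u_i)}$, and note that $S_n$ together with all the heights $\haut(u_1),\dots,\haut(u_n)$ is $\cF_n$-measurable, while $W_n=\sum_{i=1}^{n}w_i$ is a deterministic constant in this section. Hence $\sum_{i=1}^{n+1}\frac{w_i}{W_{n+1}}e^{z\haut(u_i)}=\frac{1}{W_{n+1}}\bigl(S_n+w_{n+1}e^{z\haut(u_{n+1})}\bigr)$, so that upon taking conditional expectation the only random factor left is $e^{z\haut(u_{n+1})}$.

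First I would compute $\Ecsq{e^{z\haut(u_{n+1})}}{\cF_n}$. Since $u_{n+1}$ is attached as a child of $u_{K_{n+1}}$ we have $\haut(u_{n+1})=\haut(u_{K_{n+1}})+1$, and by \eqref{wrt:eq:def Kn} together with the already-noted fact that this conditional law does not depend on the past, $\Ppsq{K_{n+1}=k}{\cF_n}=w_k/W_n$ for $k\in\{1,\dots,n\}$. Therefore
\[
\Ecsq{e^{z\haut(u_{n+1})}}{\cF_n}=\sum_{k=1}^{n}\frac{w_k}{W_n}e^{z(\haut(u_k)+1)}=\frac{e^z}{W_n}\,S_n.
\]

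Substituting this back and using $W_{n+1}=W_n+w_{n+1}$ gives
\[
\Ecsq{\sum_{i=1}^{n+1}\frac{w_i}{W_{n+1}}e^{z\haut(u_i)}}{\cF_n}=\frac{S_n}{W_{n+1}}\Bigl(1+\frac{w_{n+1}e^z}{W_n}\Bigr)=\frac{W_n+w_{n+1}e^z}{W_nW_{n+1}}\,S_n.
\]
A one-line algebraic check then finishes it: putting the right-hand side of the claimed identity over the common denominator $W_nW_{n+1}$, using $W_{n+1}=W_n+w_{n+1}$, one gets numerator $W_{n+1}+(e^z-1)w_{n+1}=W_n+w_{n+1}e^z$, which matches; and $S_n/W_n=\sum_{i=1}^{n}\frac{w_i}{W_n}e^{z\haut(u_i)}$ recovers exactly the stated formula.

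I do not expect any real obstacle here; this is a direct conditional-expectation computation. The only point that needs a word of care is the degenerate convention at $n=1$, where $u_2$ is defined directly as a child of $u_1$: this is consistent with $K_2=1$ occurring with probability $w_1/W_1=1$, so the computation above applies unchanged. The statement holds verbatim for a deterministic weight sequence, and for a random weight sequence $\boldsymbol{\mathsf w}$ it follows by conditioning on $\boldsymbol{\mathsf w}$.
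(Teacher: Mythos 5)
Your proof is correct and follows essentially the same route as the paper's: condition on $\cF_n$, use $\haut(u_{n+1})=\haut(u_{K_{n+1}})+1$ with $\Ppsq{K_{n+1}=k}{\cF_n}=w_k/W_n$ to evaluate the conditional expectation of the new term, and simplify. The only cosmetic difference is that you track the unnormalized sum $S_n$ and do an explicit algebraic check at the end, whereas the paper keeps the normalized form throughout; the content is identical.
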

\begin{proof}
Recall that conditionally on $\cF_n$, the $n+1$-st vertex $u_{n+1}$ of $\ttT_{n+1}$ is a child of the vertex $u_{K_{n+1}}$, where $\Ppsq{K_{n+1}=k}{\cF_n}=\frac{w_k}{W_n}$. We compute 
\begin{align*}
\sum_{i=1}^{n+1}\frac{w_i}{W_{n+1}}e^{z\haut(u_i)}&=\frac{W_n}{W_{n+1}}\sum_{i=1}^{n}\frac{w_i}{W_n}e^{z\haut(u_i)}+ \frac{w_{n+1}}{W_{n+1}}\cdot e^z \cdot e^{z\haut(u_{K_{n+1}})}.
\end{align*}
Taking conditional expectation with respect to $\cF_n$ yields:
\begin{align*}
\Ecsq{\sum_{i=1}^{n+1}\frac{w_i}{W_{n+1}}e^{z\haut(u_i)}}{\cF_n}&=\frac{W_n}{W_{n+1}}\cdot \sum_{i=1}^{n}\frac{w_i}{W_{n}}e^{z\haut(u_i)}+\frac{w_{n+1}}{W_{n+1}}\cdot e^z\cdot \sum_{i=1}^{n}\frac{w_i}{W_n}e^{z\haut(u_i)}\\
&=\left(1+(e^z-1)\frac{w_{n+1}}{W_{n+1}}\right) \cdot \sum_{i=1}^{n}\frac{w_i}{W_{n}}e^{z\haut(u_i)}.
\end{align*}
This concludes the proof.
\end{proof}
Let $J$ be an integer that we are going to fix later on. 
The last result ensures that if $z\in \mathbb C$ is such that $\forall i\geq J,\ 1+(e^z-1)\frac{w_i}{W_i}\neq 0$, then we can define for all $n\geq J$
\begin{align*}
C_n(z):=\prod_{i=J}^n\left(1+(e^z-1)\frac{w_i}{W_i}\right) \qquad \text{and}\qquad	M_n(z):=\frac{1}{C_n(z)}\sum_{i=1}^{n}\frac{w_i}{W_n}e^{z\haut(u_i)},
\end{align*}
and the sequence $(M_n(z))_{n\geq J}$ is a martingale. 
We want to prove results about the asymptotic behaviour of $(z\mapsto M_n(z))_{n\geq J}$, uniformly in $z$ on an appropriate set. If $J$ is fixed, then there exist parameters $z$ with $\Im(z)=\pi \mod 2\pi$ for which the sequence $(C_n(z))_{n\geq J}$ takes the value $0$. 
Under the assumption \eqref{wrt:assum: wrt alpha} on the sequence $\boldsymbol{w}$, we know that $\frac{w_n}{W_n}\underset{}{\rightarrow}0$ as $n\rightarrow\infty$.  
If we restrict ourselves to a set of the form $\enstq{z\in \mathbb C}{\Re(z)<x}$ for some $x>0$, then 
\[\abs{1+(e^z-1)\frac{w_n}{W_n}}\geq 1-\abs{e^z-1}\cdot \frac{w_n}{W_n}\geq 1-(e^x+1)\cdot \frac{w_n}{W_n} \underset{n\rightarrow\infty}{\rightarrow} 1>0,\]
hence it suffices to take $J$ large enough in order for the sequence $(C_n(z))_{n\geq J}$ to only take non-zero values for all $z\in \enstq{\xi\in \mathbb C}{\Re(\xi)<x}$ and all $n\geq J$.
In what follows we work on the set 
\begin{align*}
\mathscr{E}=\enstq{z\in \mathbb C}{\Re z <z_{+}},
\end{align*}
where $z_{+}$ is as defined in Proposition~\ref{wrt:prop:assumptions kabluchko}. 
For technical reasons, we also sometimes consider the larger set
\begin{align*}
\mathscr{E}'=\enstq{z\in \mathbb C}{\Re z <2z_{+}}.
\end{align*}
Using the preceding discussion, we fix $J\geq 1$ such that the sequence $z\mapsto (C_n(z))_{n\geq J}$ does not have any zero on $\mathscr E'$, so that $z\mapsto (M_n(z))_{n\geq J}$ is well-defined for all $z\in \mathscr E'$.

We introduce the following notation. Let $F(z,n)$ and $G(z,n)$ be two functions of a complex parameter $z$ and an integer $n\in \N$. For $D\subset \mathbb C$ a set of the complex plane we write
\begin{align}\label{wrt:eq:big and small o notation}
F(n,z)=\grandOdom{D}{G(n,z)} \qquad (\text{resp.} \qquad F(n,z)=\petitodom{D}{G(n,z)})
\end{align}
to express the fact that $F(n,z)$ is a big (resp. small) o of $G(n,z)$ as $n\rightarrow\infty$, uniformly on every compact $K\subset D$. 
Note that later in the paper, we will also use this notation for random functions of $z$ and $n$ when such a comparison holds almost surely. 

Now, let us derive some information on the asymptotic behaviour of $C_n(z)$.
\begin{lemma}\label{wrt:lem:behaviour Cn}
Suppose that $\boldsymbol{w}$ satisfies \eqref{wrt:assum: wrt alpha}. Then there exists $\epsilon>0$ and an analytic function $z\mapsto c(z)$ on $\mathscr E'$ such that 
\begin{align*}
	C_n(z)=\exp(\phi(z)\log n +c(z)+\grandOdom{\mathscr E}{n^{-\epsilon}}).
\end{align*}

\end{lemma}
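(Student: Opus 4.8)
The plan is to pass to logarithms and compare $\sum_{i=J}^{n}\log(1+(e^z-1)\tfrac{w_i}{W_i})$ with $\log W_n$. Fix the exponent $\epsilon>0$ witnessing $W_n\underset{n\to\infty}{\bowtie}\cst\cdot n^\gamma$ in \eqref{wrt:assum: wrt alpha}; we may assume $\epsilon\leq 1$, since the relation $\bowtie$ for one exponent implies it for every smaller one. Recall that $J$ was chosen so that each factor $1+(e^z-1)\frac{w_i}{W_i}$ with $i\geq J$ is non-vanishing on the half-plane $\mathscr{E}'$, and since $\mathscr{E}'$ is simply connected and this factor is positive for real $z$, there is an analytic branch of $z\mapsto\log(1+(e^z-1)\frac{w_i}{W_i})$ on $\mathscr{E}'$, real on $\mathscr{E}'\cap\R$; summing these over $J\leq i\leq n$ gives an analytic determination of $\log C_n$ on $\mathscr{E}'$ which exponentiates back to $C_n$. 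Writing $x_i:=w_i/W_i$, which tends to $0$, I would split
\begin{align*}
\log C_n(z)=(e^z-1)S_n+R_n(z),\qquad S_n:=\sum_{i=J}^{n}x_i,\qquad R_n(z):=\sum_{i=J}^{n}\bigl(\log(1+(e^z-1)x_i)-(e^z-1)x_i\bigr).
\end{align*}

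For $S_n$, the telescoping identity $\sum_{i=J}^{n}\log\frac{W_i}{W_{i-1}}=\log\frac{W_n}{W_{J-1}}$ together with the elementary bound $-\log(1-x)-x=\grandO{x^2}$ (valid once $x\leq\tfrac12$, hence for all but finitely many $i$, those being absorbed into a constant) gives $S_n=\log\frac{W_n}{W_{J-1}}-\sum_{i=J}^{n}(-\log(1-x_i)-x_i)$. Since \eqref{wrt:assum: wrt alpha} yields $\log W_n=\gamma\log n+\ell+\grandO{n^{-\epsilon}}$ for a suitable constant $\ell$, and granting the tail bound $\sum_{i>n}x_i^2=\grandO{n^{-\epsilon}}$ discussed below, we obtain $S_n=\gamma\log n+s_\infty+\grandO{n^{-\epsilon}}$ for some $s_\infty\in\R$; as $e^z-1$ is bounded on compact subsets of $\mathscr{E}$ this gives $(e^z-1)S_n=\phi(z)\log n+(e^z-1)s_\infty+\grandOdom{\mathscr{E}}{n^{-\epsilon}}$. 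For $R_n(z)$, a Taylor expansion of the logarithm shows that each summand is $\grandOdom{\mathscr{E}'}{x_i^2}$ uniformly on compacts, so $R_\infty(z):=\sum_{i\geq J}(\log(1+(e^z-1)x_i)-(e^z-1)x_i)$ converges locally uniformly on $\mathscr{E}'$ to an analytic function and $R_n(z)=R_\infty(z)+\grandOdom{\mathscr{E}'}{\sum_{i>n}x_i^2}=R_\infty(z)+\grandOdom{\mathscr{E}'}{n^{-\epsilon}}$. Setting $c(z):=(e^z-1)s_\infty+R_\infty(z)$, which is analytic on $\mathscr{E}'$, and combining the two pieces gives $\log C_n(z)=\phi(z)\log n+c(z)+\grandOdom{\mathscr{E}}{n^{-\epsilon}}$; exponentiating is the claim.

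The one point requiring genuine work is the tail estimate $\sum_{i>n}x_i^2=\grandO{n^{-\epsilon}}$, because \eqref{wrt:assum: wrt alpha} only gives $x_i=1-\frac{W_{i-1}}{W_i}=\grandO{i^{-\epsilon}}$, which is too weak to sum the squares directly when $\epsilon\leq\tfrac12$. I would get around this with a dyadic decomposition: since $t\mapsto 1/t$ is decreasing and $(W_i)$ is non-decreasing, for every $m\geq J$,
\begin{align*}
\sum_{m<i\leq 2m}x_i=\sum_{m<i\leq 2m}\frac{W_i-W_{i-1}}{W_i}\leq\int_{W_m}^{W_{2m}}\frac{\dd t}{t}=\log\frac{W_{2m}}{W_m}=\grandO{1}
\end{align*}
uniformly in $m$ by \eqref{wrt:assum: wrt alpha}; hence $\sum_{2^jn<i\leq 2^{j+1}n}x_i^2\leq(\sup_{i>2^jn}x_i)\cdot\sum_{2^jn<i\leq 2^{j+1}n}x_i=\grandO{(2^jn)^{-\epsilon}}$, and summing the geometric series over $j\geq 0$ yields $\sum_{i>n}x_i^2=\grandO{n^{-\epsilon}}$. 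With this in hand the remainder is routine: the Taylor estimates, the choice of branches on the half-plane $\mathscr{E}'$, and the absorption of finitely many initial terms into the constants. I expect this dyadic estimate — and, more generally, checking that every error term is genuinely a negative power of $n$ rather than merely $\petiton$ — to be the only delicate step.
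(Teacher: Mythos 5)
Your proof is correct and follows essentially the same route as the paper's: expand the logarithm of each factor as $(e^z-1)\frac{w_i}{W_i}$ plus an $O\bigl((w_i/W_i)^2\bigr)$ remainder, use the telescoping identity $\sum\log\frac{W_i}{W_{i-1}}=\log W_n+\cst$ to turn $\sum_{i\leq n}\frac{w_i}{W_i}$ into $\gamma\log n+\cst+O(n^{-\epsilon})$, and control the tail $\sum_{i>n}(w_i/W_i)^2=O(n^{-\epsilon})$ by a dyadic argument. The paper isolates these two estimates in its Lemma~\ref{wrt:lem:behaviour sum w/W}, and its dyadic bound for the tail is $\sum_{i=n}^{2n}(w_i/W_i)^2\leq\frac{W_{2n}}{W_n^2}\max_{n\leq i\leq 2n}w_i$ rather than your $(\sup_i x_i)\cdot\log\frac{W_{2m}}{W_m}$, but both are the same "factor out the sup" trick with a different way of bounding $\sum x_i$ over a dyadic block, so the argument is the same in substance.
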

Remark that the lemma implies that for any $z\in \mathscr E'$, we have \[\abs{C_n(z)}\underset{}{\sim} e^{\Re( c(z))} \cdot n^{\Re \phi(z)}\]
as $n\rightarrow\infty$.  It is also immediate that $\Ec{\sum_{k=1}^{n}\frac{w_k}{W_n}e^{z\haut(u_k)}}=\Ec{M_J(z)}\cdot C_n(z)$ satisfies the same asymptotics up to a constant, as soon as $z$ is such that $\Ec{M_J(z)}\neq 0$.

Before proving the lemma, we state the following result which follows from elementary calculus. Its proof can be found in the appendix. 
\begin{lemma}\label{wrt:lem:behaviour sum w/W}
	Suppose that $\boldsymbol{w}$ satisfies \eqref{wrt:assum: wrt alpha}. Then there exists $\epsilon>0$ such that 
	\begin{align*}
	\sum_{i=n}^{+\infty}\left(\frac{w_i}{W_i}\right)^2=\grandO{n^{-\epsilon}} \quad \text{and also} \quad 
	\sum_{i=1}^n\frac{w_i}{W_i}=\gamma\log n+\cst + \grandO{n^{-\epsilon}}.
	\end{align*}
\end{lemma}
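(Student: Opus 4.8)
The plan is to first extract from \eqref{wrt:assum: wrt alpha} the elementary pointwise facts that $W_i\asymp i^\gamma$ and that $w_i=\grandO{i^{\gamma-\delta}}$, where $\delta:=\min(1,\epsilon_0)>0$ and $\epsilon_0>0$ is the exponent provided by $W_n\bowtie \cst\cdot n^\gamma$. Indeed, writing $W_i=\cst\, i^\gamma+\grandO{i^{\gamma-\epsilon_0}}$ and likewise for $W_{i-1}$, and using $i^\gamma-(i-1)^\gamma=\grandO{i^{\gamma-1}}$, one gets $w_i=W_i-W_{i-1}=\grandO{i^{\gamma-1}}+\grandO{i^{\gamma-\epsilon_0}}=\grandO{i^{\gamma-\delta}}$. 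I would stress that this pointwise bound only gives $w_i/W_i=\grandO{i^{-\delta}}$, which is too weak to sum the squares directly when $\delta\le 1/2$; getting around this is the heart of the matter.

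For the first estimate I would use the exact telescoping identity $\frac{1}{W_{i-1}}-\frac{1}{W_i}=\frac{w_i}{W_{i-1}W_i}$, which, combined with $W_{i-1}\le W_i$, gives $(w_i/W_i)^2\le w_i\bigl(\frac{1}{W_{i-1}}-\frac{1}{W_i}\bigr)$. I would then split $\{i\ge n\}$ into dyadic blocks $[2^k n,2^{k+1}n)$: on block $k$ every weight satisfies $w_i=\grandO{(2^kn)^{\gamma-\delta}}$ while the telescoping sum over the block collapses to at most $W_{2^k n-1}^{-1}=\grandO{(2^kn)^{-\gamma}}$, so the block contributes $\grandO{(2^kn)^{-\delta}}$; summing the resulting geometric series over $k\ge 0$ yields $\sum_{i\ge n}(w_i/W_i)^2=\grandO{n^{-\delta}}$.

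For the second estimate I would start from the telescoping product $\prod_{i=J+1}^n\bigl(1-\frac{w_i}{W_i}\bigr)=\frac{W_J}{W_n}$, valid for $J$ large enough that $w_i/W_i<1/2$ for all $i>J$ (possible since $w_i/W_i\to 0$). Taking logarithms and writing $-\log(1-x)=x+\rho(x)$ with $0\le\rho(x)\le x^2$ on $[0,1/2]$ gives $\sum_{i=J+1}^n\frac{w_i}{W_i}=\log W_n-\log W_J-\sum_{i=J+1}^n\rho(w_i/W_i)$. Here $\sum_{i>J}\rho(w_i/W_i)$ converges absolutely with tail $\grandO{n^{-\delta}}$ by the first estimate, and $\log W_n=\gamma\log n+\log\cst+\grandO{n^{-\epsilon_0}}$ straight from \eqref{wrt:assum: wrt alpha}; absorbing $\sum_{i\le J}w_i/W_i$, $-\log W_J$, $\log\cst$ and $-\sum_{i>J}\rho(w_i/W_i)$ into a single additive constant and setting $\epsilon:=\delta$ gives the claim.

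The step I expect to be the genuine obstacle is the first estimate: the naive bound $w_i/W_i=\grandO{i^{-\delta}}$ fails to make $\sum(w_i/W_i)^2$ summable when $\delta\le 1/2$, so one really needs the telescoping-plus-dyadic-block device, which exploits the fact that the fluctuations of $W$ cancel over a block even though an individual $w_i$ may be as large as $i^{\gamma-\delta}$. Everything else, including the passage to the second estimate via the product telescoping, is then routine.
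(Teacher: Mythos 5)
Your proof is correct and takes essentially the same approach as the paper: both extract from \eqref{wrt:assum: wrt alpha} the pointwise bound $w_i=\grandO{i^{\gamma-\delta}}$ with $\delta=\min(1,\epsilon_0)$, both estimate the tail of $\sum(w_i/W_i)^2$ block-by-block over dyadic intervals, and both derive the second estimate from the telescoping product $\prod_{i>J}(1-w_i/W_i)=W_J/W_n$ after taking logarithms and expanding $\log(1-x)$. The only difference is cosmetic: for the block estimate you use $(w_i/W_i)^2\le w_i\bigl(W_{i-1}^{-1}-W_i^{-1}\bigr)$ and collapse the telescope, whereas the paper bounds $\sum_{i=n}^{2n}(w_i/W_i)^2 \le \frac{W_{2n}}{W_n^2}\max_{n\le i\le 2n}w_i$ directly --- both give $\grandO{n^{-\delta}}$ per block and then sum the geometric series.
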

\begin{proof}[Proof of Lemma~\ref{wrt:lem:behaviour Cn}]
We write $\Log$ for the principal value of the complex logarithm. For $z\in \mathbb{C}$ such that $\abs{z}<1$ we have $\Log(1+z)=\sum_{i=1}^{\infty}\frac{(-1)^{n-1}}{n}z^n$. 
If for every $i\geq J$ and $z\in\mathscr E'$, we let
\begin{align*}
h(i,z)=\Log\left(1+(e^z-1)\frac{w_i}{W_i}\right)-(e^z-1)\frac{w_i}{W_i},
\end{align*} 
which is well-defined thanks to our choice of $J$, then $\abs{h(i,z)}=\grandOdom{\mathscr E'}{\left(\frac{w_i}{W_i}\right)^2}$ is summable in $i$ and the rest of the series is 
\begin{align}\label{wrt:eq:remainder series h}
	\abs{\sum_{i=n}^{\infty}h(i,z)}\leq \sum_{i=n}^{\infty}\abs{h(i,z)}=
	\grandOdom{\mathscr E'}{\sum_{i=n}^{\infty}\left(\frac{w_i}{W_i}\right)^2}= \grandOdom{\mathscr E'}{n^{-\epsilon}},
\end{align}
for some $\epsilon>0$, thanks to Lemma~\ref{wrt:lem:behaviour sum w/W}.
Then we write
\begin{align*}
	C_n(z)=\prod_{i=J}^{n}\left(1+(e^z-1)\frac{w_i}{W_i}\right)=\exp\left((e^z-1)\sum_{i=J}^{n}\frac{w_i}{W_i}+\sum_{i=J}^n h(i,z)\right)
\end{align*}
which yields using \eqref{wrt:eq:remainder series h} and Lemma~\ref{wrt:lem:behaviour sum w/W}
\begin{align*}
C_n(z)	&= \exp\left((e^z-1)(\gamma\log n + \cst + \grandOdom{\mathscr E'}{n^{-\epsilon}})+\sum_{i=J}^\infty h(i,z)-\sum_{i=n+1}^\infty h(i,z)\right)\\
	&=\exp(\phi(z)\log n +\underset{c(z)}{\underbrace{(e^z-1)\cdot \cst +\sum_{i=J}^\infty h(i,z)}} +\grandOdom{\mathscr E'}{n^{-\epsilon}}),
\end{align*}
and $c(z)$ is an analytic function of $z$, which finishes the proof.
\end{proof}

\paragraph{Convergence of the martingales $(M_n(z))_{n\geq 1}$.}
When the parameter $z$ is a positive real number, the sequence $(M_n(z))_{n\geq 1}$ is a positive martingale and so it converges almost surely to some limit. We want to prove that these martingales converge almost surely and in $L^1$ for the largest possible range of parameters $z$. 
\textbf{For the rest of Section~\ref{wrt:subsec:laplace transform weighted profile} and also in the subsequent Section~\ref{wrt:subsec:from the weighted to unweighted sum}, we assume that the weight sequence $\boldsymbol{w}$ satisfies \eqref{wrt:assum: wrt alpha p} for some fixed parameters $\gamma>0$ and $p\in \intervalleof{1}{2}$.}

We align our notation with the one used in \cite[Theorem~2.2]{chauvin_martingales_2005} which states something similar to our forthcoming Proposition~\ref{wrt:prop:convergence Mnbeta} for another model, the binary search tree.

\begin{figure}
\centering
\begin{tabular}{ccc}
	\subfloat[$\gamma=2$]{\includegraphics[height=5cm]{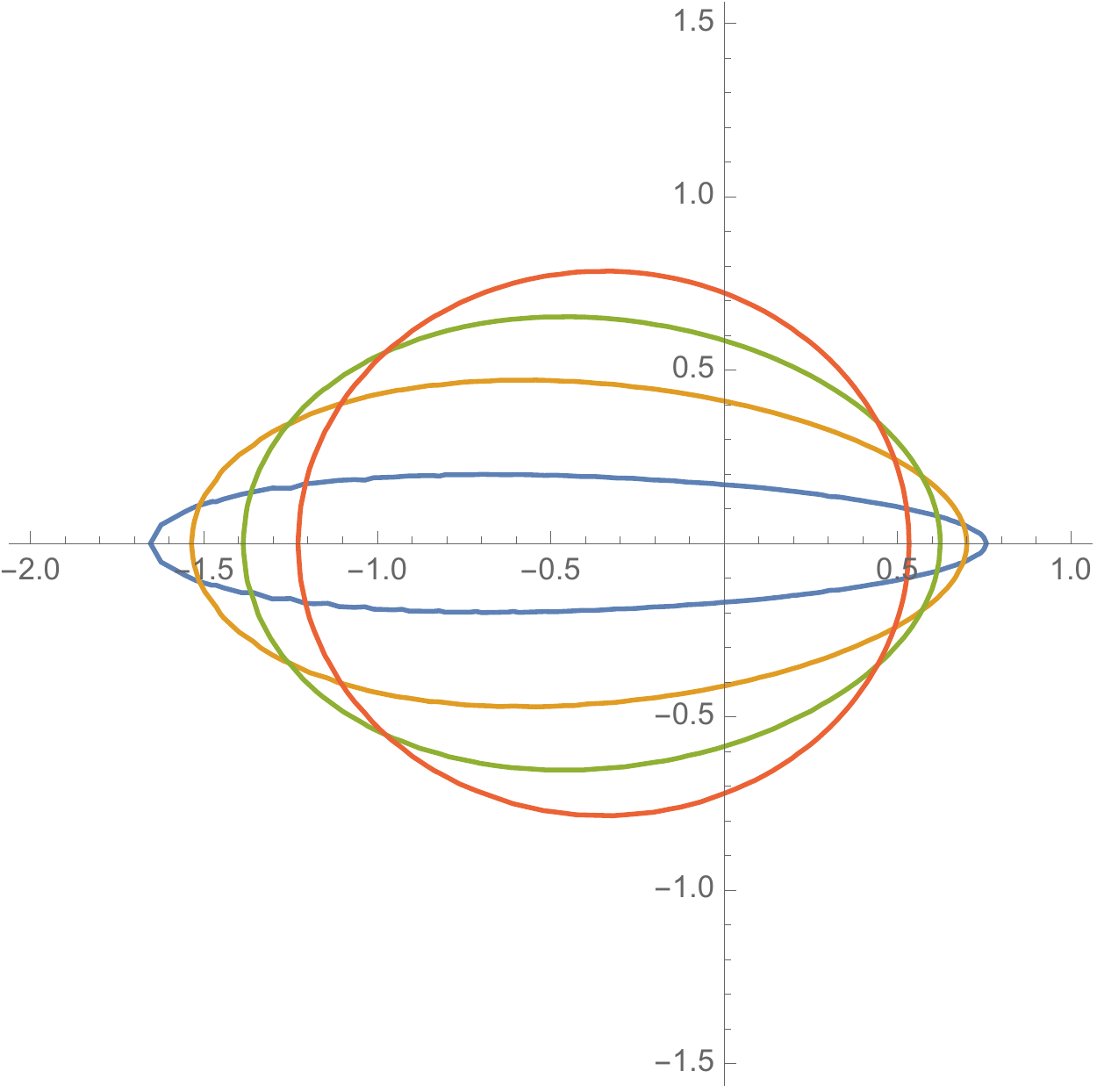}} &  &
	\subfloat[$\gamma=\frac12$]{\includegraphics[height=5cm]{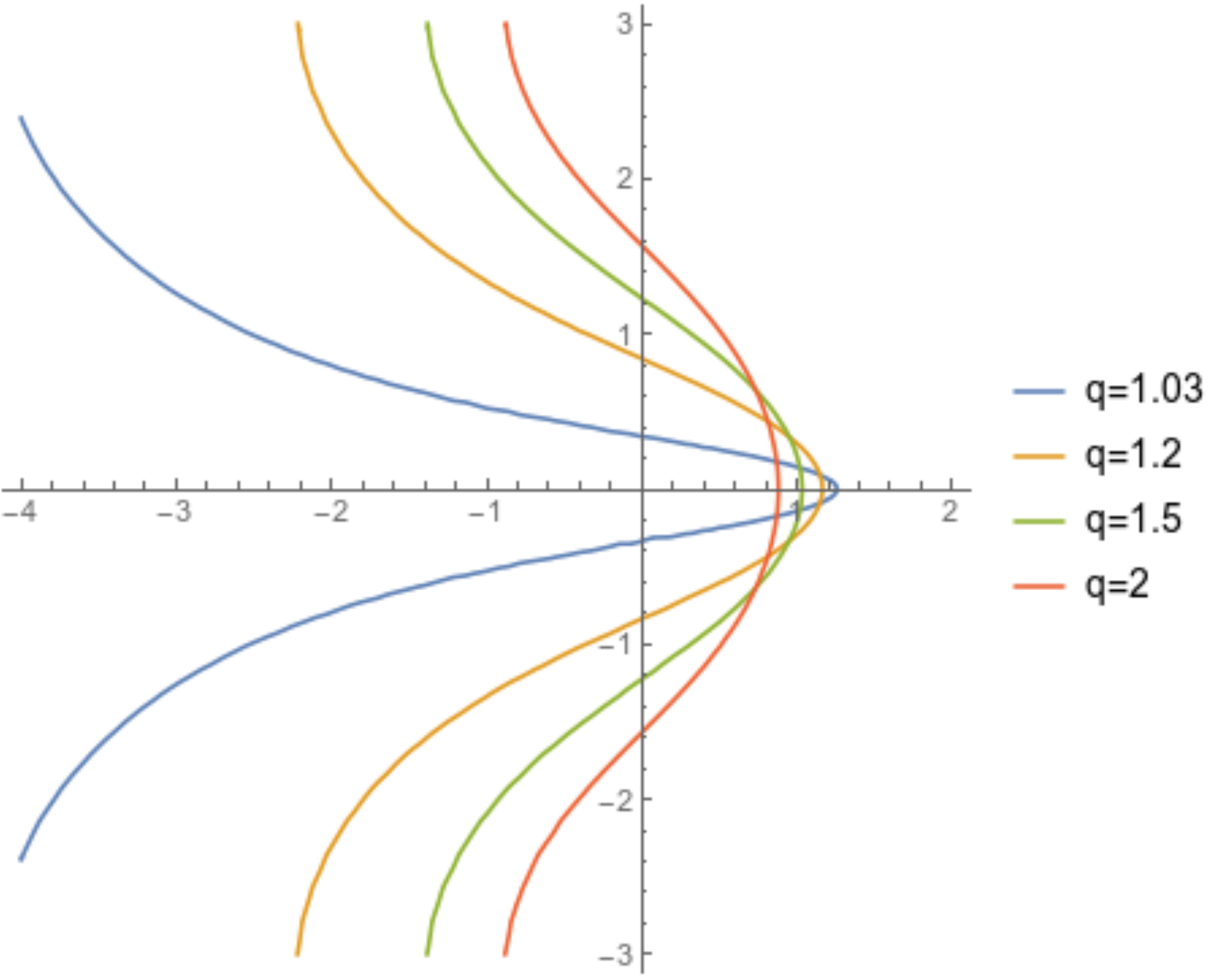}}
\end{tabular}
	\caption{The boundary of the connected component of $\mathscr V_q$ that contains $0$, for some values of $q\in\intervalleof{1}{2}$, plotted for $\gamma=2$ and $\gamma=\frac{1}{2}$.}
\end{figure}

For any $z\in \mathscr E$ and $q\in \intervalleof{1}{p}$, we let
\begin{align}\label{wrt:eq:definition g(z,q)}
g(z,q):=\phi(q\Re z) - q \Re( \phi(z)) -q+1=\gamma (e^{q \Re z} -1 -q \Re(e^z)+q)-q+1.
\end{align}
	For any $q\in \intervalleof{1}{p}$, let $\mathscr V_q=\enstq{z\in \mathscr E}{g(z,q)<0}$, and denote
	\begin{align}\label{wrt:eq:def domain V}
	\mathscr V=\bigcup_{1<q\leq p} \mathscr V_q.
	\end{align}
\begin{lemma}\label{wrt:lem:V open domain and contains open interval}
	The set $\mathscr V$ is open and contains the open interval of real numbers $I_\gamma:=\enstq{x\in\R}{\gamma(x e^x - e^x+1)-1<0}$ which contains $0$.  
\end{lemma}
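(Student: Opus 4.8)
The plan is to analyze the two functions $z \mapsto g(z,q)$ (for fixed $q$) and $x \mapsto \gamma(xe^x - e^x + 1) - 1$ on the real line, and then use a continuity/openness argument to pass to the complex neighbourhood. First I would observe that for real $z = x \in \R$, one has $\Re(e^x) = e^x$ and $\Re(\phi(x)) = \phi(x)$, so the defining expression simplifies: $g(x,q) = \phi(qx) - q\phi(x) - q + 1 = \gamma(e^{qx} - 1 - qe^x + q) - q + 1$. In particular, letting $q \to 1^+$ gives $g(x,1) = \gamma(e^x - 1 - e^x + 1) - 1 + 1 = 0$, which is not quite negative, so the point is to check that the derivative of $q \mapsto g(x,q)$ at $q = 1$ is strictly negative precisely when $x \in I_\gamma$. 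I would compute $\partial_q g(x,q)\big|_{q=1} = \gamma(xe^{x} - e^x + 1) - 1$, which is exactly the defining quantity of $I_\gamma$. Hence for $x \in I_\gamma$ this derivative is negative, so $g(x,q) < 0 = g(x,1)$ for $q$ slightly larger than $1$, which shows $x \in \mathscr{V}_q \subset \mathscr{V}$ for such $q$. This gives $I_\gamma \subset \mathscr{V}$.

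Next I would check the elementary facts about $I_\gamma$ itself: that it is an open interval containing $0$. Set $\psi(x) := \gamma(xe^x - e^x + 1) - 1$. Then $\psi(0) = \gamma(0 - 1 + 1) - 1 = -1 < 0$, so $0 \in I_\gamma$ and $I_\gamma$ is nonempty. Since $\psi$ is continuous, $I_\gamma = \psi^{-1}((-\infty,0))$ is open. To see it is an interval (i.e. connected), note $\psi'(x) = \gamma(e^x + xe^x - e^x) = \gamma x e^x$, so $\psi$ is strictly decreasing on $(-\infty,0)$ and strictly increasing on $(0,\infty)$, with minimum $\psi(0) = -1$; a sublevel set of such a unimodal function is an interval. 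This settles the claim about $I_\gamma$.

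Finally, for openness of $\mathscr{V}$: each $\mathscr{V}_q = \{z \in \mathscr{E} : g(z,q) < 0\}$ is open in $\mathscr{E}$ because $z \mapsto g(z,q)$ is continuous (it is built from $\Re z$, $\Re(e^z)$ and $e^{q\Re z}$, all continuous), and $\mathscr{E} = \{\Re z < z_+\}$ is itself open; hence $\mathscr{V} = \bigcup_{1 < q \le p} \mathscr{V}_q$ is a union of open sets, therefore open. The only subtle point worth double-checking is that the $x \in I_\gamma$ we produced really land in $\mathscr{E}$, i.e. that $x < z_+$: this follows because $z_+ = \sup\{z : f_\gamma(z) > 0\}$ with $f_\gamma(z) = 1 + \gamma(e^z - 1 - ze^z)$, and $f_\gamma(x) > 0$ is equivalent to $\gamma(xe^x - e^x + 1) < 1$, which is exactly $x \in I_\gamma$; so $I_\gamma = \{x : f_\gamma(x) > 0\} \cap \R \subset (-\infty, z_+) = \mathscr{E} \cap \R$, and in fact $z_+ = \sup I_\gamma$.

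The main obstacle is bookkeeping rather than conceptual: one must be careful that the quantity $g(z,q)$ genuinely reduces, on the reals, to the derivative-in-$q$ computation, and that the limiting case $q \to 1^+$ is handled correctly (the interval $(1,p]$ is half-open, so one cannot take $q = 1$, but any $q$ sufficiently close to $1$ works by the strict sign of the $q$-derivative). Once the identity $\partial_q g(x,q)|_{q=1} = \psi(x)$ is in hand, everything else is routine continuity and monotonicity.
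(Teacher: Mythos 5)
Your proof is correct and takes essentially the same route as the paper: evaluate $g(x,1)=0$ on the reals, compute $\partial_q g(x,q)\big|_{q=1}=\gamma(xe^x-e^x+1)-1$, and conclude that this derivative being negative (i.e.\ $x\in I_\gamma$) forces $g(x,q)<0$ for $q$ slightly above $1$, hence $x\in\mathscr V$. You supply a few checks the paper leaves implicit — that $I_\gamma$ is actually an interval (via the sign of $\psi'(x)=\gamma x e^x$), and that $I_\gamma\subset\mathscr E\cap\R$ (by noting $f_\gamma(x)=-\psi(x)$, so $I_\gamma=\{f_\gamma>0\}\subset(-\infty,z_+)$) — both of which are sound and worth making explicit, but the core argument is the same.
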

\begin{proof}
Of course $\mathscr V$ is open as a union of open sets. For any real $x$ we have $g(x,1)=0$. So, if $\frac{\partial g}{\partial q}(x,1)<0$ then there exists $q>1$ for which $g(x,q)<0$. Since $\frac{\partial g}{\partial q}(x,1)=\gamma(x e^x - e^x+1)-1$, the set $\mathscr V$ contains the interval $I_\gamma$ defined above. Since $\frac{\partial g}{\partial q}(0,1)=-1<0$, we have $0\in I_\gamma$.
\end{proof}
	\begin{proposition}\label{wrt:prop:convergence Mnbeta}
	The sequence of functions $(z\mapsto M_n(z))_{n\geq J}$ converges uniformly almost surely and in $L^1$ towards an analytic function $z \mapsto M_\infty(z)$ on every compact subset of $\mathscr V$. Furthermore, for any compact subset $K\subset \mathscr V$, there exists a real $\epsilon(K)>0$ such that almost surely
	\begin{align*}
	\abs{M_n(z)-M_\infty(z)}=\grandOdom{K}{ n^{-\epsilon(K)}}.
	\end{align*}
\end{proposition}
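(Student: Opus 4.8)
The plan is to reduce everything to a single quantitative estimate on the $q$-th moments of the martingale increments, and then to run the now-standard Biggins-type soft-analysis machinery. Fix $q\in\intervalleof{1}{p}$ (so $1<q\leq 2$) and a compact set $K\subset\mathscr V_q$, on which $g(\cdot,q)\leq-\delta$ for some $\delta>0$. The key claim will be that $(M_n(z))_{n\geq J}$ is bounded in $L^q$, uniformly for $z\in K$, with an explicit polynomial rate of convergence of its $q$-th moments. Granting this, uniform $L^q$-boundedness gives a.s.\ and $L^1$ convergence of $M_n(z)$ for each fixed $z$; a contour-integral (Cauchy) argument on a smooth loop $\Gamma\subset\mathscr V_q$ surrounding $K$ then transfers this to uniform-on-$K$ convergence, both a.s.\ and in $L^1$, and the limit $z\mapsto M_\infty(z)$ is analytic by Weierstrass's theorem. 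For real $x$, $(M_n(x))_{n\geq J}$ is a positive martingale, so its a.s.\ limit exists and must agree with the analytic limit just obtained. Since $\mathscr V=\bigcup_{1<q\leq p}\mathscr V_q$ is open, any compact subset of $\mathscr V$ is covered by finitely many sets $K$ of the above type, which yields the full statement.

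To estimate the increments, I would first make them explicit. Writing $S_n(z)=\sum_{i=1}^n\frac{w_i}{W_n}e^{z\haut(u_i)}=C_n(z)M_n(z)$, the computation in the proof of Lemma~\ref{wrt:lem:Mn martingale} together with the identity $C_{n+1}(z)=\bigl(1+(e^z-1)\tfrac{w_{n+1}}{W_{n+1}}\bigr)C_n(z)$ and $\frac{W_n}{W_{n+1}}-1=-\frac{w_{n+1}}{W_{n+1}}$ gives, after simplification,
\begin{align*}
M_{n+1}(z)-M_n(z)=\frac{e^z}{C_{n+1}(z)}\cdot\frac{w_{n+1}}{W_{n+1}}\Bigl(e^{z\haut(u_{K_{n+1}})}-C_n(z)M_n(z)\Bigr),
\end{align*}
and indeed $\Ecsq{e^{z\haut(u_{K_{n+1}})}}{\cF_n}=S_n(z)=C_n(z)M_n(z)$, so the bracketed term is $\cF_n$-centred. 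For $q\geq1$ the conditional $q$-th moment of a centred variable is at most $2^q$ times its raw conditional $q$-th moment, and
\[
\Ecsq{\bigl|e^{z\haut(u_{K_{n+1}})}\bigr|^q}{\cF_n}=\sum_{k}\frac{w_k}{W_n}e^{q\Re(z)\haut(u_k)}=C_n(q\Re z)\,M_n(q\Re z),
\]
whose expectation is $C_n(q\Re z)\,\Ec{M_J(q\Re z)}$ (note $\Re z<z_+$ forces $q\Re z<2z_+$, so $q\Re z\in\mathscr E'\cap\R$ and $M_\bullet(q\Re z)$ is a genuine positive martingale). Inserting the asymptotics $|C_n(z)|\asymp n^{\Re\phi(z)}$ and $C_n(q\Re z)\asymp n^{\phi(q\Re z)}$ from Lemma~\ref{wrt:lem:behaviour Cn} and using $\phi(q\Re z)-q\Re\phi(z)=g(z,q)+q-1$ from \eqref{wrt:eq:definition g(z,q)}, one gets, uniformly on compact subsets of $\mathscr E$,
\begin{align*}
\Ec{\bigl|M_{n+1}(z)-M_n(z)\bigr|^q}\leq \cst\cdot\Bigl(\frac{w_{n+1}}{W_{n+1}}\Bigr)^{q}\,n^{\,g(z,q)+q-1}.
\end{align*}

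The crux, and the step I expect to be the main obstacle, is to prove $\sum_n\bigl(\tfrac{w_n}{W_n}\bigr)^q n^{g(z,q)+q-1}<\infty$ uniformly for $z\in K$; this is the only genuinely model-specific computation and it is exactly where the second half of assumption~\eqref{wrt:assum: wrt alpha p} enters. I would split the sum into dyadic blocks $n\in[2^j,2^{j+1})$: on such a block $W_n\asymp 2^{j\gamma}$, and by Hölder's inequality with exponents $p/q$ and its conjugate together with $\sum_{2^j\leq i<2^{j+1}}w_i^p\leq (2^j)^{1+(\gamma-1)p+\petito{1}}$,
\[
\sum_{2^j\leq n<2^{j+1}}w_n^q\leq (2^j)^{1-q/p}\Bigl(\sum_{2^j\leq n<2^{j+1}}w_n^p\Bigr)^{q/p}\leq (2^j)^{1+(\gamma-1)q+\petito{1}}.
\]
Multiplying the three contributions, the $j$-th block contributes $(2^j)^{-\gamma q}\cdot(2^j)^{1+(\gamma-1)q+\petito{1}}\cdot(2^j)^{g(z,q)+q-1}=(2^j)^{g(z,q)+\petito{1}}$, so the series converges geometrically and uniformly on $K$ (since $g(\cdot,q)\leq-\delta$ there and the $\petito{1}$ is $z$-independent). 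The $L^q$-inequality for martingales with $1<q\leq2$ (e.g.\ von Bahr--Esseen / Burkholder, $\Ec{|M_n-M_J|^q}\leq C_q\sum_m\Ec{|M_{m+1}-M_m|^q}$) then gives $\sup_n\sup_{z\in K}\Ec{|M_n(z)|^q}<\infty$, and the same tail bound gives $\Ec{|M_n(z)-M_\infty(z)|^q}\leq\cst\sum_{m\geq n}\Ec{|M_{m+1}(z)-M_m(z)|^q}\leq\cst\,n^{g(z,q)+\petito{1}}$, uniformly on $K$. Finally, to upgrade to the uniform-in-$z$ statements and the almost sure rate, pick a smooth contour $\Gamma\subset\mathscr V_q$ surrounding $K$ at distance $\rho>0$; Cauchy's formula gives $\sup_{z\in K}|M_n(z)-M_\infty(z)|\leq\frac{|\Gamma|}{2\pi\rho}\oint_\Gamma|M_n(\zeta)-M_\infty(\zeta)|\,|\dd\zeta|$, hence $\Ec{\sup_{z\in K}|M_n-M_\infty|^q}\leq\cst\sup_{\zeta\in\Gamma}\Ec{|M_n(\zeta)-M_\infty(\zeta)|^q}=\grandO{n^{-\delta'}}$ for some $\delta'>0$; a Borel--Cantelli argument along $n=2^j$, combined with Doob's maximal inequality (again post-composed with the Cauchy estimate) to control the fluctuations inside each block $[2^j,2^{j+1})$, yields $\sup_{z\in K}|M_n(z)-M_\infty(z)|=\grandOdom{K}{n^{-\epsilon(K)}}$ almost surely for a suitable $\epsilon(K)>0$, and $L^1$ convergence follows from $\Ec{\sup_{z\in K}|M_n-M_\infty|}\leq\Ec{\sup_{z\in K}|M_n-M_\infty|^q}^{1/q}\to0$.
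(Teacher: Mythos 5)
Your proof is correct and follows the same overall architecture as the paper's: a quantitative $L^q$ bound on the martingale increments uniformly on a compact $K\subset\mathscr V_q$, followed by the Cauchy-formula/Doob-maximal/Borel--Cantelli upgrade to uniform almost sure and $L^1$ convergence (the paper packages exactly this step as Lemma~\ref{wrt:lem:convergence analytic martingale}), and finally a finite cover of any compact of $\mathscr V$ by sets $\mathscr V_q$. Where you genuinely diverge is the increment estimate itself. You factor
\[
M_{n+1}(z)-M_n(z)=\frac{e^z}{C_{n+1}(z)}\cdot\frac{w_{n+1}}{W_{n+1}}\Bigl(e^{z\haut(u_{K_{n+1}})}-C_n(z)M_n(z)\Bigr),
\]
making the bracket visibly $\cF_n$-centred, then bound its conditional $q$-th moment by $2^q\,C_n(q\Re z)M_n(q\Re z)$ via a Jensen step, and take expectation using only that $(M_n(q\Re z))_n$ is a \emph{positive} martingale (correctly justified: $\Re z<z_+$ and $1<q\leq p\leq 2$ force $q\Re z\in\mathscr E'\cap\R$, where $C_n(\cdot)$ is positive for $n\geq J$). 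This yields $\Ec{\abs{M_{n+1}(z)-M_n(z)}^q}\leq\cst\cdot(w_{n+1}/W_{n+1})^q\cdot n^{g(z,q)+q-1}$ with no dependence on $\Ec{\abs{M_n(z)}^q}$. The paper's Lemma~\ref{wrt:lem:moments Mn} instead uses the decomposition $M_{n+1}(z)-M_n(z)=M_n(z)\bigl(\tfrac{C_n(z)}{C_{n+1}(z)}-1\bigr)+\tfrac{1}{C_{n+1}(z)}\tfrac{w_{n+1}}{W_{n+1}}e^{z\haut(u_{n+1})}$, which leaves an $\Ec{\abs{M_n(z)}^q}$ factor in the increment bound and has to be resolved by iterating the recurrence $\Ec{\abs{M_{n+1}(z)}^q}\leq(1+a_n(z))\Ec{\abs{M_n(z)}^q}+b_n(z)$. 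Your centred decomposition sidesteps that bookkeeping and is the cleaner route to the increment bound; both proofs then rely on exactly the same auxiliary inputs --- the dyadic Hölder bound for $\sum(w_i/W_i)^q$ (the paper's Lemma~\ref{wrt:fact:app of holder inequality}) and the Clarkson/von Bahr--Esseen $L^q$ martingale inequality (the paper's Lemma~\ref{wrt:lem:biggins lemma}) --- and arrive at the identical moment estimate, after which the conclusions coincide.
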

The proof of the proposition will follow from the next lemma, together with Lemma~\ref{wrt:lem:convergence analytic martingale}, stated in the appendix. 
\begin{lemma}\label{wrt:lem:moments Mn}
	For any $q\in \intervalleof{1}{p}$ and $z\in \mathscr E$ we have
	\begin{align}\label{wrt:eq:martingales q-moment}
	\Ec{\abs{M_n(z)}^q}=\grandOdom{\mathscr E}{n^{0\vee g(z,q)+\petitodom{\mathscr E}{1}}}.
	\end{align}
	and also
	\begin{align}\label{wrt:eq:M2n-Mn^q is big o of}
	\Ec{\abs{M_{2n}(z)-M_{n}(z)}^q}=\grandOdom{\mathscr E}{n^{(1-q)\vee g(z,q)+\petitodom{\mathscr E}{1}}}.
	\end{align}
\end{lemma}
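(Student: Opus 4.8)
The plan is to deduce both estimates from a bound on the increments of the martingale, $\Delta_{n+1}(z):=M_{n+1}(z)-M_n(z)$. First I would make these explicit. Writing $S_n(z):=\sum_{i=1}^{n}\frac{w_i}{W_n}e^{z\haut(u_i)}$, so that $M_n(z)=S_n(z)/C_n(z)$ and $\Ecsq{e^{z\haut(u_{K_{n+1}})}}{\cF_n}=S_n(z)$, one combines the identity $S_{n+1}(z)=\frac{W_n}{W_{n+1}}S_n(z)+\frac{w_{n+1}}{W_{n+1}}e^{z}e^{z\haut(u_{K_{n+1}})}$ from the proof of Lemma~\ref{wrt:lem:Mn martingale} with $C_{n+1}(z)=C_n(z)\cdot\frac{W_n+w_{n+1}e^z}{W_{n+1}}$ to obtain, after simplification,
\[
\Delta_{n+1}(z)=\frac{w_{n+1}e^z}{C_n(z)\,\bigl(W_n+w_{n+1}e^z\bigr)}\Bigl(e^{z\haut(u_{K_{n+1}})}-S_n(z)\Bigr).
\]
As the prefactor is deterministic and $\Ecsq{e^{z\haut(u_{K_{n+1}})}-S_n(z)}{\cF_n}=0$, this is an $(\cF_n)$-martingale difference.

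The core of the argument is an estimate of $\Ec{\abs{\Delta_{n+1}(z)}^q}$. Since $e^{z\haut(u_{K_{n+1}})}-S_n(z)$ is conditionally centred, the inequality $\abs{a-b}^q\le 2^{q-1}(\abs{a}^q+\abs{b}^q)$, the bound $\abs{S_n(z)}\le S_n(\Re z)$ and Jensen's inequality applied to the probability vector $(w_i/W_n)_{i\le n}$ with the convex function $t\mapsto t^q$ (giving $S_n(\Re z)^q\le S_n(q\Re z)$) together yield $\Ecsq{\abs{e^{z\haut(u_{K_{n+1}})}-S_n(z)}^q}{\cF_n}\le 2^qS_n(q\Re z)$. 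Now $(M_n(q\Re z))_{n\ge J}$ is a nonnegative martingale, hence $\Ec{S_n(q\Re z)}=\Ec{M_J(q\Re z)}\cdot C_n(q\Re z)$, and Lemma~\ref{wrt:lem:behaviour Cn} gives $C_n(q\Re z)=n^{\phi(q\Re z)+\petito{1}}$ — this is exactly why the larger set $\mathscr E'$ is introduced, since for $z\in\mathscr E$ and $q\le 2$ one only knows $q\Re z<2z_+$. Combining this with $\abs{C_n(z)}=n^{\Re\phi(z)+\petito{1}}$ and the fact that $\abs{W_n+w_{n+1}e^z}$ is of order $W_n$, itself of order $n^{\gamma}$ (all uniformly for $z$ in a compact $K\subset\mathscr E$), I get
\[
\Ec{\abs{\Delta_{n+1}(z)}^q}\le \cst\,w_{n+1}^{q}\,n^{\phi(q\Re z)-q\Re\phi(z)-\gamma q}=\cst\,w_{n+1}^{q}\,n^{\,g(z,q)+q-1-\gamma q}.
\]

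To conclude I would sum this bound and apply the Burkholder--Davis--Gundy inequality for $1<q\le 2$ to the martingale $\bigl(M_{n+\ell}(z)-M_n(z)\bigr)_{\ell\ge 0}$ (handling real and imaginary parts separately), together with the subadditivity bound $\bigl(\sum_k a_k^2\bigr)^{q/2}\le\sum_k a_k^q$; this reduces the problem to estimating sums of the form $\sum_k w_{k+1}^{q}\,k^{\,g(z,q)+q-1-\gamma q}$. Such a sum is treated dyadically: on a block $\{2^{j-1},\dots,2^{j}-1\}$ the power of $k$ varies only by a bounded factor, and the second half of \eqref{wrt:assum: wrt alpha p} together with the power-mean inequality $\sum_{i=n}^{2n}w_i^{q}\le n^{1-q/p}\bigl(\sum_{i=n}^{2n}w_i^{p}\bigr)^{q/p}$ gives $\sum_{i=2^{j-1}}^{2^{j}}w_i^{q}\le (2^{j})^{1+(\gamma-1)q+\petito{1}}$, so the block contributes $\cst\,(2^{j})^{\,g(z,q)+\petito{1}}$. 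Summing over $k\in\{n,\dots,2n-1\}$ (one block) proves \eqref{wrt:eq:M2n-Mn^q is big o of}, because the exponent $g(z,q)$ thus obtained is in any case at most $(1-q)\vee g(z,q)$; summing over $k\in\{J,\dots,n-1\}$ (all dyadic blocks up to $n$) gives $\Ec{\abs{M_n(z)-M_J(z)}^{q}}=\grandOdom{\mathscr E}{n^{\,0\vee g(z,q)+\petitodom{\mathscr E}{1}}}$, and since $\Ec{\abs{M_J(z)}^q}$ is bounded uniformly on $K$, this yields \eqref{wrt:eq:martingales q-moment}.

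The main obstacle I anticipate is the conditional increment estimate of the second paragraph: one must recognise that the relevant quantity is $S_n(q\Re z)$ — whose expectation is explicit precisely because $M_n(q\Re z)$ is again one of our martingales — which is what makes $\phi(q\Re z)$, hence $g(z,q)$, appear, and which forces the use of $\mathscr E'$. A secondary, more routine but delicate, point is to keep every estimate uniform in $z$ over compact subsets of $\mathscr E$ and to carry the sub-polynomial $n^{\petito{1}}$ factors coming from \eqref{wrt:assum: wrt alpha p} through the dyadic sums without loss of uniformity as $g(z,q)\downarrow 0$; this is arranged by using $\sum_{j\le m}x^{j}\le (m+1)\,x^{m}$ for $x\ge 1$ in place of the geometric-series bound.
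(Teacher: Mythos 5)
Your proposal is correct and reaches the same estimates (indeed a slightly sharper one for \eqref{wrt:eq:M2n-Mn^q is big o of}), but it organises the argument differently from the paper. The paper splits the increment as $M_n(z)\bigl(\frac{C_n(z)}{C_{n+1}(z)}-1\bigr) + \frac{w_{n+1}}{W_{n+1}C_{n+1}(z)}e^{z\haut(u_{n+1})}$, which is a sum of a term involving $M_n(z)$ and a conditionally fresh term; after taking $q$-th moments this leads to a recurrence $\Ec{|M_{n+1}(z)|^q}\le(1+a_n(z))\Ec{|M_n(z)|^q}+b_n(z)$ that one unrolls. You instead factor the increment exactly as a deterministic prefactor times the conditionally centred variable $e^{z\haut(u_{K_{n+1}})}-S_n(z)$, bound its conditional $q$-th moment by $2^q S_n(q\Re z)$ via Jensen, and then observe that $\Ec{S_n(q\Re z)}=\Ec{M_J(q\Re z)}\,C_n(q\Re z)$ is explicit because $M_\cdot(q\Re z)$ is again one of the martingales under study. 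This gives an unconditional bound $\Ec{|\Delta_{n+1}(z)|^q}\lesssim w_{n+1}^q\,n^{g(z,q)+q-1-\gamma q+o(1)}$ with no bootstrap, and then a single application of BDG (or equivalently the von Bahr--Esseen/Biggins inequality used in the paper) plus the dyadic Hölder bound on $\sum w_i^q$ finishes both estimates. The two proofs share the same key ingredients — the martingale property of $M_n(q\Re z)$, Lemma~\ref{wrt:lem:behaviour Cn} on $\mathscr E'$, Hölder via \eqref{wrt:assum: wrt alpha p}, and a discrete maximal/square-function inequality for $1<q\le 2$ — so there is no real saving in technology, but your factored identity does make the increment's conditional structure transparent at a glance and avoids the cascaded recurrence, at the cost of a short algebraic computation up front; it also yields $O(n^{g(z,q)+o(1)})$ for \eqref{wrt:eq:M2n-Mn^q is big o of}, which is at least as good as the stated $O(n^{(1-q)\vee g(z,q)+o(1)})$ since $g(z,q)\le (1-q)\vee g(z,q)$. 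Your handling of uniformity (replacing geometric-series constants, which blow up as $g\downarrow 0$, by the bound $\sum_{j\le m}x^j\le (m+1)x^m$ and absorbing the $\log$ into $n^{o(1)}$) is exactly the right fix.
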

\begin{proof}
For any $q\in \intervalleof{1}{p}$ and $n\geq J$, we write
\begin{align*}
	M_{n+1}(z)-M_n(z)=M_n(z)\left(\frac{C_n(z)}{C_{n+1}(z)}-1\right)+\frac{1}{C_{n+1}(z)}\cdot \frac{w_{n+1}}{W_{n+1}}\cdot e^{z\haut(u_{n+1})}.
\end{align*}
Taking the $q$-th power of the modulus on both sides and using the inequality $\abs{a+b}^q\leq 2^q \cdot (\abs{a}^q+\abs{b}^q)$, we get 
\begin{align}\label{wrt:eq:moment increment Mn power q}
	&\Ec{\abs{M_{n+1}(z)-M_n(z)}^q}\notag \\
	&\leq \Ec{\abs{M_n(z)}^q}\cdot 2^q\abs{\frac{C_n(z)}{C_{n+1}(z)}-1}^q + 2^q \frac{1}{\abs{C_{n+1}(z)}^q}\left(\frac{w_{n+1}}{W_{n+1}}\right)^q \cdot \Ec{\abs{e^z}^{q\haut(u_{n+1})}}.
\end{align}
Using Lemma~\ref{wrt:lem:biggins lemma} in the appendix, we have for any $n\geq J$, 
\begin{align*}
\Ec{\abs{M_{n+1}(z)}^q}\leq \Ec{\abs{M_n(z)}^q}+2^q\cdot \Ec{\abs{M_{n+1}(z)-M_n(z)}^q}.
\end{align*}
Using the last display and equation~\eqref{wrt:eq:moment increment Mn power q}, we get a recurrence inequality of the form
\begin{align}\label{wrt:eq:upper bound increments Mn q}
\Ec{\abs{M_{n+1}(z)}^q}\leq (1+ a_n(z))\cdot \Ec{\abs{M_n(z)}^q}+ b_n(z),
\end{align}
where 
\[a_n(z)=2^{2q}\abs{\frac{C_n(z)}{C_{n+1}(z)}-1}^q \quad \text{and}\quad  b_n(z)=2^{2q} \frac{1}{\abs{C_{n+1}(z)}^q}\left(\frac{w_{n+1}}{W_{n+1}}\right)^q \cdot \Ec{\abs{e^z}^{q\haut(u_{n+1})}}.\]
Applying \eqref{wrt:eq:upper bound increments Mn q} in cascade we get
\begin{align}\label{wrt:eq:expectation M2n-Mn^q}
\Ec{\abs{M_{n}(z)}^q}\leq \prod_{i=J}^{n-1}(1+a_i(z)) \cdot \left(\Ec{\abs{M_J(z)}^q}+ \sum_{i=J}^{n-1}b_i(z)\right).
\end{align}
Now notice that from our assumption on the sequence $(w_n)_{n\geq 1}$ we have 
\begin{align}\label{wrt:eq:an big o}
	a_n(z)=2^{2q}\abs{\frac{C_n(z)}{C_{n+1}(z)}-1}^q=2^{2q}\abs{\frac{1}{1+(e^z-1)\frac{w_{n+1}}{W_{n+1}}}-1}^q=\grandOdom{\mathscr E}{\left(\frac{w_{n+1}}{W_{n+1}}\right)^q}.
\end{align}
On the other hand, since $z\in \mathscr E$ then $q \Re z\in \mathscr E'$, so we can use  Lemma~\ref{wrt:lem:behaviour Cn} to get
\begin{align}\label{wrt:eq:sum bn big o}
b_n(z)&=\cst \cdot \left(\frac{w_{n+1}}{W_{n+1}}\right)^q \cdot \abs{C_{n+1}(z)}^{-q}\cdot e^{q\Re z}\cdot \Ec{\sum_{k=1}^{n}\frac{w_k}{W_n}e^{(q\Re z)\haut(u_k)}} \notag\\
&=\left(\frac{w_{n+1}}{W_{n+1}}\right)^q \cdot \grandOdom{\mathscr E}{n^{-q \Re(\phi(z))}} \cdot  \grandOdom{\mathscr E}{n^{\phi(q \Re z)}} \notag\\
&=\left(\frac{w_{n+1}}{W_{n+1}}\right)^q \cdot \grandOdom{\mathscr E}{n^{g(z,q)-1+q}}.
\end{align}
We conclude using the following lemma which is an application of Hölder's inequality using the assumption \eqref{wrt:assum: wrt alpha p}.
\begin{lemma}\label{wrt:fact:app of holder inequality}
For any $q\in\intervalleof{1}{p}$ we have $\displaystyle	\sum_{i=n}^{2n}\left(\frac{w_i}{W_i}\right)^q \leq n^{1-q+\petito{1}}$.
\end{lemma}
Together with \eqref{wrt:eq:an big o}, this proves that $(a_n(z))_{n\geq 1}$ is summable and so $\prod_{i=J}^{\infty }(1+a_i(z))=\grandOdom{\mathscr E}{1}$. Also
\begin{align*}
\sum_{i=n}^{2n} b_i(z)=\grandOdom{\mathscr E}{n^{g(z,q)+\petitodom{\mathscr E}{1}}},
\end{align*}
and so $\sum_{i=J}^{n}b_i(z)=\grandOdom{\mathscr E}{n^{0\vee g(z,q)+\petitodom{\mathscr E}{1}}}$. Replacing this in \eqref{wrt:eq:expectation M2n-Mn^q} finishes to prove \eqref{wrt:eq:martingales q-moment}. 
In order to prove \eqref{wrt:eq:M2n-Mn^q is big o of}, we use Lemma~\ref{wrt:lem:biggins lemma} again and write
\begin{align*}
\Ec{\abs{M_{2n}(z)-M_{n}(z)}^q}&\leq 2^q\cdot \sum_{i=n}^{2n-1}\Ec{\abs{M_{i+1}(z)-M_{i}(z)}^q}\\
&\underset{\text{\eqref{wrt:eq:martingales q-moment},\eqref{wrt:eq:moment increment Mn power q}}}{\leq} \sum_{i=n}^{2n-1} \left(a_i(z) \cdot \grandOdom{\mathscr E}{n^{0\vee g(z,q)+\petitodom{\mathscr E}{1}}}+ b_i(z)\right)\\
&\underset{\text{\eqref{wrt:eq:an big o},\eqref{wrt:eq:sum bn big o}}}{\leq}\sum_{i=n}^{2n-1}\left(\frac{w_{i+1}}{W_{i+1}}\right)^q  \left(\grandOdom{\mathscr E}{n^{0\vee g(z,q)+\petitodom{\mathscr E}{1}}}+\grandOdom{\mathscr E}{n^{g(z,q)-1+q}}\right).
\end{align*}
Using Lemma~\ref{wrt:fact:app of holder inequality} we get $\Ec{\abs{M_{2n}(z)-M_{n}(z)}^q}=\grandOdom{\mathscr E}{n^{(1-q)\vee g(z,q)+\petitodom{\mathscr E}{1}}}$ which finishes the proof of the lemma.
\end{proof}

\begin{proof}[Proof of Proposition~\ref{wrt:prop:convergence Mnbeta}]
Any compact subset $K\subset \mathscr V_q$ can be covered by a finite number of $\mathscr V_q$. The convergence result is then an application of Lemma~\ref{wrt:lem:convergence analytic martingale}, on the set $\mathscr V_q$ with $\alpha(z)=0$ and, say $\delta(z)=-\frac{1}{2}g(z,q)>0$.  
The limiting function is analytic as a uniform limit of analytic functions. 
\end{proof}

\paragraph{Zeros of the limit.} Now that we have proved that their exists a limiting function $z\mapsto M_\infty(z)$ defined on the set $\mathscr V$, we are interested in the possible location of the zeros of this random function. In fact, the function $z\mapsto M_\infty(z)$ is related to the function $z\mapsto N_\infty(z)$ of Proposition~\ref{wrt:prop:assumptions kabluchko}, for which we aim to prove that it has almost surely no zero on some real interval $\intervalleoo{z_-}{z_+}$ which contains $0$. 
We will prove a similar result for $z\mapsto M_\infty(z)$ in Lemma~\ref{wrt:lem:Minfty almost surely no zero}, and we start by proving the following weaker statement.
Recall the definition of the interval $I_\gamma$ in Lemma~\ref{wrt:lem:V open domain and contains open interval}.
\begin{lemma}\label{wrt:lem:Minfty almost surely not zero}
For all $z\in I_\gamma$, we have almost surely $M_\infty(z)>0$. As a consequence, the number of zeros of the map $(z\mapsto M_\infty(z))$ on the interval $I_\gamma$ is almost surely at most countable. 
\end{lemma}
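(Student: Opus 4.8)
Fix $z\in I_\gamma$. Since $z$ is real and $\frac{w_i}{W_i}\to0$ under \eqref{wrt:assum: wrt alpha}, each factor $1+(e^z-1)\frac{w_i}{W_i}$ is positive for $i\geq J$, so $C_n(z)>0$ and $M_n(z)\geq0$ for all $n\geq J$; by Lemma~\ref{wrt:lem:Mn martingale} this is a non-negative martingale, hence it converges almost surely to some $M_\infty(z)\geq0$. By Lemma~\ref{wrt:lem:V open domain and contains open interval} we have $I_\gamma\subset\mathscr V$, so Proposition~\ref{wrt:prop:convergence Mnbeta} also gives convergence in $L^1$, whence $\Ec{M_\infty(z)}=\Ec{M_J(z)}$; the latter is $>0$ because $\haut(u_1)=0$ and $w_1>0$ force the term $i=1$ to contribute the deterministic positive quantity $\frac{w_1}{C_J(z)W_J}$, all other terms being non-negative. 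Thus $\Pp{M_\infty(z)>0}>0$, and it remains only to show $\Pp{M_\infty(z)=0}=0$.

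\textbf{Step 2: a zero--one law (the crux).} The plan is to show that $\{M_\infty(z)=0\}$ coincides, up to a null set, with a tail event of the independent sequence $(K_n)_{n\geq2}$. Fix $n\geq1$ and set $S_n:=\sum_{i=1}^{n}w_ie^{z\haut(u_i)}$, which is $\cF_n$-measurable and finite. From $W_m\sim\cst\cdot m^{\gamma}$ and $C_m(z)\sim\cst\cdot m^{\phi(z)}$ (Lemma~\ref{wrt:lem:behaviour Cn}) and $\gamma+\phi(z)=\gamma e^z>0$, we get $C_m(z)W_m\to+\infty$, hence $\frac{S_n}{C_m(z)W_m}\to0$ and
\[
M_\infty(z)=\lim_{m\to\infty}Y_m,\qquad Y_m:=\frac{1}{C_m(z)W_m}\sum_{i=n+1}^{m}w_ie^{z\haut(u_i)}.
\]
For $i>n$ let $u_{\kappa(i)}$ be the ancestor of $u_i$ of largest height among those with label $\leq n$ (it exists, as $u_1$ is an ancestor of everyone), so that $\haut(u_i)=\haut(u_{\kappa(i)})+\dist(u_{\kappa(i)},u_i)$. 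Following the ancestral line of $u_i$, where the parent of $u_j$ carries label $K_j$, one sees that $\kappa(i)$ and $d_i:=\dist(u_{\kappa(i)},u_i)$ depend only on $K_{n+1},\dots,K_i$. Writing $H_n:=\haut(\ttT_n)<\infty$, we have $0\leq\haut(u_{\kappa(i)})\leq H_n$, so $e^{-|z|H_n}e^{zd_i}\leq e^{z\haut(u_i)}\leq e^{|z|H_n}e^{zd_i}$ for each $i>n$; since all summands are non-negative this yields, with $X_m:=\frac{1}{C_m(z)W_m}\sum_{i=n+1}^{m}w_ie^{zd_i}\geq0$,
\[
e^{-|z|H_n}X_m\leq Y_m\leq e^{|z|H_n}X_m .
\]
As $H_n$ is finite, letting $m\to\infty$ gives $\{M_\infty(z)=0\}=\{\limsup_m X_m=0\}$ almost surely, and the right-hand event is $\sigma(K_{n+1},K_{n+2},\dots)$-measurable because $C_m(z)W_m$ and the $w_i$ are deterministic while each $d_i$ is a function of $(K_{n+1},\dots,K_i)$. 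Since $n$ was arbitrary, $\{M_\infty(z)=0\}$ coincides almost surely with an event in the tail $\sigma$-field of $(K_n)_{n\geq2}$, which is trivial by Kolmogorov's zero--one law; combined with $\Pp{M_\infty(z)>0}>0$ this forces $\Pp{M_\infty(z)=0}=0$, i.e. $M_\infty(z)>0$ almost surely.

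\textbf{Step 3: the countability statement.} By Proposition~\ref{wrt:prop:convergence Mnbeta}, $z\mapsto M_\infty(z)$ is analytic on $\mathscr V\supset I_\gamma$. Applying Step 1--2 at a single point $z_0\in I_\gamma$ (for instance $z_0=0$, where $M_n(0)\equiv1$ and hence $M_\infty(0)=1$), we get that almost surely $M_\infty\not\equiv0$ on $\mathscr V$; the zeros of a non-identically-zero analytic function form a discrete, hence at most countable, subset of $\mathscr V$, so in particular $(z\mapsto M_\infty(z))$ has at most countably many zeros on $I_\gamma$.

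\textbf{Where the difficulty lies.} The only delicate point is Step 2: one must observe that the geometry of the finite tree $\ttT_n$ enters $M_\infty(z)$ only through the bounded, non-vanishing factors $e^{z\haut(u_{\kappa(i)})}\in[e^{-|z|H_n},e^{|z|H_n}]$, which may be absorbed without affecting whether the limit vanishes, turning $\{M_\infty(z)=0\}$ into a tail event of the driving independent sequence $(K_n)_{n\geq2}$; the auxiliary fact $C_m(z)W_m\to+\infty$ used there is exactly what \eqref{wrt:assum: wrt alpha} provides via Lemma~\ref{wrt:lem:behaviour Cn}.
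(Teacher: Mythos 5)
Your proof is correct and follows essentially the same route as the paper's: both derive $\Pp{M_\infty(z)>0}>0$ from $L^1$ convergence, apply Kolmogorov's zero--one law to the tail $\sigma$-field of the independent attachment indices $(K_n)_{n\geq 2}$ via the observation that the first $n$ steps of the construction affect the weighted-profile Laplace transform only through a bounded multiplicative factor, and conclude the countability of zeros from analyticity of $z\mapsto M_\infty(z)$. The only cosmetic difference is that the paper packages the tail-measurable object as the martingale $M_n^{(N)}(z)$ of the tree obtained by contracting $\ttT_N$ to its root (invoking Proposition~\ref{wrt:prop:convergence Mnbeta} for the shifted weight sequence to get its a.s.\ limit), whereas you work directly with the truncated sums $X_m$ and their $\limsup$, which avoids having to re-run the convergence machinery on an auxiliary process.
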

Let us recall from \eqref{wrt:eq:def Kn} the definition of the sequence of independent random variables $(K_2,K_3,\dots)$ that is used to construct the trees $(\ttT_n)_{n\geq 1}$. 
\begin{proof}[Proof of Lemma~\ref{wrt:lem:Minfty almost surely not zero}]This follows from an application of Kolmogorov's $0-1$ law. Indeed, fix $N\geq J$ and $z\in I_\gamma$ and for all $n\geq N$, let
\begin{align*}
	M_n^{(N)}(z)=\frac{1}{C_n(z)}\sum_{i=1}^{n}\frac{w_i}{W_n}e^{z\dist(u_i,\ttT_N)},
\end{align*} 
where $\dist$ denotes the graph distance in $\bU$, and the distance between a vertex and a subset of vertices is defined the usual way. 
The idea behind $(M_n^{(N)}(z))_{n\geq N}$ is that, up to a positive multiplicative constant (i.e. a deterministic constant that depends on $N$ and $z$ but not on $n$), it has the same distribution as the sequence $M_n(z)$ associated to the growth of the tree that one informally obtains by contracting all the vertices $\{u_1,u_2,\dots,u_N\}$ into one. 
Note that the growth of such a tree can be described as that of a weighted recursive tree with weight sequence $(W_N,w_{N+1},w_{N+2},\dots)$, which shares the same asymptotic property \eqref{wrt:assum: wrt alpha p} as the original sequence $(w_n)_{n\geq 1}$. 

We claim the following:
\begin{enumerate}
\item Due to the above remarks, $(M_n^{(N)}(z))_{n\geq N}$ is a positive martingale which satisfies the same assumptions as $M_n(z)$ so it converges a.s. and in $L^1$ towards a non-negative limit, $M_\infty^{(N)}(z)$, thanks to Proposition~\ref{wrt:prop:convergence Mnbeta}, which we can apply here because $z\in I_\gamma\subset \mathscr{V}$.
\item We have $(1\wedge e^z)^N M_n^{(N)}(z)\leq M_n(z)\leq (1\vee e^z)^N M_n^{(N)}(z)$.
\item The sequence $(M_n^{(N)}(z))_{n\geq N}$, hence its limit $M_\infty^{(N)}(z)$, is independent of the $N$ first steps of the construction, and is hence a measurable function of the sequence $(K_{n})_{n\geq N+1}$. 
\end{enumerate}
Using all these observations we deduce that for any $N\geq J$ we have the equality of events $\{M_\infty(z)>0\}=\{M^{(N)}_\infty(z)>0\}$. This proves that $\{M_\infty(z)>0\}$ is measurable with respect to the tail $\sigma$-algebra generated by the sequence $(K_n)_{n\geq 2}$, which is a sequence of jointly independent random variables. Kolmogorov's $0$-$1$ law then ensures that this event has probability $0$ or $1$. By $L^1$ convergence we have $\Ec{M_\infty(z)}=\Ec{M_J(z)}>0$ and this proves our claim.
It follows immediately that the limit $z\mapsto M_\infty(z)$ can only have finitely many zeros in any compact subset of $I_\gamma$ almost surely, because otherwise, by analyticity of $z \mapsto M_\infty(z)$ on the connected component of $\mathscr{V}$ that contains $I_\gamma$, the function would be identically $0$ on $I_\gamma$ with positive probability. 
This ensures that the total number of zeros in $I_\gamma$ is at most countable and finishes the proof. 
\end{proof} 

\begin{lemma}\label{wrt:lem:Minfty almost surely no zero}
The function $M_\infty(z)$ has almost surely no zero on $I_\gamma$.
\end{lemma}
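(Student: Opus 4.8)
The plan is to promote the pointwise assertion of Lemma~\ref{wrt:lem:Minfty almost surely not zero} (for each fixed $x$, a.s.\ $M_\infty(x)>0$) to one holding simultaneously for all $x\in I_\gamma$. The pointwise statement alone cannot do this, since a nonnegative analytic function may have an isolated (necessarily even-order) zero while being positive on a countable dense set; the extra ingredient will be a decomposition of the tree at a branching vertex together with the conditional independence of the two subtrees hanging from it.

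\emph{Step 1: a branching vertex with two heavy subtrees.} Under \eqref{wrt:assum: wrt alpha p} one has $\sum_i (w_i/W_i)^2<\infty$ (Lemma~\ref{wrt:lem:behaviour sum w/W}) and $\sum_i w_i=\infty$, so by Theorem~\ref{wrt:thm:weak convergence of measures} the limit measure $\mu$ is a.s.\ diffuse and carried on $\partial\bU$. A diffuse probability measure on $\partial\bU$ cannot have ``at most one heavy child at every vertex'': if for every $u\in\bU$ at most one subtree $T(ui)$ had $\mu(T(ui))>0$, then, starting from the root and repeatedly following the heavy child, one would build a ray $\xi\in\partial\bU$ with $\mu(T(\xi_{|k}))=1$ for all $k$, whence $\mu(\{\xi\})=\lim_k\mu(T(\xi_{|k}))=1$, contradicting diffuseness. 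Hence a.s.\ there is a vertex $v$ of $\ttT_\infty:=\bigcup_n\ttT_n$ and two of its children $v1,v2$ with $\mu(T(v1))>0$ and $\mu(T(v2))>0$.

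\emph{Step 2: decomposition.} I would split the sum $\sum_i \tfrac{w_i}{W_n}e^{x\haut(u_i)}$ according to whether $u_i$ lies in $T(v1)$, in $T(v2)$, or elsewhere, divide by $C_n(x)$, and pass to the liminf in this nonnegative sum. For real $x\in I_\gamma$ this gives
\[
M_\infty(x)\;\geq\; e^{\haut(v1)\,x}\,\widetilde Q_1(x)\,M^{(1)}_\infty(x)+e^{\haut(v2)\,x}\,\widetilde Q_2(x)\,M^{(2)}_\infty(x),
\]
where $M^{(k)}_\infty$ is the martingale limit attached to the growing tree $(T(vk)\cap\ttT_n)_n$ and $\widetilde Q_k(x)=\lim_n\tfrac{W^{(k)}_n}{W_n}\cdot\tfrac{C^{(k)}_n(x)}{C_n(x)}>0$: the first factor tends to $\mu(T(vk))>0$ and the second to a positive constant by Lemma~\ref{wrt:lem:behaviour Cn} (indeed $W^{(k)}_n/W_n\to\mu(T(vk))>0$ and, by Proposition~\ref{wrt:prop:convergence other measures}, $T(vk)$ also contains a fraction $\to\mu(T(vk))$ of the vertices, so its weight sequence has the same exponent $\gamma$). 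Crucially, conditionally on which labels fall into $T(v1)$ and which into $T(v2)$ --- which in particular fixes $v,v1,v2$ and the two weight sequences --- the trees $(T(v1)\cap\ttT_n)_n$ and $(T(v2)\cap\ttT_n)_n$ are \emph{independent} weighted recursive trees, and each of their weight sequences a.s.\ satisfies \eqref{wrt:assum: wrt alpha p} with the same $\gamma$; so Lemma~\ref{wrt:lem:Minfty almost surely not zero} applies conditionally to $M^{(1)}_\infty$ and $M^{(2)}_\infty$.

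\emph{Step 3: conclusion.} Since for real $x$ all terms in the displayed inequality are nonnegative, $M_\infty(\zeta)=0$ for some $\zeta\in I_\gamma$ forces $M^{(1)}_\infty(\zeta)=M^{(2)}_\infty(\zeta)=0$, i.e.\ $\zeta$ is a common zero of $M^{(1)}_\infty$ and $M^{(2)}_\infty$. But, conditionally on the above data, the zero set $Z^{(1)}\subset I_\gamma$ of $M^{(1)}_\infty$ is at most countable (Lemma~\ref{wrt:lem:Minfty almost surely not zero}; it is not identically zero since $M^{(1)}_\infty(0)=1$) and is measurable with respect to the process that builds $T(v1)$, whereas $M^{(2)}_\infty$ comes from an independent process and satisfies, conditionally, $\P(M^{(2)}_\infty(x)=0)=0$ for every fixed $x\in I_\gamma$. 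Enumerating $Z^{(1)}=\{\zeta_i\}_{i\geq1}$ and conditioning first on the $T(v1)$-process, $\P(\zeta_i\in Z^{(2)})=0$ for each $i$, hence $\P(Z^{(1)}\cap Z^{(2)}\neq\emptyset)=0$. As a vertex $v$ as in Step~1 exists almost surely, this yields $\P(M_\infty\text{ has a zero on }I_\gamma)=0$. The main obstacle is the structural input of Step~2: one must prove carefully that, given which label goes into which subtree, the subtrees evolve as independent weighted recursive trees (the same urn-type mechanism as in the proof of Theorem~\ref{wrt:thm:weak convergence of measures}), and that the resulting random weight sequences inherit the quantitative power-law control \eqref{wrt:assum: wrt alpha p}; the remaining steps are routine.
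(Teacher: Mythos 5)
Your proposal is correct and follows essentially the same route as the paper: use Theorem~\ref{wrt:thm:weak convergence of measures} to find two disjoint subtrees carrying positive $\mu$-mass, show that conditionally on the allocation of labels to these subtrees they evolve as independent weighted recursive trees whose weight sequences still satisfy \eqref{wrt:assum: wrt alpha p}, bound $M_\infty$ from below by a positive combination of the two corresponding limits $M^{(1)}_\infty, M^{(2)}_\infty$, and conclude via the independence of the (at most countable) zero set of $M^{(1)}_\infty$ from $M^{(2)}_\infty$ together with Lemma~\ref{wrt:lem:Minfty almost surely not zero}. The only cosmetic difference is that the paper picks the first two labelled vertices $u_{I^{(1)}},u_{I^{(2)}}$ with disjoint heavy subtrees (not necessarily siblings), which slightly streamlines the ``revealing'' argument in Lemma~\ref{wrt:lem:autosimilarity wrt}, but your sibling-based decomposition works just as well; the structural claims you flag as the main obstacle are exactly the content of Lemma~\ref{wrt:lem:autosimilarity wrt}.
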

In order to prove this lemma, we use an argument of self-similarity: essentially, if we take two vertices $u_i$ and $u_j$ in the tree, then conditionally on the sequences of vertices that are grafted above $u_i$ or above $u_j$, the subtrees above $u_i$ and $u_j$ evolve as two \emph{independent} weighted recursive trees. 
Using Proposition~\ref{wrt:prop:convergence Mnbeta} and Lemma~\ref{wrt:lem:Minfty almost surely not zero}, the normalized Laplace transform of the weighted profile of each of those two subtrees should converge almost surely to some random analytic function on $\mathscr{V}$ which is non-negative on $I_\gamma$ and has at most countably many zeros on this interval. Since the two are independent, their zeros should not overlap and hence the sum of their contribution should result in a function that is positive on $I_\gamma$.
\begin{proof}
Let us formalise this line of reasoning. Using Theorem~\ref{wrt:thm:weak convergence of measures}, we know that the measure $\mu$ on $\partial \bU$ is almost surely diffuse, hence we can define
\begin{align*}
 I^{(1)}:= \inf\enstq{i\geq 1}{\mu(T(u_i))\in\intervalleoo{0}{1}}  \quad \text{and} \quad I^{(2)}:= \inf\enstq{i\geq I_1}{u_i\notin T(u_{I_1}) \text{ and }\mu(T(u_i))\in\intervalleoo{0}{1}},
\end{align*}
and they are almost surely finite. 

Let us consider the sequences $\left(\ind{u_n\in T(u_{I^{(j)}})}\right)_{n\geq 1}$ for $j\in\{1,2\}$, which record the times when a vertex is added to $T(u_{I^{(1)}})$ or $T(u_{I^{(2)}})$, and work conditionally on them for the rest of the proof. 
We let
\begin{align*}
\forall n \geq 1, \quad N_n^{(j)}:= \sum_{i=1}^{n}\ind{u_i\in T(u_{I^{(j)}})} \qquad \text{and} \qquad \forall k\geq 1, \quad \tau^{(j)}_k:=\inf\enstq{n\geq 1}{N_n^{(j)}\geq k},
\end{align*}
which record respectively the number of vertices among $\{u_1,u_2,\dots,u_n\}$ that are in $T(u_{I^{(j)}})$ and conversely, the $k$-th time where a vertex is added to $T(u_{I^{(j)}})$ in the construction of $(\ttT_n)_{n\geq 1}$.
We let $w^{(j)}_k:=w_{\tau^{(j)}(k)}$ and $W^{(j)}_k:=\sum_{i=1}^{k}w^{(j)}_k$, and also $u_k^{(j)}:=u_{\tau_k^{(j)}}$. 
We also define for $j\in\{1,2\}$ and $k\geq 1$
\[\ttT_k^{(j)}:=\enstq{u\in\bU}{u_{I^{(j)}}u\in \ttT_{\tau_k^{(j)}}},\]
the subtree hanging above $u_{I^{(j)}}$ at the time where it contains exactly $k$ vertices (translated to the origin in order to be considered as a plane tree).

Let us state the following intermediate result, which we will prove at the end of the section. Note that the random sequences $(N_n^{(j)})_{n\geq 1}$, $(\tau^{(j)}_k)_{k\geq 1}$ and 
	$(w^{(j)}_k)_{k\geq 1}$ for $j\in\{1,2\}$ can be read from $\left(\ind{u_n\in T (u_{I^{(j)}}) }\right)_{n\geq 1}$ for $j\in\{1,2\}$.
\begin{lemma}\label{wrt:lem:autosimilarity wrt}
The following holds.
\begin{enumerate}
\item\label{wrt:it:number of nodes in subtree} For $j\in\{1,2\}$, we almost surely  have
$N_n^{(j)}\underset{n\rightarrow\infty}{\sim} \mu(T(u_{I^{(j)}}))\cdot n$.
\item\label{wrt:it:weight subsequences satisfy square gamma p}  For $j\in\{1,2\}$, the sequence $(w^{(j)}_k)_{k\geq 1}$ satisfies \eqref{wrt:assum: wrt alpha p} almost surely.
\item\label{wrt:it:wrt conditionally on weight sequences}  Conditionally on the two sequences $\left(\ind{u_n\in T(u_{I^{(1)}})}\right)_{n\geq 1}$ and $\left(\ind{u_n\in T(u_{I^{(2)}})}\right)_{n\geq 1}$, the sequences of trees $(\ttT^{(1)}_k)_{k\geq1}$ and $(\ttT^{(2)}_k)_{k\geq1}$ are independent and have respective distributions $\wrt((w^{(1)}_k)_{k\geq 1})$ and $\wrt((w^{(2)}_k)_{k\geq 1})$.
\end{enumerate}  
\end{lemma}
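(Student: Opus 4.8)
The plan is to prove the three parts in order, since \ref{wrt:it:weight subsequences satisfy square gamma p} uses \ref{wrt:it:number of nodes in subtree} and \ref{wrt:it:wrt conditionally on weight sequences} is essentially independent. For \ref{wrt:it:number of nodes in subtree} I would argue exactly as for the uniform measure $\nu_n$ in Section~\ref{wrt:subsec:convergence of measure}. Since $I^{(j)}$ takes only countably many values, it suffices to fix an integer $k$ and work on the event $\{I^{(j)}=k\}\subseteq\{\mu(T(u_k))\in\intervalleoo{0}{1}\}$. There $N_n^{(j)}=\sum_{i\leq n}\ind{u_i\in T(u_k)}$ is a sum of conditionally Bernoulli variables with $\Ppsq{u_i\in T(u_k)}{\cF_{i-1}}=\mu_{i-1}(T(u_k))$, which tends a.s.\ to $\mu(T(u_k))>0$ by the martingale convergence established in the proof of Theorem~\ref{wrt:thm:weak convergence of measures}. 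Lemma~\ref{wrt:lem:sum bernoulli} then gives $N_n^{(j)}\sim\sum_{i\leq n}\mu_{i-1}(T(u_k))$ a.s., and Ces\`aro's lemma turns the right-hand side into $\mu(T(u_k))\cdot n$.

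For \ref{wrt:it:weight subsequences satisfy square gamma p}, write $W^{(j)}_k=S^{(j)}_{\tau^{(j)}_k}$ with $S^{(j)}_n:=W_n\cdot\mu_n(T(u_{I^{(j)}}))$ the total weight inside $T(u_{I^{(j)}})$ at time $n$. The second bound in \eqref{wrt:assum: wrt alpha p} is easy: by the inverse of part \ref{wrt:it:number of nodes in subtree}, $\tau^{(j)}_i\sim i/\mu(T(u_{I^{(j)}}))$, so for large $k$ the distinct integers $\{\tau^{(j)}_i:k\leq i\leq 2k\}$ lie in some range $\intervalleff{ck}{c'k}$, whence $\sum_{i=k}^{2k}(w^{(j)}_i)^p\leq\sum_{m=ck}^{c'k}w_m^p\leq k^{1+(\gamma-1)p+\petito{1}}$ after chaining dyadic blocks of the bound assumed for $\boldsymbol w$. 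The first condition $W^{(j)}_k\bowtie\cst\cdot k^\gamma$ is the delicate one, because mere almost sure equivalence of $N^{(j)}_n$ and $S^{(j)}_n$ is not enough: I need \emph{polynomial} error terms. For this I would upgrade the urn convergence quantitatively: $(\mu_n(T(u_k)))_n$ is a bounded martingale with $\Ecsq{(\mu(T(u_k))-\mu_n(T(u_k)))^2}{\cF_n}\leq\sum_{i\geq n}(w_{i+1}/W_i)^2=\grandO{n^{-\epsilon}}$ by Lemma~\ref{wrt:lem:behaviour sum w/W}, and a dyadic Borel--Cantelli argument using Doob's $L^2$-inequality on the blocks $\intervalleff{2^\ell}{2^{\ell+1}}$ yields $\mu_n(T(u_k))=\mu(T(u_k))+\grandO{n^{-\epsilon'}}$ a.s.; the same scheme applied to the martingale $N_n^{(j)}-\sum_{i\leq n}\mu_{i-1}(T(u_{I^{(j)}}))$ upgrades \ref{wrt:it:number of nodes in subtree} to $N^{(j)}_n=\mu(T(u_{I^{(j)}}))\,n\,(1+\grandO{n^{-\epsilon'}})$, hence $\tau^{(j)}_k=k\,\mu(T(u_{I^{(j)}}))^{-1}(1+\grandO{k^{-\epsilon'}})$. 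Composing these with $W_n\bowtie\cst\cdot n^\gamma$ and using $\mu(T(u_{I^{(j)}}))>0$ gives $W^{(j)}_k=\cst\cdot(\tau^{(j)}_k)^\gamma(1+\grandO{(\tau^{(j)}_k)^{-\epsilon''}})\bowtie\cst\cdot k^\gamma$.

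For \ref{wrt:it:wrt conditionally on weight sequences} I would invoke the branching property of the WRT. Fixing first two labels $i_1<i_2$ with $u_{i_2}\notin T(u_{i_1})$ (so the subtrees are vertex-disjoint, with vertex sets $A^{(1)},A^{(2)}$), one reads off from \eqref{wrt:eq:def Kn} and the independence of the attachment variables $(K_n)_{n\geq2}$ that, conditionally on the routing indicator sequences of $u_{i_1}$ and $u_{i_2}$, the two internal structures are independent weighted recursive trees with weight sequences $(w_n)_{n\in A^{(1)}}$ and $(w_n)_{n\in A^{(2)}}$, and are independent of $(K_n)_{n\notin A^{(1)}\cup A^{(2)}}$ --- because conditioning an independent $K_n$ on lying in a prescribed set only renormalizes its weights there. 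The one genuinely delicate point is that $I^{(1)},I^{(2)}$ are \emph{random}: but $\mu(T(u_k))=\lim_nW_n^{-1}\sum_{i\leq n,\,u_i\in T(u_k)}w_i$, hence the events $\{I^{(1)}=i_1,I^{(2)}=i_2\}$, are measurable functions of the routing indicators of fixed labels, while these routing indicators are --- given the routing indicators of $u_{i_1}$ and $u_{i_2}$ --- independent of the two internal structures (the relevant ancestral chains involve only $(K_m)_{m\leq i_1}$, an index set disjoint from the ones governing the subtrees). Decomposing over the countably many values of $(I^{(1)},I^{(2)})$ then upgrades the fixed-label statement to the stated conditional statement.

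The hard part will be \ref{wrt:it:weight subsequences satisfy square gamma p}: the $\bowtie$-precision demanded by \eqref{wrt:assum: wrt alpha p} is not implied by the soft convergences, and one is forced into a quantitative analysis of the time-dependent P\'olya urns governing $\mu_n(T(u_k))$, for which Lemma~\ref{wrt:lem:behaviour sum w/W} is exactly the estimate needed. The conceptual subtlety in \ref{wrt:it:wrt conditionally on weight sequences} --- reconciling the random indices $I^{(j)}$ with the Markovian branching argument --- is milder but also needs to be handled with care.
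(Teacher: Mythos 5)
Your plan is essentially the paper's for all three parts. Part~\ref{wrt:it:number of nodes in subtree} is precisely the argument used for the uniform measure $\nu_n$ in Proposition~\ref{wrt:prop:convergence other measures}, applied to $T(u_{I^{(j)}})$, and the paper simply cites that proposition. For part~\ref{wrt:it:weight subsequences satisfy square gamma p}, the paper likewise upgrades to the quantitative estimates $\mu_n(T(u_{I^{(j)}}))\bowtie\mu(T(u_{I^{(j)}}))$ and $\tau^{(j)}_n\bowtie\mu(T(u_{I^{(j)}}))^{-1}\cdot n$, substitutes into $W^{(j)}_n=W_{\tau^{(j)}_n}\cdot\mu_{\tau^{(j)}_n}(T(u_{I^{(j)}}))$, and chains dyadic blocks for the bound on $\sum(w^{(j)}_k)^p$, exactly as you sketch. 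The only technical divergence is the martingale estimate: you control $\mu_n(T(u_k))$ in $L^2$ via $\sum_{i\ge n}(w_i/W_i)^2=\grandO{n^{-\epsilon}}$ from Lemma~\ref{wrt:lem:behaviour sum w/W}, while the paper bounds $\Ec{\abs{\mu_{2n}(T(u_k))-\mu_n(T(u_k))}^p}$ via Lemmas~\ref{wrt:lem:biggins lemma} and~\ref{wrt:fact:app of holder inequality} and then invokes the packaged Lemma~\ref{wrt:lem:convergence analytic martingale}. Both give polynomial a.s.\ rates, so this is a cosmetic difference; in fact Lemma~\ref{wrt:lem:convergence analytic martingale} is exactly the dyadic Doob/Borel--Cantelli scheme you propose to reinvent by hand.

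The one genuine weakness is the parenthetical justification you give for the independence claim in part~\ref{wrt:it:wrt conditionally on weight sequences}. You write that the routing indicators governing $\{I^{(1)}=i_1,I^{(2)}=i_2\}$ are independent of the internal structures because ``the relevant ancestral chains involve only $(K_m)_{m\le i_1}$''. That is not correct: the event in question depends on the limits $\mu(T(u_k))$, which involve $\ind{u_n\in T(u_k)}$ for \emph{all} $n$, hence attachment decisions $K_m$ for arbitrarily large $m$. What is actually true, and what your argument needs, is that given the two routing sequences the event $\{I^{(1)}=i_1,I^{(2)}=i_2\}$ is measurable with respect to $\{K_m : u_m\notin T(u_{i_1})\cup T(u_{i_2})\}$, which \emph{is} conditionally independent of the internal structures. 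This holds because (a) for $u_m\in T(u_{i_j})$ and any $u_k\notin T(u_{i_1})\cup T(u_{i_2})$ one has $\ind{u_m\in T(u_k)}=\ind{u_{i_j}\in T(u_k)}$, so such indicators carry no information about the inside of the subtree, while (b) for $u_m\notin T(u_{i_1})\cup T(u_{i_2})$ the whole ancestral chain of $u_m$ stays outside. The paper sidesteps spelling this out by a sequential revelation device: it reveals the sequences $(\ind{u_n\in T(u_k)})_n$ one $k$ at a time and \emph{skips} every $k$ with $u_k\in T(u_{I^{(1)}})$, so that by construction only ``outside'' information is ever examined before $I^{(1)}$ and $I^{(2)}$ are pinned down. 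Your fixed-label decomposition can be made rigorous, but the claimed reason must be replaced by this one.
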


Recall the discussion before Lemma~\ref{wrt:lem:behaviour Cn}. 
For $j\in\{1,2\}$, let $J^{(j)}\geq 1$ be the smallest integer such that for all $k\geq J^{(j)}$ and for all $z\in\mathscr E'$ we have $1+(e^z-1)\frac{w^{(j)}_k}{W^{(j)}_k}\neq 0$. 
Then we can define for $k\geq J^{(j)}$,
\begin{align*}
M^{(j)}_k(z):=\frac{1}{C_k^{(j)}(z)}\sum_{i=1}^{k}\frac{w_i^{(j)}}{W_k^{(j)}}e^{z\dist (u_{I^{(j)}},u_i^{(j)})} \quad \text{with} \quad C_k^{(j)}(z):=\prod_{i=J^{(j)}}^{k}\left(1+(e^z-1)\frac{w^{(j)}_i}{W^{(j)}_i}\right).
\end{align*}
These processes are the martingales associated to the weighted profile of the trees $(\ttT_k^{(j)})_{k\geq 1}$ for $j\in\{1,2\}$.
Thanks to Lemma~\ref{wrt:lem:autosimilarity wrt}\ref{wrt:it:wrt conditionally on weight sequences} those trees have respective distribution $\wrt((w^{(j)}_k)_{k\geq 1})$, for $j\in\{1,2\}$ and thanks to Lemma~\ref{wrt:lem:autosimilarity wrt}\ref{wrt:it:weight subsequences satisfy square gamma p}, those weight sequences satisfy \eqref{wrt:assum: wrt alpha p} almost surely. 
This allows us to apply Proposition~\ref{wrt:prop:convergence Mnbeta}, which entails that for $j\in\{1,2\}$, the sequence of functions $(z\mapsto M^{(j)}_k(z))_{k\geq J^{(j)}}$ converges almost surely to an analytic limit $z\mapsto M_\infty^{(j)}(z)$ on the set $\mathscr V$.
Now we can write, for $n$ sufficiently large
\begin{align}\label{wrt:eq:minoration Mn par Mn1 Mn2}
M_n(z)&=\frac{1}{C_n(z)}\sum_{i=1}^{n}\frac{w_i}{W_n}e^{z \haut(u_i)} \notag\\
&\geq \frac{C_{N_n^{(1)}}^{(1)}(z) \cdot W_{N_n^{(1)}}^{(1)}}{C_n(z)\cdot W_n} \cdot e^{z\haut(u_{I^{(1)}})} \cdot M_{N_n^{(1)}}^{(1)}(z)+ \frac{C_{N_n^{(2)}}^{(2)}(z) \cdot W_{N_n^{(2)}}^{(2)}}{C_n(z)\cdot W_n}\cdot e^{z\haut(u_{I^{(2)}})}\cdot  M_{N_n^{(2)}}^{(2)}(z).
\end{align}
Using Lemma~\ref{wrt:lem:behaviour Cn}, we have almost surely for $j\in\{1,2\}$, 	\[C_k^{(j)}(z)=\exp(\phi(z)\log k +c^{(j)}(z)+\petitodom{\mathscr E}{1}).\]
Using the asymptotics $N_n^{(j)}\underset{n\rightarrow\infty}{=}\mu(T(u_{I^{(j)}}))\cdot n\cdot (1+\petito{1})$ from Lemma~\ref{wrt:lem:autosimilarity wrt}\ref{wrt:it:number of nodes in subtree} we get
\begin{align*}
C_{N^{(j)}_n}^{(j)}(z)
&=\exp\left(\phi(z)\log n+\phi(z)\log (\mu(T(u_{I^{(j)}})))+c^{(j)}(z)+\petitodom{\mathscr E}{1}\right).
\end{align*}
From the a.s.\ convergence of the sequence of measures $(\mu_n)_{n\geq 1}$, see Theorem~\ref{wrt:thm:weak convergence of measures}, we also get \[\frac{W_{N_n^{(j)}}^{(j)}}{W_n}=\mu_n(T(u_{I^{(j)}}))\underset{n\rightarrow\infty}{\rightarrow}\mu(T(u_{I^{(j)}}))>0,\]
which entails that for $j\in\{1,2\}$, uniformly on all compact subsets of $\mathscr E$, we have the a.s.\ convergence
\[\frac{W_{N^{(j)}_n}^{(j)}}{W_n}\cdot\frac{C_{N^{(j)}_n}^{(j)}(z)}{C_n(z)}\underset{n\rightarrow\infty}{\rightarrow}A_j(z):=\mu(T(u_{I^{(j)}}))\cdot \exp\left(\phi(z)\log (\mu(T(u_{I^{(j)}})))+c^{(j)}(z)-c(z)\right) ,\] where the limiting function $z\mapsto A_j(z)$ is analytic and only takes positive values on $\mathscr E \cap \R$. 
Then, for any $z\in \mathscr V\cap \R$, taking the limit $n\rightarrow\infty$ in \eqref{wrt:eq:minoration Mn par Mn1 Mn2} yields
\begin{align*}
	M_\infty(z)\geq e^{z \haut(u_{I^{(1)}})}\cdot A_1(z)\cdot M^{(1)}_\infty(z) + e^{z\haut(u_{I^{(2)}})}\cdot A_2(z) \cdot M^{(2)}_\infty(z).
\end{align*}
Now, thanks to Lemma~\ref{wrt:lem:Minfty almost surely not zero}, the function $z\mapsto M^{(1)}_\infty(z)$ can only have at most countably many zeros on $I_\gamma\subset \mathscr V\cap \R$ and for all $z\in I_\gamma$, we have $M^{(2)}_\infty(z)>0$ almost surely.
Then if we condition on the location of the zeros $z_1,z_2\dots$ of $M^{(1)}_\infty$ on $I_\gamma$, since $M^{(2)}_\infty$ is independent of $z_1,z_2\dots$, we have $M^{(2)}_\infty(z_i)>0$ for all $i\geq 1$ almost surely. 
Hence $M_\infty$ has almost surely no zeros on $I_\gamma$.
\end{proof}

Now let us prove Lemma~\ref{wrt:lem:autosimilarity wrt} which we used in the preceding proof.
\begin{proof}[Proof of Lemma~\ref{wrt:lem:autosimilarity wrt}]
Point \ref{wrt:it:number of nodes in subtree} follows just from Theorem~\ref{wrt:thm:weak convergence of measures} and Proposition~\ref{wrt:prop:convergence other measures} and the fact that for $j\in\{1,2\}$ we have $N^{(j)}_n=n\nu_n(T(u_{I^{(j)}}))$.

Let us prove \ref{wrt:it:weight subsequences satisfy square gamma p}. In order to do that we are going to prove that for $j\in\{1,2\}$, we have
\begin{equation}\label{wrt:eq:measures converge nicely}
\mu_n(T(u_{I^{(j)}}))\underset{n\rightarrow\infty}{\bowtie}\mu(T(u_{I^{(j)}}))\qquad \text{and} \qquad \tau_n^{(j)}\underset{n\rightarrow\infty}{\bowtie} \mu(T(u_{I^{(j)}}))^{-1} \cdot n.
\end{equation}
Let us conclude from here: using the fact that $\boldsymbol{w}$ satisfies \eqref{wrt:assum: wrt alpha}, we get 
\begin{align*}
W_n^{(j)}=W_{\tau_n^{(j)}}\cdot \mu_{\tau_n^{(j)}}(T(u_{I^{(j)}})) \underset{n\rightarrow\infty}{\bowtie} \cst \cdot (\tau_n^{(j)})^\gamma \cdot \mu(T(u_{I^{(j)}})) \underset{n\rightarrow\infty}{\bowtie} \cst \cdot n^{\gamma},
\end{align*}
with a positive constant. We also have
\begin{align*}
\sum_{k=n}^{2n}(w_k^{(j)})^p=\sum_{k=n}^{2n}(w_{\tau_k^{(j)}})^p\leq \sum_{i=\tau_n^{(j)}}^{\tau_{2n}^{(j)}}w_i^p.
\end{align*}
Because of \eqref{wrt:eq:measures converge nicely}, we have the following almost sure convergence $\frac{\tau_{2n}^{(j)}}{\tau_{n}^{(j)}}\rightarrow 2$ as $n\rightarrow\infty$, hence almost surely for $n$ large enough we have $\tau_{2n}^{(j)}\leq 4 \tau_{n}^{(j)}$, so
\begin{align*}
	\sum_{i=\tau_n^{(j)}}^{\tau_{2n}^{(j)}}w_i^p \leq \sum_{i=\tau_n^{(j)}}^{4\tau_n^{(j)}}w_i^p \leq \sum_{i=\tau_n^{(j)}}^{2\tau_n^{(j)}}w_i^p + \sum_{i=2\tau_n^{(j)}}^{4\tau_n^{(j)}}w_i^p &\leq (\tau_n^{(j)})^{1+(\gamma-1) p+\petito{1}} +(2\tau_n^{(j)})^{1+(\gamma-1) p+\petito{1}} \\
	&\leq n^{1+(\gamma-1) p+\petito{1}},
\end{align*} 
where in the two last inequalities we used the fact that $\boldsymbol{w}$ satisfies \eqref{wrt:assum: wrt alpha p} and the almost sure linear growth of $\tau_n^{(j)}$ ensured by \eqref{wrt:eq:measures converge nicely}.

So it remains only to prove \eqref{wrt:eq:measures converge nicely}. 
Recall the proof of Theorem~\ref{wrt:thm:weak convergence of measures}. 
For all $k\geq 1$ the process $(\mu_n(T(u_k)))_{n\geq k}$ is a martingale and almost surely we have \[\abs{\mu_{n+1}(T(u_k))-\mu_n(T(u_k))}=\frac{w_{n+1}}{W_{n+1}}\cdot \abs{\ind{u_{n+1}\in T(u_k)}- \mu_n(T(u_k))}\leq \frac{w_{n+1}}{W_{n+1}}.\] 
Using successively Lemma~\ref{wrt:lem:biggins lemma} and then Lemma~\ref{wrt:fact:app of holder inequality}, which applies because $\boldsymbol{w}$ satisfies \eqref{wrt:assum: wrt alpha p},
\begin{align*}
\Ec{\abs{\mu_{2n}(T(u_k))-\mu_n(T(u_k))}^p}\leq 2^p\cdot \sum_{i=n+1}^{2n}\left(\frac{w_i}{W_i}\right)^p = \grandO{n^{1-p+\petito{1}}}.
\end{align*}
Using then Lemma~\ref{wrt:lem:convergence analytic martingale} with $q=p$ and $\alpha=0$ and $\delta=(p-1)/2$, we get that $\abs{\mu_n(T(u_k))-\mu(T(u_k))}=\grandO{n^{-\epsilon}}$ almost surely for some $\epsilon>0$. Since this is true almost surely for all $k\geq 1$, we use it with $k\in\{I^{(1)},I^{(2)}\}$. As by definition for $j\in\{1,2\}$ we have $\mu(T(u_{I^{(j)}}))>0$, we conclude that $\mu_n(T(u_{I^{(j)}}))\underset{n\rightarrow\infty}{\bowtie}\mu(T(u_{I^{(j)}}))$.

Then, for any $k\geq 1$, consider the process $\left(n\nu_n(T(u_k))-\sum_{i=k+1}^n\mu_i(T(u_k))\right)_{n\geq k}$. It is easy to verify that this process is a martingale in its own filtration and that its increments are bounded by $1$. Using again Lemma~\ref{wrt:lem:convergence analytic martingale} with $q=2$ and $\alpha=1$ and $\delta=1$, we get that
$n^{-1}\abs{n\nu_n(T(u_k))-\sum_{i=k+1}^n\mu_i(T(u_k))}=\grandO{n^{-\epsilon}}$ for some $\epsilon>0$. 
Using again that for $j\in\{1,2\}$ the limit $\mu(T(u_{I^{(j)}}))$ is almost surely positive, we can write 
$N_n^{(j)}=n\nu_n(T(u_{I^{(j)}}))\underset{n\rightarrow\infty}{\bowtie} \mu(T(u_{I^{(j)}}))\cdot n$. 
Using the definition of $\tau_n^{(j)}$, we can check that this entails that $\tau_n^{(j)}\underset{n\rightarrow\infty}{\bowtie}\mu(T(u_{I^{(j)}}))^{-1}\cdot n$ almost surely. This concludes the proof of \eqref{wrt:eq:measures converge nicely} and so, \ref{wrt:it:weight subsequences satisfy square gamma p} is proved.

Let us now prove \ref{wrt:it:wrt conditionally on weight sequences}. 
For any $k\geq1$, we consider the sequence $\left(\ind{u_n\in T(u_k)}\right)_{n\geq 1}$ that encodes the labels of the vertices above $u_k$. 
Note that the limiting mass $\mu(T(u_k))$ can be computed from that sequence.  
Now, let us sequentially reveal $\left(\ind{u_n\in T(u_1)}\right)_{n\geq 1},\left(\ind{u_n\in T(u_2)}\right)_{n\geq 1},\dots,\left(\ind{u_n\in T(u_k)}\right)_{n\geq 1},\dots$ until we get to a $k$ for which $\mu(T(u_k))\in \intervalleoo{0}{1}$. By definition, the first index for which it happens is $I^{(1)}$.

Then we continue revealing the sequences $\left(\ind{u_n\in T(u_k)}\right)_{n\geq 1}$ for $k>I^{(1)}$ but only for the $k$'s such that $u_k\notin T(u_{I^{(1)}})$ until we get to a $k$ for which $\mu(T(u_k))\in \intervalleoo{0}{1}$. By definition, this second index is $I^{(2)}$. 
Remark, and this is the key in this argument, that after determining $I^{(1)}$ and $I^{(2)}$ in this way, the only information that we have about $T(u_{I^{(1)}})$ and $T(u_{I^{(2)}})$ is the list of labels of the vertices that belong each of them (and the position of $u_{I^{(1)}}$ and $u_{I^{(2)}}$). 

Now, conditionally on all this information, it is straightforward to see from the attachment dynamics that for any $j\in\{1,2\}$, when the $i+1$-st vertex attaches above $u_{I^{(j)}}$ at time $\tau_{i+1}^{(j)}$, the label $K_{\tau_{i+1}^{(j)}}$ of the vertex to which it attaches is chosen among $\tau_1^{(j)},\tau_2^{(j)},\dots \tau_i^{(j)}$ with probability proportional to their respective weight $w_{\tau_1^{(j)}},w_{\tau_2^{(j)}},\dots w_{\tau_i^{(j)}}$, independently for different choices of $i\geq 1$ and $j\in\{1,2\}$. This finishes the proof of \ref{wrt:it:wrt conditionally on weight sequences} and hence that of the lemma.
\end{proof}

\subsection{From the weighted to the unweighted sum.}\label{wrt:subsec:from the weighted to unweighted sum}
Now we want to transfer these results of convergence to the Laplace transform of the real profile.  
Recall from \eqref{wrt:eq:def Nnz} the definition of the sequence of functions $(z\mapsto N_n(z))_{n\geq 1}$. We still assume until the end of Section~\ref{wrt:subsec:from the weighted to unweighted sum} that $\boldsymbol{w}$ satisfies \eqref{wrt:assum: wrt alpha p} for some $\gamma>0$ and $p\in \intervalleof{1}{2}$. 

We introduce the following quantity, for $n\geq J$, 
\begin{align*}
	X_n(z)&:=n^{1+\phi(z)}\cdot N_n(z)-e^z \sum_{k=J}^{n-1}C_k(z)M_k(z)\\
	&=\sum_{i=1}^{n}e^{z \haut(u_i)}-e^z \sum_{k=J}^{n-1}\left(\sum_{i=1}^{k}\frac{w_i}{W_k}e^{z\haut(u_i)}\right)
\end{align*}
The goal of this subsection is to show that the quantity $X_n(z)$ is negligible as $n\rightarrow\infty$ compared to any of the two terms in the difference, for $z$ contained in some subset of the complex plane. 
This way we will transfer the asymptotics that we have proved for $M_n(z)$ and $C_n(z)$ in the last section to asymptotics for $N_n(z)$, which is the quantity that we want to study in the end.
Let us start by proving a lemma.

\begin{lemma}\label{wrt:lem:Xn martingale}
	The process $\left(X_n(z)\right)_{n\geq J}$ is a martingale with respect to $(\cF_n)_{n\geq 1}$. Furthermore, for all $q\in\intervalleof{1}{p}$, \[\Ec{\abs{X_{2n}(z)-X_n(z)}^q}=\grandOdom{\mathscr E}{n^{1+(q\Re(\phi(z)) \vee \phi(q\Re z))+\petitodom{\mathscr E}{1}} }.\]
\end{lemma}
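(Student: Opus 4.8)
The plan is to first establish the martingale property by a direct increment computation, then control the $L^q$ norm of the increments using the already-established moment bounds for $M_n(z)$ and the estimates for $C_n(z)$.

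For the martingale property, write the one-step increment
\begin{align*}
X_{n+1}(z)-X_n(z)=e^{z\haut(u_{n+1})}-e^z\left(\sum_{i=1}^{n}\frac{w_i}{W_n}e^{z\haut(u_i)}\right)=e^{z\haut(u_{n+1})}-e^z C_n(z)M_n(z).
\end{align*}
Here I used that the term indexed $k=n$ added to the second sum at step $n+1$ equals $\sum_{i=1}^{n}\frac{w_i}{W_n}e^{z\haut(u_i)}=C_n(z)M_n(z)$. Now take conditional expectation with respect to $\cF_n$: conditionally on $\cF_n$, the new vertex $u_{n+1}$ is a child of $u_{K_{n+1}}$ where $\Ppsq{K_{n+1}=k}{\cF_n}=\frac{w_k}{W_n}$, so $\haut(u_{n+1})=\haut(u_{K_{n+1}})+1$ and
\begin{align*}
\Ecsq{e^{z\haut(u_{n+1})}}{\cF_n}=e^z\sum_{k=1}^{n}\frac{w_k}{W_n}e^{z\haut(u_k)}=e^z C_n(z)M_n(z).
\end{align*}
Hence $\Ecsq{X_{n+1}(z)-X_n(z)}{\cF_n}=0$ and $(X_n(z))_{n\geq J}$ is a martingale.

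For the moment bound, estimate each increment. Using $\abs{a+b}^q\leq 2^q(\abs a^q+\abs b^q)$,
\begin{align*}
\Ec{\abs{X_{n+1}(z)-X_n(z)}^q}\leq 2^q\Ec{\abs{e^{z}}^{q\haut(u_{n+1})}}+2^q\abs{e^z}^q\abs{C_n(z)}^q\Ec{\abs{M_n(z)}^q}.
\end{align*}
For the first term, $\Ec{\abs{e^z}^{q\haut(u_{n+1})}}=e^{q\Re z}\Ec{\sum_{k=1}^{n}\frac{w_k}{W_n}e^{(q\Re z)\haut(u_k)}}$, which by the remark following Lemma~\ref{wrt:lem:behaviour Cn} (applied at the real point $q\Re z\in\mathscr E'$, using that $\Ec{M_J(q\Re z)}\neq 0$ since it is a strictly positive real number) is $\grandOdom{\mathscr E}{n^{\phi(q\Re z)}}$. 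For the second term, combine $\abs{C_n(z)}^q=\grandOdom{\mathscr E}{n^{q\Re\phi(z)}}$ from Lemma~\ref{wrt:lem:behaviour Cn} with $\Ec{\abs{M_n(z)}^q}=\grandOdom{\mathscr E}{n^{0\vee g(z,q)+\petitodom{\mathscr E}{1}}}$ from Lemma~\ref{wrt:lem:moments Mn}; recalling $g(z,q)=\phi(q\Re z)-q\Re\phi(z)-q+1$, this gives $\grandOdom{\mathscr E}{n^{(q\Re\phi(z))\vee(\phi(q\Re z)-q+1)+\petitodom{\mathscr E}{1}}}$, which is dominated by $\grandOdom{\mathscr E}{n^{(q\Re\phi(z))\vee\phi(q\Re z)+\petitodom{\mathscr E}{1}}}$. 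So each increment is $\grandOdom{\mathscr E}{n^{(q\Re\phi(z))\vee\phi(q\Re z)+\petitodom{\mathscr E}{1}}}$. Then, by Lemma~\ref{wrt:lem:biggins lemma} (the "Biggins-type" inequality used in the proof of Lemma~\ref{wrt:lem:moments Mn}),
\begin{align*}
\Ec{\abs{X_{2n}(z)-X_n(z)}^q}\leq 2^q\sum_{i=n}^{2n-1}\Ec{\abs{X_{i+1}(z)-X_i(z)}^q}=\grandOdom{\mathscr E}{n\cdot n^{(q\Re\phi(z))\vee\phi(q\Re z)+\petitodom{\mathscr E}{1}}},
\end{align*}
which is the claimed bound.

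The main obstacle is bookkeeping the uniformity in $z$ on compact subsets of $\mathscr E$ through all the $O$-estimates simultaneously — in particular making sure that the application of Lemma~\ref{wrt:lem:behaviour Cn} at the real argument $q\Re z$ is legitimate (this is why $\mathscr E'$, the enlarged strip of real part $<2z_+$, was introduced: for $z\in\mathscr E$ and $q\leq p\leq 2$ we have $q\Re z<2z_+$) and that the $\petitodom{\mathscr E}{1}$ errors coming from Lemma~\ref{wrt:lem:moments Mn} and Lemma~\ref{wrt:fact:app of holder inequality} can all be absorbed into a single $\petitodom{\mathscr E}{1}$. The increment computation and the martingale property themselves are routine.
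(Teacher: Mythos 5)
Your proof is correct and follows essentially the same route as the paper's: a direct increment computation for the martingale property, then bounding $\Ec{\abs{X_{n+1}(z)-X_n(z)}^q}$ via the triangle inequality, Lemma~\ref{wrt:lem:behaviour Cn} for $\abs{C_n(z)}$ and the expectation at the real point $q\Re z$, Lemma~\ref{wrt:lem:moments Mn} for $\Ec{\abs{M_n(z)}^q}$, and finally Lemma~\ref{wrt:lem:biggins lemma} to sum the increments — with the same simplification $q\Re\phi(z)+g(z,q)=\phi(q\Re z)+1-q<\phi(q\Re z)$ absorbing the $g(z,q)$ term.
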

\begin{proof}
This process is of course $(\cF_n)$-adapted and integrable. For the martingale property we compute
\begin{align*}
	\Ecsq{X_{n+1}(z)}{\cF_n}&=\Ecsq{X_n(z)-e^z C_n(z)M_n(z)+e^{z\haut(u_{n+1})}}{\cF_n}\\
	&=X_n(z)-e^z C_n(z)M_n(z)+e^z\sum_{i=1}^{n}\frac{w_i}{W_n}e^{z\haut(u_i)}=X_n(z).
\end{align*}
For $z\in \mathscr E$ and $q\in \intervalleof{1}{p}$, we make the following computation, using Lemma~\ref{wrt:lem:behaviour Cn} and Lemma~\ref{wrt:lem:moments Mn},
\begin{align*}
	\Ec{\abs{X_{n+1}(z)-X_{n}(z)}^q}&=\Ec{\abs{-e^z C_n(z)M_n(z)+e^{z\haut(u_{n+1})}}^q}\\
	&\leq 2^q\cdot \left(e^{q\Re z}\abs{C_n(z)}^q \Ec{\abs{M_n(z)}^q}+e^{q\Re z} \Ec{\sum_{i=1}^{n}\frac{w_i}{W_n}e^{\haut(u_i)q\Re z}}\right),\\
	&= \grandOdom{\mathscr E}{n^{q\Re \phi(z)+0\vee g(z,q)+\petitodom{\mathscr E}{1}}}+\grandOdom{\mathscr E}{n^{\phi(q\Re z)}}\\
	&=\grandOdom{\mathscr E}{n^{q\Re \phi(z)\vee (q\Re \phi(z)+g(z,q))\vee \phi(q\Re z)+\petitodom{\mathscr E}{1} }}
	\end{align*}
	and the last exponent reduces to $q\Re \phi(z)\vee \phi(q\Re z)$ because $(q\Re \phi(z)+g(z,q))= \phi(q\Re z)+1-q<\phi(q\Re z)$. 
	Hence, using Lemma~\ref{wrt:lem:biggins lemma}, we get
	\begin{align*}
	\Ec{\abs{X_{2n}(z)-X_n(z)}^q}&\leq 2^q\sum_{i=n}^{2n}\Ec{\abs{X_{i+1}(z)-X_{i}(z)}^q}=\grandOdom{\mathscr E}{n^{1+(q\Re(\phi(z)) \vee \phi(q\Re z))+\petitodom{\mathscr E}{1}}},
	\end{align*}
which finishes the proof of the lemma.
\end{proof}
Recall the definition of $z_+$ and $z_-$ in \eqref{wrt:eq:definition z+ and z-}. Let us define the domain $\mathscr D$ to which we refer in the statement of Proposition~\ref{wrt:prop:assumptions kabluchko} as the connected component of
\begin{align*}
\mathscr V\cap \enstq{z \in \mathbb C}{1+\Re(\phi (z))>0}
\end{align*} 
that contains $0$, where $\mathscr{V}$ is defined in \eqref{wrt:eq:def domain V}.
In this way, $\mathscr D$ is a domain of $\mathbb C$ and $\mathscr D \cap \R=\intervalleoo{z_{-}}{z_{+}}$. 
Indeed, first, $\mathscr D$ is open and connected by definition. Then recall from Lemma~\ref{wrt:lem:V open domain and contains open interval} that $\mathscr V \cap \R$ contains $I_\gamma=\enstq{x\in \R}{1+\gamma(e^x-1-xe^x)>0}$ an open interval which contains $0$ and has $z_+$ as its right endpoint. 
Now just check that $\enstq{z \in \mathbb R}{1+\Re(\phi (z))>0}=\intervalleoo{z_-}{\infty}$ and that $z_-\in I_\gamma$.

For technical reasons, we also introduce the following subset of $\mathbb{C}$, here identified as $\R\times \R$,
\begin{align*}
\mathscr D':= \intervalleoo{z_{-}}{z_{+}}\times\intervalleoo{0}{2\pi},
\end{align*} 
on which the process $(z\mapsto M_n(z))_{n\geq J}$, and hence also $(z\mapsto X_n(z))_{n\geq J}$, are well-defined. 
Let us further decompose $\mathscr D'$ into a union of open sets
\begin{align*}
\mathscr D' = \bigcup_{1<q\leq p} \mathscr D'_q \qquad \text{where} \qquad \mathscr D'_q=\enstq{z\in \mathscr D'}{g(\Re z ,q)<0}.
\end{align*}
\begin{lemma}\label{wrt:lem:convergences linked to Xn}The following holds.
\begin{enumerate}
	\item\label{wrt:it:convergence Xn 1} For all compact $K\subset \mathscr D$ there exists $\epsilon(K)>0$ such that almost surely 
	\begin{align*}
	n^{-(1+\Re \phi(z))}\cdot\abs{X_n(z)}=\grandOdom{K}{n^{-\epsilon(K)}}.
	\end{align*}
	\item \label{wrt:it:convergence Xn 2} For all compact $K\subset \mathscr D'$, there exists $\epsilon(K)>0$ such that
	\begin{align*}
	n^{-(1+\phi(\Re z))}\cdot \abs{\sum_{i=J}^{n-1}C_i(z)M_i(z)}=\grandOdom{K}{n^{-\epsilon(K)}}
	\end{align*}
	\item \label{wrt:it:convergence Xn 3} For all compact $K\subset \mathscr D'$, there exists $\epsilon(K)>0$ such that almost surely
	\begin{align*}
	n^{-(1+\phi(\Re z))}\cdot\abs{X_n(z)}=\grandOdom{K}{n^{-\epsilon(K)}}.
	\end{align*}
\end{enumerate}
\end{lemma}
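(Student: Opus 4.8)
The plan is to derive all three statements from the same two‑step mechanism. \emph{Step 1} produces, uniformly in $z$ on compact subsets of the relevant domain, an $L^q$‑estimate ($q\in\intervalleof{1}{p}$) showing that the dyadic increments of the appropriately rescaled quantity decay like a strictly negative power of $n$. \emph{Step 2} upgrades this to an almost sure polynomial rate, uniform on compacts and valid for all $n$ (not only $n=2^j$): this is precisely what Lemma~\ref{wrt:lem:convergence analytic martingale} in the appendix provides, its proof combining Markov's inequality and Borel--Cantelli along the dyadic subsequence, Doob's maximal inequality to interpolate between consecutive dyadic times, and Cauchy's integral formula (equivalently the sub‑mean‑value property of $z\mapsto\abs{f(z)}^q$ for $f$ analytic) to pass from pointwise‑in‑$z$ moment bounds to bounds on the moment of the supremum over a compact set of complex $z$. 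Since Step~2 is a black box, the real work is the exponent bookkeeping in Step~1, which rests on the identity $g(z,q)=\phi(q\Re z)-q\Re\phi(z)-q+1$ and on the defining inequalities of $\mathscr D$, $\mathscr D'$ and $\mathscr D'_q$; note $\mathscr D\subset\mathscr V\subset\mathscr E$ and $\mathscr D'\subset\mathscr E$, so Lemmas~\ref{wrt:lem:behaviour Cn}, \ref{wrt:lem:moments Mn} and \ref{wrt:lem:Xn martingale} apply throughout.

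For \ref{wrt:it:convergence Xn 1}, I would use that $(X_n(z))_{n\geq J}$ is a martingale with $X_n$ entire in $z$ (Lemma~\ref{wrt:lem:Xn martingale}), cover a compact $K\subset\mathscr D$ by finitely many of the open sets $\mathscr V_q$, $1<q\leq p$, and fix $q$ with $K\subset\mathscr V_q$; since $\mathscr D\subset\{1+\Re\phi(z)>0\}$ we have $1+\Re\phi(z)\geq\eta(K)>0$ on $K$. Dividing the bound of Lemma~\ref{wrt:lem:Xn martingale} by $n^{q(1+\Re\phi(z))}$ yields the exponent $1+\bigl(q\Re\phi(z)\vee\phi(q\Re z)\bigr)-q\bigl(1+\Re\phi(z)\bigr)+\petito{1}=\max\bigl(1-q,\ g(z,q)\bigr)+\petito{1}$, which is bounded above by $-2\epsilon(K)<0$ on $K$ because $1-q<0$ (as $q>1$) and $g(z,q)<0$ on $\mathscr V_q$. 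Hence the dyadic increments of $n^{-(1+\Re\phi(z))}X_n(z)$ are $\grandOdom{K}{n^{-\epsilon(K)}}$ in $L^q$, and Lemma~\ref{wrt:lem:convergence analytic martingale} concludes (after possibly shrinking $\epsilon(K)$); in particular this shows $X_n(z)$ grows strictly slower than $n^{1+\Re\phi(z)}$, so the rescaled limit is $0$ as claimed.

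For \ref{wrt:it:convergence Xn 2}, the quantity $S_n(z):=\sum_{i=J}^{n-1}C_i(z)M_i(z)=\sum_{i=J}^{n-1}\sum_{l=1}^{i}\frac{w_l}{W_i}e^{z\haut(u_l)}$ is entire in $z$ but no longer a martingale. I would write $\mathscr D'=\bigcup_{1<q\leq p}\mathscr D'_q$, fix a compact $K\subset\mathscr D'_q$ (on which $g(\Re z,q)<0$, $\Im z$ is bounded away from $0$ and $2\pi$, so $\cos\Im z\leq 1-c(K)<1$, and $1+\phi(\Re z)\geq\eta(K)>0$), and combine Lemma~\ref{wrt:lem:behaviour Cn} with \eqref{wrt:eq:martingales q-moment} to get $\Ec{\abs{C_i(z)M_i(z)}^q}^{1/q}=\grandOdom{\mathscr E}{i^{\rho(z)+\petito{1}}}$ with $\rho(z):=\Re\phi(z)+\tfrac1q\bigl(0\vee g(z,q)\bigr)$; by the triangle inequality $\Ec{\abs{S_n(z)}^q}^{1/q}=\grandOdom{\mathscr E}{n^{1+\rho(z)+\petito{1}}}$ when $\rho(z)>-1$, and is bounded (up to a $\log n$ factor) otherwise. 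A short computation with the definition of $g$ gives $\rho(z)-(1+\phi(\Re z))=-1+\tfrac1q g(\Re z,q)$ when $g(z,q)>0$, and $\rho(z)-(1+\phi(\Re z))=-1+\Re\phi(z)-\phi(\Re z)=-1+\gamma e^{\Re z}(\cos\Im z-1)$ when $g(z,q)\leq 0$; both are $\leq-1-\epsilon(K)<0$ on $K$. Therefore $n^{-(1+\phi(\Re z))}\abs{S_n(z)}$ has $L^q$‑norm $\grandOdom{K}{n^{-2\epsilon(K)}}$, and the same bound holds for $\sum_{i=2^j}^{2^{j+1}-1}\abs{C_i(z)M_i(z)}$; since $\abs{S_n(z)}\leq\abs{S_{2^{j+1}}(z)}+\sum_{i=2^j}^{2^{j+1}-1}\abs{C_i(z)M_i(z)}$ for $2^j\leq n<2^{j+1}$, the Markov--Borel--Cantelli part of Lemma~\ref{wrt:lem:convergence analytic martingale} applies with this inequality taking the place of Doob's maximal inequality, giving the claim.

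For \ref{wrt:it:convergence Xn 3}, I would argue exactly as for \ref{wrt:it:convergence Xn 1} but with $K\subset\mathscr D'_q$, now dividing the bound of Lemma~\ref{wrt:lem:Xn martingale} by $n^{q(1+\phi(\Re z))}$: the exponent becomes $\max\bigl(1-q+q(\Re\phi(z)-\phi(\Re z)),\ g(\Re z,q)\bigr)+\petito{1}$, whose first argument is $\leq 1-q<0$ because $\Re\phi(z)=\gamma(e^{\Re z}\cos\Im z-1)\leq\gamma(e^{\Re z}-1)=\phi(\Re z)$ and whose second argument is $<0$ on $\mathscr D'_q$; uniformity on $K$ is clear, and Lemma~\ref{wrt:lem:convergence analytic martingale} again concludes. \textbf{The main obstacle} throughout is Step~2: turning pointwise‑in‑$z$ $L^q$ estimates into a single almost sure estimate uniform over compact sets of complex $z$ and over all $n$. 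This relies on the analyticity of $X_n$ and $S_n$ together with Doob's and Markov's inequalities, i.e.\ on the content of Lemma~\ref{wrt:lem:convergence analytic martingale}, the only subtlety being that in \ref{wrt:it:convergence Xn 2} the absence of a martingale structure forces the elementary interpolation bound $\abs{S_n}\leq\abs{S_{2^{j+1}}}+\sum_{i=2^j}^{2^{j+1}-1}\abs{C_iM_i}$ in place of Doob's inequality.
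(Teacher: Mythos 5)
Your proofs of parts (i) and (iii) match the paper's almost exactly: in both cases you cover the compact by finitely many $\mathscr V_q$ (resp.\ $\mathscr D'_q$) and feed the moment bound of Lemma~\ref{wrt:lem:Xn martingale} into Lemma~\ref{wrt:lem:convergence analytic martingale}; your exponent arithmetic $1+\bigl(q\Re\phi(z)\vee\phi(q\Re z)\bigr)-q\bigl(1+\Re\phi(z)\bigr)=\max(1-q,g(z,q))$ (and the analogous one for (iii)) is exactly the paper's choice $\alpha(z)q-\delta(z)$ written out differently, and it is correct.

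For part (ii) you take a genuinely different route from the paper, and it is worth comparing. You bound $S_n(z)=\sum_{i<n}C_i(z)M_i(z)$ in $L^q$ by the triangle inequality on the $L^q$‑norms of the individual terms, and then upgrade this to a uniform a.s.\ bound; since $S_n$ is not a martingale you cannot invoke Lemma~\ref{wrt:lem:convergence analytic martingale} as a black box and must replace Doob's maximal inequality by the elementary interpolation $\abs{S_n(z)}\leq\abs{S_{2^{j}}(z)}+\sum_{i=2^{j}}^{2^{j+1}-1}\abs{C_i(z)M_i(z)}$, together with the sub‑mean‑value inequality for the subharmonic function $z\mapsto\sum_i\abs{C_i(z)M_i(z)}$ in place of Cauchy's formula. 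This is a valid argument, but it re‑implements part of the appendix lemma for a non‑martingale. The paper avoids this altogether: it applies Lemma~\ref{wrt:lem:convergence analytic martingale}\emph{(i)} to the \emph{martingale} $M_n(z)$ itself, with $\alpha(z)=\phi(\Re z)-\Re\phi(z)>0$ and $\delta(z)=\min\bigl(-1+q+q(\phi(\Re z)-\Re\phi(z)),\ -g(\Re z,q)\bigr)$, obtaining the a.s.\ uniform estimate $n^{-\phi(\Re z)+\Re\phi(z)}M_n(z)=\grandOdom{K}{n^{-\epsilon(K)}}$; multiplying by the deterministic bound $\abs{C_i(z)}=\grandOdom{K}{i^{\Re\phi(z)}}$ from Lemma~\ref{wrt:lem:behaviour Cn} gives $\abs{C_i(z)M_i(z)}=\grandOdom{K}{i^{\phi(\Re z)-\epsilon(K)}}$ a.s., and one then sums deterministically. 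That route stays entirely within the scope of the appendix lemma and is cleaner, but your exponent bookkeeping ($\rho(z)-(1+\phi(\Re z))=-1+\frac1q g(\Re z,q)$ when $g(z,q)>0$, and $=-1+\gamma e^{\Re z}(\cos\Im z-1)$ otherwise, both $<-1$ on $K\subset\mathscr D'_q$) is correct and your argument does go through.
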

\begin{proof}
For the first one, for any $q\in\intervalleof{1}{p}$ we can apply Lemma~\ref{wrt:lem:convergence analytic martingale} on the open set $\mathscr V_q\cap \enstq{z\in \mathbb C}{1+\Re(\phi(z))>0}$ with $\alpha(z)=1+\Re(\phi(z))>0$ and $\delta(z)=\min(q-1, -g(z,q))>0$, thanks to Lemma~\ref{wrt:lem:Xn martingale}. 
Then using the compactness of $K$, \ref{wrt:it:convergence Xn 1} is true for every compact $K\subset \mathscr V \cap \enstq{z\in \mathbb C}{1+\Re(\phi(z))>0}$, hence for any compact $K\subset \mathscr D$. 

Let us prove point \ref{wrt:it:convergence Xn 2}. For any $q\in\intervalleof{1}{p}$, thanks to Lemma~\ref{wrt:lem:moments Mn}, on the open set $\mathscr D'_q\subset \mathscr E$ we have $\Ec{\abs{M_{2n}(z)-M_{n}(z)}^q}=\grandOdom{\mathscr D'_q}{n^{(1-q)\vee g(z,q)+\petitodom{\mathscr D'_q}{1}}}$ and 
\[g(z,q)=q\underset{>0}{\underbrace{(\phi(\Re z)-\Re \phi(z))}}+\underset{<0}{\underbrace{g(\Re z,q)}}.\]
Applying Lemma~\ref{wrt:lem:convergence analytic martingale} for the martingale $(z\mapsto M_n(z))_{n\geq J}$ on any compact subset $K\subset \mathscr D'_q$ with $\alpha(z)=\phi(\Re z)-\Re \phi(z)>0$ and $\delta(z)=\min(-1+q+q(\phi(\Re z)-\Re(\phi(z))), -g(\Re z,q))>0$ yields:
\begin{align*}
n^{-\phi(\Re z)+\Re \phi(z)}\cdot M_n(z)=\grandOdom{K}{n^{-\epsilon(K)}}.
\end{align*}
Using the estimates of Lemma~\ref{wrt:lem:behaviour Cn}, we have $\abs{C_n(z)}=\grandOdom{K}{n^{\Re\phi(z)}}$, and so $\abs{C_i(z) M_i(z)}=\grandOdom{K}{i^{\phi(\Re z)-\epsilon(K)}}$. Hence $\sum_{i=J}^{n-1}\abs{C_i(z)M_i(z)}=\grandOdom{K}{n^{0\vee (1+\phi(\Re z)-\epsilon(K))}}$ which finishes the proof of \ref{wrt:it:convergence Xn 2}.

Last, in order to prove \ref{wrt:it:convergence Xn 3}, we use Lemma~\ref{wrt:lem:convergence analytic martingale} on $\mathscr D'_q$ for the martingale $(z\mapsto X_n(z))_{n\geq J}$ with $\alpha(z)=1+\phi(\Re z)>0$ and $\delta(z)=\min(-1+q+q(\phi(\Re z)-\Re(\phi(z))), -g(\Re z,q))$. 
\end{proof}
In order to conclude, we will also need the following lemma, which is a direct consequence of Lemma~\ref{wrt:lem:behaviour Cn}.
\begin{lemma}\label{wrt:lem:sum Ciz}
For any compact $K\subset \mathscr E\cap \enstq{z\in \mathbb C}{1+\Re(\phi(z))>0}$, there exists $\epsilon(K)$ such that 
\begin{align*}
\abs{n^{-(1+\phi(z))}\cdot \sum_{i=J}^{n-1}C_i(z)-\frac{e^{c(z)}}{1+\phi(z)}}=\grandOdom{K}{n^{-\epsilon(K)}}
\end{align*}
\begin{proof}
	On any compact $K\subset \mathscr E\cap \enstq{z\in \mathbb C}{1+\Re(\phi(z))>0}$, using Lemma~\ref{wrt:lem:behaviour Cn} we write \[C_n(z)=e^{c(z)}\cdot n^{\phi(z)}\cdot (1+\grandOdom{K}{n^{-\epsilon}}),\] 
	so that
	\begin{align*}
	\sum_{i=J}^{n-1}C_i(z)&= e^{c(z)} \cdot \sum_{i=J}^{n-1}i^{\phi(z)}+ e^{c(z)} \cdot \sum_{i=J}^{n-1}i^{\phi(z)}\cdot \grandOdom{K}{i^{-\epsilon}}\\
	&=\frac{e^{c(z)}n^{1+\phi(z)}}{1+\phi(z)}\cdot(1+\grandOdom{K}{n^{-1}}) + \grandOdom{K}{n^{1+\phi(z)-\epsilon(K)}},
	\end{align*}
	where in the second line, we use the fact that $\inf_{z\in K}(1+\Re\phi(z))>0$, and we define $\epsilon(K):=\epsilon\wedge \inf_{z\in K}(1+\Re\phi(z))$. This proves the lemma.
\end{proof}
\end{lemma}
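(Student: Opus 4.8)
The plan is to deduce the estimate by summing the asymptotic expansion of $C_n(z)$ from Lemma~\ref{wrt:lem:behaviour Cn}. First I would rewrite that lemma as: uniformly for $z$ in the compact $K$,
\[
C_i(z)=e^{c(z)}\,i^{\phi(z)}\bigl(1+\grandOdom{K}{i^{-\epsilon}}\bigr),
\]
where $\epsilon>0$ is the exponent produced by that lemma and $i^{\phi(z)}:=\exp(\phi(z)\log i)$ (this uses $e^{x}=1+\grandO{x}$ for small $x$, and $K\subset\mathscr E$). Summing over $i=J,\dots,n-1$ decomposes $\sum_{i=J}^{n-1}C_i(z)$ into a main term $e^{c(z)}\sum_{i=J}^{n-1}i^{\phi(z)}$ and a remainder $e^{c(z)}\sum_{i=J}^{n-1}i^{\phi(z)}\grandOdom{K}{i^{-\epsilon}}$.

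The one genuinely quantitative input is the partial-sum asymptotics of the Dirichlet-type sum with complex exponent: comparing $\sum_{i=J}^{n-1}i^{s}$ with $\int_{J}^{n}t^{s}\,\dd t=\tfrac{n^{1+s}-J^{1+s}}{1+s}$ and bounding the difference by $\grandO{n^{\Re s\vee 0}}$ (monotonicity of $t\mapsto t^{\Re s}$, or Euler--Maclaurin), valid whenever $\Re s>-1$, gives $\sum_{i=J}^{n-1}i^{s}=\frac{n^{1+s}}{1+s}+\grandO{n^{\Re s\vee 0}}$. Applying this with $s=\phi(z)$ and dividing by $n^{1+\phi(z)}$, the main term becomes $\frac{e^{c(z)}}{1+\phi(z)}+\grandOdom{K}{n^{-(1\wedge(1+\Re\phi(z)))}}$; here one uses crucially that on the compact $K\subset\mathscr E\cap\{z:1+\Re(\phi(z))>0\}$ the functions $z\mapsto|1+\phi(z)|^{-1}$ and $z\mapsto\Re\phi(z)$ are bounded and $1+\Re\phi(z)$ is bounded below by a positive constant, which is exactly what makes all the $\grandO$'s uniform in $z\in K$.

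For the remainder term, $\bigl|i^{\phi(z)}i^{-\epsilon}\bigr|=i^{\Re\phi(z)-\epsilon}$, so summing and dividing by $n^{1+\Re\phi(z)}$ leaves $\grandOdom{K}{n^{-\epsilon}\vee n^{-(1+\Re\phi(z))}}$. Combining the two contributions gives the statement with, say, $\epsilon(K):=\epsilon\wedge 1\wedge\inf_{z\in K}(1+\Re\phi(z))>0$. I do not expect any real obstacle here: the only thing requiring care is that the elementary estimates above be made uniform over $K$, which is automatic from the continuity (hence boundedness on compacts) of $\Re\phi$ and of $(1+\phi)^{-1}$, together with the strict inequality $1+\Re\phi>0$ imposed in the statement.
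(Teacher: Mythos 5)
Your proposal is correct and follows essentially the same route as the paper: apply Lemma~\ref{wrt:lem:behaviour Cn} to expand $C_i(z)$, split $\sum_{i=J}^{n-1}C_i(z)$ into a main Dirichlet-type sum and a remainder, compare the main sum with $\int t^{\phi(z)}\,\mathrm d t$, and take $\epsilon(K)$ as the minimum of the error exponents. The extra $\wedge 1$ in your definition of $\epsilon(K)$ is harmless (and in fact unnecessary, since the $\epsilon$ produced by Lemma~\ref{wrt:lem:behaviour sum w/W} is already taken $<1$), and the possible $\log n$ factor when $\Re\phi(z)=0$ is likewise absorbed because $\epsilon(K)<1$.
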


We can now prove Proposition~\ref{wrt:prop:assumptions kabluchko}. 
\begin{proof}[Proof of Proposition~\ref{wrt:prop:assumptions kabluchko}]
Let us start by proving simultaneously that for any $z\in\mathscr D$, we almost surely have
\begin{align}\label{wrt:eq:Ninfty as Minfty}
N_\infty(z):=\lim_{n\rightarrow\infty}N_n(z)=\frac{e^{z+c(z)}}{1+\phi(z)}\cdot M_\infty(z),
\end{align}
and also that both points \ref{wrt:it:convergence Nn sur tout compact} and \ref{wrt:it:convergence Nn polynomiale} of the proposition hold. For any compact $K\subset \mathscr D$  and $z\in K$, we write
\begin{align*}
\abs{N_n(z)-\frac{e^{z+c(z)}}{1+\phi(z)}M_\infty(z)}
	&\leq \abs{n^{-(1+\phi(z))}X_n(z)}+\abs{n^{-(1+\phi(z))} e^z\sum_{i=J}^n C_i(z)M_i(z)-\frac{e^{z+c(z)}}{1+\phi(z)}M_\infty(z)}
\end{align*}
The first term is $\grandOdom{K}{n^{-\epsilon(K)}}$ thanks to Lemma~\ref{wrt:lem:convergences linked to Xn}\ref{wrt:it:convergence Xn 1}. We bound the second one by the following quantity
\begin{align*}
	\abs{M_\infty(z)}\cdot \abs{e^z}\cdot \underset{\grandOdom{K}{n^{-\epsilon(K)}}}{ \underbrace{\abs{n^{-(1+\phi(z))}\cdot \sum_{i=J}^{n}C_i(z)-\frac{e^{c(z)}}{(1+\phi(z))}}}}
	+n^{-(1+\Re \phi(z))}\cdot \abs{e^z}\cdot \sum_{i=J}^{n-1}\underset{\grandOdom{K}{i^{\Re \phi(z)-\epsilon(K)}}}{\underbrace{\abs{C_i(z)} \cdot \abs{M_i(z)-M_\infty(z)}}}.
\end{align*} 
In the above display, we used Lemma~\ref{wrt:lem:sum Ciz} and then Lemma~\ref{wrt:lem:behaviour Cn} together with Proposition~\ref{wrt:prop:convergence Mnbeta} on respectively the first and the second term. In the end, the whole expression is $\grandOdom{K}{n^{-\epsilon(K)}}$. 
From \eqref{wrt:eq:Ninfty as Minfty}, it is clear that the limiting function $z\mapsto N_\infty(z)$ is analytic and has almost surely no zero on $\intervalleoo{z_{-}}{z_{+}}$ because of Lemma~\ref{wrt:lem:Minfty almost surely no zero}. 
For \ref{wrt:it:convergence Im>0}, let us prove the stronger statement: for any compact subset $K \subset \intervalleoo{z_{-}}{z_{+}}$ and $0<a<\pi$, there exists $\epsilon(K,a)>0$ such that almost surely, 
\begin{align*}
\sup_{x\in K}\sup_{a\leq \eta \leq \pi}n^{-(1+\phi(x))}\abs{\sum_{i=1}^n e^{(x+i\eta)\haut(u_i)}}=\grandO{n^{-\epsilon(K,a
		)}}.
\end{align*}
For this, we write \[n^{-(1+\phi(x))}\abs{\sum_{i=1}^n e^{(x+i\eta)\haut(u_i)}}\leq n^{-(1+\phi(x))}\abs{X_n(x+i\eta)}+ n^{-(1+\phi(x))}\abs{\sum_{i=J}^{n-1}C_i(x+i\eta)M_i(x+i\eta)}.\]
We apply points \ref{wrt:it:convergence Xn 2} and \ref{wrt:it:convergence Xn 3} of Lemma~\ref{wrt:lem:convergences linked to Xn} to the compact $K\times \intervalleff{a}{\pi}$ and get the desired bound.
\end{proof}

\subsection{Height of the tree}\label{wrt:subsec:upper-bounds on height}
In this section, we study the behaviour of the height $\haut(\ttT_n)$ of the tree $\ttT_n$, which is defined as the maximal height of the vertices of $\ttT_n$, i.e.
\begin{align*}
\haut(\ttT_n)=\max_{1\leq k\leq n}\haut(u_k).
\end{align*}
We start by showing that under the assumption \eqref{wrt:assum: wrt alpha p} we have the convergence \eqref{wrt:eq:convergence height gamma p}. Then, for the sake of completeness, we also study the simpler case where $\log n = \petito{\sum_{i=1}^{n}\frac{w_i}{W_i}}$. 

One key argument in our proofs is the following equality for the \emph{annealed} moment generating function of the height of $u_k$, for any fixed $k\geq 2$, which can be seen as a corollary of Lemma~\ref{wrt:lem:Mn martingale}
\begin{equation}\label{wrt:eq:distribution of the height of unif vertex}
\Ec{e^{z\haut(u_k)}}=e^z\cdot \prod_{j=2}^{k-1}\left(1 + (e^z-1)\frac{w_j}{W_j}\right).
\end{equation} 
Some elementary computations using the Chernoff bound and the last display yield the following lemma. 
\begin{lemma}\label{wrt:lem:upperbound height in log}
	Suppose that the sequence of weights $\boldsymbol{w}$ satisfies
	\begin{align*}
		\limsup_{n\rightarrow\infty} \frac{1}{\log n}\sum_{i=2}^{n}\frac{w_i}{W_i} \leq u \in \R_+^*.
	\end{align*}
Then almost surely we have
\begin{align*}
	\limsup_{n\rightarrow\infty} \frac{\haut(\ttT_n)}{\log n}\leq u e^{z_{+}(u)},
\end{align*}
where $z_+(u)$ is the unique positive root of $u (z e^z - e^z +1) -1=0$.  
\end{lemma}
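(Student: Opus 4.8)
The strategy is a classic first-moment / union-bound argument for the upper bound on the height, feeding on the annealed moment generating function identity \eqref{wrt:eq:distribution of the height of unif vertex}. Fix $\lambda>u$ arbitrarily close to $u$, and fix a parameter $z>0$ to be optimised at the end. First I would observe that under the hypothesis $\limsup_n \frac{1}{\log n}\sum_{i=2}^n \frac{w_i}{W_i}\leq u$, for every $\lambda > u$ we have $\sum_{i=2}^{n}\frac{w_i}{W_i}\leq \lambda\log n$ for all $n$ large enough. Combining this with the bound $1+x\leq e^x$ in \eqref{wrt:eq:distribution of the height of unif vertex} gives, for $n$ large,
\begin{align*}
\Ec{e^{z\haut(u_k)}}=e^z\cdot \prod_{j=2}^{k-1}\left(1+(e^z-1)\frac{w_j}{W_j}\right)\leq e^z\cdot \exp\left((e^z-1)\sum_{j=2}^{k-1}\frac{w_j}{W_j}\right)\leq e^z\cdot n^{\lambda(e^z-1)},
\end{align*}
uniformly over $k\leq n$.

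Next I would apply Markov's inequality (the Chernoff bound) to each vertex: for any threshold $h\geq 0$ and any $k\leq n$,
\begin{align*}
\Pp{\haut(u_k)\geq h}\leq e^{-zh}\cdot \Ec{e^{z\haut(u_k)}}\leq e^{z}\cdot n^{\lambda(e^z-1)}\cdot e^{-zh}.
\end{align*}
A union bound over $k\in\{1,\dots,n\}$ then yields $\Pp{\haut(\ttT_n)\geq h}\leq e^{z}\cdot n^{1+\lambda(e^z-1)}\cdot e^{-zh}$. Choosing $h=h_n:=\alpha\log n$ for a constant $\alpha$ to be fixed, the right-hand side is $e^z\cdot n^{1+\lambda(e^z-1)-z\alpha}$, which is summable in $n$ as soon as the exponent $1+\lambda(e^z-1)-z\alpha<-1$, i.e. $\alpha>\frac{2+\lambda(e^z-1)}{z}$. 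By Borel--Cantelli, for such an $\alpha$ we get $\haut(\ttT_n)\leq \alpha\log n$ for all $n$ large enough, almost surely, hence $\limsup_n \haut(\ttT_n)/\log n\leq \alpha$.

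It remains to optimise over $z>0$ and then send $\lambda\downarrow u$. Since the constant $2$ in the numerator can be replaced by any $\delta>1$ (summability of $n^{-\delta}$ holds for all $\delta>1$), the best bound we get for fixed $\lambda$ is $\inf_{z>0}\frac{1+\lambda(e^z-1)}{z}$, and one checks by differentiation that the infimum is attained at the unique positive $z=z_+(\lambda)$ solving $\lambda(ze^z-e^z+1)-1=0$, with infimal value exactly $\lambda\, e^{z_+(\lambda)}$ (substitute $\lambda(e^z-1)=\lambda z e^z - 1$ from the critical equation into $\frac{1+\lambda(e^z-1)}{z}$). Thus almost surely $\limsup_n \haut(\ttT_n)/\log n\leq \lambda\, e^{z_+(\lambda)}$ for every $\lambda>u$; letting $\lambda\downarrow u$ and using continuity of $\lambda\mapsto z_+(\lambda)$ (which follows from the implicit function theorem, the map $(\lambda,z)\mapsto \lambda(ze^z-e^z+1)-1$ being smooth with non-vanishing $z$-derivative at the root) gives $\limsup_n \haut(\ttT_n)/\log n\leq u\, e^{z_+(u)}$, as claimed.

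The only mildly delicate points are (i) checking that the function $z\mapsto \frac{1+\lambda(e^z-1)}{z}$ on $(0,\infty)$ indeed has a unique interior minimum characterised by $\lambda(ze^z-e^z+1)=1$ — this is elementary calculus, noting the function tends to $+\infty$ at both ends and the derivative has the stated unique zero — and (ii) the continuity of $z_+(\cdot)$ in the limit $\lambda\downarrow u$; neither is a real obstacle. The main conceptual input is simply the annealed identity \eqref{wrt:eq:distribution of the height of unif vertex}, which turns the height of each vertex into something with an explicit, controllable exponential moment, after which the argument is the standard union-bound-plus-Borel--Cantelli scheme.
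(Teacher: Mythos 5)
Your overall strategy — the annealed moment generating function identity, Chernoff bound, Borel--Cantelli — is the same as the paper's. The meaningful difference is that you take a union bound over the $n$ vertices before invoking Borel--Cantelli, whereas the paper bounds $\haut(u_n)$ for each $n$ separately (each tail probability is already summable without any union bound) and then observes that $\haut(\ttT_n)=\max_{k\leq n}\haut(u_k)\leq \max(\text{const},\,\max_{N\leq k\leq n}u e^z\log k)= u e^z \log n$ eventually. Your union bound costs you an extra factor of $n$, so your summability condition is $1+\lambda(e^z-1)-z\alpha<-1$, i.e.\ $\alpha>\frac{2+\lambda(e^z-1)}{z}$, which after optimising over $z$ gives a strictly worse constant than $\lambda e^{z_+(\lambda)}$. (Concretely: for $\lambda=1$, $\inf_{z>0}\frac{2+(e^z-1)}{z}\approx 3.59$ whereas $e^{z_+(1)}=e\approx 2.72$.)

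The step where you assert that ``the constant $2$ in the numerator can be replaced by any $\delta>1$ (summability of $n^{-\delta}$ holds for all $\delta>1$)'' is the gap. Within the union-bound-on-all-$n$ framework you present, the requirement really is $\alpha>\frac{2+\lambda(e^z-1)}{z}$: one $+1$ comes from the factor $n$ in the union bound, the other from the summability threshold, and neither is negotiable by the remark in parentheses. To make your route work you would need to add one of two observations: either drop the union bound and argue vertex by vertex as the paper does; or keep the union bound but exploit the monotonicity of $n\mapsto\haut(\ttT_n)$ to run Borel--Cantelli only along a geometric subsequence $n_j=\lceil c^j\rceil$, which relaxes the summability requirement to $1+\lambda(e^z-1)-z\alpha<0$, i.e.\ $\alpha>\frac{1+\lambda(e^z-1)}{z}$, and then use $\log n_{j+1}/\log n_j\to 1$. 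Either fix is short, but as written the justification is not valid, and without it you only obtain the weaker bound. The rest of the argument — the uniform bound on $\Ec{e^{z\haut(u_k)}}$ for $k\leq n$, the Chernoff step, the calculus showing $\inf_{z>0}\frac{1+\lambda(e^z-1)}{z}=\lambda e^{z_+(\lambda)}$, and the limit $\lambda\downarrow u$ via continuity of $z_+$ — is correct.
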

\begin{proof}
Using the expression \eqref{wrt:eq:distribution of the height of unif vertex} for the moment generating function of $\haut(u_n)$ we get, for any $z>0$
\begin{align*}
\Ec{e^{z \haut(u_n)}}=e^z \cdot \prod_{j=2}^{n-1}\left(1 + (e^z-1)\frac{w_j}{W_j}\right)&\leq \exp\left(1+(e^z-1)\sum_{j=2}^{n-1}\frac{w_j}{W_j}\right)\\
&\leq \exp\left((\log n)\cdot(u(e^z-1)+\petito{1})\right),
\end{align*}
where we use the inequality $(1+x)\leq e^x$ and the assumption on $\boldsymbol{w}$.
Then, for  any $z>0$ and $n\geq 1$,
\begin{align*}
\Pp{\haut(u_n)\geq u e^z\log n}\leq e^{-uz e^z \log n}\Ec{e^{z \haut(u_n)}} \leq \exp\left(-u\log n (z e^z -e^z +1+\petito{1})\right)
\end{align*}
If we take $z>0$ such that $u (z e^z - e^z+1)>1$ then the right-hand-side is summable and hence using the Borel-Cantelli lemma shows that for all $n$ large enough, we have $\haut(u_n)\leq ue^z \log n$. Letting $z\searrow z^+(u)$, we get the result.
\end{proof}

Let us prove the last claim \eqref{wrt:eq:convergence height gamma p} of Theorem~\ref{wrt:thm:profile and height of wrt}. Here we suppose that the weight sequence $\boldsymbol{w}$ satisfies \eqref{wrt:assum: wrt alpha p} for some $\gamma>0$ and some $p\in\intervalleof{1}{2}$.
\begin{proof}[Proof of \eqref{wrt:eq:convergence height gamma p}]
	Recall the asymptotics \eqref{wrt:eq:radish asymptotics} in Theorem~\ref{wrt:thm:profile and height of wrt}. It ensures that there almost surely exist vertices at height $\lfloor\gamma e^z\log n\rfloor$, for any fixed $z\in\intervalleoo{z_{-}}{z_{+}}$ and $n$ large enough. Hence the height of the tree $\ttT_n$ satisfies
	\begin{align*}
	\liminf_{n\rightarrow\infty}\frac{\haut(\ttT_n)}{\log n}\geq \gamma e^{z_{+}}.
	\end{align*}
	For the limsup, we use Lemma~\ref{wrt:lem:upperbound height in log} with $u=\gamma$ (this is justified by Lemma~\ref{wrt:lem:behaviour sum w/W}), which yields $\limsup_{n\rightarrow\infty}\frac{\haut(\ttT_n)}{\log n}\leq \gamma e^{z_{+}}$. 
\end{proof}
To finish the section, we state a proposition.
\begin{proposition}\label{wrt:prop:upper-bound height}
Let $f(n):= \sum_{i=2}^{n-1}\frac{w_i}{W_i}$. If $\log n = \petito{f(n)}$
then we have the almost sure convergences
\begin{align*}
\lim_{n\rightarrow\infty} \frac{\haut(\ttT_n)}{f(n)}=\lim_{n\rightarrow\infty} \frac{\haut(u_n)}{f(n)}=1.
\end{align*}
\end{proposition}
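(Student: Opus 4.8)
The plan is to squeeze $\haut(\ttT_n)=\max_{1\le k\le n}\haut(u_k)$ between $\haut(u_n)$ and this maximum, and to control the annealed deviations of each $\haut(u_k)$ directly from the moment generating function \eqref{wrt:eq:distribution of the height of unif vertex} (which, incidentally, exhibits $\haut(u_k)$ as being distributed like $1$ plus a sum of independent Bernoulli variables with parameters $w_j/W_j$, $2\le j\le k-1$). First note that $\log n=\petito{f(n)}$ forces $f(n)\to\infty$, and that $f$ is non-decreasing since the $w_i/W_i$ are non-negative. Applying Markov's inequality to $e^{z\haut(u_k)}$ and bounding the product in \eqref{wrt:eq:distribution of the height of unif vertex} via $1+x\le e^x$ by $\exp\bigl((e^z-1)f(k)\bigr)$ yields, for all $t\in\R$,
\[
\Pp{\haut(u_k)\ge t}\le e^{z-zt+(e^z-1)f(k)}\ \ (z>0),\qquad \Pp{\haut(u_k)\le t}\le e^{z-zt+(e^z-1)f(k)}\ \ (z<0).
\]

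For the upper bound I would fix $\epsilon'>0$, choose $\epsilon>0$ small enough that $\epsilon/\log(1+\epsilon)<1+\epsilon'$, and take $z=\log(1+\epsilon)$ and $t=(1+\epsilon')f(k)$: the exponent equals $f(k)\bigl(\epsilon-(1+\epsilon')\log(1+\epsilon)\bigr)+\log(1+\epsilon)=-\delta f(k)+\grandO{1}$ with $\delta>0$. Since $f(k)\gg\log k$, this is at most $k^{-2}$ for $k$ large, so Borel--Cantelli gives that almost surely $\haut(u_k)\le(1+\epsilon')f(k)$ for all $k$ large. As $f$ is non-decreasing and tends to infinity, this forces $\haut(\ttT_n)=\max_{k\le n}\haut(u_k)\le(1+\epsilon')f(n)$ for $n$ large, whence $\limsup_n\haut(\ttT_n)/f(n)\le 1$ a.s. (and a fortiori $\limsup_n\haut(u_n)/f(n)\le 1$ a.s., since $\haut(u_n)\le\haut(\ttT_n)$). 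Letting $\epsilon'\downarrow 0$ along a sequence finishes this half.

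For the lower bound I would symmetrically fix $\epsilon'>0$, choose $\epsilon\in(0,1)$ with $\epsilon/(-\log(1-\epsilon))>1-\epsilon'$, and take $z=\log(1-\epsilon)<0$ and $t=(1-\epsilon')f(n)$ in the second inequality: the exponent is $f(n)\bigl((1-\epsilon')(-\log(1-\epsilon))-\epsilon\bigr)+\log(1-\epsilon)=-\delta' f(n)+\grandO{1}$ with $\delta'>0$, hence at most $-2\log n$ for $n$ large, so $\Pp{\haut(u_n)\le(1-\epsilon')f(n)}\le n^{-2}$ eventually. Borel--Cantelli then gives $\haut(u_n)>(1-\epsilon')f(n)$ for $n$ large, a.s., so $\liminf_n\haut(u_n)/f(n)\ge 1$ a.s.; combined with $\haut(\ttT_n)\ge\haut(u_n)$ this also gives $\liminf_n\haut(\ttT_n)/f(n)\ge1$. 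Together with the previous paragraph, both quotients converge a.s.\ to $1$.

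The only real subtlety — and the reason for not simply optimising the Chernoff exponent — is that the naive bound $e^{-\delta f(k)}$ need not be summable when $f$ grows slowly; the hypothesis $\log n=\petito{f(n)}$ must be used to upgrade it to a summable bound $\le k^{-2}$, which is achieved by deliberately using the slightly suboptimal tilt $z=\log(1\pm\epsilon)$ (so that the threshold sits a little inside the constant $1$) rather than its exact minimiser. Everything else is a routine Chernoff-plus-Borel--Cantelli argument, which matches the paper's description of these as ``quite rough estimates''.
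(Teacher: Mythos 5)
Your proof is correct and follows essentially the same route as the paper's: both recognise $\haut(u_n)-1$ as a sum of independent Bernoulli$(w_j/W_j)$ variables with mean $f(n)$, deduce exponential concentration around $f(n)$, use $\log n = \petito{f(n)}$ to make the tail bound summable, and conclude with Borel--Cantelli (the paper quotes a standard two-sided Bernstein-type bound $2e^{-\epsilon^2 f(n)/3}$ where you derive a Chernoff bound by hand with the tilt $z=\log(1\pm\epsilon)$, and it leaves the passage from $\haut(u_n)$ to $\haut(\ttT_n)$ via monotonicity of $f$ implicit where you spell it out, but these are cosmetic).
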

\begin{proof}
	As we can check from its moment generating function \eqref{wrt:eq:distribution of the height of unif vertex}, the random variable $\haut(u_n)-1$ is a sum of independent Bernoulli random variables, with expectation $f(n)$. Using standard bounds for $\haut(u_n)-1$ yields
\begin{align*}
\Pp{\abs{\haut(u_n)-1-f(n)}\geq \epsilon f(n)}\leq 2 \exp\left(-\epsilon^2f(n)/3\right),
\end{align*}
which is summable in $n$ for any $\epsilon>0$. The result of the proposition is then obtained using the Borel-Cantelli lemma.
\end{proof}

\section{Preferential attachment trees are weighted recursive trees}\label{wrt:sec:exchangeability}
In this section, we study preferential attachment trees with fitnesses $\mathbf a$ as defined in the introduction. First, in Section~\ref{wrt:subsec:coupling with a sequence of polya urns}, we prove Theorem~\ref{wrt:thm:connection PA WRT} which allows us to see them as weighted random trees $\wrt(\boldsymbol{\mathsf w}^\mathbf a)$ for some random weight sequence $\boldsymbol{\mathsf w}^\mathbf a$. Then in Section~\ref{wrt:subsec:proof of proposition 2} we prove Proposition~\ref{wrt:prop:assum pa implies assum wrt} which relates the asymptotic behaviour of $\boldsymbol{\mathsf w}^\mathbf a$ to the behaviour of $\mathbf a$.
Finally, in Section~\ref{wrt:subsec:distribution of the limiting sequence} we prove Proposition~\ref{wrt:prop:Mk markov chain}, which ensures that the sequence $\boldsymbol{\mathsf{m}}^\mathbf a$ obtained as the scaling limit of the degrees can be expressed as the increments of a Markov chain.

\subsection{Coupling with a sequence of P\'olya urns: proof of Theorem~\ref{wrt:thm:connection PA WRT}}\label{wrt:subsec:coupling with a sequence of polya urns}
Here we fix an arbitrary sequence $\mathbf{a}$ such that $a_1>-1$ and $\forall n\geq 2,\ a_n\geq 0$. Let us recall the notation, for $n\geq 0$,
\begin{align*}
A_n:=\sum_{i=1}^{n}a_i,
\end{align*}
with the convention that $A_0=0$.
We consider a sequence of trees $(\ttP_n)_{n\geq 1}$ evolving according to the distribution $\pa(\mathbf{a})$ and we want to prove Theorem~\ref{wrt:thm:connection PA WRT}, namely that there exists a random sequence of weights $\boldsymbol{\mathsf{w}}^\mathbf a$ for which the sequence evolves as a $\wrt(\boldsymbol{\mathsf{w}}^\mathbf a)$.
The proof uses a decomposition of this process into an infinite number of P\'olya urns. This is very close to what is used in the proofs of \cite[Theorem~2.1]{berger_asymptotic_2014} or \cite[Section 1.2]{bloem-reddy_preferential_2017_online} in similar settings. The novelty of our approach is to express this result using weighted random trees, since it allows us to apply all the results developed in the preceding section.
\paragraph{P\'olya urns.}For us, a P\'olya urn process $(\mathsf{Urn}(n))_{n\geq 0}=(X(n),\mathrm{Total}(n))_{n\geq 0}$ is a Markov chain on $E:=\enstq{(x,z)\in\R_+\times \R_+^*}{x\leq z}$ with transition probabilities given by the matrix $P$ where for all $(x,z)\in E$,
\begin{equation}
P\left((x,z),(x+1,z+1)\right)=\frac{x}{z} \qquad \text{and} \qquad P\left((x,z),(x,z+1)\right)=\frac{z-x}{z}.
\end{equation}
The quantities $X(n)$ and $\mathrm{Total}(n)$ represent respectively the number of red balls and the total number of balls at time $n$ in a urn containing red and blacks balls, in which we add a ball at each time, the colour of which is chosen at random proportionally to the current proportion in the urn. 
Starting at time $0$ from the state $(a,a+b)$, i.e. with $a$ red balls and $b$ black balls, it is well-known that the sequence $(\Delta X(n))_{n\geq1}=(X(n)-X(n-1))_{n\geq1}$ of random variables is \emph{exchangeable}, and an application of de Finetti's representation theorem ensures that it has the same distribution as i.i.d.\ samples of Bernoulli random variables with a \emph{random} parameter $\beta$, which has distribution $\mathrm{Beta}(a,b)$, where we use the convention that $\mathrm{Beta}(a,b)=\delta_1$ if $b=0$.

Note that the process $(\mathsf{Urn}(n))_{n\geq 0}$ is entirely determined from $(\Delta X(n))_{n\geq1}$ and that the random variable $\beta$ is a measurable function of the sequence $(\mathsf{Urn}(n))_{n\geq 0}$ because it can almost surely be obtained as $\beta=\lim_{n\rightarrow\infty}\frac{X(n)}{n}$.

\paragraph{Nested structure of urns in the tree.}
For all $k\ge 1$ we define the following process in $n\geq k$
\[W_k(n):=A_k+\sum_{i=1}^{k}\deg^+_{\ttP_{n}}(u_i),\]
the "total fitness" of the vertices $\{u_1,u_2,\dots,u_k\}$, for which we remark that for any $k\geq 1$ we have
\begin{equation}
	W_k(k)=A_k+k-1 \qquad \text{and} \qquad W_{k}(k+1)=A_k+k. 
\end{equation}
Imagine that $\ttP_n$ is constructed and we add a new vertex $u_{n+1}$ to the tree. We choose its parent in a downward sequential way: 
\begin{itemize}
	\item we first determine whether the parent is $u_n$, this happens with probability \[\frac{a_n+\deg^+_{\ttP_{n}}(u_n)}{W_n(n)}=1-\frac{W_{n-1}(n)}{W_n(n)},\]
	\item then with the complementary probability $\frac{W_{n-1}(n)}{W_n(n)}$ it is not, so conditionally on this we determine whether it is $u_{n-1}$, this happens with (conditional) probability
	\[\frac{a_{n-1}+\deg^+_{\ttP_{n}}(u_{n-1})}{W_{n-1}(n)}=1-\frac{W_{n-2}(n)}{W_{n-1}(n)}.\] 
	\item then with the complementary probability $\frac{W_{n-2}(n)}{W_{n-1}(n)}$ it is not, etc... We continue this process until we stop at some $u_i$.
\end{itemize}
Now let us fix $k\geq 1$ and introduce the following time-change: for all $N\geq 0$, we let
\begin{equation}
	\theta_{k}(N):=\inf\enstq{n\geq k+1}{W_{k+1}(n)=A_{k+1}+k+N},
\end{equation}
be the $N$-th time that a vertex in attached on one of the vertices $\{u_1,\dots,u_{k+1}\}$ after time $k+1$, where by definition, we have $\theta_{k}(0)=k+1$. 
Remark that it can be the case that $\theta_{k}(N)$ is not defined for large $N$, if there is only a finite number of vertices attaching to $\{u_1,\dots,u_{k+1}\}$. Let us ignore this possible problem for the moment, and only consider sequences $\mathbf a$ for which $A_n=\grandO{n}$, for which this will almost surely not happen. In this case for all $N\geq 0$ we set
\begin{equation}
\mathsf{Urn}_{k}(N):=(W_{k}(\theta_{k}(N)),W_{k+1}(\theta_{k}(N)))=(W_{k}(\theta_{k}(N)),A_{k+1}+k+N).
\end{equation}
Now, the three following facts are the key observations in order to prove Theorem~\ref{wrt:thm:connection PA WRT}:
\begin{enumerate}
	\item\label{wrt:it:urns are polya urns} for all $k\geq 1$, the process $\mathsf{Urn}_{k}=\left(\mathsf{Urn}_{k}(N)\right)_{N\geq 0}$ has the distribution of a P\'olya urn starting from the state $(A_{k}+k,A_{k+1}+k)$,
	\item\label{wrt:it:urns are independent} those process are jointly independent for $k\geq 1$,
	\item \label{wrt:it:urns determine the process}the whole sequence $(\ttP_n)_{n\geq 1}$ is a function the collection of processes $\left(\mathsf{Urn}_{k},\ k\geq1\right)$.
\end{enumerate}
Point \ref{wrt:it:urns are polya urns} already follows from the discussion above. A moment of thought shows that \ref{wrt:it:urns are independent} holds as well: of course the processes $(W_k(n),W_{k+1}(n))_{n\geq k+1}$ for different $k$ are not independent at all but the point is that they only interact through the time-changes $(\theta_k(\cdot),k\geq 1)$. 
Last, for \ref{wrt:it:urns determine the process}, let us note that we can reconstruct the tree $\ttP_n$ at time $n$ from the random variables $(W_i(k))_{1\leq i,k \leq n}$ and that these random variables can be entirely determined using
\begin{equation*}
\left\lbrace
\begin{aligned}
	W_k(n)&=\mathsf{Urn}_{k}(W_{k+1}(n)-(A_{k+1}+k)), \quad \text{for } 1\leq k\leq n-1,\\
	W_n(n)&=A_n+n-1.
	\end{aligned}
\right.
\end{equation*}

\paragraph{Reversing the construction and using the exchangeability.}
Let us now reverse the construction and start with an independent family $(\mathsf{Urn}_{k},\ k\geq 1)$ of processes which have for each $k\geq 1$ the distribution of a P\'olya urn starting from the state $(A_{k}+k,A_{k+1}+k)$, so that they have the joint same distribution as the ones described in \ref{wrt:it:urns are polya urns} and \ref{wrt:it:urns are independent}.
From what we did above, the sequence $(\ttP_n)_{n\geq 1}$ that they determine through \ref{wrt:it:urns determine the process} has distribution $\pa(\mathbf{a})$. 
A moment of thought shows that this argument actually still holds for a completely arbitrary sequence of fitnesses $\mathbf{a}$. 

Now, using de Finetti's theorem, each of the processes $\mathsf{Urn}_k$ can be produced by sampling $\beta_k\sim \mathrm{Beta}(A_k+k,a_{k+1})$ and adding a red ball at each step independently with probability $\beta_k$ and a black ball with probability $1-\beta_k$. This is of course done independently for different $k\geq1$.

In terms of our downward sequential procedure defined above for finding the parent of each newcomer, it amounts to saying that each time that we have to choose between attaching to $u_{k+1}$ or attach to a vertex among $\{u_1,\dots,u_k\}$, the former is chosen with probability $1-\beta_k$ and the latter with probability $\beta_k$.
Let us verify that the law of $(\ttP_n)_{n\geq 1}$ conditionally on the sequence $(\beta_k)_{k\geq 1}$ can indeed be expressed as WRT with the random sequence of weights $\boldsymbol{\mathsf w}^\mathbf{a}$ defined in Theorem~\ref{wrt:thm:connection PA WRT}, which is defined from the sequence $(\beta_k)_{k\geq 1}$ as,
\begin{align*}
\forall n\geq 1, \qquad \mathsf W^\mathbf a_n=\prod_{i=1}^{n-1}\beta_i ^{-1} \qquad \text{and}  \qquad \mathsf w^\mathbf a_n= \mathsf W^\mathbf a_{n}- \mathsf W^\mathbf a_{n-1},
\end{align*}
with the convention that $\mathsf W^\mathbf a_1=1$ and $\mathsf W^\mathbf a_0=0$.
Let us reason conditionally on the sequence $(\beta_k)_{k\geq 1}$ (or equivalently the sequence $(\mathsf w^\mathbf a_n)_{n\geq 1}$). 
When determining the parent of $u_{n+1}$, whose label we denote $J_{n+1}$ as in \eqref{wrt:eq:def Jn},
we successively try to attach to $u_n,u_{n-1},\dots$ until we stop at $u_{J_{n+1}}$.
Using the independence, we get that for every $k\in\{1,2,\dots,n\}$,
\begin{align*}
\Ppsq{J_{n+1}=k}{\ttP_1,\dots,\ttP_n,(\beta_i)_{i\geq 1}}
=\beta_{n-1}\beta_{n-2}\dots \beta_k (1-\beta_{k-1})
=\frac{\mathsf W^\mathbf a_k-\mathsf W^\mathbf a_{k-1}}{\mathsf W^\mathbf a_n}
=\frac{\mathsf w^\mathbf a_k}{\mathsf W^\mathbf a_n}.
\end{align*}

This proves Theorem~\ref{wrt:thm:connection PA WRT}. 
Let us explain how Corollary~\ref{wrt:cor: pa conditional distribution limiting degrees} follows from the proof that we developed here. From the discussion in the previous paragraph, in the case of a sequence $\mathbf a$ for which $A_n=\grandO{n}$, each of the processes $(\mathsf{Urn}_k(N))_{N\geq 0}$ for $k\geq 1$ is a measurable function of $(\ttP_n)_{n\geq1}$, and hence the associated $\beta_k$ also is. 
In the end, the sequence $(\mathsf w^\mathbf a_n)_{n\geq 1}$ is a measurable function of $(\ttP_n)_{n\geq1}$ and it is easy to check that it corresponds to the one described in the statement of Corollary~\ref{wrt:cor: pa conditional distribution limiting degrees}.

\subsection{Proof of Proposition~\ref{wrt:prop:assum pa implies assum wrt}}\label{wrt:subsec:proof of proposition 2}
Let $(\mathsf W^\mathbf a_n)_{n\geq 1}$ be the random sequence of cumulated weights defined Theorem~\ref{wrt:thm:connection PA WRT}, whose distribution depends on a sequence $\mathbf a$ of fitnesses, and is expressed using a sequence of independent Beta-distributed random variables $(\beta_k)_{k\geq1}$. We are going to prove Proposition~\ref{wrt:prop:assum pa implies assum wrt}, which relates the growth of $(\mathsf{W}^\mathbf a_n)_{n\geq1}$ to the one of $(A_n)_{n\geq 1}$. 
\begin{proof}[Proof of Proposition~\ref{wrt:prop:assum pa implies assum wrt}]
	As in \cite[Proof of Lemma~1.1]{goldschmidt_line_2015}, we introduce 
	\begin{equation}\label{wrt:eq:def Xn}
	X_n:=\prod_{i=1}^{n-1}\frac{\beta_i}{\Ec{\beta_i}}.
	\end{equation}
	It is easy to see that $X_n$ is a positive martingale, hence it almost surely converges to a limit $X_\infty$ as $n\rightarrow\infty$. 
	Now, using the fact that the $(\beta_n)_{n\geq 1}$ are independent and that for any integer $q \geq 0$, the $q$-th moment of a random variable with $\mathrm{Beta}(a,b)$ distribution is given by
	\begin{equation}\label{wrt:eq:moments loi beta}
	\frac{ \Gamma(a+q)\Gamma(a+b)}{\Gamma(a) \Gamma(a+b+q)}=\prod_{k=0}^{q-1}\frac{a+k}{a+b+k},
	\end{equation}
	we can compute
	\begin{align*}
	\prod_{i=1}^{n-1}\Ec{\beta_i^p}=\prod_{i=1}^{n-1}\left(\prod_{k=0}^{p-1}\frac{i+A_i+k}{i+A_{i+1}+k}\right)
	&=\prod_{k=0}^{p-1}\left(\frac{1+A_1+k}{n+A_{n}+k-1}\prod_{i=2}^{n-1}\frac{i+A_i+k}{i+A_{i}+k-1}\right)\\
	&=\left(\prod_{k=0}^{p-1}\frac{1+A_1+k}{n+A_{n}+k-1}\right)\cdot \prod_{k=0}^{p-1}\prod_{i=2}^{n-1}\left(1+\frac{1}{i+A_{i}+k-1}\right)
	\end{align*}
	Now from \eqref{wrt:assum: pa An=cn+rn}, there exists $\epsilon>0$ such that $A_n=c\cdot n +\grandO{n^{1-\epsilon}}$ and without loss of generality we can assume that $\epsilon<1$.
	For all $k\in\intervalleentier{0}{p-1}$ we can write
	\[
	n+A_n+k-1\underset{n\rightarrow\infty}{=} (c+1)n + \grandO{n^{1-\epsilon}} \quad \text{and so} \quad \frac{1}{n+A_{n}+k-1}\underset{n\rightarrow\infty}{=}\frac{1}{(c+1)n}+\grandO{n^{-1- \epsilon}}.
	\]
	Hence
	\begin{align*}
		\prod_{k=0}^{p-1}\prod_{i=2}^{n-1}\left(1+\frac{1}{i+A_{i}+k-1}\right)&= \prod_{k=0}^{p-1}\prod_{i=2}^{n-1}\left(1+\frac{1}{(c+1)i}+\grandO{i^{-1- \epsilon}}\right)\\
		&=\exp\left(\sum_{k=0}^{p-1} \sum_{i=2}^n\left(\frac{1}{(c+1)i}+\grandO{i^{-1- \epsilon}}\right)\right)\\
		&=\exp\left(\frac{p}{c+1}\log n + \cst + \grandO{n^{-\epsilon}}\right)\\
		&=\cst\cdot n^{\frac{p}{c+1}}\left(1 + \grandO{n^{-\epsilon}}\right).
	\end{align*}
In the end, since $\left(\prod_{k=0}^{p-1}\frac{1+A_1+k}{n+A_{n}+k-1}\right)=\cst\cdot \prod_{k=0}^{p-1}\frac{1}{(c+1)n+\grandO{n^{1-\epsilon}}}=\cst\cdot n^{-p}\cdot (1+\grandO{n^{-\epsilon}})$, we get
	\begin{align}\label{wrt:eq:definition Cp}
	\prod_{i=1}^{n-1}\Ec{\beta_i^p}
	&= C_p\cdot n^{-p+p/(c+1)}\cdot (1+\grandO{n^{-\epsilon}})
	\end{align}
	where $C_p$ is a positive constant which depends on the sequence $\mathbf a$ and $p$.
	This entails that, under our assumptions, for any $p\geq1$, we have
	\begin{align*}
	\Ec{X_n^p} = \frac{\prod_{i=1}^{n-1}\Ec{\beta_i^p}}{\prod_{i=1}^{n-1}\Ec{\beta_i}^p}\underset{n\rightarrow\infty}{\rightarrow} \frac{C_p}{C_1^p},
	\end{align*}
 which shows that this martingale is bounded in $L^p$ for all $p\geq1$ and hence it is uniformly integrable. Consequently, it converges a.s.\ and in $L^p$ to a limit random variable $X_\infty$, with moments determined by
	\begin{equation}\label{wrt:eq:moments Xinfty}
	\forall p \geq 1, \qquad \Ec{X_\infty^p}=\frac{C_p}{C_1^p}.
	\end{equation}
	Furthermore, we have 
	\begin{align}\label{wrt:eq:upper-bound increments squared Xn}
	\Ec{(X_{n+1}-X_n)^2}=\Ec{X_{n}^2\left(\frac{\beta_n}{\Ec{\beta_n}}-1\right)^2}\leq \Ec{X_n^2} \cdot \frac{\Var{\beta_n}}{\Ec{\beta_n}^2}.
	\end{align}
	Since $\beta_n\sim \mathrm{Beta}(n+A_n,a_{n+1})$, we get
	\begin{align}\label{wrt:eq:expectation and variance beta}
		\Ec{\beta_n}=\frac{n+A_{n}}{n+A_{n+1}}\rightarrow1 \qquad \text{and} \qquad \Var{\beta_n}=\frac{a_{n+1}(n+A_n)}{(n+A_{n+1})^2(n+A_{n+1}+1)}=\grandO{\frac{a_{n+1}}{n^2}}.
	\end{align}
	Using \eqref{wrt:eq:upper-bound increments squared Xn}, \eqref{wrt:eq:expectation and variance beta}, Lemma~\ref{wrt:lem:biggins lemma} and then summing over $n\leq k\leq 2n-1$ and using the fact that $\mathbf{a}$ satisfies \eqref{wrt:assum: pa An=cn+rn} we get that 
	\begin{align*}
	\Ec{(X_{2n}-X_n)^2}= \grandO{\frac{\sum_{k=n}^{2n}a_k}{n^2}}=\grandO{n^{-1}}.
	\end{align*} 
	Using Lemma~\ref{wrt:lem:convergence analytic martingale}, we get that almost surely, for any $\epsilon'<\frac{1}{2}$,
	\begin{align*}
		\abs{X_n-X_\infty}=\grandO{n^{-\epsilon'}}.
	\end{align*}
	
	Since $\beta_i>0$ almost surely for every $i\geq1$, the event $\{X_\infty=0\}$ is a tail event for the filtration generated by the $\beta_i$ and has probability $0$ or $1$. In the end, it has probability $0$ because $\Ec{X_\infty}=1$.
	We deduce that
	\begin{align}\label{wrt:eq:asymptotic behaviour Wn with quantified error}
	(\mathsf W_n^\mathbf{a})^{-1}=\prod_{i=1}^{n-1}\beta_i&= X_n\cdot \prod_{i=1}^{n-1}\Ec{\beta_i}\notag\\
	&=X_\infty\cdot \left(1 + \grandO{n^{-\epsilon'}}\right)\cdot C_1\cdot n^{-1+\frac{1}{c+1}}\cdot \left(1+\grandO{n^{-\epsilon}}\right)\notag \\
	&=C_1\cdot X_\infty\cdot n^{-1+\frac{1}{c+1}} \cdot \left(1+\grandO{n^{-\epsilon}+n^{-\epsilon'}}\right).
	\end{align}
	Hence, we have, 
	\begin{equation}\label{wrt:eq: asymptotic for Wk}
	\mathsf W_n^\mathbf{a}\underset{n\rightarrow\infty}{\bowtie} Z\cdot n^{\frac{c}{(c+1)}} \qquad  \text{with} \qquad  Z:=\frac{1}{X_\infty\cdot C_1}.
	\end{equation}
	Whenever $a_n\leq n^{c'+\petito{1}}$ as $n\rightarrow\infty$, we can show the following (we postpone the proof to the end of the section) 
	\begin{lemma}\label{wrt:eq: proba beta exponentiellement petite}
	 For any $\delta>0$ small enough, we have 
\begin{align*}
\Pp{1-\beta_k>k^{-1+c'+\delta}}\leq \exp\left(-(k+1)^{c'+\delta+\petito{1}}\right).
\end{align*}
	\end{lemma}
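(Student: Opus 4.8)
The plan is to control this probability by Markov's inequality applied to a suitable integer moment of $1-\beta_k$. Recall from Theorem~\ref{wrt:thm:connection PA WRT} that $\beta_k\sim\mathrm{Beta}(A_k+k,a_{k+1})$, hence $1-\beta_k\sim\mathrm{Beta}(a_{k+1},A_k+k)$; to lighten notation set $b:=a_{k+1}$ and $\alpha:=A_k+k$. If $b=0$ then $1-\beta_k=0$ almost surely and there is nothing to prove, so assume $b>0$. The beta integral gives, for every integer $q\geq1$,
\begin{align*}
\Ec{(1-\beta_k)^q}=\frac{\Gamma(b+q)\,\Gamma(b+\alpha)}{\Gamma(b)\,\Gamma(b+q+\alpha)}=\prod_{j=0}^{q-1}\frac{b+j}{b+\alpha+j}\leq\left(\frac{b+q}{\alpha}\right)^{q},
\end{align*}
where in the inequality each numerator is bounded by $b+q$ and each denominator by $\alpha$.

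Next I would apply Markov's inequality with exponent $q$ to the event of interest: writing $t:=k^{-1+c'+\delta}$,
\begin{align*}
\Pp{1-\beta_k>t}\leq t^{-q}\,\Ec{(1-\beta_k)^q}\leq\left(\frac{b+q}{t\,\alpha}\right)^{q},
\end{align*}
and then I would estimate the two quantities appearing here. Since $a_i\geq0$ for $i\geq2$ and $a_1>-1$, we have $\alpha=A_k+k\geq k-1$, so $t\,\alpha\geq(k-1)\,k^{-1+c'+\delta}\geq\tfrac12\,k^{c'+\delta}$ for $k\geq2$ (the hypothesis \eqref{wrt:assum: pa An=cn+rn} would even give $t\alpha\sim(c+1)k^{c'+\delta}$, but $\alpha\geq k-1$ is enough). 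On the other hand, from $a_{k+1}\leq(k+1)^{c'+\petito{1}}$ we get $b=a_{k+1}=\petito{k^{c'+\delta}}$ as $k\to\infty$, for any fixed $\delta>0$.

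Then I would choose $q=q_k:=\lceil\frac{1}{6e}\,k^{c'+\delta}\rceil$. For $k$ large enough one has $b\leq\frac{1}{6e}\,k^{c'+\delta}$ and $q_k\leq\frac{1}{3e}\,k^{c'+\delta}$, hence $b+q_k\leq\frac{1}{2e}\,k^{c'+\delta}$ and therefore $\frac{b+q_k}{t\,\alpha}\leq\frac1e$. Substituting,
\begin{align*}
\Pp{1-\beta_k>k^{-1+c'+\delta}}\leq e^{-q_k}\leq\exp\left(-\tfrac{1}{6e}\,k^{c'+\delta}\right)=\exp\left(-(k+1)^{c'+\delta+\petito{1}}\right),
\end{align*}
the last equality because $\log\!\big(\tfrac{1}{6e}(k/(k+1))^{c'+\delta}\big)\big/\log(k+1)\to0$. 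This is exactly the claimed estimate. (When $\delta\geq1-c'$ one has $t\geq1$ and the left-hand side vanishes, so the statement is trivial there; "$\delta$ small enough" only serves to place us in the meaningful range $\delta<1-c'$.)

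There is no genuine difficulty here; the only point requiring a little care is the calibration of the exponent $q$, which must grow like a constant times $k^{c'+\delta}$ — large enough that $e^{-q_k}$ matches the target $\exp\!\big(-(k+1)^{c'+\delta+\petito{1}}\big)$, yet small enough that $b+q$ remains of order $t\alpha$. This is precisely where the two hypotheses are used: $a_{k+1}\leq(k+1)^{c'+\petito{1}}$ guarantees $b=\petito{k^{c'+\delta}}$, and $A_k+k\geq k-1$ (a fortiori \eqref{wrt:assum: pa An=cn+rn}) guarantees that $t\alpha$ is at least a constant times $k^{c'+\delta}$.
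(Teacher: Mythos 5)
Your argument is correct, and it takes a genuinely different route from the paper's. The paper works with the density of the Beta distribution: it decomposes a $\mathrm{Beta}(x,y)$ random variable as a product $XY$ with $X\sim\mathrm{Beta}(x+1,y)$ and $Y\sim\mathrm{Beta}(x,1)$ independent, bounds $\Pp{Z>z}\leq\Pp{X>z}$, writes the latter as a Beta integral, extracts the factor $\exp(-(y-1)z)$ from $(1-u)^{y-1}$ on $\intervalleff{z}{1}$, and then invokes Stirling's approximation for the ratio of Gamma functions that remains. You instead stay entirely on the level of integer moments: the algebraic product formula for Beta moments gives $\Ec{(1-\beta_k)^q}\leq\big(\frac{b+q}{\alpha}\big)^q$, Markov's inequality with a moment index $q_k$ tuned to be of order $k^{c'+\delta}$ then yields the exponential decay directly, and no density manipulation, no product decomposition, and no Stirling asymptotics are needed. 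Your approach is more elementary and self-contained; the paper's approach, in exchange, delivers a cleaner leading constant $(c+1)$ in the exponent via the $-(y-1)z$ term (compared to your $\frac{1}{6e}$), but since both constants are absorbed into the $\petito{1}$ in the exponent, neither buys anything at the level of the stated lemma. One small remark: in the inequality $\Ec{(1-\beta_k)^q}\leq\big(\frac{b+q}{\alpha}\big)^q$, the right-hand side could exceed $1$ if $b+q>\alpha$, in which case the bound is vacuous; your calibration of $q_k$ does ensure $b+q_k\leq\frac{1}{2e}k^{c'+\delta}\ll\alpha$ for $k$ large, so there is no actual problem, but it might be worth flagging explicitly that the argument only engages after this threshold.
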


	Since the last quantity is summable in $k$ we can use the Borel-Cantelli lemma (and a sequence of $\delta$ going to $0$) to show that almost surely $1-\beta_k\leq k^{-1+c'+o_\omega(1)}$ as $k\rightarrow\infty$, where the term $o_\omega(1)$ denotes a random function of $k$ that tends to $0$ when $k\rightarrow\infty$.
	Combining this with \eqref{wrt:eq: asymptotic for Wk}, we finish proving the proposition by writing
	\[
	\mathsf w_{k}^\mathbf{a}=\mathsf W_{k}^\mathbf{a}-\mathsf W_{k-1}^\mathbf{a}=\mathsf W_k^\mathbf{a}\cdot (1-\beta_{k-1})\leq (k+1)^{c'-1/(c+1)+o_\omega(1)}.\qedhere
	\]\end{proof}

We finish by giving a proof of Lemma~\ref{wrt:eq: proba beta exponentiellement petite}.
\begin{proof}[Proof of Lemma~\ref{wrt:eq: proba beta exponentiellement petite}]
	Let $x\geq0$ and $y>1$ and let $X$ be a random variable with distribution $\mathrm{Beta}(x+1,y)$ and $Y$ with distribution $\mathrm{Beta}(x,1)$, independent of $X$. By standard results on Beta distributions, the product $Z=X\cdot Y$ has distribution $\mathrm{Beta}(x,y)$.  
	
	Then for any $z\in\intervalleff{0}{1}$ we have, using the explicit expression of the density of $X$,
	\begin{align*}
	\Pp{Z>z}\leq \Pp{X>z}&=\frac{\Gam{x+1+y}}{\Gam{x+1}\Gam{y}}\int_{z}^{1}u^{x}(1-u)^{y-1} \dd{u}\\
	&\leq \frac{\Gam{x+1+y}}{\Gam{x+1}\Gam{y}}\exp\left(-(y-1)z\right)\int_{z}^{1}u^{x}\dd{u} \\
	&\leq \frac{\Gam{x+1+y}}{\Gam{x+2}\Gam{y}}\cdot\exp\left(-(y-1)z\right),
	\end{align*}
and the last display in increasing in $x$.
We are going to use this inequality for well-chosen sequences $(x_n)$, $(y_n)$ and $(z_n)$ taking place of the values of $x,y,z$. 
Let us first remark that for any two non-negative sequences $(x_n)$ and $(y_n)$ with $(y_n)$ going to infinity and $x_n=\petito{y_n}$, we have the following estimate using Stirling's approximation:
\begin{align*}%
\log\left(\frac{\Gam{x_n+1+y_n}}{\Gam{x_n+2}\Gam{y_n}}\right)\underset{n\rightarrow\infty}{=} (x_n+1)\log (y_n)\cdot (1+\petito{1}).
\end{align*}
Now let us apply the above computations for every $n\geq 1$ with $z_n:= n^{-1+c'+\delta}$ to the random variables $(1-\beta_n)$ which have distribution $\mathrm{Beta}\left(x_n,y_n\right)$, with $x_n:=a_{n+1}$ and $y_n:=A_n+n$. 
In particular, in this context we have $x_n=a_n\leq (n+1)^{c'+\petito{1}}$ and $y_n=A_n+n=(n+1)^{1+\petito1}$, so that the all of the above applies and
\begin{align*}\log \Pp{(1-\beta_n)>n^{-1+c'+\delta}}&\leq (x_n+1)\log(y_n) (1+\petito{1}) - (y_n-1)z_n \\
&\leq (n+1)^{c'+\petito{1}} - (n+1)^{c'+\delta+\petito{1}} \\
&\leq - (n+1)^{c'+\delta+\petito{1}},
\end{align*}
which is what we wanted.
\end{proof}

\subsection{The distribution of the limiting sequence}\label{wrt:subsec:distribution of the limiting sequence}
Let us stay in the setting of Section~\ref{wrt:subsec:proof of proposition 2}. Suppose that we are working with a sequence of fitnesses $\mathbf{a}$ that satisfies \eqref{wrt:assum: pa An=cn+rn} for some $c>0$. 
The sequence $\left(\mathsf M_n^{\textbf{a}}\right)_{n\geq1}$ is defined in \eqref{wrt:eq:def mna} as some random multiple of the sequence $\left(\mathsf W_n^{\textbf{a}}\right)_{n\geq1}$, whose distribution is described in Theorem~\ref{wrt:thm:connection PA WRT} from a sequence $(\beta_n)_{n\geq 1}$ of independent random variables with $\beta_n\sim \mathrm{Beta}(A_n+n,a_{n+1})$, so that for all $n\geq 1$,
\begin{align*}
\mathsf M^\mathbf{a}_n=\frac{c+1}{Z}\cdot \prod_{k=1}^{n-1}\beta_{k}^{-1},
\end{align*}
where the random variable $Z$ is the one that appears in \eqref{wrt:eq: asymptotic for Wk}, and depends on the whole sequence $(\beta_n)_{n\geq 1}$.
\begin{proposition}\label{wrt:prop:Mk markov chain}
	For any sequence $\textbf{a}$ that satisfies the condition \eqref{wrt:assum: pa An=cn+rn}, the sequence $\left(\mathsf M_k^{\textbf{a}}\right)_{k\geq1}$ is a (possibly time-inhomogeneous) Markov chain such that for all $k\geq 1$,
	 $\mathsf M_{k+1}^{\textbf{a}}$ is independent of $\beta_1,\beta_2,\dots,\beta_{k}$.
	 The fact that for all $k\geq 1$ we have $\mathsf{M}_{k}^{\textbf{a}}= \beta_k\cdot \mathsf M_{k+1}^{\textbf{a}}$ with $\beta_k\sim\mathrm{Beta}(A_k+k,a_{k+1})$ independent of $\mathsf M_{k+1}^{\textbf{a}}$ characterises the backward transitions of the chain.
\end{proposition}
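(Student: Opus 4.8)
The plan is to read off everything from the two multiplicative representations available. Recall from \eqref{wrt:eq:Mna as iterated product} that $\mathsf M^\mathbf a_n=\frac{c+1}{Z}\prod_{k=1}^{n-1}\beta_k^{-1}$, where $Z=\frac1{X_\infty C_1}$ and $X_\infty=\lim_n\prod_{i=1}^{n-1}\frac{\beta_i}{\E[\beta_i]}$ from the proof of Proposition~\ref{wrt:prop:assum pa implies assum wrt}. The first observation is the immediate algebraic identity $\mathsf M^\mathbf a_n=\beta_n\cdot\mathsf M^\mathbf a_{n+1}$, which holds since $\prod_{k=1}^{n-1}\beta_k^{-1}=\beta_n\cdot\prod_{k=1}^{n}\beta_k^{-1}$; this is the asserted backward relation, and what remains is to show $\beta_n$ is independent of $\mathsf M^\mathbf a_{n+1}$ and that the sequence is Markov.

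First I would prove that $\mathsf M^\mathbf a_{n+1}$ is independent of $(\beta_1,\dots,\beta_n)$. Write $X_\infty=\big(\prod_{i=1}^{n}\frac{\beta_i}{\E[\beta_i]}\big)\cdot X_\infty^{(n)}$, where $X_\infty^{(n)}:=\lim_{m\to\infty}\prod_{i=n+1}^{m-1}\frac{\beta_i}{\E[\beta_i]}$ is a measurable function of $(\beta_i)_{i\geq n+1}$ only (the limit exists a.s.\ by the same $L^1$-bounded martingale argument applied to the shifted sequence, since the assumptions on $\mathbf a$ are shift-invariant up to the $\bowtie$ equivalence). Then
\begin{align*}
\mathsf M^\mathbf a_{n+1}=\frac{c+1}{Z}\prod_{k=1}^{n}\beta_k^{-1}=(c+1)\,X_\infty C_1\prod_{k=1}^{n}\beta_k^{-1}=(c+1)C_1\Big(\prod_{i=1}^{n}\tfrac1{\E[\beta_i]}\Big)\cdot X_\infty^{(n)},
\end{align*}
so the factors $\prod_k\beta_k^{-1}$ and those hidden in $X_\infty$ cancel exactly, leaving $\mathsf M^\mathbf a_{n+1}$ as a deterministic constant times $X_\infty^{(n)}$, hence a measurable function of $(\beta_i)_{i\geq n+1}$ alone. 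Since the $\beta_i$ are independent, $\mathsf M^\mathbf a_{n+1}$ is independent of $(\beta_1,\dots,\beta_n)$; in particular it is independent of $\beta_n$, giving the backward transition characterisation.

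It remains to check the Markov property. For this I would verify that for each $n$, the pair $(\mathsf M^\mathbf a_1,\dots,\mathsf M^\mathbf a_n)$ is a measurable function of $\mathsf M^\mathbf a_{n+1}$ together with $(\beta_1,\dots,\beta_n)$ — indeed $\mathsf M^\mathbf a_k=\beta_k\beta_{k+1}\cdots\beta_n\cdot\mathsf M^\mathbf a_{n+1}$ for $k\leq n$. By the independence just established, $\mathsf M^\mathbf a_{n+1}$ is independent of $\sigma(\beta_1,\dots,\beta_n)\supseteq\sigma(\mathsf M^\mathbf a_1,\dots,\mathsf M^\mathbf a_n)$; conversely $\mathsf M^\mathbf a_{n+1}=\beta_n^{-1}\mathsf M^\mathbf a_n$, and more generally $(\mathsf M^\mathbf a_m)_{m\geq n+1}$ is a function of $\mathsf M^\mathbf a_{n+1}$ and $(\beta_i)_{i\geq n+1}$, the latter independent of $(\beta_1,\dots,\beta_n)$. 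Hence, conditionally on $\mathsf M^\mathbf a_{n+1}$, the future $(\mathsf M^\mathbf a_m)_{m\geq n+1}$ and the past $(\mathsf M^\mathbf a_m)_{m\leq n}$ are independent, which is exactly the Markov property of the time-reversed sequence; equivalently $(\mathsf M^\mathbf a_n)_{n\geq1}$ is a Markov chain. The forward transitions need not be simple, but the backward ones are given by multiplication by an independent $\mathrm{Beta}(A_n+n,a_{n+1})$ variable as claimed.

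The main obstacle is the bookkeeping showing the exact cancellation $\frac{1}{Z}\prod_{k=1}^n\beta_k^{-1}=(\text{const})\cdot X_\infty^{(n)}$: one must be careful that $X_\infty$ genuinely factors as $\big(\prod_{i=1}^n\frac{\beta_i}{\E\beta_i}\big)X_\infty^{(n)}$ with $X_\infty^{(n)}$ well-defined and finite almost surely and $\sigma((\beta_i)_{i>n})$-measurable — this is where one invokes that the tail martingale converges (same proof as for $X_n\to X_\infty$, using that the sequence of fitnesses shifted by $n$ still satisfies \eqref{wrt:assum: pa An=cn+rn} and that $X_\infty^{(n)}>0$ a.s.\ by the $0$–$1$ law argument). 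Everything else is a direct consequence of the independence of the $\beta_i$'s.
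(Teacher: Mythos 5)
Your plan mirrors the paper's proof in its essentials: both rely on the multiplicative representation of $\mathsf M_n^\mathbf a$, both factor $X_\infty=\big(\prod_{i\leq n}\beta_i/\E[\beta_i]\big)\cdot X_\infty^{(n)}$ so that $\mathsf M^\mathbf a_{n+1}$ is seen to be $\sigma\big((\beta_i)_{i>n}\big)$-measurable, and both exploit that to get the independence of $\mathsf M^\mathbf a_{n+1}$ from $(\beta_1,\dots,\beta_n)$ and the backward transitions. The paper uses the identity $\sigma(\mathsf M_1^\mathbf a,\dots,\mathsf M_k^\mathbf a)=\sigma(\mathsf M_k^\mathbf a,\beta_1,\dots,\beta_{k-1})$ (because $\beta_j=\mathsf M_j^\mathbf a/\mathsf M_{j+1}^\mathbf a$) and then conditions directly; you instead argue conditional independence of past and future given the present (time-reversed Markov property). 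Both routes work.

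However, one line in your Markov argument is wrong. You assert $\sigma(\beta_1,\dots,\beta_n)\supseteq\sigma(\mathsf M^\mathbf a_1,\dots,\mathsf M^\mathbf a_n)$ and use it to claim that $\mathsf M^\mathbf a_{n+1}$ is independent of the past trajectory. Neither is true: each $\mathsf M^\mathbf a_k$ involves $Z=1/(C_1X_\infty)$, which depends on the whole sequence $(\beta_i)_{i\geq 1}$, so the $\mathsf M^\mathbf a_k$ are not $\sigma(\beta_1,\dots,\beta_n)$-measurable. And $\mathsf M^\mathbf a_{n+1}$ cannot be independent of $(\mathsf M^\mathbf a_1,\dots,\mathsf M^\mathbf a_n)$, since $\mathsf M^\mathbf a_k=\beta_k\cdots\beta_n\,\mathsf M^\mathbf a_{n+1}\leq\mathsf M^\mathbf a_{n+1}$ almost surely. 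The correct statement, which you also write, is that the past is a measurable function of $(\beta_1,\dots,\beta_n,\mathsf M^\mathbf a_{n+1})$; the future is a function of $(\beta_i)_{i>n}$ alone; and since $(\beta_1,\dots,\beta_n)$ remains independent of $(\beta_i)_{i>n}$ after conditioning on the $\sigma\big((\beta_i)_{i>n}\big)$-measurable variable $\mathsf M^\mathbf a_{n+1}$, the past and future are conditionally independent given $\mathsf M^\mathbf a_{n+1}$. That is exactly what you need, so the overall conclusion stands; just delete the erroneous inclusion and the false unconditional-independence claim.
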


\begin{proof}
	We follow the same steps as \cite[Lemma~1.1]{goldschmidt_line_2015}. 
Recall the definition of the random variable $X_\infty$ as the limit of the sequence $(X_n)_{n\geq 1}$ defined in \eqref{wrt:eq:def Xn}, the definition \eqref{wrt:eq:definition Cp} of the constant $C_1$ and their relation to the random variable $Z$.  
	We have
	\begin{equation}\label{wrt:eq:def of Mk}
	\mathsf M_1^\mathbf{a}=\frac{c+1}{Z}=\left(C_1\cdot (c+1)\cdot X_\infty\right) \quad \text{and for $k\geq 2$,} \quad \mathsf M_k^\mathbf{a}=\mathsf M_1^\mathbf{a}\cdot \prod_{i=1}^{k-1}\beta_i^{-1}.
	\end{equation}
	It then follows that we can write, for $k\geq 1$,
	\begin{align*}
	\mathsf M_{k+1}^\mathbf{a}=C_1\cdot (c+1)\cdot X_\infty\cdot \prod_{i=1}^{k}\beta_i^{-1}=C_1\cdot (c+1)\cdot \lim_{n\rightarrow\infty}\frac{\prod_{i=k+1}^{n-1}\beta_i}{\prod_{i=1}^{n-1}\Ec{\beta_i}},
	\end{align*}
	which ensures that $\mathsf M_{k+1}^\mathbf{a}$ is independent of $\beta_1,\beta_2,...,\beta_{k}$. The limit in the last equality exists almost surely thanks to the results of the preceding section.
	
	Now we prove the Markov property of the chain. Let $k\geq 1$. Because of the definition of the chain as a product, the distribution of $\mathsf M_{k+1}^\mathbf{a}$ conditional on the past trajectory $\mathsf M_1^\mathbf{a},\mathsf M_2^\mathbf{a},\dots,\mathsf M_k^\mathbf{a}$ is the same as the distribution of $\mathsf M_{k+1}^\mathbf{a}$ conditional on $\mathsf M_k^\mathbf{a},\beta_1,\dots,\beta_{k-1}$. Since $\mathsf M_{k+1}^\mathbf{a}=\beta_k^{-1}\cdot \mathsf M_k^\mathbf{a}$ and that $\beta_k$ and $\mathsf M_k^\mathbf{a}$ are both independent of $\beta_1,\dots,\beta_{k-1}$, this conditional distribution corresponds to the one of $\mathsf M_{k+1}^\mathbf{a}$ conditional on the present state of the chain $\mathsf M_k^\mathbf{a}$.
\end{proof}
\paragraph{Computing the moments.}
In some cases where the sequence $\mathbf{a}$ is sufficiently regular, we can compute explicitly every moment of the random variable $\mathsf M^\mathbf{a}_k$ for every $k\geq 1$. Indeed, using \eqref{wrt:eq:moments Xinfty} and \eqref{wrt:eq:def of Mk} and the independence, we get 
\begin{align}\label{wrt:eq:moments Mk}
\Ec{(\mathsf M_k^\mathbf{a})^p}=\Ec{\left(C_1\cdot (c+1) \cdot \lim_{n\rightarrow\infty}\frac{\prod_{i=k}^{n-1}\beta_i}{\prod_{i=1}^{n-1}\Ec{\beta_i}}\right)^p}
&= C_1^p\cdot(c+1)^p\cdot \lim_{n\rightarrow\infty}\frac{\prod_{i=k}^{n-1}\Ec{\beta_i^p}}{\left(\prod_{i=1}^{n-1}\Ec{\beta_i}\right)^p}\nonumber \\
&= \frac{(c+1)^p\cdot C_p}{\prod_{i=1}^{k-1}\Ec{\beta_i^p}}.
\end{align}
In general, if the collection $(\mu_p)_{p\geq 1}$ of $p$-th moments of some random variable satisfies the so-called Carleman's condition: $\sum_{p=1}^\infty \mu_{2p}^{-1/(2p)}=\infty$, then its distribution is uniquely determined from those moments.
\section{Examples and applications}\label{wrt:sec:examples and applications}
In this section, we compute the explicit distribution of $(\mathsf M_n^{\mathbf{a}})$ for some particular sequences $\mathbf a$. 
We then describe some applications of our results to a model of P\'olya urn with immigration and then to a model of preferential attachment graphs.
\subsection{The limit chain for particular sequences $\mathbf{a}$}
As stated in the preceding section, we can compute the distribution of $\mathsf{M}^\mathbf{a}_k$ for some fixed $k$ by the expression of its moments \eqref{wrt:eq:moments Mk}, provided that they satisfy Carleman's condition. Knowing these distributions and the backward transitions given in Proposition~\ref{wrt:prop:Mk markov chain} then characterises the law of the whole process.
For two particular examples, this law has a nice expression.
\begin{proposition}\label{wrt:prop:distribution limiting chains}
	In the two following cases, the distribution of the chain $(\mathsf M_n^{\mathbf{a}})$ is explicit.
	\begin{enumerate}
\item\label{wrt:it:limit chain is ML} 	If $\mathbf{a}$ is of the form $\mathbf{a}=(a,b,b,b,\dots)$ with $a>-1$ and $b>0$, then the limiting sequence $(\mathsf M_n^\mathbf{a})_{n\geq1}$ is a Mittag-Leffler Markov chain $\MLMC\left(\frac{1}{b+1},\frac{a}{b+1}\right)$. 
\item\label{wrt:it:limit chain is PGG} If $\mathbf{a}$ is of the form $\mathbf{a}=(a,b_1,b_2,\dots,b_\ell,b_1,b_2,\dots b_\ell,b_1,\dots)$, periodic of period $\ell$ starting from the second term with $a>-1$ and $\ell$ integers $b_1,b_2,\dots,b_\ell$ with at least one being non-zero, then, letting $S=b_1+b_2+\dots+b_\ell$, the sequence $\frac{\ell^{\frac{-\ell}{S+\ell}}}{S+\ell} \cdot (\mathsf M_n^\mathbf{a})_{n\geq1}$ has the distribution of an Intertwined Product of Generalised Gamma Processes with parameters $(a,b_1,b_2,\dots,b_\ell)$, which we denote $\mathrm{IPGGP}(a,b_1,b_2,\dots,b_\ell)$.
	\end{enumerate}
\end{proposition}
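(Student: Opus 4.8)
The plan is to pin down the law of the Markov chain $(\mathsf M_n^{\mathbf a})_{n\geq 1}$ through its moments, exploiting the product structure provided by Proposition~\ref{wrt:prop:Mk markov chain}. There, $\mathsf M_k^{\mathbf a}=\beta_k\cdot\mathsf M_{k+1}^{\mathbf a}$ with $\beta_k\sim\mathrm{Beta}(A_k+k,a_{k+1})$ independent of $\mathsf M_{k+1}^{\mathbf a}$, and $\mathsf M_n^{\mathbf a}$ is independent of $(\beta_1,\dots,\beta_{n-1})$; iterating the backward transition, $(\mathsf M_1^{\mathbf a},\dots,\mathsf M_n^{\mathbf a})$ has the law of $\big((\prod_{j=1}^{n-1}\beta_j)\,\Xi_n,\ (\prod_{j=2}^{n-1}\beta_j)\,\Xi_n,\ \dots,\ \Xi_n\big)$ with all factors independent and $\Xi_n\sim\Law(\mathsf M_n^{\mathbf a})$. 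Hence the joint law of the chain is determined by the one-dimensional marginals together with the (explicit) laws of the $\beta_j$; moreover $\Law(\mathsf M_k^{\mathbf a})$ determines $\Law(\mathsf M_{k+1}^{\mathbf a})$ via $\Ec{(\mathsf M_{k+1}^{\mathbf a})^p}=\Ec{(\mathsf M_k^{\mathbf a})^p}/\Ec{\beta_k^p}$ as soon as moments are known to be determining, so it suffices to identify a single marginal, say at $n=1$. The first step is to make \eqref{wrt:eq:moments Mk} fully explicit: using the $\Gamma$-form $\Ec{\beta_i^p}=\prod_{j=0}^{p-1}\tfrac{i+A_i+j}{i+A_{i+1}+j}$ of \eqref{wrt:eq:moments loi beta}, one evaluates in closed form the finite product $\prod_{i=1}^{k-1}\Ec{\beta_i^p}$ and the constant $C_p=\lim_n n^{p\gamma}\prod_{i=1}^{n-1}\Ec{\beta_i^p}$ of \eqref{wrt:eq:definition Cp}, hence $\Ec{(\mathsf M_k^{\mathbf a})^p}$ for every integer $p\geq 1$. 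A Stirling estimate then shows $\Ec{(\mathsf M_k^{\mathbf a})^p}$ grows like $(p!)^{\gamma}$ up to a factor bounded by $C^p$, with $\gamma\in(0,1)$ in both cases, so Carleman's condition $\sum_{p\geq 1}\mu_{2p}^{-1/(2p)}=\infty$ holds and the moments characterise each marginal.

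For case~\ref{wrt:it:limit chain is ML} one has $A_n=a+(n-1)b$, so $c=b$ and $\gamma=\tfrac{b}{b+1}$; writing $\alpha=\tfrac{1}{b+1}$ and $\theta=\tfrac{a}{b+1}$, a direct computation gives $\beta_k\sim\mathrm{Beta}\big(\tfrac{\theta+k-1}{\alpha}+1,\tfrac{1}{\alpha}-1\big)$, and one checks these are exactly the backward transitions of $\MLMC(\alpha,\theta)$ as defined in \cite{goldschmidt_line_2015,james_generalized_2015_online}. A telescoping computation yields $C_p=\prod_{k=0}^{p-1}\tfrac{\Gamma(1+\alpha(a+k))}{\Gamma(\alpha(a+k+1))}$, and simplifying via $\Gamma(x+1)=x\Gamma(x)$ one obtains
\[
\Ec{(\mathsf M_1^{\mathbf a})^p}=(b+1)^p\,C_p=\frac{\Gamma(\theta+1)\,\Gamma(\tfrac{\theta}{\alpha}+p+1)}{\Gamma(\tfrac{\theta}{\alpha}+1)\,\Gamma(\theta+\alpha p+1)},
\]
which is the $p$-th moment of the $\mathrm{ML}(\alpha,\theta)$ distribution. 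Combined with Carleman's condition, the Mellin recursion above, and the agreement of backward transitions, this yields $(\mathsf M_n^{\mathbf a})_{n\geq 1}\overset{\mathrm{(d)}}{=}\MLMC(\tfrac{1}{b+1},\tfrac{a}{b+1})$.

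For case~\ref{wrt:it:limit chain is PGG} one has $A_n=\tfrac{S}{\ell}n+\grandO{1}$, so $c=\tfrac{S}{\ell}$, $\gamma=\tfrac{S}{S+\ell}\in(0,1)$, and the transition $\mathrm{Beta}(A_k+k,a_{k+1})$ is $\ell$-periodic in $k$ — by construction these are the backward transitions of the chain $\mathrm{IPGGP}(a,b_1,\dots,b_\ell)$ introduced in Section~\ref{wrt:subsec:product generalised gamma}. The new feature compared with case~\ref{wrt:it:limit chain is ML} is that $\prod_{i=1}^{k-1}\Ec{\beta_i^p}$ and $C_p$ now split into $\ell$ Gamma-function factors, one per residue class modulo $\ell$; applying the Gauss multiplication formula $\prod_{j=0}^{\ell-1}\Gamma(z+\tfrac{j}{\ell})=(2\pi)^{(\ell-1)/2}\,\ell^{\frac{1}{2}-\ell z}\,\Gamma(\ell z)$ recombines them into the Gamma expression defining the IPGGP moments (as set up in Section~\ref{wrt:subsec:product generalised gamma}), the stray powers $\ell^{\frac{1}{2}-\ell z}$ being precisely what produces the normalising constant $\ell^{-\ell/(S+\ell)}/(S+\ell)$ of the statement. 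Once $\Ec{(\mathsf M_1^{\mathbf a})^p}$ is identified with the matching moment of $(S+\ell)\,\ell^{\ell/(S+\ell)}\cdot\mathrm{IPGGP}_1$, one concludes exactly as in case~\ref{wrt:it:limit chain is ML}.

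The step I expect to be the real work is the Gamma-function bookkeeping in case~\ref{wrt:it:limit chain is PGG}: carrying out the recombination while keeping track of the periodic structure and the powers of $\ell$, and verifying that the Beta kernels displayed above genuinely coincide with those built into the definitions of $\MLMC$ and $\mathrm{IPGGP}$. The remaining ingredients — the moment evaluation from \eqref{wrt:eq:moments Mk}, the Stirling/Carleman estimate, and the reduction of the joint law to one-dimensional marginals — are routine.
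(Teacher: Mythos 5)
Your overall strategy --- compute the moments of one marginal, check Carleman, bootstrap via the explicit Beta backward transitions from Proposition~\ref{wrt:prop:Mk markov chain} --- is structurally the same as the paper's, and case~\ref{wrt:it:limit chain is ML} is correct: your $C_p=\prod_{k=0}^{p-1}\Gamma(1+\alpha(a+k))/\Gamma(\alpha(a+k+1))$ matches the paper's formula after simplification, the Beta kernel $\mathrm{Beta}(1+a+(k-1)(b+1),b)$ does coincide with the $\MLMC(\alpha,\theta)$ transition, and the moment growth of order $(p!)^\gamma$ with $\gamma=b/(b+1)\in(0,1)$ gives Carleman. The paper in fact identifies every marginal $\mathsf M_k^{\mathbf a}\sim\mathrm{ML}(\alpha,\theta+k-1)$ explicitly rather than only $\mathsf M_1$, but your shortcut via $\Ec{(\mathsf M_{k+1}^{\mathbf a})^p}=\Ec{(\mathsf M_k^{\mathbf a})^p}/\Ec{\beta_k^p}$ is equivalent.

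The proposed method for case~\ref{wrt:it:limit chain is PGG} has a genuine gap. You say the residue-class Gamma products should be recombined via the Gauss multiplication formula $\prod_{j=0}^{\ell-1}\Gamma(z+j/\ell)=(2\pi)^{(\ell-1)/2}\ell^{1/2-\ell z}\Gamma(\ell z)$, and you attribute the normalising constant $\ell^{-\ell/(S+\ell)}/(S+\ell)$ to the stray power $\ell^{1/2-\ell z}$. That cannot be right on two counts. First, the Gamma arguments that actually appear are not an arithmetic progression of step $1/\ell$: after splitting $\prod_i \Ec{\beta_i^p}$ by residue class $r\bmod\ell$ and using the (integer-$b_r$) telescoping swap $\prod_{q=0}^{p-1}\frac{X+q}{X+q+b_r}=\prod_{q=0}^{b_r-1}\frac{X+q}{X+q+p}$, the inner product over $j$ runs along an arithmetic progression of step $\ell+S$ in $X=a+B_{r-1}+r+j(\ell+S)$, so after the elementary Pochhammer-to-Gamma conversion one ends up with $\Gamma$'s at the points $\{(a+q)/(\ell+S):q\in\mathcal S\}$ --- exactly the set $\mathcal S$ of Section~\ref{wrt:subsec:product generalised gamma}, which has gaps and step $1/(\ell+S)$, not $1/\ell$. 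Second, applying the multiplication formula would introduce a factor $(2\pi)^{(\ell-1)/2}$ that appears nowhere in the statement. The true source of $\left((S+\ell)\,\ell^{-\ell/(S+\ell)}\right)^p$ is more mundane: it is the product of the $(c+1)^p=((S+\ell)/\ell)^p$ prefactor already present in \eqref{wrt:eq:moments Mk} with the $\ell^{pS/(\ell+S)}$ that drops out of the Stirling asymptotics, because in each residue class the inner index $j$ runs to roughly $n/\ell$ rather than $n$ (so the $\ell$ residue classes contribute $(n/\ell)^{-pS/(\ell+S)}$, and matching against the global normalisation $n^{p\gamma}=n^{pS/(\ell+S)}$ in the definition \eqref{wrt:eq:definition Cp} of $C_p$ leaves a factor $\ell^{pS/(\ell+S)}$). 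So the constant comes from index rescaling, not from a multiplication theorem, and the Gamma bookkeeping is a telescoping-plus-Pochhammer computation.
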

 Note that the two cases \ref{wrt:it:limit chain is ML} and \ref{wrt:it:limit chain is PGG} are not mutually exclusive. We will prove the two points of this proposition in separate subsections. The proper definitions of the distributions to which we refer in the statement are given along the proof.

\subsubsection{Mittag-Leffler Markov chains}\label{wrt:subsubsec:mlmc}
Let us study the case where the underlying preferential attachment tree has a sequence of fitnesses $\mathbf{a}$ that are of the form $(a,b,b,b,\dots)$. 
We start by recalling the definitions of Mittag-Leffler distributions and Mittag-Leffler Markov chains and introduced in \cite{goldschmidt_line_2015}, and also studied in \cite{james_generalized_2015_online}.
\paragraph{Mittag-Leffler distributions.}
Let $0<\alpha<1$ and $\theta>-\alpha$. The generalized Mittag-Leffler $\mathrm{ML}(\alpha, \theta)$ distribution has $p$th moment
\begin{align}\label{wrt:eq:moments mittag-leffler}
\frac{\Gamma(\theta) \Gamma(\theta/\alpha + p)}{\Gamma(\theta/\alpha) \Gamma(\theta + p \alpha)}=\frac{\Gamma(\theta+1) \Gamma(\theta/\alpha + p+1)}{\Gamma(\theta/\alpha+1) \Gamma(\theta + p \alpha+1)}
\end{align}
and the collection of $p$-th moments for $p \in \N$ uniquely characterizes this distribution thanks to Carleman's criterion.
\paragraph*{Mittag-Leffler Markov Chains.}
For any $0<\alpha<1$ and $\theta>-\alpha$, we introduce the (a priori) inhomogenous Markov chain $(\mathsf M^{\alpha,\theta}_n)_{n\geq 1}$, the distribution of which we call the Mittag-Leffler Markov chain of parameters $(\alpha,\theta)$, or $\MLMC(\alpha,\theta)$. This type of Markov chain was already defined in \cite{goldschmidt_line_2015}, for some choice of parameters $\alpha$ and $\theta$. 
It is a Markov chain such that for any $n\geq 1$,
\begin{equation*}%
\mathsf M_n^{\alpha,\theta}\sim \mathrm{ML}\left(\alpha,\theta+n-1\right),
\end{equation*}
and the transition probabilities are characterised by the following equality in law:
\begin{equation*}%
\left(\mathsf M_n^{\alpha,\theta},\mathsf M_{n+1}^{\alpha,\theta}\right)=\left(B_n\cdot \mathsf M_{n+1}^{\alpha,\theta},\mathsf M_{n+1}^{\alpha,\theta}\right),
\end{equation*}
with $B_n\sim \mathrm{Beta}\left(\frac{\theta+n-1}{\alpha}+1,\frac{1}{\alpha}-1\right)$, independent of $\mathsf M_{n+1}^{\alpha,\theta}$. These chains are constructed (for some values of $\theta$ depending on $\alpha$) in \cite{goldschmidt_line_2015}. In fact, our proof of Proposition~\ref{wrt:prop:distribution limiting chains}\ref{wrt:it:limit chain is ML} ensures that these chains exist for any choice of parameters $0<\alpha<1$ and $\theta>-\alpha$.
Let us mention that the proof of \cite[Lemma~1.1]{goldschmidt_line_2015} is still valid for the whole range of parameters $0<\alpha<1$ and $\theta>-\alpha$, which proves that these Markov chains are in fact time-homogeneous. 
We provide, in a later paragraph, another proof of this time-homogeneity using an argument that relies on preferential attachment trees.

%
%
%
%
%
%
%
%
%
\paragraph{The limiting Markov chain is a Mittag-Leffler.}
Recall the definition of the sequence $(\beta_k)_{k\geq 1}$ and their respective distributions $\beta_k\sim \mathrm{Beta}(A_k+k,a_{k+1})$. From our assumption that $\mathbf a=a,b,b,b\dots$ we have for all $k\geq 1$, \[(A_k+k,a_{k+1})=(1+a+(k-1)b,b).\]
\begin{proof}[Proof of Proposition~\ref{wrt:prop:distribution limiting chains}~\ref{wrt:it:limit chain is ML}]
For $p\geq 1$, we can make the following computation, using \eqref{wrt:eq:moments loi beta}, one change of indices and several times the property of the Gamma function that for any $z>0$ we have $\Gam{z+1}=z\Gam{z}$:
\begin{align}\label{wrt:eq:prod betai MLMC}
\prod_{i=1}^{n-1}\Ec{\beta_i^p}&=\prod_{i=1}^{n-1}\frac{\Gam{1+a+p+(b+1)(i-1)}\Gam{a+(b+1)i}}{\Gam{1+a+(b+1)(i-1)}\Gam{a+(b+1)i+p}}\notag\\
&=\frac{\Gam{1+a+p}}{\Gam{1+a}}\cdot \frac{\Gam{a+(b+1)(n-1)}}{\Gam{a+(b+1)(n-1)+p}}\cdot\frac{\Gam{\frac{a+p}{b+1}+n-1}}{\Gam{\frac{a}{b+1}+n-1}}\cdot\frac{\Gam{1+\frac{a}{b+1}}}{\Gam{1+\frac{a+p}{b+1}}}.
\end{align}
Using Stirling formula, we can then compute the numbers $C_p$ introduced in \eqref{wrt:eq:definition Cp},
\begin{align}\label{wrt:eq:Cp MLMC}
C_p=(b+1)^{-p}\cdot \frac{\Gam{1+a+p}\Gam{1+\frac{a}{b+1}}}{\Gam{1+a}\Gam{1+\frac{a+p}{b+1}}}.
\end{align}
Using \eqref{wrt:eq:moments Mk}, the moments of $\mathsf M_k$ are given, for any $p\in\N$ by the formula:
\begin{align*}
\Ec{(\mathsf M_k^\mathbf{a})^p}= \frac{(b+1)^p\cdot C_p}{\prod_{i=1}^{k-1}\Ec{\beta_i^p}} \underset{\eqref{wrt:eq:Cp MLMC},\eqref{wrt:eq:prod betai MLMC}}{=}\frac{\Gam{\frac{a}{b+1}+k-1}\Gam{a+(b+1)(k-1)+p}}{\Gam{a+(b+1)(k-1)}\Gam{\frac{a+p}{b+1}+k-1}}
\end{align*}
These moments identify using \eqref{wrt:eq:moments mittag-leffler} the distribution of $\mathsf M_k^\mathbf{a}$ for all $k\geq 1$,
\begin{align*}
\mathsf M_k^\mathbf{a}\sim \mathrm{ML}\left(\frac{1}{b+1},\frac{a}{b+1}+k-1\right).
\end{align*}
From this, and the form of the backward transitions, we can identify $(\mathsf M_k^\mathbf{a})_{k\geq 1}$ as having a distribution $\mathrm{MLMC}\left(\frac{1}{b+1},\frac{a}{b+1}\right)$. 
\end{proof}

\paragraph{Time-homogeneity of MLMC.}
Let us keep the notation from the previous paragraph with a sequence $\mathbf{a}=a,b,b,b\dots$ and let us show the time-homogeneity of the corresponding Mittag-Leffler Markov chain $(\mathsf{M}^\mathbf{a}_k)_{k\geq 1}\sim \mathrm{MLMC}\left(\frac{1}{b+1},\frac{a}{b+1}\right)$ using its connection with preferential attachment trees.

For any $x>-1$, consider the sequence $\boldsymbol{x}=x,b,b,b\dots$ and $(\ttP^x_n)_{n\geq 1}\sim \pa(\boldsymbol{x})$ in such a way that, using Theorem~\ref{wrt:thm:convergence degrees pa},
\begin{align*}
	\mathsf{M}_1^{\boldsymbol{x}}=\lim_{n\rightarrow\infty} n^{-1/(b+1)}\cdot \deg_{\ttP_n^x}^+(u_1) \quad \text{and}\quad 	\mathsf{M}_2^{\boldsymbol{x}}=\lim_{n\rightarrow\infty} n^{-1/(b+1)}\cdot (\deg_{\ttP_n^x}^+(u_1)+ \deg_{\ttP_n^x}^+(u_2)).
\end{align*}
By choosing $x$ appropriately, we can make $(\mathsf{M}_1^{\boldsymbol{x}},\mathsf{M}_2^{\boldsymbol{x}})$ have the distribution of any of the couples $(\mathsf{M}_k,\mathsf{M}_{k+1})$ for $k\geq 1$.
Thus, in order to prove the time-homogeneity of the transitions, it suffices to prove that the conditional distribution of $\mathsf{M}_2^{\boldsymbol{x}}$ with respect to $\mathsf{M}_1^{\boldsymbol{x}}$ does not depend on $x$.

Recall from Section~\ref{wrt:subsubsec:convergence of the weight measure intro} in the introduction that we see $(\ttP_n^x)_{n\geq 1}$ as an increasing sequence of plane trees, defined as subsets of $\bU$. Also recall that for any $u\in\bU$, we denote $T(u)$ the subtree descending from $u$.
 At every time $n\geq 1$, we can consider the sequence $(\#(\ttP_n^x\cap T(1)),\#(\ttP_n^x\cap T(2)),\dots)$, which counts the number of vertices in the subtrees descending from the children of $u_1=\emptyset$ in $\ttP_n^x$, in order of creation (completed by an sequence of zeros).
We can check that this sequence evolves as $n$ grows with the same distribution as the number of customers seating at different tables in a Chinese Restaurant Process with seating plan $(\frac{1}{b+1},\frac{x}{b+1})$, see \cite[Section~3.2]{pitman_combinatorial_2006} for a definition.

Then, conditionally on the evolution of this sequence, every time that a vertex is added to one of those subtrees, it is attached to any vertex already present in the subtree with probability proportional to its out-degree plus $b$ (and in particular this does not depend on the value of $x$).

Thanks to \cite[Corollary~3.9]{pitman_combinatorial_2006}, two Chinese Restaurant Processes with respective seating plan $(\frac{1}{b+1},\frac{x}{b+1})$ and $(\frac{1}{b+1},\frac{x'}{b+1})$ with $x,x'>-1$ have a density with respect to each other and this density is a function of the scaling limit of the number of tables created in the process, which corresponds in our case to $\mathsf{M}_1^{\boldsymbol{x}}$.

These observations allow us to conclude that the distribution of $(\ttP^x_n)_{n\geq 1}$ for any $x>-1$ has a positive density with respect to $(\ttP^0_n)_{n\geq 1}$, and this density is a function of $\mathsf{M}_1^x$. 
From here, it is clear that conditionally on $\mathsf{M}_1^{\boldsymbol{x}}$, the distribution of the quantity $(\mathsf{M}_2^{\boldsymbol{x}}-\mathsf{M}_1^{\boldsymbol{x}})=\lim_{n\rightarrow\infty}\deg_{\ttP_n^x}^+(u_2)$ does not depend on $x$, which concludes the argument.

\subsubsection{Products of generalised Gamma.}\label{wrt:subsec:product generalised gamma}
The following paragraphs aim at proving Proposition~\ref{wrt:prop:distribution limiting chains}\ref{wrt:it:limit chain is PGG}. 
In the first paragraph and second paragraph we define the families of distributions of $\mathrm{GGP}$ and $\mathrm{IPGGP}$-processes. Some special cases of these processes already appeared in \cite{pekoez_generalized_2016, pekoez_joint_2017}.
In the third one we prove that the distribution of $(\mathsf M_k^\mathbf{a})_{k\geq 1}$ belongs to this family whenever the sequence $\mathbf a$ is of the form assumed in Proposition~\ref{wrt:prop:distribution limiting chains}\ref{wrt:it:limit chain is PGG}.

\paragraph{Construction of a $\mathrm{GPP}(z,r)$-process.} 
For $z,r>0$ real numbers, let $(Z_i)_{i\geq 1}$ be a family of independent variables with the following distribution:
\begin{equation*}
Z_1\sim\mathrm{Gamma}\left(\frac{z}{r}\right) \quad \text{and for } i\geq 2, \quad Z_i\sim \mathrm{Exp}(1),
\end{equation*}
where, for any $k>0$, the distribution $\mathrm{Gamma}(u)$ has density $x\mapsto \frac{x^{u-1}e^{-x}}{\Gamma(u)}\ind{x>0}$ with respect to the Lebesgue measure.  
Then for all $k\geq 1$ we define $\mathsf{G}_k$ as,
\begin{equation*}
\mathsf{G}_k:=\left(\sum_{i=1}^{k}Z_i\right)^{\frac{1}{r}}.
\end{equation*}
We say that the process $(\mathsf{G}_k)_{k\geq 1}$ has the distribution of a \emph{Generalised Gamma process} with parameters $(z,r)$ which we denote $\mathrm{GPP}(z,r)$.

Let us note that,using standard distributional equalities with Gamma and Beta distributions, for every $k\geq 1$, we have $(\mathsf{G}_k)^r\sim \mathrm{Gamma}\left(k-1+\frac{z}{r}\right)$ and
\begin{align}\label{wrt:eq:finite dimensional gamma process}
V_k:=\left(\frac{\mathsf{G}_{k}}{\mathsf{G}_{k+1}}\right)^r\sim \mathrm{Beta}\left(k-1+\frac{z}{r},1\right), \quad \text{so that} \quad V_k^{1/r}=\frac{\mathsf{G}_{k}}{\mathsf{G}_{k+1}}\sim \mathrm{Beta}\left(\frac{k-1}{r}+z,1\right),
\end{align}
and $V_k^{1/r}$ is independent of $\mathsf{G}_{k+1}$. 
In fact, we can further show that $V_1,V_2,\dots ,V_k, \mathsf{G}_{k+1}$ are jointly independent with the corresponding distribution and that this characterizes the finite dimensional marginals of this process.

\begin{remark}\label{wrt:rem:poisson point process}
		For $z=r$, the process $(\mathsf{G}_k)_{k\geq 1}$ has exactly the distribution of the points of a Poisson process on $\intervalleoo{0}{\infty}$ with intensity $r\cdot t^{r-1}\dd t$, listed in increasing order.
\end{remark}

\paragraph{Intertwined Products of $\mathrm{GGP}$-processes.}
Let $a>-1$ and $b_1,b_2,\dots,b_\ell$ be positive integers with at least one being non-zero.
We let $B_r:=\sum_{s=1}^{r}b_s$ for all $0\leq r \leq \ell$, with the convention that $B_0:=0$. We also let $S=B_\ell$. Then we define the set
\[\mathcal{S}:=\{1,2,\dots,S+\ell-1\}\setminus \enstq{B_{r}+r}{1\leq r \leq \ell-1}.\]
Start with independent $\mathrm{GPP}$ processes $\enstq{\mathsf{G}^{(q)}}{q\in \mathcal{S}}$ indexed by $\mathcal S$ such that for all $q\in \mathcal S$,
\[\mathsf{G}^{(q)}\sim\mathrm{GPP}(a+q,\ell+S).\]
Now $\mathsf{G}=(\mathsf{G}_k)_{k\geq 1}$ is defined in such a way that for all $n\geq 1$ and $1\leq r\leq \ell$ we have
\begin{align}\label{wrt:eq:description IPGG process}
\mathsf{G}_{\ell \cdot (n-1)+r}=\left(\underset{1\leq q\leq r-1+B_{r-1}}{\prod_{q\in \mathcal S}}\mathsf{G}^{(q)}_{n+1} \right) \cdot \left( \underset{r+B_{r-1}\leq q \leq S+\ell-1}{\prod_{q\in \mathcal S}}\mathsf{G}^{(q)}_{n} \right).
\end{align} 
The process $(\mathsf{G}_k)_{k\geq 1}$ defined above is said to have distribution of an \emph{Intertwined Product of Generalized Gamma Processes} with parameters $(a,b_1,b_2,\dots,b_\ell)$, denoted $\mathrm{IPGGP}(a,b_1,b_2,\dots,b_\ell)$.
Its finite dimensional marginals can be obtained in the same way as it was done in the preceding paragraph for Generalized Gamma processes. 

\paragraph{Identification of the limiting chain.}
Fix $\ell\geq1$ and $b_1,b_2,\dots,b_\ell\geq 0$ some integers (where at least one is non-zero) and suppose that the sequence $\mathbf{a}$ has the following form,
\begin{align*}
\mathbf{a}=(a,b_1,b_2,\dots,b_\ell,b_1,b_2,\dots b_\ell,b_1,\dots)
\end{align*}
meaning that the sequence is periodic with period $\ell$ starting from the second term, with $a>-1$. 

For any $j\geq 0$ and $1\leq r\leq \ell$ we have
\[\beta_{j\ell+r}\sim \mathrm{Beta}\left(a+B_{r-1}+r+j(\ell+S),b_r\right),\]
for the $(\beta_k)_{k\geq 1}$ as defined in Theorem~\ref{wrt:thm:connection PA WRT}.
For any $j\geq 1,p\geq 1$, we use the moments \eqref{wrt:eq:moments loi beta} of a Beta random variable and a telescoping argument to write
\begin{align*}
\Ec{(\beta_{j\ell+r})^p}&=\prod_{q=0}^{p-1}\frac{a + B_{r-1}+r +j(\ell+S)+q}{a + B_{r-1}+r +j(\ell+S)+q+b_r}=\prod_{q=0}^{b_r-1}\frac{a + B_{r-1}+r +j(\ell+S)+q}{a + B_{r-1}+r +j(\ell+S)+q+p}.
\end{align*}
Using the last display, we get that for any $n\geq 1$,
\begin{align*}
\prod_{j=0}^{n}\Ec{(\beta_{j\ell+r})^p}
&=\prod_{q=0}^{b_r-1}\frac{\Gam{\frac{a + B_{r-1}+r+q}{\ell+S}+n+1}\Gam{\frac{a + B_{r-1}+r+q+p}{\ell+S}}}{\Gam{\frac{a + B_{r-1}+r+q}{\ell+S}}\Gam{\frac{a + B_{r-1}+r+q+p}{\ell+S}+n+1}}.
\end{align*}
Using Stirling's approximation we get
\begin{align*}
\prod_{j=0}^{n}\Ec{(\beta_{j\ell+r})^p}\underset{n\rightarrow\infty}{\sim} n^{-\frac{p\cdot b_r}{\ell+S}}\cdot \prod_{q=0}^{b_r-1}\frac{\Gam{\frac{a + B_{r-1}+r+q+p}{\ell+S}}}{\Gam{\frac{a + B_{r-1}+r+q}{\ell+S}}}.
\end{align*}
Hence, recalling the definition of $C_p$ in \eqref{wrt:eq:definition Cp}, we get
\begin{align*}
C_p=\ell^{\frac{pS}{\ell+S}}\cdot \prod_{r=1}^\ell \prod_{q=0}^{b_r-1}\frac{\Gam{\frac{a + B_{r-1}+r+q+p}{\ell+S}}}{\Gam{\frac{a + B_{r-1}+r+q}{\ell+S}}}=\ell^{\frac{pS}{\ell+S}}\cdot\prod_{i\in \mathcal S}\frac{\Gam{\frac{a + i+p}{\ell+S}}}{\Gam{\frac{a +i}{\ell+S}}}.
\end{align*}

Then using \eqref{wrt:eq:moments Mk} with $c=S/\ell$,
\begin{align*}
&\Ec{(\mathsf M_{\ell\cdot(n-1)+r}^\mathbf{a})^p}\\
&=\frac{(c+1)^p\cdot C_p}{\prod_{i=1}^{\ell\cdot(n-1)+r-1}\Ec{\beta_i^p}}\\
&=\left(\frac{S+\ell}{\ell}\right)^p\cdot \ell^{\frac{pS}{S+\ell}}\cdot \prod_{s=1}^{r-1} \prod_{j=0}^{b_r-1}\frac{\Gam{n+\frac{a + B_{r-1}+r+j+p}{\ell+S}}}{\Gam{n+\frac{a + B_{r-1}+r+j}{\ell+S}}}\cdot \prod_{s=r}^\ell \prod_{j=0}^{b_r-1}\frac{\Gam{n-1+\frac{a + B_{r-1}+r+j+p}{\ell+S}}}{\Gam{n-1+\frac{a + B_{r-1}+r+j}{\ell+S}}}\\
&=\left((S+\ell)\cdot (\ell^{\frac{-\ell}{S+\ell}})\right)^p \cdot \underset{1\leq q\leq r-1+B_{r-1}}{\prod_{q\in \mathcal S}}\frac{\Gam{n+\frac{a + q+p}{\ell+S}}}{\Gam{n+\frac{a+q}{\ell+S}}}\cdot\underset{r+B_{r-1}\leq q \leq S+\ell-1}{\prod_{q\in \mathcal S}}\frac{\Gam{n-1+\frac{a + q+p}{\ell+S}}}{\Gam{n-1+\frac{a +q}{\ell+S}}}.
\end{align*}
Using the last display and the fact that random variable with distribution $\mathrm{Gamma}(u)$ has $p$-th moment equal to $\frac{\Gam{u+p}}{\Gam{u}}$,
we can identify the distribution of the one-dimensional marginals $\frac{\ell^{\frac{\ell}{S+\ell}}}{S+\ell} \cdot \mathsf M_k^\mathbf{a}$ for any $k\geq1$ with the ones of the process described in \eqref{wrt:eq:description IPGG process}. 
The identification of the distribution of the process $\frac{\ell^{\frac{\ell}{S+\ell}}}{S+\ell}\cdot(  \mathsf M_k^\mathbf{a})_{k\geq 1}$ as  $\mathrm{IPGGP}(a,b_1,b_2,\dots,b_\ell)$ is then obtained by comparing their finite dimensional distribution which are characterized by Proposition~\ref{wrt:prop:Mk markov chain} and, respectively, \eqref{wrt:eq:description IPGG process} together with the discussion below \eqref{wrt:eq:finite dimensional gamma process}.
\paragraph{Sparse sequences.}
Let us treat a particular example of parameters $a,b_1,b_2,\dots b_\ell$ for which the distribution $\mathrm{IPGGP}(a,b_1,b_2,\dots,b_\ell)$ has a simpler description than the general case. 
Suppose that only one of the parameters $b_1,b_2,\dots,b_\ell$ is non-zero, say $b_\ell$ for example. 
Keeping the notation introduced above, the corresponding set $\mathcal{S}$ contains only $b_\ell$ elements $\mathcal{S}=\{\ell, \ell+1,\dots,\ell+b_\ell-1\}$.
Following the definition \eqref{wrt:eq:description IPGG process}, the process $(\mathsf{G}_k)_{k\geq 1}$ with distribution $\mathrm{IPGGP}(a,0,0,\dots,0,b_\ell)$ is constant on every interval $\intervalleentier{(n-1)\ell+1}{n\ell}$ for any integer $n\geq 1$ and , the process $(\mathsf{G}_{(n-1)\ell+1})_{n\geq 1}$ is just given by a product of $b_\ell$ independent $\mathrm{GGP}$-processes
\[\mathsf{G}_{(k-1)\ell+1}=\prod_{\ell \leq q\leq \ell+ b_\ell-1}\mathsf{G}^{(q)}_{k},\]
where for all $\ell\leq q \leq \ell +b_\ell-1$, the process $(\mathsf{G}^{(q)}_k)_{k\geq 1}$ has distribution $\mathrm{GPP}(a+q,\ell+b_\ell)$.

In the particular case where $a=1$ and $(b_1,b_2,\dots,b_{\ell-1},b_\ell)=(0,0,\dots ,0,1)$, the picture is even simpler because the last display becomes a product over only one term.
We can check using Remark~\ref{wrt:rem:poisson point process} that the process $(\mathsf{G}_{(k-1)\ell +1})_{k\geq 1}$ has then exactly the distribution of the points of a Poisson process on $\intervalleoo{0}{\infty}$ with intensity $(\ell+1)t^\ell\dd t$, listed in increasing order, which was already noted in \cite[Remark~2]{pekoez_joint_2017}.

\subsection{Application to P\'olya urns with immigration}\label{wrt:subsec:application to polya urn with immigration}  
Define the following generalisation of P\'olya's urn, which depends on a sequence of numbers $(a_n)_{n\geq1}$: start at time $1$ with an urn containing $a_1$ red balls. At every time $n\geq 2$, we sample a ball uniformly at random from the urn, return it to the urn with $1$ additional ball of the same colour, plus an immigration of $a_n$ additional white balls. 
The outcome of the first step being deterministic, it is equivalent to consider that we start at time $2$ with $a_1+1$ red balls and $a_2$ white balls in the urn, so that we allow ourselves to consider any (possibly negative) value $a_1>-1$.  
This model was studied in the sequence of paper \cite{pekoez_degree_2013,pekoez_generalized_2016,pekoez_joint_2017} in specific cases of periodic immigration and also studied in \cite{banderier_periodic_2019_online} with a larger class of periodic immigration. 

Denote $R_n$ the number of red balls in the urn at time $n$ and let us state a scaling limit result for $R_n$ when $n\rightarrow\infty$. We also identify the speed of convergence and the Gaussian fluctuations around the limit, provided that the immigration is sufficiently regular.

Recall from the introduction the assumption \eqref{wrt:assum: pa An=cn+rn} defined for a real number $c>0$. We introduce the following more precise assumption of the same type, for any $c>0$ and $\delta>0$.
\begin{equation}\label{wrt:assum:ad hoc pa assum for gaussian limit}
A_n=c\cdot n \cdot \left(1+\grandO{n^{-\delta}}\right). \tag{$H_c^\delta$}
\end{equation}
Remark that for any $0<\delta<\frac{1}{2}$, this assumption is satisfied for periodic sequences $\mathbf{a}$, and almost surely satisfied by sequences of i.i.d. non-negative random variables with a second moment. 
\begin{proposition}
Assume that the sequence $\mathbf{a}=(a_n)_{n\geq 1}$ satisfies \eqref{wrt:assum: pa An=cn+rn} for some $c>0$. Then for $D_n:=n^{-\frac{1}{c+1}}\cdot R_n$,
\begin{enumerate}
	\item \label{wrt:it:convergence nombre de boules normalise} we have the following almost sure convergence,
	\begin{align*}
	D_n\underset{n\rightarrow\infty}{\longrightarrow} D_\infty,
	\end{align*}
	where $D_\infty$ has the same law as $\mathsf{M}_1^\mathbf{a}$, defined in \eqref{wrt:eq:def mna}.
	\item \label{wrt:it:gaussian fluctuations} If $\delta>\frac{1}{2(c+1)}$ then we have
	\begin{align*}
	n^{\frac{1}{2(c+1)}}\cdot \frac{D_\infty-D_n}{\sqrt{D_n}}\overset{\mathrm{(d)}}{\underset{n\rightarrow\infty}{\longrightarrow}} \mathcal{N}(0,1).
	\end{align*}
\end{enumerate}
\end{proposition}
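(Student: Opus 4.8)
The plan is to reduce both statements to facts about the degree of the root vertex in a preferential attachment tree, for which an exact distributional identity is available, and then to extract the Gaussian fluctuations from a martingale computation combined with a self-normalising martingale central limit theorem.

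\textbf{Step 1 (identification with a PAT, and proof of \ref{wrt:it:convergence nombre de boules normalise}).} Let $(\ttP_n)_{n\geq 1}\sim\pa(\mathbf{a})$, built as in the excerpt. I would first check that the process $(R_n)_{n\geq 2}$ and the process $\big(a_1+\deg^+_{\ttP_n}(u_1)\big)_{n\geq 2}$ have the same law. Both are time‑inhomogeneous Markov chains; at time $2$ they start from the deterministic state $a_1+1$ inside a ``urn'' of total size $A_2+1$ (in the tree, $\deg^+_{\ttP_2}(u_1)=1$ and $\sum_{k=1}^2(a_k+\deg^+_{\ttP_2}(u_k))=A_2+1$), and at every later step the ``red count'' increases by an independent Bernoulli variable of parameter (current red count)$/(A_n+n-1)$ while the total grows deterministically by $1+a_{n+1}$; for the tree this is precisely the probability that $u_{n+1}$ attaches to $u_1$, using $\sum_{k\leq n}\deg^+_{\ttP_n}(u_k)=n-1$. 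Since $\tfrac{1}{c+1}>0$, we have $n^{-1/(c+1)}a_1\to 0$, so Theorem~\ref{wrt:thm:convergence degrees pa} (applicable because $\mathbf{a}$ satisfies \eqref{wrt:assum: pa An=cn+rn}) gives that $n^{-1/(c+1)}\big(a_1+\deg^+_{\ttP_n}(u_1)\big)\to\mathsf m_1^{\mathbf{a}}=\mathsf M_1^{\mathbf{a}}$ almost surely. As almost sure convergence of a sequence is an event measurable with respect to its whole trajectory, $D_n$ converges almost surely to a limit $D_\infty$ with $D_\infty\overset{\mathrm{(d)}}{=}\mathsf M_1^{\mathbf{a}}$; in particular $D_\infty>0$ almost surely, since $\mathsf M_1^{\mathbf{a}}=\tfrac{c+1}{Z}=(c+1)C_1X_\infty$ with $X_\infty>0$ almost surely (see the proof of Proposition~\ref{wrt:prop:assum pa implies assum wrt}). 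This settles \ref{wrt:it:convergence nombre de boules normalise}.

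\textbf{Step 2 (the martingale and its quadratic variation).} For \ref{wrt:it:gaussian fluctuations} I assume moreover that $\mathbf{a}$ satisfies \eqref{wrt:assum:ad hoc pa assum for gaussian limit} with $\delta>\tfrac1{2(c+1)}$, and I write $\alpha:=\tfrac1{c+1}$. From $\mathbb E[R_{n+1}\mid\cF_n]=R_n\big(1+\tfrac1{A_n+n-1}\big)$, the quantity $M_n:=\gamma_nR_n$, with $\gamma_n:=\prod_{k=2}^{n-1}\big(1+\tfrac1{A_k+k-1}\big)^{-1}$, is a nonnegative martingale and hence converges almost surely to some $M_\infty$. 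Using \eqref{wrt:assum:ad hoc pa assum for gaussian limit} one gets $\sum_{k=2}^{n-1}\log\big(1+\tfrac1{A_k+k-1}\big)=\alpha\log n+\cst+\grandO{n^{-\delta'}}$ with $\delta':=\min(\delta,1)$, so $\gamma_n=\lambda_\infty^{-1}n^{-\alpha}\big(1+\grandO{n^{-\delta'}}\big)$ for a constant $\lambda_\infty>0$; equivalently $D_n=\lambda_nM_n$ with $\lambda_n:=n^{-\alpha}/\gamma_n=\lambda_\infty\big(1+\grandO{n^{-\delta'}}\big)$, whence $D_\infty=\lambda_\infty M_\infty$ and $M_\infty>0$ a.s. Writing $q_k:=R_k/(A_k+k-1)\to 0$ and using $R_k\sim D_\infty k^{\alpha}$ a.s.\ (from Step 1) together with $\gamma_{k+1}^2\sim\lambda_\infty^{-2}k^{-2\alpha}$ and $A_k+k-1\sim(c+1)k$, the conditional quadratic variation of the tail satisfies, almost surely,
\begin{equation*}
V_n:=\sum_{k\geq n}\mathbb E\big[(M_{k+1}-M_k)^2\mid\cF_k\big]=\sum_{k\geq n}\gamma_{k+1}^2\,q_k(1-q_k)\;\sim\;\frac{\lambda_\infty^{-2}D_\infty}{(c+1)\alpha}\,n^{-\alpha}=\lambda_\infty^{-2}D_\infty\,n^{-\alpha},
\end{equation*}
the last equality because $(c+1)\alpha=1$; making this passage from the a.s.\ asymptotics of $R_k$ to that of the tail sum $V_n$ rigorous is a small Abel/Cesàro summation point.

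\textbf{Step 3 (martingale CLT and self-normalisation).} I would then apply a martingale central limit theorem (in its stable form) to the array of differences $\big(M_{n+j}-M_{n+j-1}\big)_{j\geq 1}$ as $n\to\infty$: its sum is $M_\infty-M_n$, its conditional variance is $V_n$, and since $|M_m-M_{m-1}|=\gamma_m\,|(R_m-R_{m-1})-q_{m-1}|\leq\gamma_m\leq C m^{-\alpha}$ one has $\max_{j\geq 1}|M_{n+j}-M_{n+j-1}|/\sqrt{V_n}\to 0$, giving the conditional Lindeberg condition. Hence $\tfrac{M_\infty-M_n}{\sqrt{V_n}}$ converges stably, as $n\to\infty$, to the standard Gaussian distribution, the limit being independent of $\cF_\infty:=\bigvee_m\cF_m$. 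Now
\begin{equation*}
\frac{n^{\alpha/2}(D_\infty-D_n)}{\sqrt{D_n}}=\frac{\lambda_\infty\,n^{\alpha/2}(M_\infty-M_n)}{\sqrt{D_n}}+\frac{n^{\alpha/2}(\lambda_\infty-\lambda_n)M_n}{\sqrt{D_n}}=\frac{\lambda_\infty\sqrt{n^{\alpha}V_n}}{\sqrt{D_n}}\cdot\frac{M_\infty-M_n}{\sqrt{V_n}}+\grandO{n^{\alpha/2-\delta'}},
\end{equation*}
and the error term tends to $0$ almost surely precisely because $\delta'>\alpha/2=\tfrac1{2(c+1)}$ — this is exactly where the hypothesis $\delta>\tfrac1{2(c+1)}$ is used. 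Since $n^{\alpha}V_n\to\lambda_\infty^{-2}D_\infty$ and $D_n\to D_\infty>0$ almost surely (both $\cF_\infty$‑measurable), the prefactor $\tfrac{\lambda_\infty\sqrt{n^{\alpha}V_n}}{\sqrt{D_n}}$ converges almost surely to $\lambda_\infty\cdot\lambda_\infty^{-1}\sqrt{D_\infty}/\sqrt{D_\infty}=1$, so Slutsky's lemma combined with the stable convergence gives $\tfrac{n^{\alpha/2}(D_\infty-D_n)}{\sqrt{D_n}}\Rightarrow\mathcal N(0,1)$. Note that the random limit $D_\infty$ cancels exactly, which is why the limit is a genuine standard Gaussian rather than a mixture; this self‑normalising cancellation, together with invoking the martingale CLT in the correct \emph{stable} form so that the cancellation is legitimate, is the main obstacle of the argument, the remainder being bookkeeping on the error $\grandO{n^{\alpha/2-\delta'}}$.
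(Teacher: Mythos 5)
Your Step~1 is the same as the paper's: identify $R_n$ with $a_1+\deg^+_{\ttP_n}(u_1)$ for a $\pa(\mathbf a)$ sequence, then invoke Theorem~\ref{wrt:thm:convergence degrees pa}. For the Gaussian fluctuations, however, you take a genuinely different route from the paper. The paper's proof of \ref{wrt:it:gaussian fluctuations} goes through the WRT representation (Theorem~\ref{wrt:thm:connection PA WRT}): conditionally on the random weight sequence $\boldsymbol{\mathsf w}^\mathbf a$, the increments $\deg^+_{\ttT_{n+1}}(u_1)-\deg^+_{\ttT_n}(u_1)$ are \emph{independent} Bernoulli variables with parameters $w_1/W_n$, so one can apply the classical Lindeberg--Feller theorem to a triangular array of independent summands, absorb the deterministic drift error (which requires exactly the condition on $W_n$ translated, via Proposition~\ref{wrt:prop:assum pa implies assum wrt}, from $\delta>\tfrac1{2(c+1)}$), and then deduce the annealed statement by conditioning. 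You instead work directly with the P\'olya--urn martingale $M_n=\gamma_n R_n$, compute the conditional quadratic variation of its tail, and appeal to a stable (mixing) martingale CLT, using the self-normalisation by $\sqrt{V_n}$ to cancel the random limiting factor $D_\infty$.

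Both approaches are sound, and the places where the hypotheses enter are the same: the condition $\delta>\tfrac1{2(c+1)}$ is used in both to make the deterministic drift term (in the paper, $w_1(\sum_i W_i^{-1}-n^{1-\gamma})$; in yours, $(\lambda_\infty-\lambda_n)M_n$) negligible at the $n^{\alpha/2}$ scale. The trade-off is as follows. The paper's route reduces everything to the CLT for independent random variables, which is elementary; the price is that it needs the exchangeability/de Finetti machinery (Theorem~\ref{wrt:thm:connection PA WRT}) as a preliminary, which the paper of course already has. Your route is self-contained in the sense that it needs no exchangeability and sees the Gaussian limit as coming purely from the martingale structure of the urn; the price is that the ``correct stable form'' of the tail martingale CLT (convergence of $(M_\infty-M_n)/\sqrt{V_n}$ to a standard Gaussian mixing with respect to $\cF_\infty$, and compatibility with Slutsky) is less standard than Lindeberg--Feller and needs to be quoted precisely, with the hypotheses (square-integrability of $M_\infty$, a.s.\ positivity of the limiting variance ratio, the Lindeberg condition) checked. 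You flag this yourself; your verification that $\sup_j\abs{M_{n+j}-M_{n+j-1}}/\sqrt{V_n}=\grandO{n^{-\alpha/2}}$ gives the Lindeberg condition, the estimate $\Ec{(M_{n+1}-M_n)^2}=\grandO{n^{-\alpha-1}}$ gives $L^2$-boundedness, and the a.s.\ asymptotics $V_n\sim\lambda_\infty^{-2}D_\infty n^{-\alpha}$ with $D_\infty>0$ a.s.\ gives the positivity, so the argument closes. In short: same Step~1, different (but correct) Steps~2--3.
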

\begin{remark}
If the sequence $(a_n)_{n\geq 1}$ has one of the particular forms treated in Proposition~\ref{wrt:prop:distribution limiting chains} of the previous section, we can identify the distribution of the limiting random variable as being Mittag-Leffler or a product of independent generalised Gamma random variables. This gives us an alternative proof for the similar statement \cite[Theorem~3.8]{banderier_periodic_2018}. 
\end{remark}
\begin{proof}Let $(\ttP_n)_{n\geq 1}$ be a sequence of trees with distribution $\pa(\mathbf{a})$ and let $R_n:=a_1+\deg^+_{\ttP_n}(u_1)$. With this definition, the sequence $(R_n)_{n\geq 1}$ has exactly the same distribution as the number of red balls in a P\'olya urn with immigration with immigration sequence $\mathbf{a}$.
	
If the sequence $\mathbf{a}$ satisfies our assumption \eqref{wrt:assum: pa An=cn+rn} for some $c>0$ then using \eqref{wrt:eq:convergence degrees pa} we can write the following almost sure convergence
\begin{equation*}
	n^{-\frac{1}{c+1}}\cdot\deg^+_{\ttP_n}(u_1)\underset{n\rightarrow\infty}{\rightarrow}\mathsf{M}_1^\mathbf{a},
\end{equation*}
where the sequence $(\mathsf{M}_n^\mathbf{a})_{n\geq 1}$ is defined in \eqref{wrt:eq:def mna}, so this proves \ref{wrt:it:convergence nombre de boules normalise}.

Let us turn to the proof of \ref{wrt:it:gaussian fluctuations}. We will prove this convergence in two steps, by first proving some corresponding result for the degree of the first vertex in a $\wrt$, and then using Theorem~\ref{wrt:thm:connection PA WRT} and Proposition~\ref{wrt:prop:assum pa implies assum wrt} to transfer the result to the corresponding $\pa$ distribution.
Indeed, let $(\ttT_n)_{n\geq 1}$ be a sequence of trees with distribution $\wrt(\boldsymbol{w})$ with a sequence $\boldsymbol{w}$ satisfying the following assumption
\begin{equation}\label{wrt:assum:wrt gaussian limit} W_n\underset{n\rightarrow\infty}{=}\frac{n^{\gamma}}{1-\gamma}\cdot \left(1+\petito{n^{-\frac{1-\gamma}{2}}}\right),
\end{equation}
for some $\gamma\in \intervalleoo{0}{1}$.
In this context, recalling \eqref{wrt:eq:equality distribution degree}, the degree of the first vertex can be written as
\begin{align*}
\deg^+_{\ttT_n}(u_1)&=\sum_{i=2}^{n}\ind{U_i\leq \frac{w_1}{W_n}}\\
&=w_1\cdot n^{1-\gamma}+w_1\cdot \left(\sum_{i=2}^{n}\frac{1}{W_n}-n^{1-\gamma}\right) + \sum_{i=2}^{n}\left(\ind{U_i\leq \frac{w_1}{W_n}}-\frac{w_1}{W_n}\right).
\end{align*}
Now, using our assumption on the sequence $(W_n)_{n\geq 1}$ we get
$\frac{1}{W_n}=\frac{1-\gamma}{n^\gamma} + \petito{n^{-\frac{\gamma}{2}-\frac{1}{2}}}$,
so that
\begin{align*}
\left(\sum_{i=2}^{n}\frac{1}{W_n}-n^{1-\gamma}\right)=\petito{n^{\frac{1-\gamma}{2}}}.
\end{align*}
Rearranging the terms, we get
\begin{align*}
	n^{\frac{1-\gamma}{2}}\cdot (n^{-(1-\gamma)}\cdot \deg^+_{\ttT_n}(u_1)-w_1)=n^{-\frac{1-\gamma}{2}}\cdot \sum_{i=2}^{n}\left(\ind{U_i\leq \frac{w_1}{W_n}}-\frac{w_1}{W_n}\right) + \petito{1},
\end{align*}
and using the Lindeberg-Feller theorem (see \cite[Theorem~3.4.5]{durrett_probability_2010} for example), we get that the latter expression converges in distribution when $n\rightarrow\infty$ to a Gaussian distribution $\mathcal{N}(0,w_1)$. Recalling that $n^{-(1-\gamma)}\cdot \deg^+_{\ttT_n}(u_1)\rightarrow w_1$ a.s. as $n\rightarrow\infty$,  we can also write using Slutsky's lemma
\begin{align}\label{wrt:eq:fluctuations gaussiennes wrt}
	n^{\frac{1-\gamma}{2}}\cdot \frac{(n^{-(1-\gamma)}\cdot \deg^+_{\ttT_n}(u_1)-w_1)}{\sqrt{n^{-(1-\gamma)}\cdot \deg^+_{\ttT_n}(u_1)}} \overset{\mathrm{(d)}}{\underset{n\rightarrow\infty}{\longrightarrow}}\mathcal{N}(0,1).
\end{align}

Now let us transfer this result to the case of preferential attachment trees. For this, it suffices to prove that $\mathbf{a}$ satisfies the condition \eqref{wrt:assum:ad hoc pa assum for gaussian limit} with $\delta>\frac{1}{2(c+1)}$ then the corresponding sequence $(\mathsf{M}^\mathbf{a}_n)_{n\geq 1}$ defined \eqref{wrt:eq:def mna} almost surely satisfies \eqref{wrt:assum:wrt gaussian limit} for $\gamma=\frac{c}{c+1}$. 
From Proposition~\ref{wrt:prop:assum pa implies assum wrt} and the definition of   $(\mathsf{M}_n^\mathbf{a})_{n\geq 1}$ as a scaled version of $(\mathsf{W}_n^\mathbf{a})_{n\geq 1}$, we know that we have  $\mathsf{M}_n^\mathbf{a}\underset{n\rightarrow\infty}{=}\frac{1}{1-\gamma}\cdot n^{\gamma}\cdot (1+ \grandO{n^{-\epsilon}})$ almost surely, for $\gamma=\frac{c}{c+1}$ and some $\epsilon>0$. 
Going along the proof of Proposition~\ref{wrt:prop:assum pa implies assum wrt}, we get from \eqref{wrt:eq:asymptotic behaviour Wn with quantified error} that 
\begin{align*}
	\mathsf{M}^\mathbf{a}_n\underset{n\rightarrow\infty}{=}\frac{1}{1-\gamma}\cdot n^{\gamma}\cdot (1+\grandO{n^{-\zeta}})
\end{align*}
for any $\zeta<\delta\wedge \frac{1}{2}$, so that \eqref{wrt:assum:wrt gaussian limit} is almost surely satisfied by $(\mathsf{M}_n^\mathbf{a})_{n\geq 1}$ if 
\begin{align*}
\delta> \frac{1-\gamma}{2}=\frac{1}{2(c+1)}.
\end{align*}
Now, thanks to Theorem~\ref{wrt:thm:connection PA WRT}, conditionally on the sequence $(\mathsf{M}_n^\mathbf{a})_{n\geq 1}$ the distribution of $(\ttP_n)_{n\geq 1}$ is $\wrt((\mathsf{m}^\mathbf{a}_n)_{n\geq 1})$. Applying \eqref{wrt:eq:fluctuations gaussiennes wrt} in this case finishes to prove \ref{wrt:it:gaussian fluctuations}. 

\end{proof}
\subsection{Applications to some other models of preferential attachment}\label{wrt:sec:other model of PA}
Let us present here another model of preferential attachment which appears in the literature, for example in \cite{pekoez_joint_2017}. This model does not produce a tree as ours does, but we can couple them in such a way that some of their features coincide. We only focus on one particular model of graph here but the method presented here can adapt to other similar models.

\paragraph{A model of $(m,\alpha)$-preferential attachment}Let $\mathtt S$ be a non-empty graph, with vertex-set $\{v_1^{(1)},\dots,v_1^{(k)}\}$ which have degrees $(d_1, \dots d_k)$, and $m\geq 2$ an integer and $\alpha>-m$ a real number such that $\alpha+d_i>0$ for all $1\leq i \leq k$. 
The model is then the following: we let $\mathtt G_1=\mathtt S$. Then, at any time $n\geq 1$, the graph $\mathtt G_{n+1}$ is constructed from the graph $\mathtt G_n$ by:
\begin{itemize}
	\item adding a new vertex labelled $v_{n+1}$ with $m$ outgoing edges,
	\item choosing sequentially to which other vertex each of these edges are pointed, each vertex being chosen with probability proportional to $\alpha$ plus its degree (the degree of the vertices are updated after each edge-creation).
\end{itemize}
The degree of a vertex in a graph refers in this section to the number of edges incident to it. Here the growth procedure in fact produces \emph{multigraphs}, in which it is possible for two vertices to be connected to each other by more than one edge. In this case, all those edges contribute in the count of their degree.

We can couple this model to a preferential attachment tree with sequence of fitnesses $\mathbf{a}$ defined as:
\begin{align*}
\mathbf{a}= (w(\mathtt S),\underset{m-1}{\underbrace{0,0,\dots,0}}, m+\alpha,\underset{m-1}{\underbrace{0,0,\dots,0}},m+\alpha,0,0 \dots),
\end{align*}
where $w(\mathtt S):=d_1+d_2+\dots+d_k+k\alpha$.

Indeed, we can construct $(\ttP_n)$ with distribution $\pa(\mathbf{a})$. Then, for any $n\geq 1$, consider the tree $\ttP_{1+m(n-1)}$ and for all $2\leq i\leq n$, merge together each vertex with fitness $m+\alpha$ together with the $m-1$ vertices with fitness $0$ that arrived just before it. If $\mathtt G_1$ only contains one vertex, it is immediate that the obtained sequence of graphs has exactly the same distribution as $(\mathtt G_n)_{n\geq 1}$. For general seed graphs $\mathtt S$, we can still use the same construction and the obtained sequence of graphs has the same evolution as some sequence $(\widetilde{\mathtt G}_n)_{n\geq 1}$ which would be obtained from $(\mathtt G_n)_{n\geq 1}$ by merging all the vertices $\{v_1^{(1)},\dots,v_1^{(k)}\}$ into a unique vertex $v_1$.

Note that a similar construction would also be possible if the degrees of the vertices $v_2,v_3,\dots$ were given by a sequence of integers $(m_2,m_3,\dots)$ instead of all being equal to some constant value $m$. This is for example the case in the model studied in \cite{deijfen_preferential_2009}, where the degrees are random. 

We have the following convergence for degrees of vertices in the graph, as $n\rightarrow\infty$.
\begin{proposition}\label{wrt:cor:convergence degree sequence m,delta preferential attachment}
	The following convergence holds almost surely in any $\ell^p$ with $p>2+\frac{\alpha}{m}$:
	\begin{multline*}
	n^{-\frac{1}{2+\alpha/m}}(\deg_{\ttG_n}(v_1^{(1)}),\deg_{\ttG_n}(v_1^{(2)}),\dots,\deg_{\ttG_n}(v_1^{(k)}),\deg_{\ttG_n}(v_2),\deg_{\ttG_n}(v_3),\dots)\\
	\underset{n\rightarrow\infty}{\longrightarrow} (\mathsf N_1\cdot B^{(1)},\mathsf N_1\cdot B^{(2)},\dots \mathsf N_1\cdot B^{(k)},\mathsf N_2-\mathsf N_1,\mathsf N_3-\mathsf N_2,\dots),
	\end{multline*}
	where \begin{align*}
	(B^{(1)},B^{(2)},\dots B^{(k)})\sim \mathrm{Dir}(d_1+\alpha,d_2+\alpha,\dots,d_k+\alpha),
	\end{align*}
	and the process $(\mathsf N_n)_{n\geq 1}$ is independent of $(B^{(1)},B^{(2)},\dots, B^{(k)})$. 
	
	Furthermore, whenever $\alpha\in \mathbb{Z}$ with $\alpha>-m$ or $m=1$ then the distribution of $(\mathsf N_n)_{n\geq 1}$ is explicit and given by: 
	\begin{itemize}
		\item if $\alpha \in \Z$ with $\alpha>-m$, then
		\[\frac{m^{\frac{-2m}{2m+\alpha}}}{2m+\alpha}\cdot (\mathsf N_n)_{n\geq 1} \sim \mathrm{IPGGP}(w(\mathtt{S}),\underset{m-1}{\underbrace{0,0,\dots,0}},m+\alpha)\]
		\item if $m=1$, then \[(\mathsf N_n)_{n\geq 1}\sim \MLMC\left(\frac{1}{2+\alpha},\frac{w(\mathtt S)}{2+\alpha}\right).\]
	\end{itemize}
\end{proposition}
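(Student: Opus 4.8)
The plan is to reduce everything to the already-established results about preferential attachment trees via the explicit coupling described in the paragraph just above the statement. Recall that we constructed $(\ttP_n)_{n\geq 1}\sim\pa(\mathbf a)$ with the sparse, almost-periodic fitness sequence $\mathbf a=(w(\mathtt S),0,\dots,0,m+\alpha,0,\dots,0,m+\alpha,\dots)$ and that, after merging each fitness-$(m+\alpha)$ vertex with the $m-1$ preceding fitness-$0$ vertices, the sequence $\ttP_{1+m(n-1)}$ becomes (a version of) $(\widetilde{\mathtt G}_n)_{n\geq 1}$, i.e.\ $(\mathtt G_n)_{n\geq 1}$ with the seed collapsed to one vertex $v_1$. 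The key bookkeeping observation is that under this merge, the degree $\deg_{\mathtt G_n}(v_j)$ for $j\geq 2$ equals $m+\alpha$ plus the sum of the out-degrees, in $\ttP_{1+m(n-1)}$, of the block of $m$ vertices that got merged into $v_j$; and $\sum_{i=1}^k\deg_{\mathtt G_n}(v_1^{(i)})=w(\mathtt S)+\deg^{+}_{\ttP_{1+m(n-1)}}(u_1)$, the total degree of the collapsed seed-vertex. So the whole rescaled degree sequence of the graph is, up to the deterministic reindexing $n\mapsto 1+m(n-1)$ and additive constants that vanish in the scaling limit, a linear image of the rescaled out-degree sequence of $\ttP$.

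First I would check that $\mathbf a$ satisfies \eqref{wrt:assum: pa An=cn+rn}: here $A_n=w(\mathtt S)+(m+\alpha)\lfloor (n-1)/m\rfloor = \frac{m+\alpha}{m}\cdot n\cdot(1+\grandO{n^{-1}})$, so \eqref{wrt:assum: pa An=cn+rn} holds with $c=(m+\alpha)/m$, hence $\frac{1}{c+1}=\frac{1}{2+\alpha/m}$, which is the correct exponent. Moreover $a_n\leq (n+1)^{\petito 1}$, and $c'=0<\frac{1}{c+1}$, so Theorem~\ref{wrt:thm:convergence degrees pa} gives the almost sure convergence of $n^{-1/(c+1)}\cdot(\deg^{+}_{\ttP_n}(u_1),\deg^{+}_{\ttP_n}(u_2),\dots)$ to $(\mathsf m_1^{\mathbf a},\mathsf m_2^{\mathbf a},\dots)$ in every $\ell^p$ with $p>\frac{c+1}{1-(c+1)c'}=2+\alpha/m$. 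Replacing $n$ by $1+m(n-1)$ only multiplies the limit by the harmless factor $m^{-1/(c+1)}$, and adding bounded constants to finitely-grouped coordinates does not affect $\ell^p$ convergence (the added sequence is itself in $\ell^p$ and vanishes after rescaling). Summing consecutive blocks of $m$ coordinates is a bounded linear operation on $\ell^p$, so it passes to the limit; since $(\mathsf m_n^{\mathbf a})$ is the increment sequence of $(\mathsf M_n^{\mathbf a})$, summing blocks of $m$ telescopes, and one checks the block sums of $(\mathsf m_n^{\mathbf a})$ are exactly the increments $(\mathsf N_{n+1}-\mathsf N_n)$ where $\mathsf N_n:=m^{-1/(c+1)}\cdot \mathsf M_{1+m(n-1)}^{\mathbf a}$ — this defines the process $(\mathsf N_n)_{n\geq 1}$ appearing in the statement.

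Next I would identify the seed contribution. The coordinate $n^{-1/(c+1)}\deg^{+}_{\ttP_n}(u_1)$ converges to $\mathsf m_1^{\mathbf a}$, which by construction equals $\mathsf N_1$ (up to the $m$-reindexing factor already absorbed). Now I must split $\deg_{\mathtt G_n}(v_1^{(1)}),\dots,\deg_{\mathtt G_n}(v_1^{(k)})$ according to the $k$ original seed vertices. Conditionally on which of the first-generation edges attach to the seed (i.e.\ on the evolution of $\deg^{+}_{\ttP_n}(u_1)$), the way those attachments distribute among $v_1^{(1)},\dots,v_1^{(k)}$ is governed by a P\'olya urn / Chinese-restaurant-type argument exactly as in the proof of time-homogeneity in Section~\ref{wrt:subsubsec:mlmc}: each incoming edge to the seed picks $v_1^{(i)}$ with probability proportional to $d_i+\alpha$ plus its current number of received edges, so the vector of proportions converges a.s.\ to a $\mathrm{Dir}(d_1+\alpha,\dots,d_k+\alpha)$ variable $(B^{(1)},\dots,B^{(k)})$, independent of the total (hence of $(\mathsf N_n)_{n\geq 1}$, which is a measurable function of the attachment dynamics \emph{outside} the seed). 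Putting this together with the a.s.\ convergence of the total seed degree gives $n^{-1/(c+1)}\deg_{\mathtt G_n}(v_1^{(i)})\to \mathsf N_1\cdot B^{(i)}$, and the joint convergence in $\ell^p$ follows since we only rearranged a fixed finite number of coordinates and used Slutsky together with the $\ell^p$ convergence of the tail.

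Finally, for the explicit laws: if $m=1$ then $\mathbf a=(w(\mathtt S),1+\alpha,1+\alpha,\dots)$ is constant from the second term with $b=1+\alpha>0$ (here $1+\alpha$ need not be an integer, but Proposition~\ref{wrt:prop:distribution limiting chains}\ref{wrt:it:limit chain is ML} does not require it), so $(\mathsf M_n^{\mathbf a})\sim\MLMC(\frac1{b+1},\frac{a}{b+1})=\MLMC(\frac1{2+\alpha},\frac{w(\mathtt S)}{2+\alpha})$; and with $m=1$ the block size is $1$ so $(\mathsf N_n)=(\mathsf M_n^{\mathbf a})$, giving the stated law. If $\alpha\in\Z$ with $\alpha>-m$, then $\mathbf a=(w(\mathtt S),0,\dots,0,m+\alpha,\dots)$ is periodic of period $\ell=m$ from the second term with nonnegative integer entries $(b_1,\dots,b_m)=(0,\dots,0,m+\alpha)$ (at least one nonzero since $m+\alpha>0$), $S=b_1+\dots+b_m=m+\alpha$, so Proposition~\ref{wrt:prop:distribution limiting chains}\ref{wrt:it:limit chain is PGG} gives $\frac{\ell^{-\ell/(S+\ell)}}{S+\ell}(\mathsf M_n^{\mathbf a})_{n\geq 1}=\frac{m^{-m/(2m+\alpha)}}{2m+\alpha}(\mathsf M_n^{\mathbf a})_{n\geq 1}\sim\mathrm{IPGGP}(w(\mathtt S),0,\dots,0,m+\alpha)$; and since $(\mathsf N_n)_{n\geq 1}=m^{-1/(c+1)}(\mathsf M_{1+m(n-1)}^{\mathbf a})_{n\geq 1}$ with $\frac1{c+1}=\frac{m}{2m+\alpha}$, the normalising constant matches $\frac{m^{-2m/(2m+\alpha)}}{2m+\alpha}$ after composing with the subsampling $n\mapsto 1+m(n-1)$ — which is precisely the operation built into the definition \eqref{wrt:eq:description IPGG process} of the $\mathrm{IPGGP}$ (reading off the sub-sequence $(\mathsf G_{\ell(n-1)+1})$), so no extra work is needed there.

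\medskip
\emph{Main obstacle.} The delicate point is the conditional-independence / Dirichlet splitting of the seed: making rigorous that, conditionally on the whole attachment process restricted to "outside the initial seed" (which carries all the information determining $(\mathsf N_n)_{n\geq 1}$), the edges that do land on the seed distribute among $v_1^{(1)},\dots,v_1^{(k)}$ as a standard P\'olya urn with initial composition $(d_1+\alpha,\dots,d_k+\alpha)$, so that the limiting proportions are $\mathrm{Dir}(d_1+\alpha,\dots,d_k+\alpha)$ and \emph{independent} of $(\mathsf N_n)$. This requires care about what is being conditioned on and uses the same de Finetti/exchangeability mechanism as Section~\ref{wrt:sec:exchangeability}; everything else is bounded-linear-operator bookkeeping on $\ell^p$ plus Slutsky, and the identification of the explicit laws is a direct application of Proposition~\ref{wrt:prop:distribution limiting chains}.
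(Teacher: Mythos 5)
Your argument follows essentially the same path as the paper's own proof: couple with $\pa(\mathbf a)$, apply Theorem~\ref{wrt:thm:convergence degrees pa} (via Propositions~\ref{wrt:prop:control wkn} and~\ref{wrt:prop:assum pa implies assum wrt}, with $c=(m+\alpha)/m$ and $c'=0$) for the $\ell^p$ convergence, split the seed by a P\'olya-urn argument to obtain the Dirichlet factor, and read off the explicit laws from Proposition~\ref{wrt:prop:distribution limiting chains}. One sign slip to fix: you rescale by $n^{-1/(c+1)}$ while evaluating the tree at time $N=1+(n-1)m$, so the time-change factor is $(N/n)^{1/(c+1)}\to m^{+1/(c+1)}=m^{m/(2m+\alpha)}$, not $m^{-1/(c+1)}$; hence $\mathsf N_n=m^{m/(2m+\alpha)}\,\mathsf M^{\mathbf a}_{1+(n-1)m}$, which is exactly what turns $\frac{m^{-m/(2m+\alpha)}}{2m+\alpha}\,\mathsf M^{\mathbf a}\sim\mathrm{IPGGP}$ into your (correctly) claimed $\frac{m^{-2m/(2m+\alpha)}}{2m+\alpha}\,\mathsf N\sim\mathrm{IPGGP}$, whereas the identity as you wrote it would yield $\frac{1}{2m+\alpha}\,\mathsf N$ instead. (The additive constants in your degree bookkeeping are also slightly off --- it is $m$, not $m+\alpha$, and $\sum_j d_j$, not $w(\mathtt S)$ --- but these vanish under the $\ell^p$ scaling for $p>2+\alpha/m$ and do not affect the limit.)
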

This result strengthens the one of \cite[Theorem~1, Theorem~2 and Proposition~1]{pekoez_joint_2017} which corresponds (up to some definition convention) to the case $\alpha=1-m$. We emphasize that the convergence here  is almost sure in an $\ell^p$ space.

\begin{proof}[Proof of Proposition~\ref{wrt:cor:convergence degree sequence m,delta preferential attachment}]

Using the coupling argument, we know that we can construct jointly the sequence of graphs $(\mathtt{G}_n)_{n\geq 1}$ and a sequence of trees $(\ttP_n)_{n\geq 1}\sim\pa(\mathbf{a})$ with fitness sequence \[\mathbf{a}=(	w(S),\underset{m-1}{\underbrace{0,0,\dots,0}},m+\alpha,\underset{m-1}{\underbrace{0,0,\dots,0}},m+\alpha,\dots),\]
in such a way that for every $n\geq1$, the sequence
\begin{align*}
	\left(\sum_{j=1}^k\left(\deg_{\ttG_n}(v^{(j)}_1)-d_j\right),(\deg_{\ttG_n}(v_2)-m),\dots,(\deg_{\ttG_n}(v_n)-m),0,0\dots\right)
\end{align*}
coincides with
\begin{align*}
	(\deg_{\ttP_{1+(n-1)m}}^+(u_1),\deg_{\ttP_{1+(n-1)m}}^+(u_{1+m}),\deg_{\ttP_{1+(n-1)m}}^+(u_{1+2m}),\dots).
\end{align*}
Using this connection and Theorem~\ref{wrt:thm:connection PA WRT}, Proposition~\ref{wrt:prop:assum pa implies assum wrt} and Proposition~\ref{wrt:prop:control wkn} we get
	\begin{multline*}
n^{-\frac{1}{2+\alpha/m}}(\deg_{\ttG_n}(v_1)+\deg_{\ttG_n}(v_2)+\dots+\deg_{\ttG_n}(v_k),\deg_{\ttG_n}(u_2),\deg_{\ttG_n}(u_3),\dots)\\
\underset{n\rightarrow\infty}{\longrightarrow} (\mathsf N_1,\mathsf N_2-\mathsf N_1,\mathsf N_3-\mathsf N_2,\dots),
\end{multline*}
almost surely in $\ell^p$ for all $p>2+\frac{\alpha}{m}$, for some random sequence $(\mathsf N_n)_{n\geq 1}$. Note that the time-change between $(\mathtt{G}_n)_{n\geq 1}$ and $(\mathtt{P}_n)_{n\geq1}$ is responsible for an extra factor in the scaling, so that the sequence $(\mathsf N_n)_{n\geq 1}$ has the distribution of $m^{\frac{m}{2m+\alpha}}\cdot (\mathsf{M}_n^\mathbf{a})_{n\geq 1}$. 
In the case $\alpha\in\Z$ or $m=1$, Proposition~\ref{wrt:prop:distribution limiting chains} identifies the distribution of the limiting sequence. 

Last, the convergence of $\frac{1}{\sum_{j=1}^{k}\deg_{\ttG_n}(v^{(j)}_1)}\cdot (\deg_{\ttG_n}(v^{(1)}_1),\deg_{\ttG_n}(v^{(2)}_1),\dots,\deg_{\ttG_n}(v^{(k)}_1))$ just follows from the classical result of convergence for the proportion of balls in a P\'olya urn.
\end{proof}
\appendix
\section{Technical proofs and results}\label{wrt:app:computations}
This appendix contains the proofs of technical results that are used throughout this paper.
Let start by stating a useful conditional version of the Borel-Cantelli lemma. 
\begin{lemma}\label{wrt:lem:sum bernoulli}Let $(\cF_n)$ be a filtration and let $(B_n)_{n\geq 1}$ be a sequence of events adapted to this filtration. For all $n\geq 1$, let $\mathsf p_n:=\Ppsq{B_n}{\cF_{n-1}}$. We have
	\begin{align*}
	\frac{\sum_{i=1}^{n}\mathbf{1}_{B_i}}{\sum_{i=1}^{n}\mathsf p_i}\underset{n\rightarrow\infty}{\rightarrow} 1 \qquad \text{a.s.\ on the event} \quad \left\lbrace\sum_{i=1}^{\infty}\mathsf p_i=\infty\right\rbrace
	\end{align*}
	and also
	\begin{align*}
	\sum_{i=1}^{n}\mathbf{1}_{B_i} \quad \text{converges a.s.\ on the event} \quad   \left\lbrace\sum_{i=1}^{\infty}\mathsf p_i<\infty\right\rbrace.
	\end{align*}
\end{lemma}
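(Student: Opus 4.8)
The plan is to reduce both statements to two soft facts about a single martingale. Since $(B_n)_{n\geq 1}$ is adapted, $\mathsf p_n$ is $\cF_{n-1}$-measurable, so the process $S_n:=\sum_{i=1}^n\mathsf p_i$ is non-decreasing and predictable; write $S_\infty:=\lim_{n\to\infty}S_n\in[0,\infty]$. The counting process $\sum_{i=1}^n\mathbf 1_{B_i}$ has predictable compensator $S_n$, i.e.\ $M_n:=\sum_{i=1}^n(\mathbf 1_{B_i}-\mathsf p_i)$ (with $M_0:=0$) is an $(\cF_n)$-martingale, with increments bounded by $1$ and predictable quadratic variation $\langle M\rangle_n=\sum_{i=1}^n\mathsf p_i(1-\mathsf p_i)\leq S_n$. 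Since $\sum_{i=1}^n\mathbf 1_{B_i}=M_n+S_n$, the second assertion of the lemma is equivalent to ``$M_n$ converges almost surely on $\{S_\infty<\infty\}$'', and the first to ``$M_n/S_n\to 0$ almost surely on $\{S_\infty=\infty\}$''.

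First I would establish convergence of $M_n$ on $\{S_\infty<\infty\}$ by localisation. For each $K\in\N$, the time $T_K:=\inf\{n\geq 0:S_{n+1}>K\}$ is an $(\cF_n)$-stopping time (because $S_{n+1}$ is $\cF_n$-measurable) and satisfies $S_{T_K}\leq K$. Optional stopping gives $\Ec{M_{n\wedge T_K}^2}=\Ec{\langle M\rangle_{n\wedge T_K}}\leq\Ec{S_{n\wedge T_K}}\leq K$ for all $n$, so $(M_{n\wedge T_K})_n$ is bounded in $L^2$ and converges almost surely. On $\{S_\infty\leq K\}\subseteq\{T_K=\infty\}$ this is convergence of $M_n$ itself; letting $K\to\infty$ yields almost sure convergence of $M_n$, hence of $\sum_{i=1}^n\mathbf 1_{B_i}=M_n+S_n$, on $\{S_\infty<\infty\}$.

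Next, for the regime $\{S_\infty=\infty\}$, I would pass to the martingale transform $\widetilde M_n:=\sum_{i=1}^n(M_i-M_{i-1})/(1+S_i)$, which is a genuine martingale since $1+S_i$ is $\cF_{i-1}$-measurable. A telescoping estimate, using $\mathsf p_i=S_i-S_{i-1}$ and $1+S_{i-1}\leq 1+S_i$, shows $\langle\widetilde M\rangle_n=\sum_{i=1}^n\mathsf p_i(1-\mathsf p_i)/(1+S_i)^2\leq\sum_{i=1}^n\bigl((1+S_{i-1})^{-1}-(1+S_i)^{-1}\bigr)\leq 1$, so $\widetilde M$ is bounded in $L^2$ and converges almost surely to a finite limit. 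On $\{S_\infty=\infty\}$ the weights $b_i:=1+S_i$ are non-decreasing and tend to $\infty$, so Kronecker's lemma applied pathwise to the convergent series $\sum_i(M_i-M_{i-1})/(1+S_i)$ gives $M_n/(1+S_n)\to 0$, hence $M_n/S_n\to 0$, and finally $\bigl(\sum_{i=1}^n\mathbf 1_{B_i}\bigr)/\bigl(\sum_{i=1}^n\mathsf p_i\bigr)=1+M_n/S_n\to 1$ almost surely on $\{S_\infty=\infty\}$.

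I do not expect a real obstacle: this is standard discrete-time martingale theory. The one point requiring care is that neither $\Ec{S_\infty}$ nor $\Ec{\langle M\rangle_\infty}$ need be finite, so one cannot appeal to global $L^2$-boundedness of $M$; the fix is precisely the localisation by the stopping times $T_K$ and the introduction of the normalised transform $\widetilde M$, after which one argues $\omega$-by-$\omega$ on the events $\{S_\infty<\infty\}$ and $\{S_\infty=\infty\}$. The only external facts invoked — almost sure convergence of $L^2$-bounded martingales and Kronecker's lemma — are elementary.
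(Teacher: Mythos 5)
Your proof is correct. The paper's proof simply cites Durrett's textbook (Theorems 5.4.9 and 5.4.11) applied to the very martingale $M_n=\sum_{i=1}^n(\mathbf 1_{B_i}-\mathsf p_i)$ that you introduce; your argument is a complete, self-contained derivation of those cited results via the standard route (localisation by the predictable stopping times $T_K$, the normalised transform $\widetilde M$, and Kronecker's lemma), so the underlying mathematics is essentially the same.
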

\begin{proof}
	The first convergence is the content of Theorem~5.4.11 and the second one is an application of Theorem~5.4.9 to the martingale $\left(\sum_{i=1}^{n}(\mathbf{1}_{B_i}-\mathsf p_i)\right)_{n\geq 1}$, both taken from \cite{durrett_probability_2010}.
\end{proof}

The following lemma is a rewriting of \cite[Lemma~1]{biggins_uniform_1992}. We provide the proof for completeness. 
\begin{lemma}\label{wrt:lem:biggins lemma}
Let $(M_n)_{n\geq 1}$ be a complex-valued martingale with finite $q$-th moment for some $q\in\intervalleff{1}{2}$. Then for every $n\geq 1$ we have
\begin{align*}
	\Ec{\abs{M_{n+1}}^q}\leq \Ec{\abs{M_{n}}^q}+ 2^q \cdot \Ec{\abs{M_{n+1}-M_n}^q}.
\end{align*}
\begin{proof}
	Let $X_{n+1}:= M_{n+1}-M_n$ and let $X_{n+1}'$ be a random variable such that conditionally on $(M_1,\dots,M_n)$ the random variable $X_{n+1}'$ is independent of, and has the same distribution as $X_{n+1}$.
	Then 
	\begin{align*}
\Ec{\abs{M_{n+1}}^q}&= \Ec{\bigabs{\Ecsq{M_{n+1}-X_{n+1}'}{M_1,\dots M_{n+1}}}^q}\\
&\leq \Ec{\abs{M_{n+1}-X_{n+1}'}^q}\\
&= \Ec{\abs{M_{n}+X_{n+1}-X_{n+1}'}^q}\\
&\leq \Ec{\abs{M_{n}}^q}+\Ec{\abs{X_{n+1}-X_{n+1}'}^q}\\
&\leq \Ec{\abs{M_{n}}^q}+2^q\cdot \Ec{\abs{X_{n+1}}^q},
\end{align*}
where the first equality comes from the fact that $\Ecsq{X_{n+1}'}{M_1,\dots M_{n+1}}=0$. The first inequality is the one of Jensen for conditional expectation, applied to the convex function $z\mapsto \abs{z}^q$. The second inequality is due to Clarkson, see \cite[Lemma~1]{bahr_inequality_1964}, and can be applied because the distribution of $X_{n+1}-X_{n+1}'$ conditional on $M_n$ is symmetric and $1\leq q \leq 2$. The last inequality comes from the triangle inequality for the $L^q$-norm.  
\end{proof}
\end{lemma}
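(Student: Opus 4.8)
The plan is to use the classical symmetrization argument of von Bahr and Esseen. First I would introduce an auxiliary increment $X_{n+1}'$ which, conditionally on $\mathcal F_n:=\sigma(M_1,\dots,M_n)$, is independent of $X_{n+1}:=M_{n+1}-M_n$ and has the same conditional law. Since $\Ecsq{X_{n+1}}{\mathcal F_n}=0$ by the martingale property, the same holds for $X_{n+1}'$; moreover $X_{n+1}'$ is conditionally independent of $M_{n+1}=M_n+X_{n+1}$ given $\mathcal F_n$, so that $\Ecsq{X_{n+1}'}{M_1,\dots,M_{n+1}}=0$ and hence $M_{n+1}=\Ecsq{M_{n+1}-X_{n+1}'}{M_1,\dots,M_{n+1}}$. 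Applying Jensen's inequality for conditional expectations to the convex map $z\mapsto\abs z^q$ and then taking expectations gives $\Ec{\abs{M_{n+1}}^q}\le \Ec{\abs{M_{n+1}-X_{n+1}'}^q}=\Ec{\abs{M_n+(X_{n+1}-X_{n+1}')}^q}$.

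The key point is then that $Y:=X_{n+1}-X_{n+1}'$ is, conditionally on $\mathcal F_n$, symmetric, being the difference of two conditionally i.i.d.\ variables, while $M_n$ is $\mathcal F_n$-measurable. I would invoke the elementary inequality of von Bahr and Esseen (the cited \cite[Lemma~1]{bahr_inequality_1964}): for any $q\in\intervalleff 1 2$, any $\sigma$-algebra $\mathcal G$, any $\mathcal G$-measurable $U$ and any $Y$ whose conditional law given $\mathcal G$ is symmetric, one has $\Ecsq{\abs{U+Y}^q}{\mathcal G}\le \Ecsq{\abs U^q}{\mathcal G}+\Ecsq{\abs Y^q}{\mathcal G}$. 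Taking $\mathcal G=\mathcal F_n$ and $U=M_n$, then taking a further expectation, yields $\Ec{\abs{M_n+Y}^q}\le \Ec{\abs{M_n}^q}+\Ec{\abs Y^q}$.

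Finally, by the triangle inequality in $L^q$ (valid since $q\ge 1$) together with the fact that $X_{n+1}$ and $X_{n+1}'$ have the same law, one gets $\Ec{\abs Y^q}^{1/q}\le \Ec{\abs{X_{n+1}}^q}^{1/q}+\Ec{\abs{X_{n+1}'}^q}^{1/q}=2\,\Ec{\abs{X_{n+1}}^q}^{1/q}$, hence $\Ec{\abs Y^q}\le 2^q\,\Ec{\abs{M_{n+1}-M_n}^q}$. Chaining the three displays gives exactly the claimed bound. The only non-routine input is the conditional symmetrization inequality of von Bahr–Esseen; everything else is Jensen's inequality and the $L^q$ triangle inequality, so I expect that step to be the main (and essentially only) obstacle — and it can simply be quoted, after noting that $\mathbb C\cong\mathbb R^2$ so the vector-valued version of that lemma applies to $\abs\cdot$.
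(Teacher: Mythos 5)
Your proof is correct and follows essentially the same route as the paper's: the same auxiliary conditionally i.i.d.\ increment $X_{n+1}'$, the same use of $\Ecsq{X_{n+1}'}{M_1,\dots,M_{n+1}}=0$ followed by Jensen, the same conditional von Bahr--Esseen (Clarkson) symmetrization inequality, and the same $L^q$ triangle inequality to pick up the factor $2^q$. The only cosmetic difference is that you state the symmetrization inequality in conditional form and then integrate, whereas the paper applies it in one stroke; the underlying argument is identical.
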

 Let us state another result about martingales, which we use numerous times throughout the paper. Recall our uniform big-$O$ and small-$o$ notation, introduced in \eqref{wrt:eq:big and small o notation}. 
\begin{lemma}\label{wrt:lem:convergence analytic martingale}
	Suppose that $(z \mapsto Z_n(z))_{n\geq 1}$ is a sequence of analytic functions on some open set $\mathscr O\subset \mathbb C$, adapted to some filtration $(\cG_n)$. Suppose that for every $z\in \mathscr O$, the sequence $(Z_n(z))_{n\geq 1}$ is a martingale with respect to the filtration $(\cG_n)$. If there exists a parameters $q>1$ and continuous functions $\alpha:\mathscr O\rightarrow\R$ and $\delta:\mathscr O\rightarrow \intervalleoo{0}{\infty}$ such that for all $n\geq 1$ we have
	\begin{align*}
	\Ec{\abs{Z_{2n}(z)-Z_n(z)}^q}=\grandOdom{\mathscr O}{n^{\alpha(z) q - \delta(z)+\petitodom{\mathscr O}{1}}},
	\end{align*}
	then for any compact subset $K\subset \mathscr O$, there exists $\epsilon(K)>0$ such that
	\begin{enumerate}
		\item\label{wrt:it:convergence gamma>0} if $\alpha>0$ on $\mathscr O$ we have $n^{-\alpha(z)}\cdot\abs{Z_n(z)-Z_1(z)}=\grandOdom{K}{n^{-\epsilon(K)}}$ almost surely and also in expectation,
		\item\label{wrt:it:convergence gamma<0} if $\alpha\leq0$ on $\mathscr O$, the almost sure limit $Z_\infty(z)$ exists for $z\in \mathscr O$ and we have $n^{-\alpha(z)}\cdot\abs{Z_n(z)-Z_\infty(z)}=\grandOdom{K}{n^{-\epsilon(K)}}$ almost surely and also in expectation.
	\end{enumerate}
\end{lemma}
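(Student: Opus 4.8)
The plan is to prove Lemma~\ref{wrt:lem:convergence analytic martingale} by first establishing convergence along the dyadic subsequence $(n=2^k)_{k\geq 0}$ for a fixed $z$, then upgrading to convergence along all of $\N$, then promoting almost-sure pointwise statements to locally uniform ones using analyticity, and finally extracting the polynomial rate. I would treat case \ref{wrt:it:convergence gamma<0} first and then deduce \ref{wrt:it:convergence gamma>0} from it by a simple rescaling trick, since the two arguments are almost identical.

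First, fix $z\in\mathscr O$ with $\alpha(z)\leq 0$. Writing $D_k(z):=Z_{2^{k+1}}(z)-Z_{2^k}(z)$, the hypothesis gives $\Ec{\abs{D_k(z)}^q}=\grandOdom{\mathscr O}{2^{k(\alpha(z)q-\delta(z)+\petito 1)}}$. Since $q>1$ and $\alpha(z)\leq 0<\delta(z)$, the exponent $\alpha(z)q-\delta(z)$ is strictly negative, so $\sum_k \Ec{\abs{D_k(z)}^q}<\infty$; hence $\sum_k \abs{D_k(z)}^q<\infty$ a.s., and a fortiori $\sum_k \abs{D_k(z)}<\infty$ a.s. (using $q\geq 1$ and that a summable $q$-th power sequence with bounded terms is summable, or more simply that $\abs{D_k}\to 0$ so $\abs{D_k}\leq\abs{D_k}^q$ eventually). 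Thus $Z_{2^k}(z)$ converges a.s.\ to some $Z_\infty(z)$, and $\abs{Z_{2^k}(z)-Z_\infty(z)}\leq\sum_{j\geq k}\abs{D_j(z)}$, whose $L^q$-norm is $\grandOdom{\mathscr O}{2^{k(\alpha(z)-\delta(z)/q+\petito 1)}}$ by the triangle inequality in $L^q$ and summing the geometric series. To fill the gaps between $2^k$ and $2^{k+1}$, bound $\max_{2^k\leq n<2^{k+1}}\abs{Z_n(z)-Z_{2^k}(z)}$: either iterate the dyadic-type estimate on a finer scale, or invoke Doob's $L^q$ maximal inequality on the martingale $(Z_n(z))_{2^k\leq n\leq 2^{k+1}}$ after recentering, so that the maximal increment over the block has $L^q$-norm controlled by $\Ec{\abs{Z_{2^{k+1}}(z)-Z_{2^k}(z)}^q}^{1/q}$ up to a constant. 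Combining, $\abs{Z_n(z)-Z_\infty(z)}=\grandOdom{\mathscr O}{n^{\alpha(z)-\delta(z)/q+\petito 1}}$ in $L^q$, hence $n^{-\alpha(z)}\abs{Z_n(z)-Z_\infty(z)}$ has $L^q$-norm $\grandOdom{\mathscr O}{n^{-\delta(z)/q+\petito 1}}$. A Borel--Cantelli argument along $n=2^k$ combined with the block maximal bound then gives the a.s.\ statement with any $\epsilon<\delta(z)/q$.

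The passage to \emph{uniform} control on compacts is where analyticity enters, and this is the step I expect to require the most care. The functions $z\mapsto n^{-\alpha(z)}(Z_n(z)-Z_m(z))$ are not analytic (because of $n^{-\alpha(z)}$ with $\alpha$ merely continuous and real-valued), so I would instead work with $W_n(z):=Z_n(z)-Z_1(z)$ or $Z_n(z)-Z_\infty(z)$ directly, which \emph{are} analytic (the limit is analytic as a locally uniform limit, once established). Given a compact $K\subset\mathscr O$, pick an open $K\subset U\Subset\mathscr O$ and a contour $\Gamma\subset\mathscr O\setminus U$; by Cauchy's integral formula, $\sup_{z\in K}\abs{Z_{2^{k+1}}(z)-Z_{2^k}(z)}\leq C_\Gamma\sup_{w\in\Gamma}\abs{Z_{2^{k+1}}(w)-Z_{2^k}(w)}$. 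Taking $L^q$-norms and using the uniform-on-compacts hypothesis on $\Gamma$, one gets $\Ec{\sup_{z\in K}\abs{D_k(z)}^q}=\grandOdom{K}{2^{k(\alpha_{\max}q-\delta_{\min}+\petito 1)}}$ where $\alpha_{\max}=\sup_\Gamma\alpha$, $\delta_{\min}=\inf_\Gamma\delta>0$; choosing $U$ (hence $\Gamma$) close enough to $K$ and using continuity of $\alpha,\delta$, we may arrange $\alpha_{\max}\leq \sup_K\alpha+\eta$ and $\delta_{\min}\geq\inf_K\delta-\eta$ for small $\eta$. Summing over $k$ (with the geometric decay) gives $\sum_k\sup_{z\in K}\abs{D_k(z)}<\infty$ a.s., hence $Z_n(z)\to Z_\infty(z)$ uniformly on $K$, and the rate $\sup_{z\in K}\abs{Z_n(z)-Z_\infty(z)}=\grandOdom{K}{n^{-\epsilon(K)}}$ follows, where now one must be slightly careful: the bound one gets naturally is $\grandOdom{K}{n^{(\sup_K\alpha)-\delta(K)/q}}$ for some $\delta(K)>0$; in case \ref{wrt:it:convergence gamma<0}, $\sup_K\alpha\leq 0$, so this already yields $n^{-\epsilon(K)}$ decay, but in general to get the stated $n^{-\alpha(z)}\abs{Z_n(z)-Z_\infty(z)}=\grandOdom{K}{n^{-\epsilon(K)}}$ one divides by $n^{\alpha(z)}$ pointwise and absorbs the (bounded, continuous) discrepancy between $\alpha(z)$ and $\sup_K\alpha$ into the $\petito 1$, which is legitimate on a compact set. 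For case \ref{wrt:it:convergence gamma>0}, where $\alpha>0$, there is no finite limit; instead apply the already-proven case \ref{wrt:it:convergence gamma<0} (or rather its proof) to the \emph{differences} $Z_n-Z_1$ directly: the same dyadic summation now shows $\sum_k 2^{-(k+1)\alpha_{\min}}\sup_K\abs{D_k}<\infty$ type control, giving $n^{-\alpha(z)}\abs{Z_n(z)-Z_1(z)}=\grandOdom{K}{n^{-\epsilon(K)}}$; concretely, write $Z_n(z)-Z_1(z)=\sum_{j:2^j<n}(Z_{2^{j+1}\wedge n}(z)-Z_{2^j}(z))$ and bound $n^{-\alpha(z)}$ times this sum term by term, using that each increment is $\grandOdom{K}{2^{j(\alpha_{\max}-\delta_{\min}/q+\petito 1)}}$ in $L^q$ and on compacts a.s., so dividing by $n^{\alpha(z)}\geq 2^{j\alpha(z)}$ leaves a summable-in-$j$ tail with geometric rate. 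The main obstacle throughout is precisely this bookkeeping: ensuring that the non-analyticity of the normalisation $n^{-\alpha(z)}$ does not break the Cauchy-estimate step, which is handled by only ever applying Cauchy's formula to the analytic objects $Z_n-Z_m$ (never to the normalised ones) and absorbing normalisation discrepancies into $\epsilon(K)$ and the $\petito 1$ terms on the compact $K$.
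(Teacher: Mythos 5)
Your plan follows the same overall route as the paper: control the dyadic increments $D_k=Z_{2^{k+1}}-Z_{2^k}$, use Doob's $L^q$ maximal inequality to fill the gaps between dyadic times, apply the Cauchy integral formula to upgrade pointwise estimates to local uniformity, and then Borel--Cantelli. The two case distinctions are handled in the same spirit. So this is not a different proof, but I want to flag a genuine gap and two smaller sloppinesses.

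The genuine gap is the very last step, where you ``divide by $n^{\alpha(z)}$ pointwise and absorb the (bounded, continuous) discrepancy between $\alpha(z)$ and $\sup_K\alpha$ into the $\petito 1$.'' This is not legitimate. The discrepancy $\sup_K\alpha-\alpha(z)$ is a fixed positive number for each $z$ with $\alpha(z)<\sup_K\alpha$; it does not tend to zero with $n$, so $n^{\sup_K\alpha-\alpha(z)}$ is a genuine polynomial factor, not $n^{\petito1}$. If the oscillation of $\alpha$ over $K$ exceeds $\inf_K\delta/q$, your bound on $n^{-\alpha(z)}\abs{Z_n(z)-Z_\infty(z)}$ has a \emph{positive} exponent, i.e.\ it does not go to zero. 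The same problem occurs in your treatment of case~\ref{wrt:it:convergence gamma>0}: the per-term gain $2^{-j\alpha(z)}$ has to beat $2^{j(\alpha_{\max}-\delta_{\min}/q)}$, which again requires $\alpha_{\max}-\alpha(z)<\delta_{\min}/q$, i.e.\ small oscillation of $\alpha$. The fix, which is what the paper does, is to cover the compact $K$ by finitely many \emph{small closed disks} $\mathrm D(x,2\rho)$ chosen so that, on each disk, $\overline\alpha-\underline\alpha<\underline\delta/q$ (possible by continuity of $\alpha,\delta$ and compactness of $K$), carry out the whole argument disk by disk, and take the worst $\epsilon$ over the finitely many disks. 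Choosing the contour $\Gamma$ ``close to $K$'' shrinks the oscillation of $\alpha$ between $\Gamma$ and $K$, but does nothing to the oscillation of $\alpha$ \emph{within} $K$, which is what you actually need to control.

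Two smaller points. First, the chain ``$\sum_k\abs{D_k}^q<\infty$ a.s., and a fortiori $\sum_k\abs{D_k}<\infty$ a.s.'' with the parenthetical ``\dots $\abs{D_k}\to 0$ so $\abs{D_k}\leq\abs{D_k}^q$ eventually'' is backwards: for $q>1$ and $\abs{D_k}\to 0$ one has $\abs{D_k}^q\leq\abs{D_k}$, which implies nothing useful, and the statement ``a summable $q$-th power sequence with bounded terms is summable'' is simply false for $q>1$ (take $\abs{D_k}=1/k$). Luckily you do not need this remark: the geometric $L^q$ bound $\Ec{\abs{D_k}^q}^{1/q}=\grandO{2^{k(\alpha-\delta/q)}}$ already gives $\Ec{\sum_k\abs{D_k}}<\infty$, and your subsequent ``$L^q$-norm of the tail'' computation, which is correct, supersedes the faulty line. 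Second, your inequality $\Ec{\sup_{z\in K}\abs{D_k}^q}\leq C_\Gamma^q\Ec{\sup_{w\in\Gamma}\abs{D_k(w)}^q}$ followed by ``using the uniform-on-compacts hypothesis on $\Gamma$'' sweeps under the rug an interchange of $\sup_w$ and $\Ec{\cdot}$: the hypothesis controls $\sup_{w\in\Gamma}\Ec{\abs{D_k(w)}^q}$, not $\Ec{\sup_{w\in\Gamma}\abs{D_k(w)}^q}$. This is repairable by using the Cauchy formula in its integral form, then Hölder in the contour variable plus Fubini, or more simply, as the paper does, by working at the $L^1$ level after first reducing via Jensen and Doob to the $L^q$ bound at a single point of the contour, so that only Fubini is needed, never a sup--expectation interchange. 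As written, though, the step is unjustified.
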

\begin{proof}[Proof of Lemma~\ref{wrt:lem:convergence analytic martingale}]
	First, without loss of generality, we can consider that the term $\petitodom{\mathscr O}{1}$ is identically equal to $0$, otherwise we just replace the function $z\mapsto\delta(z)$ by $z\mapsto\frac{1}{2}\cdot \delta(z)$. 
	Second, by compactness, it is sufficient to prove the result for a small disk around each $x\in K$. Since $\mathscr O$ is an open set, let $\rho>0$ be such that $\mathrm D(x,2\rho)\subset \mathscr O$, where $\mathrm D(x,2\rho)$ is the closed disk in the complex plane with centre $x$ and radius $2\rho$. We denote
	\begin{align*}
		\underline\alpha=\inf_{\mathrm D(x,2\rho)}\alpha, \quad \overline \alpha=\sup_{\mathrm D(x,2\rho)}\alpha, \quad \underline\delta=\inf_{\mathrm D(x,2\rho)}\delta,
	\end{align*}	
	and choose $\rho$ small enough so that $\underline\alpha-\overline \alpha+\frac{1}{q}\underline\delta >0$.
	Then if we let $\xi:\intervalleff{0}{2\pi}\rightarrow \mathbb C$ such that $\xi(t)=x+2\rho e^{it}$, we have for any $n$ and $m$, using the Cauchy formula
	\begin{align*}
	\sup_{z\in \mathrm D(x,\rho)}\abs{Z_n(z)-Z_m(z)}\leq \pi^{-1} \int_{0}^{2\pi}\abs{Z_n(\xi(t))-Z_m(\xi(t))}\dd t.
	\end{align*}
	Now,
	\begin{align}\label{wrt:eq:upper bound using cauchy formula}
	\sup_{2^s\leq n\leq 2^{s+1}}\sup_{z\in \mathrm D(x,\rho)}\abs{Z_n(z)-Z_{2^s}(z)}&\leq \pi^{-1}\sup_{2^s\leq n\leq 2^{s+1}} \int_{0}^{2\pi}\abs{Z_n(\xi(t))-Z_{2^s}(\xi(t))}\dd t \notag \\
	&\leq \pi^{-1} \int_{0}^{2\pi} \sup_{2^s\leq n\leq 2^{s+1}} \abs{Z_n(\xi(t))-Z_{2^s}(\xi(t))}\dd t.
	\end{align}
	Using sequentially Jensen's inequality and Doob's maximal inequality in $L^q$, gives us for every $z\in \mathrm D(x,\rho)$:
	\begin{align}\label{wrt:eq:expectation Tn}
	\Ec{\sup_{2^s\leq n\leq 2^{s+1}}\abs{Z_n(z)-Z_{2^s}(z)}}&\leq \Ec{\sup_{2^s\leq n\leq 2^{s+1}}\abs{Z_n(z)-Z_{2^s}(z)}^q}^{\frac{1}{q}} \notag \\
	&\leq \frac{q}{q-1}\cdot \Ec{\abs{Z_{2^{s+1}}(z)-Z_{2^s}(z)}^q}^{\frac{1}{q}}\notag \\
	&\underset{s\rightarrow\infty}=\grandOdom{\mathrm D(x,2\rho)}{2^{\left(\overline\alpha-\frac{1}{q}\cdot\underline\delta\right) s}}.
	\end{align}
	So using \eqref{wrt:eq:upper bound using cauchy formula}, Fubini's theorem and \eqref{wrt:eq:expectation Tn}, we get
	\begin{align}\label{growing:eq:control expectation sup sup}
	\Ec{\sup_{2^s\leq n\leq 2^{s+1}}\sup_{z\in \mathrm D(x,\rho)}\abs{Z_n(z)-Z_{2^s}(z)}}&\leq \pi^{-1} \int_{0}^{2\pi} \Ec{\sup_{2^s\leq n\leq 2^{s+1}} \abs{Z_n(\xi(t))-Z_{2^s}(\xi(t))}}\dd t \notag \\
		&\underset{s\rightarrow\infty}=\grandO{2^{\left(\overline\alpha-\frac{1}{q}\cdot\underline\delta\right) s}}.
	\end{align}
	Now let us treat the two cases $\alpha>0$ and $\alpha\leq0$ separately. Remark that the quantity $\left(\overline\alpha-\frac{1}{q}\cdot\underline\delta\right)$ is negative when $\alpha\leq 0$, but can be of any sign in the case $\alpha>0$. 
	
	$\bullet$ For $\alpha>0$ and $r\geq 0$, we let
	\begin{align*}
	A_r:= 2^{-\underline\alpha r}\cdot \sum_{s=0}^{r}\sup_{2^s\leq n\leq 2^{s+1}}\sup_{z\in \mathrm D(x,\rho)}\abs{Z_n(z)-Z_{2^s}(z)}.
	\end{align*}
	Using \eqref{growing:eq:control expectation sup sup}, we have
	\begin{align*}
	\Ec{A_r}
	&\leq \cst \cdot 2^{-\underline\alpha r}\cdot \sum_{s=0}^{r}2^{\left(\overline\alpha-\frac{1}{q}\cdot\underline\delta\right) s}\leq \cst \cdot 2^{-(\underline\alpha -0\vee (\overline\alpha-\frac{1}{q}\cdot\underline\delta))r}.
	\end{align*}
	Thanks to our assumptions, the number $\beta:=\underline\alpha -0\vee (\overline\alpha-\frac{1}{q}\cdot\underline\delta)$ is positive. Using Markov's inequality  and the last display yields
	\begin{align*}
	\Pp{ A_r> 2^{-\frac{\beta}{2}r}}\leq 2^{\frac{\beta}{2}r} \cdot \Ec{A_r}
	\leq \cst\cdot  2^{-\frac{\beta}{2}r},	\end{align*}
	which is summable, so the Borel-Cantelli lemma ensures that $A_r=\grandO{2^{-\frac{\beta}{2}r}}$ almost surely as $r\rightarrow\infty$. 
	Now for any $n\geq 1$, there is a unique integer $r_n$ such that $2^{r_n}\leq n < 2^{r_n+1}$, namely $r_n=\lfloor\log_2(n)\rfloor$, and we write
	\begin{align*}
		n^{-\alpha(z)}\cdot\abs{Z_n(z)-Z_1(z)}\leq n^{-\alpha(z)}\sup_{1\leq k \leq n}\sup_{z\in \mathrm D(x,\rho)}\abs{Z_n(z)-Z_1(z)} \leq A_{r_n},
	\end{align*}
	which proves point \ref{wrt:it:convergence gamma>0}, because almost surely $A_{r_n}=\grandO{n^{-\frac{\beta}{2}}}$ as $n\rightarrow\infty$.
	
	$\bullet$ For $\alpha\leq 0$, the reasoning is similar so we use the same notation for slightly different quantities. For any integer $r\geq 0$ we let
	 \begin{align*}
A_r:= 2^{-\underline\alpha (r+1)} \cdot\sum_{s=r}^{\infty}\sup_{2^s\leq k\leq 2^{s+1}}\sup_{z\in \mathrm D(x,\rho)}\abs{Z_k(z)-Z_{2^s}(z)}
	 \end{align*}
	 Then, thanks to \eqref{growing:eq:control expectation sup sup}, we have
	 \begin{align*}
	 \Ec{A_r}&\leq \cst \cdot 2^{-\underline\alpha (r+1)} \cdot\sum_{s=r}^{\infty}{2^{\left(\overline\alpha-\frac{1}{q}\cdot\underline\delta\right) s}}\leq \cst \cdot 2^{-(\underline\alpha - \overline\alpha+\frac{1}{q}\cdot\underline\delta)r},
	 \end{align*}
	 and thanks to our assumption the number $\beta:= \underline\alpha - \overline\alpha+\frac{1}{q}\cdot\underline\delta$ is positive. Using the same arguments as in the case $\alpha>0$ we have $A_r=\grandO{2^{-\frac{\beta}{2}r}}$ almost surely as $r\rightarrow\infty$ and taking $r_n:= \lfloor \log_2(n)\rfloor$ yields
	\begin{align*}
	n^{-\alpha(z)}\sup_{k \geq n}\sup_{z\in \mathrm D(x,\rho)}\abs{Z_k(z)-Z_n(z)}& \leq A_{r_n}.
	\end{align*}
	Again we almost surely have $A_{r_n}=\grandO{n^{-\frac{\beta}{2}}}$ as $n\rightarrow\infty$.
	This ensures that the sequence of functions $(z\mapsto Z_n(z))_{n\geq 1}$ is almost surely a Cauchy sequence for the uniform convergence on the disc $\mathrm D(x,\rho)$ (so that its limit $z\mapsto Z_\infty(z)$ is well-defined on the disk) and that \ref{wrt:it:convergence gamma<0} is satisfied.
\end{proof}

Finally, let us give a proof of Lemma~\ref{wrt:lem:behaviour sum w/W}.
\begin{proof}[Proof of Lemma~\ref{wrt:lem:behaviour sum w/W}]
From the assumption, we know that there exists $\epsilon>0$ such that $W_n\underset{}{=} \cst \cdot n^{\gamma}+\grandO{n^{\gamma-\epsilon}}$ as $n\rightarrow\infty$. Without loss of generality, we can assume that $\epsilon<1$. 
Then it is immediate that $w_n=W_{n+1}-W_n=\grandO{n^{\gamma-\epsilon}}$. 
Then 
\begin{align*}
\sum_{i=n}^{2n}\left(\frac{w_i}{W_i}\right)^2\leq \frac{1}{W_n^2}\cdot \max_{n\leq i \leq 2n} w_i \cdot \sum_{i=n}^{2n}w_i \leq \frac{W_{2n}}{W_n^2}\cdot \max_{n\leq i \leq 2n} w_i = \grandO{n^{-\epsilon}},
\end{align*}
and the first point follows by summing over intervals of the type $\intervalleentier{n2^k}{n2^{k+1}}$.

Now write
\begin{align*}
\frac{W_1}{W_n}=\prod_{i=2}^{n}\frac{W_{i-1}}{W_i}=\prod_{i=2}^{n}\left(1-\frac{w_i}{W_i}\right)=\exp\left(\sum_{i=2}^n\log\left(1-\frac{w_i}{W_i}\right)\right).
\end{align*}
Since $\frac{w_i}{W_i}\rightarrow 0$ as $i\rightarrow \infty$, we get
\begin{align*}
\log\left(1-\frac{w_i}{W_i}\right)=-\frac{w_i}{W_i}+\grandO{\left(\frac{w_i}{W_i}\right)^2}
\end{align*}
Putting everything together, we get
\begin{align*}
	\sum_{i=2}^{n}\frac{w_i}{W_i}&=-\sum_{i=2}^{n}\log\left(1-\frac{w_i}{W_i}\right) + \sum_{i=2}^{n}\grandO{\left(\frac{w_i}{W_i}\right)^2}\\
	&=\log W_n-\log W_1+\sum_{i=2}^{\infty}\grandO{\left(\frac{w_i}{W_i}\right)^2}-\grandO{\sum_{i=n+1}^{\infty}\left(\frac{w_i}{W_i}\right)^2}\\
	&=\log W_n + \cst + \grandO{n^{-\epsilon}}.
\end{align*}
Last, just remark that $\log W_n= \log(\cst \cdot n^{\gamma}\cdot (1+\grandO{n^{-\epsilon}}))=\gamma \log n + \cst + \grandO{n^{-\epsilon}},$ which finishes the proof.
\end{proof}
\printbibliography
\end{document}